\newtheorem{theorem}{Theorem}[section]
\newtheorem{corollary}[theorem]{Corollary}
\newtheorem{lemma}[theorem]{Lemma}
\newtheorem{proposition}[theorem]{Proposition}
\theoremstyle{definition}
\newtheorem{definition}[theorem]{Definition}
\newtheorem{remark}[theorem]{Remark}
\numberwithin{equation}{section}
\numberwithin{subsection}{section}
\newcommand{\R}{\mathbb{R}} %reali
\newcommand{\spt}{\mathrm{spt}} %support
\newcommand{\dist}{\mathrm{dist}} %distance
\newcommand{\Lip}{\mathrm{Lip}} %Lipschitz
\newcommand{\Ha}{\mathcal{H}} %Hausdorff measure
\newcommand{\pa}{\partial} %del
\newcommand{\mres}{\mathbin{\vrule height 1.6ex depth 0pt width %measure restriction
0.13ex\vrule height 0.13ex depth 0pt width 1.3ex}}
\newcommand{\abs}[1]{\lvert#1\rvert} %absolute value
\newcommand{\var}{\mathbf{var}} %varifold(rect. set, density)
\newcommand{\Sing}{\mathrm{Sing}} %singular set
\newcommand{\bC}{\mathbf{C}} %varifold cones
\newcommand{\bS}{\mathbf{S}} %spine
\title[Brakke flows near triple junctions]{The epsilon-regularity theorem for \\ Brakke flows  near triple junctions} %\\ with the velocities in $L^2$}
\date{\today}
\author[S. Stuvard]{Salvatore Stuvard}
\address{Dipartimento di Matematica, Universit\`{a} degli Studi di Milano, Via Saldini 50, I-20133 Milano (MI), Italy}
\email{salvatore.stuvard@unimi.it}
\author[Y. Tonegawa]{Yoshihiro Tonegawa}
\address{Department of Mathematics, Institute of Science Tokyo, 2-12-1 Ookayama, Meguro-ku, Tokyo 152-8551, Japan}
\email{tonegawa@math.titech.ac.jp}
\begin{document}

\begin{abstract}
We establish the $\varepsilon$-regularity theorem for $k$-dimensional, possibly forced, Brakke flows near a static, multiplicity-one triple junction. This result provides the parabolic analogue to L. Simon's foundational work on the singular set of stationary varifolds and confirms that the regular structure of triple junctions persists under weak mean curvature flow. The regularity holds provided the flow satisfies a mild structural assumption on its 1-dimensional slices taken orthogonal to the junction's $(k-1)$-dimensional spine, which prohibits certain topological degeneracies. We prove that this assumption is automatically satisfied by two fundamental classes of flows where such singularities are expected: codimension-one multi-phase flows, such as the canonical $\mathrm{BV}$-Brakke flows constructed by the authors, and flows of arbitrary codimension with the structure of a mod~3 integral current, which arise from Ilmanen's elliptic regularization. For such flows, therefore, the Simon type regularity holds unconditionally.
\end{abstract}

\maketitle

\tableofcontents

%%%%%%%%%%%%%%%%%%%%%%
\section{Introduction} \label{sec:intro}

A central theme in geometric analysis is the structure of singularities in critical points of geometric variational problems. The regularity theory for stationary varifolds — weak solutions to the Euler–Lagrange equation of the area functional — forms one of the most challenging parts of the field. Following the fundamental monotonicity formula, which yields subsequential convergence of blow-ups of a stationary varifold to stationary cones (tangent cones), three questions became central. First: does the occurrence of a \emph{regular} tangent cone (a plane, possibly weighted with constant multiplicity $Q$) at a point force local regularity of the varifold? Second: more generally, does the occurrence of a given tangent cone imply its \emph{uniqueness} (independence of blow-up sequences) and that the varifold is locally diffeomorphic to that cone? Third: what can be said about the size (Hausdorff dimension) and fine properties (rectifiability, higher regularity) of the singular set? While complete answers remain out of reach, remarkable partial results have been obtained in the last few decades, with various degrees of precision and possibly under various additional assumptions, such as stability or area minimization; see, among others, \cite{Allard,Almgren_brp,DLS_Lp,DLS_cm,DLS_bu,NV_varifolds,Simon_Loj,white-immiscible,Wic}. 

\smallskip

A parallel and natural line of research concerns the parabolic counterpart: weak varifold solutions to the $L^2$-gradient flow of the area functional — the mean curvature flow. The relevant notion of weak solution in this context was given by K. Brakke in \cite{Brakke_mcf}, whence it is typically referred to as Brakke flow; see Section \ref{sec:results} for the relevant definitions. In this parabolic framework, the monotonicity formula of Huisken \cite{Huisken_mono}, originally proved for smooth mean curvature flows and extended  to Brakke flows by Ilmanen in \cite{Ilmanen_mono}, allows one to mirror the elliptic theory. In particular, it establishes subsequential convergence of \emph{parabolic} blow-ups of a Brakke flow at a space-time point of its support to limit tangent flows, and the analogues of the three questions above can then be asked in this framework, too. This paper resolves, in arbitrary dimension and codimension, the analogue of the second question at static multiplicity-one triple junctions, namely when a tangent flow is independent of time and equal to the stationary cone $\bC$ given by the union of three half-planes meeting at $120^\circ$ along a common subspace. Precisely, we establish the following regularity theorem, presented here informally and stated rigorously in Theorem \ref{thm:main}.

\begin{theorem}[Main theorem, informal statement] \label{thm:main-informal}
    Suppose $\{V_t\}_{t \in (-1,0]}$ is a $k$-dimensional Brakke flow in the open ball $U_1(0) \subset \R^n$ satisfying the structural assumption (A6). If a tangent flow at $0$ is a static multiplicity-one triple junction $\bC$ for $t\leq 0$, then the following holds:
    \begin{itemize}
        \item[(i)]  $\bC$ is the \emph{unique} tangent flow at $0$;
        \item[(ii)]  the parabolic blow-ups $V_t^{(0,0),\lambda}$, informally $V_t^{(0,0),\lambda}=\lambda^{-1} V_{\lambda^2t}$, converge to $\bC$ at a rate $\mathrm{O}(\lambda^\alpha)$ as $\lambda \to 0^+$ for every $\alpha \in (0,1)$;
        \item[(iii)] there exists $r > 0$ such that, for every $t \in (-r^2,0)$, $\spt\|V_t\| \cap U_r(0)$ consists of the union of three $k$-dimensional submanifolds-with-boundary meeting at $120^\circ$ along a common boundary. In fact the three submanifolds are normal graphs over the three branches of $\bC$, and the common boundary is a normal graph over the axis (spine) of $\bC$. Furthermore, the boundary is regular of class $C^{1,\alpha}$, and each sheet is a smooth solution to the mean curvature flow in the interior and $C^{1,\alpha}$ up to the common boundary.  
    \end{itemize}

In fact, an $\varepsilon$-regularity statement holds. More precisely, there exists $\varepsilon_0>0$ such that, under (A6), if the flow is $\varepsilon_0$-close to a static multiplicity-one triple junction (as specified in Definition \ref{def:eps-nbd}) and the Gaussian density $\Theta(0,0)\ge\tfrac32$, then the tangent flow at $(0,0)$ is a unique static triple junction $\bC_{(0,0)}$, and the conclusions above hold with $\bC_{(0,0)}$ replacing $\bC$. The statements remain valid for Brakke flows with forcing $u\in L_t^{q}L_X^{p}$ with $p\ge2$, $q>2$, and 
$\alpha_\star:=1-\frac{k}{p}-\frac{2}{q}\in(0,1)$; the rate is $\mathrm{O}(\lambda^\alpha)$ 
for every $\alpha\le \alpha_\star$.
\end{theorem}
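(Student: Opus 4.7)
The plan is to follow the classical excess-decay paradigm of De Giorgi--Allard, adapted to the triple-junction setting by Simon in the stationary case, replacing its elliptic PDE input by its parabolic counterpart. The argument splits into four ingredients: (a) a quantitative graphical approximation of the flow by a system of normal graphs $u_i(x,t)$ over the three branches of $\bC$; (b) a Caccioppoli-type tilt estimate on parabolic cylinders around the spine; (c) a blow-up analysis whose limit is a caloric triple $(w_1,w_2,w_3)$ solving a linear transmission problem on $\bC$; and (d) iteration of the resulting excess-decay lemma. For (a) I would combine Huisken--Ilmanen monotonicity, Huisken's clearing-out and classical Brakke $\varepsilon$-regularity to obtain, away from the spine, smooth normal-graph representations of the three sheets over the branches of $\bC$, and then a parabolic Lipschitz approximation to extend the graphical description up to the spine. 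Assumption (A6) enters crucially here: it forces the slice of $\spt\|V_t\|$ along almost every line transverse to the spine to be exactly a multiplicity-one ``Y'', which together with $\Theta(0,0)\ge 3/2$ prevents pinching of two sheets, disappearance of a branch, or the appearance of a higher-multiplicity plane in the limit.

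The core of the proof is the blow-up analysis (c). Given a contradiction sequence $\{V_t^{(j)}\}$ converging to $\bC$ with parabolic $L^2$-excess $E_j\to 0$, I would write each sheet as a normal graph $u_i^{(j)}$ over the $i$-th branch and rescale $w_i^{(j)}:=u_i^{(j)}/E_j^{1/2}$. The Caccioppoli estimate of (b) produces uniform $L^2$ control of the parabolic gradient of $w_i^{(j)}$, whence $w_i^{(j)}\to w_i$ in $L^2_{\mathrm{loc}}$ up to a subsequence. Expanding Brakke's inequality to first order in $E_j^{1/2}$ yields the linearised system: each $w_i$ satisfies the heat equation in the interior of its branch, and the $120^\circ$ matching along the spine, together with the joint first-variation information inherited from Brakke's identity, becomes in the limit the linear junction condition
\[
w_1+w_2+w_3 \equiv 0 \quad \text{on the spine},
\]
together with matching of the tangential traces across the spine. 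Regularity for this symmetric transmission problem, reducible by separation of variables on each transverse half-plane, shows that after subtracting the affine solutions corresponding to infinitesimal rigid motions of $\bC$ one obtains the decay
\[
\int_{Q_{\theta}} \sum_{i}\abs{w_i-\ell_i}^2 \le c(\theta)\,\theta^{2(1+\alpha)} \int_{Q_1} \sum_i \abs{w_i}^2
\]
for every $\theta\in(0,1)$ and every $\alpha<1$.

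Transporting this linear decay back to the nonlinear level by the standard compactness/contradiction argument yields the excess-decay lemma at every space-time point of the spine: there exist $\theta\in(0,1)$ and a rotated triple junction $\bC'$ produced by the affine correction such that
\[
\mathrm{exc}(V,\bC',Q_{\theta r}(x_0,t_0)) \le \theta^{2\alpha}\,\mathrm{exc}(V,\bC,Q_r(x_0,t_0)) + C\,r^{2\alpha_\star}\,\|u\|_{L_t^q L_X^p}^2,
\]
with the forcing contributing a parabolic-subcritical error. Iterating produces (i) uniqueness of the tangent flow $\bC_{(0,0)}$, (ii) the polynomial convergence rate $\mathrm{O}(\lambda^\alpha)$ for $\alpha\le\alpha_\star$, and (iii), via a Campanato characterisation, $C^{1,\alpha}$-regularity of the spine graph and of the three sheets up to the spine; smoothness of each sheet in its interior follows from classical parabolic Brakke regularity. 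The step I expect to be the main obstacle is proving that the blow-up $(w_1,w_2,w_3)$ actually admits a trace on the spine and satisfies the linear junction condition in a strong enough sense to apply the linear decay: controlling the contribution of the spine to the first variation in a way that survives the rescaling is delicate, and this is precisely where (A6) becomes indispensable, since without it the sheets could pinch or one branch could disappear in the limit and destroy the linear structure. Verifying (A6) for codimension-one multi-phase $\mathrm{BV}$-Brakke flows and for mod-$3$ Ilmanen flows is then a separate input of the paper.
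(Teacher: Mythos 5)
Your high-level plan — graphical approximation away from the spine, blow-up with excess normalization, linearize Brakke's inequality to get a caloric triple, derive decay of the linearized problem, iterate — is indeed the strategy the paper follows. But there are two substantive places where your proposal diverges or would not close.

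First, the linearized junction condition you write, $w_1+w_2+w_3\equiv 0$ on the spine, is only the correct constraint for the in-plane normal component (the $x_2$-component in the paper's rotated coordinates, via the identity $\mathbf R_0+\mathbf R_{2\pi/3}+\mathbf R_{4\pi/3}=3\,\mathbf p_{\{x_1=x_2=0\}}$). For the out-of-plane normal components (present whenever the codimension $n-k\geq 2$), the sum of the three traces does \emph{not} vanish; what is true instead is that all three traces agree, because they are projections of a single translation vector $\tilde\xi(y,t)$ produced by the no-hole binding function. In addition, the blow-up limit must satisfy derivative ``stationarity'' conditions at the spine ($\sum_j \partial_r \hat f_j|_{r=0}=0$ plus even-reflection of the remaining components). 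These two families of conditions — the trace conditions (odd reflection) and the flux conditions (even reflection) — are obtained in the paper through quite different mechanisms (Proposition \ref{p:odd} vs.\ Proposition \ref{p:even}); the latter requires testing Brakke's inequality with $x$-affine test functions and explicitly does not follow from the former. Without the full set of conditions the Taylor expansion of Corollary \ref{newcone-decay} does not close, and in turn the decay lemma fails.

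Second, the ingredient you label ``a Caccioppoli-type tilt estimate on parabolic cylinders around the spine'' is not what makes the argument work, and I believe a straight Caccioppoli estimate would not give you what you need near the spine (there is no uniform $L^2$ control on the radial derivative $\partial_r\tilde f^{(m)}$). The paper's actual replacement for Simon's $\int |X^\perp|^2|X|^{-k-2+\kappa}\,d\|V\|$ estimate is a pair of \emph{non-concentration estimates} (Theorem \ref{thm:HS}): (a) any Gaussian-density-$\tfrac32$ point lies within distance $\mathrm{O}(\max\{\mu,\|u\|\})$ of the spine, and (b) the backward-heat-kernel-weighted $L^2$ excess relative to the translated cone decays like $(\tau-t)^\kappa$ uniformly in $t$. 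These are obtained by testing Brakke's inequality with suitable cut-offs and exploiting Huisken's monotonicity, and it is \emph{here} — not in the first-variation contribution of the spine, as your proposal suggests — that (A6) is indispensable: the cut-offs produce error terms involving $|(\nabla\eta)^\perp|^2$ that have no sign, and (A6) supplies the quantitative lower bound on slice mass and second moment (relative to $\hat\bC$) needed to absorb them, in place of the missing parabolic analogue of Simon's derivative inequality $kr^{k-1}\int_{B_r}|X^\perp|^2|X|^{-k-2}\,d\|V\|\leq \frac{d}{dr}\bigl(\|V\|(B_r)-\|\bC\|(B_r)\bigr)$. Finally, the no-hole property itself (existence of a $\Theta\geq\tfrac32$ point at every slice, proved via White's stratification theorem) is a separate essential input that your sketch doesn't surface; it is what furnishes the centers $\Xi$ for the weighted estimate, and hence the traces $\tilde\xi(y,t)$ used in the linear junction conditions. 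The strong $L^2$ compactness of the normalized graphs $\tilde f^{(m)}$ up to the spine is also a consequence of the non-concentration estimate, not of a Caccioppoli bound.
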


\smallskip

Before commenting further on the theorem and its assumptions (particularly the structural assumption (A6) mentioned there), let us first discuss the relevance of studying triple junction singularities. The intrinsic interest of these specific singularities dates back to J. Plateau's experiments with soap films and bubbles in the nineteenth century. From the phenomenological observations, Plateau formulated what are nowadays known as \emph{Plateau's laws} on the shape of soap films. The laws predict that:
\begin{itemize}
    \item[(i)] Soap films are made of entire (unbroken) smooth surfaces.
    \item[(ii)] The mean curvature of a portion of a soap film is everywhere constant on any point on the same piece of soap film.
    \item[(iii)]  Soap films always meet in threes along an edge called a Plateau border, and they do so at an angle of $\arccos(-1/2)=120^\circ$.
    \item[(iv)] These Plateau borders meet in fours at a vertex, at the tetrahedral angle $\arccos(-1/3) \approx 109.47^\circ$.
\end{itemize}
The local analysis of triple junction singularities of minimal surfaces (the elliptic, stationary setting) is then the rigorous study, in mathematical terms, of the local geometry at the Plateau borders described above in (iii). In \cite{Taylor_AlmMin}, J. Taylor demonstrated that two-dimensional Almgren minimal sets in $\R^3$ (see \cite{Alm_minsets}) do satisfy Plateau's laws. Later on, in \cite{Taylor}, she identified triple junctions as the \emph{only} admissible singularities for two-dimensional flat chains in $\R^3$ that minimize the area in the homology class mod~3. In his pioneering paper \cite{Simon_cylindrical}, L. Simon eventually proved an $\varepsilon$-regularity theorem for triple junction singularities of multiplicity-one stationary varifolds in arbitrary dimension and codimension. In particular, his result implies that if a multiplicity-one stationary varifold admits, at a point of its support, a unit density triple junction as tangent cone, then that tangent cone is unique, blow-ups converge towards it at a rate that is a positive power of the blow-up scale, and locally at the point the varifold consists of three smooth minimal surfaces meeting at $120^\circ$ at a common $C^{1,\alpha}$ boundary. The techniques introduced by Simon have proved themselves to be extremely robust, and they have been successfully applied to a variety of elliptic problems concerning the local structure of cylindrical singularities (namely, singularities where a tangent cone splits a Euclidean factor); see, in particular, \cite{CoEdSp,DLHMSS_odd,MDLS,MWic}. 

\smallskip

In contrast, much less progress has been made in this direction at the level of parabolic theory. Prior to the present contribution, the analysis of Brakke flows at triple junction singularities had been addressed in two papers. In \cite{SW20}, F. Schulze and B. White considered the restricted class of mean curvature flows of clusters of smooth $k$-dimensional surfaces in $\R^n$ that meet in triples at equal angles along smooth edges and higher-order junctions on lower-dimensional faces, termed \emph{mean curvature flows with triple edges}. They proved that any such flow that is close to a static triple junction weakly in the sense of Brakke flows is in fact close to it in the smooth topology, and furthermore they showed that any cluster with only triple edges and no higher-order junctions evolves by mean curvature within the class for short time. Instead, in \cite{tone-wic} the second-named author and N. Wickramasekera implemented Simon's blow-up technique to prove an $\varepsilon$-regularity theorem for $1$-dimensional Brakke flows that are $L^2$-close to a static triple junction in the plane, thereby establishing the $k=1$, $n=2$ case of Theorem~\ref{thm:main-informal}, without the structural assumption (A6). 

\smallskip

The essential advantage of the $k=1$ case is the following. Along a mean curvature flow, it is natural to control the space-time integral of the mean curvature squared, since this quantity represents the dissipation of area along the flow. When $k=1$, this natural control implies that for a.e. $t$ the $1$-dimensional varifold $V_t$ has bounded generalized mean curvature in $L^2$. In turn, this information entails strong constraints on the topology (and length) of the rectifiable set supporting $V_t$; see, for instance \cite[Proposition 4.2]{tone-wic}. When the varifolds' dimension is $k\geq 2$, such constraints are unavailable. With such a weak, albeit natural, integrability condition on the mean curvature, it is not clear to the authors whether a complete, unconditional parabolic counterpart to Simon's theorem in \cite{Simon_cylindrical} is to be expected. 

More precisely, while many of the formulas in \cite{Simon_cylindrical} have parabolic counterparts for Brakke flows, one crucial estimate does not. For a stationary integral varifold $V$ of dimension $k$ and a stationary cone $\bC$ of the same dimension, if $0 \in \spt\|V\|$ is a point such that the density $\Theta_V(0) \geq \Theta_{\bC}(0)$ then as a consequence of the monotonicity formula it holds for a.e.~$r>0$ that  \begin{equation}\label{simonineq}
    kr^{k-1}\int_{B_r}\frac{|X^\perp|^2}{|X|^{k+2}}\,d\|V\|(X)\leq \frac{d}{dr}\big(\|V\|(B_r)-\|\bC\|(B_r)\big),
\end{equation}
where $X^\perp$ is the orthogonal projection of the position vector $X$ to $({\rm Tan}_X \|V\|)^\perp$; see \cite[p.~613]{Simon_cylindrical} and \cite[Lemma 8.2 and Appendix E]{DLHMSS_odd}. The above formula allowed Simon to avoid certain derivative terms of cut-off 
functions needed for localization. For Brakke flows, a direct counterpart to \eqref{simonineq} is missing: while the natural attempt would be to try and derive a suitable estimate from Huisken's monotonicity formula, the intrinsic ``non-local'' nature of the latter prevents one to obtain an inequality which can be successfully integrated against radial cut-off functions. The local monotonicity formula of Ecker \cite{Ecker} does not appear to provide the kind of estimates we need either. 

In this paper, we show that the lack of a formula mirroring \eqref{simonineq} for mean curvature flow can be entirely overcome by imposing one single additional structural assumption on the flow, which, in the present paper, is labeled (A6) and is introduced in Section \ref{futh}. Although it is not precise, (A6) roughly requires the following. If in a parabolic cylinder $B_r \times (-r^2,0)$ the flow is sufficiently close, in space-time $L^2$, to a triple junction $\bC$, then at a.e. time $t$ the slices of $V_t$ in the direction perpendicular to the spine $\bS(\bC)$ of the cone, a one-dimensional rectifiable set $M_t^y$ for a.e. $y \in \bS(\bC)$, have scale invariant length
\[
r^{-1}\mathcal H^1(M_t^y \cap B_r)
\]
and scale invariant second moment with respect to $\bS(\bC)$
\[
r^{-3}\int_{M_t^y\cap B_r} \dist^2(X,\bS(\bC))  \,d\mathcal H^1(X)
\]
greater than or equal to the corresponding quantities evaluated on the slices of $\bC$, up to an admissible error which is quadratic in the scale invariant $L^\infty$ distance between the flow and the cone in the annulus $B_r \setminus B_{r/2}$ (see \eqref{cotriple}–\eqref{cotriple2}). 

Essentially, the validity of (A6) prevents the occurrence of topological degeneracies in the slices of the flow such as those depicted in Figure \eqref{fig:A6}. Interestingly, we verify in this paper (see Section \ref{lastapp}) that (A6) is in fact automatically satisfied in the two canonical classes of Brakke flows where triple junction singularities are expected and a robust existence theory is available: (i) \emph{multi-phase Brakke flows} with at least three phases, such as those constructed in \cite{kim-tone,ST_canonical}, and (ii) \emph{flows of currents mod $3$}, which can be obtained, for instance, by elliptic regularization as in \cite{Ilm1}. In these cases, the parabolic counterpart to Simon's regularity theorem holds unconditionally, and we have the following corollary; the precise statements are Theorem \ref{profor1.1} and Theorem \ref{thm:main-mod3}.  

\begin{corollary} \label{cor-applications}
There exists $\varepsilon_0$ with the following property. Suppose that $\{V_t\}_{t\in(-1,0]}$ is a $k$-dimensional (possibly forced) Brakke flow in the open ball $U_1(0)\subset \R^n$ which is $\varepsilon_0$-close, in the sense of Definition \ref{def:eps-nbd}, to a static multiplicity-one triple junction $\bC$. Suppose furthermore that $\{V_t\}_t$ has a multi-phase cluster structure (in which case $n=k+1$) or that it is a flow of currents mod~3. If the Gaussian density $\Theta(0,0) \geq \tfrac32$, then the tangent flow at $(0,0)$ is unique, it is a static triple junction $\bC_{(0,0)}$, and the conclusions (ii) and (iii) of Theorem \ref{thm:main-informal} hold with $\bC_{(0,0)}$ in place of $\bC$.
\end{corollary}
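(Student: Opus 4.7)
The plan is to reduce Corollary~\ref{cor-applications} to the main $\varepsilon$-regularity theorem (Theorem~\ref{thm:main-informal}) by verifying that, for each of the two classes of flows at hand, the structural assumption (A6) is automatically satisfied. Once (A6) holds for such a flow, conclusions (ii) and (iii) follow verbatim from Theorem~\ref{thm:main-informal}. The task therefore splits into two independent verifications, corresponding to Theorem~\ref{profor1.1} (multi-phase clusters) and Theorem~\ref{thm:main-mod3} (mod-3 currents), which together with Theorem~\ref{thm:main-informal} imply the corollary.

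For the multi-phase case ($n=k+1$), I would exploit the partition-of-ambient-space structure. At a.e.\ time $t$ and for $\Ha^{k-1}$-a.e.\ spine point $y\in\bS(\bC)$, applying a slicing/coarea argument to the phase partition produces, in the $2$-plane orthogonal to $\bS(\bC)$ through $y$, a $\mathrm{BV}$ partition of the plane whose interface supports the slice $M_t^y$. The $L^2$-proximity of $V_t$ to the triple junction cone $\bC$, together with the $L^\infty$ closeness guaranteed on the annulus $B_r\setminus B_{r/2}$, forces the boundary traces of the partition on a suitable intermediate circle to be arranged in three arcs belonging to three distinct phases, mimicking the triple junction configuration. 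Then I would apply a planar relative-perimeter lower bound for partitions of the disk with prescribed three-phase boundary trace: any such partition has total interface length bounded below by that of the triple junction slice, up to a correction quadratic in the $L^\infty$ deviation of the boundary trace from the canonical one. The second-moment bound follows from an analogous calibration-style argument, using the squared distance function to $\bS(\bC)$ as a test against the interface measure.

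For the flows of currents mod~3, the key is that the $1$-dimensional slice in a $2$-plane of a $k$-dimensional mod-3 integral current is, by the mod-3 slicing theorem, a $1$-dimensional integral mod-3 current in that plane. If the ambient current is close to the mod-3 triple junction (three half-planes, each with multiplicity $1$), then the sliced current is mod-3 close, in the annular region, to the in-plane triple junction of three rays with multiplicity $1$. Mod-3 arithmetic rules out the two topological degeneracies that would violate (A6): two rays cannot collapse, because $1+1\equiv2\not\equiv0\pmod 3$, and three rays can only close up by meeting at a single point, since $1+1+1\equiv 0\pmod 3$. Comparing the mass and second moment of the slice with those of the unique minimal mod-3 competitor sharing the prescribed boundary datum on the annulus (namely the straight extension of the three endpoints to the origin) yields the required scale-invariant length and second moment bounds, with the quadratic error coming precisely from the $L^\infty$ deviation of the actual endpoints from the canonical triple junction endpoints.

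The main obstacle in both cases is converting the soft topological statements — \emph{``the partition retains three phases''} or \emph{``mod-3 cancellation is forbidden''} — into the sharply quantitative form of (A6), with error quadratic rather than merely $o(1)$ in the $L^\infty$ deviation on the outer annulus. I would handle this by first selecting, via Fubini, an intermediate sphere of radius $\rho\in(r/2,r)$ on which pointwise closeness is available and the sliced object has nice boundary behavior; then computing exactly the length and second moment of the explicit triple junction competitor with matched boundary data, and expanding around the canonical endpoints to second order to extract the quadratic correction. A final small argument, again by Fubini, is needed to pass from ``a.e.\ such sphere'' to ``a.e.\ spine point $y$ at a.e.\ time $t$'', at the level of regularity of the slicing theory required by (A6).
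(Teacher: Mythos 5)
Your high-level route is exactly the paper's: Corollary~\ref{cor-applications} is reduced to Theorem~\ref{thm:main} once Assumption~(A6) is verified, and you verify (A6) by slicing in directions orthogonal to $\bS(\bC)$, using the outer annular graphicality from Theorem~\ref{thm:graphical} to control the trace data, and then running a planar minimization argument with prescribed boundary to get the scale-invariant length and second-moment lower bounds. The mod-3 arithmetic observation (that multiplicity-one rays cannot cancel or merge in pairs mod~3) is the same structural input as in the paper's Lemma~\ref{triplemin2}.

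There are, however, two places where your sketch departs from what actually makes the argument close, and you should be aware of them before filling in details. First, for the second-moment estimate you invoke ``an analogous calibration-style argument, using the squared distance function as a test.'' This is not what the paper does and it is not clear a genuine calibration exists for the weighted functional $\int|x|^2\,d\Ha^1$: the paper instead characterizes the weighted-mass minimizer with prescribed boundary on $\partial B_s^2$ by a \emph{sliding barrier} argument built on the explicit solutions $x_2=\sqrt{a+x_1^2}$ of the Euler--Lagrange ODE $f''/(1+(f')^2)=(-x_1f'+f)/(f^2+x_1^2)$, concluding that the minimizer is three straight radial segments to the origin (see Lemmas~\ref{triplemin3} and~\ref{triplemin2}). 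A naive ``$\int\nabla(|x|^3/3)\cdot\tau$'' calibration only bounds the integral by radial-distance differences and does not pin the free junction vertex to the origin, so you would lose the bound when the Fermat point drifts. Second, your parenthetical identifies the ``unique minimal mod-3 competitor sharing the prescribed boundary datum'' with ``the straight extension of the three endpoints to the origin,'' but that is the minimizer of the \emph{weighted} mass $\int|x|^2$, not of the mass $\Ha^1$; the mass minimizer is the Steiner configuration with three $120^\circ$ arms meeting at the Fermat point. Conflating the two would lead you to write down the wrong Euler--Lagrange condition for the length estimate of~\eqref{cotriple}; keeping the two comparison problems separate, as in Lemmas~\ref{triplemin1}/\ref{triplemin2} versus~\ref{triplemin3}, is necessary. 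Finally, a small bonus of the weighted problem you did not exploit: once the minimizer is shown to be three radial segments from $\partial B_s$ to the origin, the second moment is \emph{exactly} $s^3$ with no $K^2$ correction (in the codimension-one case), because a segment of length $s$ from the origin to the circle has $\int_0^s r^2\,dr=s^3/3$ regardless of direction; the $K^2$ loss you anticipate appears only in the mod-3 higher-codimension case after the two-plane projection.
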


Note that in case $\{V_t\}_{t \in (-1,0]}$ is a flow of currents mod~3 arising as limit of Ilmanen's elliptic regularization scheme, triple junction regularity had already been established by Schulze-White in \cite[Lemma 5.2]{SW20} (also utilizing Krummel's result \cite{Krummel}), and in fact more can be said in that case, as the graphical sheets in point (iii) of Theorem \ref{thm:main-informal} are \emph{smooth} up to the common boundary, and the latter is smooth as well. An advantage of our result, however, is that it shows how the basic $C^{1,\alpha}$ regularity does not depend in any way on the method used to construct the flow, so long as the underlying mod~3 homology structure is present.

\medskip

In combination with White's stratification theorem \cite{White_stratification}, Corollary \ref{cor-applications} allows one to conclude the following structural result on the singular set of a Brakke flow. Given an open interval $I \subset \R$, an open set $U \subset \R^n$, and a $k$-dimensional Brakke flow $\mathscr{V}=\{V_t\}_{t\in I}$ in $U$, the (interior) \emph{singular set} $\Sing\,\mathscr{V}$ is defined as the set of points $(X,t) \in U \times I$ for which no parabolic neighborhood $U_r(X) \times (t-r^2,t+r^2)$ can be found where the support $\spt\|\mathscr V\|$ (where $\|\mathscr V\|$ is the measure $\|V_t\|\otimes \mathscr L^1$ in $U \times I$) is a smooth mean curvature flow.
\begin{theorem}\label{thm:structure-intro}
    Let $I \subset \R$ be an open interval, and let $\mathscr V =\{V_t\}_{t\in I}$ be a $k$-dimensional Brakke flow in an open set $U \subset \R^n$ with multi-phase cluster structure or that is a flow of currents mod~3. Assume the following:
    \begin{equation}\tag{$\mathrm{H}$}\label{ipotesi-H}
    \begin{split}
        &\mbox{$\mathscr V$ has no static tangent flows having, after rotations, the form $\bC^{(0)}\times\R^{k-1}$} \\
        &\mbox{for a one-dimensional (stationary) cone $\bC^{(0)}$ with $\Theta(\bC^{(0)},0)\geq 2$.} 
    \end{split}
    \end{equation}
    Then, the singular set $\Sing\,\mathscr{V}$ admits the decomposition 
    \begin{equation} \label{eq:sing-decomp}
        \Sing\,\mathscr{V}=\mathcal R \cup \mathcal S\,,
    \end{equation}
    where
    \begin{itemize}
        \item[(i)] $\mathcal R$ is a $k$-dimensional submanifold of $U \times I$ of class $C^{1,\alpha}$, and
        \item[(ii)] $\mathcal S$ has parabolic Hausdorff dimension $\dim_{\mathcal P}(\mathcal S) \leq k$.  
    \end{itemize}
    In particular, for a.e. $t \in I$ the singular set at time $t$, that is the set $(\Sing\,\mathscr{V})_t := \{X\,\colon\,(X,t)\in\Sing\,\mathscr{V}\}$ decomposes as $(\Sing\,\mathscr{V})_t = \mathcal R_t \cup \mathcal S_t$, where $\mathcal R_t$ is a $(k-1)$-dimensional submanifold of $U$ of class $C^{1,\alpha}$ and the Euclidean Hausdorff dimension of $\mathcal S_t$ is $\dim_{\mathcal H}(\mathcal S_t) \leq k-2$.
\end{theorem}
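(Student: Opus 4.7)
The plan is to combine White's quantitative stratification theorem for Brakke flows~\cite{White_stratification} with the $\varepsilon$-regularity result Corollary~\ref{cor-applications} at static triple junctions. Concretely, I would first apply the stratification to isolate, within $\Sing\,\mathscr V$, the top singular stratum of points at which some tangent flow retains maximal spatial symmetry; then exploit hypothesis~\eqref{ipotesi-H} to identify that top stratum with the locus of triple-junction singularities; and finally apply Corollary~\ref{cor-applications} pointwise to promote that locus to a $C^{1,\alpha}$ submanifold of spacetime, leaving the remaining singular strata as the lower-dimensional piece $\mathcal S$.

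By White's stratification~\cite{White_stratification}, $\Sing\,\mathscr V$ decomposes as $\mathcal T\sqcup\mathcal S$, where $\mathcal T$ consists of the spacetime points admitting, for at least one tangent flow, the product form $\bC^{(0)}\times\R^{k-1}$ (after rotation) with $\bC^{(0)}\subset\R^{n-k+1}$ a $1$-dimensional stationary cone, and the complementary piece $\mathcal S$ satisfies $\dim_{\mathcal P}\mathcal S\le k$. Any such $\bC^{(0)}$ is a finite union of half-lines through the origin in force-balance, with density at the vertex equal to one-half the number of half-lines counted with multiplicity. Hypothesis~\eqref{ipotesi-H} forbids density $\ge 2$, leaving only two possibilities: a multiplicity-one line (which corresponds to a regular point, hence is excluded from $\Sing\,\mathscr V$) or a multiplicity-one triple junction of three half-lines meeting at $120^\circ$, of density~$\tfrac32$. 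Therefore every $(X_0,t_0)\in\mathcal T$ has Gaussian density $\Theta(X_0,t_0)\ge\tfrac32$ and admits a static multiplicity-one triple junction as a tangent flow; Huisken's monotonicity combined with upper semicontinuity of the density then implies the $\varepsilon_0$-closeness hypothesis of Corollary~\ref{cor-applications} at some sufficiently small parabolic scale around $(X_0,t_0)$, so the Corollary applies and yields, in that parabolic neighborhood, a cluster of three $C^{1,\alpha}$ sheets meeting along a $C^{1,\alpha}$ spine that coincides setwise with~$\mathcal T$.

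Set $\mathcal R:=\mathcal T$. The local $C^{1,\alpha}$ parametrizations of the spine from Corollary~\ref{cor-applications} agree on overlaps (both describe the same subset $\mathcal R$ of spacetime), so they glue to a global $k$-dimensional $C^{1,\alpha}$ submanifold structure on $\mathcal R\subset U\times I$. Locally, the spine is the spacetime graph of a smooth $1$-parameter family of $(k{-}1)$-dimensional normal graphs over the static spine of the triple-junction tangent, so each time slice $\mathcal R_t$ is a $(k{-}1)$-dimensional $C^{1,\alpha}$ submanifold of $U$. To turn the parabolic bound $\dim_{\mathcal P}\mathcal S\le k$ into the Euclidean bound $\dim_{\mathcal H}\mathcal S_t\le k-2$ for $\Leb^1$-a.e.~$t$, I would invoke the standard parabolic slicing principle: a cover of $\mathcal S$ by parabolic balls of radii $r_i$ restricts at fixed~$t$ to a cover of $\mathcal S_t$ by Euclidean balls of the same radii, and a Fubini-type argument (summing $r_i^{d-2}$ over parabolic balls whose time interval contains $t$) shows that the two parabolic dimensions carried by the temporal factor collapse upon slicing, yielding $\dim_{\mathcal H}\mathcal S_t\le\dim_{\mathcal P}\mathcal S-2$ for a.e.~$t$.

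The delicate step, I expect, is the bridge between White's tangent-flow information and the quantitative $\varepsilon_0$-closeness hypothesis of Corollary~\ref{cor-applications}: the stratification supplies only \emph{a} tangent flow of triple-junction type at each $(X_0,t_0)\in\mathcal T$, and this must be promoted into $\varepsilon_0$-closeness on a bona fide parabolic neighborhood (not merely along a blow-up subsequence). The promotion is standard via Huisken's monotonicity and upper semicontinuity of the density, but the bookkeeping of the stratification indices must be kept sharp so that the bound $\dim_{\mathcal P}\mathcal S\le k$ is preserved. Once this is secured, the remainder — gluing the local $C^{1,\alpha}$ charts and invoking parabolic slicing — is routine.
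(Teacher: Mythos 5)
Your proposal is correct and follows essentially the same route as the paper's proof: White's stratification to split off the top stratum, hypothesis $(\mathrm{H})$ to force any tangent flow of the form $\bC^{(0)}\times\R^{k-1}$ at a singular point to be a multiplicity-one triple junction (the multiplicity-one plane case being absorbed into the regular set by the end-time Brakke regularity theorem), and then Corollary \ref{cor-applications}/Theorem \ref{thm:main} to upgrade that stratum to a $C^{1,\alpha}$ spacetime submanifold. The "delicate bridge" you flag — promoting the existence of a triple-junction tangent flow along a blow-up subsequence into genuine $\varepsilon_0$-closeness in $\mathscr N_\varepsilon$ at some fixed scale — is precisely the content of Remark \ref{rmk:tangents}, which the paper's proof invokes for this purpose; and the parabolic-slicing argument you sketch to pass from $\dim_{\mathcal P}\mathcal S\le k$ to $\dim_{\mathcal H}\mathcal S_t\le k-2$ for a.e.\ $t$ is the standard one the paper leaves implicit. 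So you have, if anything, supplied slightly more detail than the paper itself.
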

We remark that, for any Brakke flow $\mathscr V= \{V_t\}$, if the Gaussian density at a point $(X_{\mathrm o},t_{\mathrm{o}})$ satisfies $\Theta(\mathscr{V},(X_{\mathrm o},t_{\mathrm{o}})) < 2$ then, by upper semicontinuity of the Gaussian density, the assumption \eqref{ipotesi-H} is satisfied in a neighborhood $U'\times I'\ni(X_{\mathrm o},t_{\mathrm{o}})$. Hence, if $\mathscr{V}$ has multi-phase cluster structure or is a flow of currents mod~3 then the structural decomposition \eqref{eq:sing-decomp} holds for $\Sing\,\mathscr V \cap (U'\times I')$ in this case. 

\medskip

In the following subsection, we will describe the structure of the paper and the plan of the proof of our main results.

\subsection{Plan of the paper and strategy of proof}

In Section \ref{sec:results} we introduce the relevant notation in place throughout the paper, including those related to the geometry of triple junctions. We also recall the fundamental facts from the theory of varifolds, and we then recall the notion of Brakke flows with forcing. In Definition \ref{def:eps-nbd}, we list the conditions for a flow to be $\varepsilon$-close to a triple junction $\bC$, defining the $\varepsilon$-neighborhood $\mathcal N_\varepsilon(\bC)$, Finally, we discuss extensively the structural assumption (A6), we state our Main Theorem \ref{thm:main}, and we show how Theorem \ref{thm:structure-intro} follows via stratification technique.

Towards the proof of Theorem \ref{thm:main}, the most important result is the Decay Theorem \ref{thm:decay}. The latter states, roughly speaking, that if the flow is sufficiently close, at a given scale $r$, to a triple junction $\bC$ then there exists another triple junction $\bC'$ of the form $\bC'=a'+O'(\bC)$ for a small translation vector $a'$ and a rotation $O'$ close to the identity so that the (scale invariant) $L^2$-excess (in space-time) of the flow from $\bC'$ at a smaller scale $\theta_\star r$ (for some $\theta_\star \in (0,1)$) has decayed by a fixed factor $\theta_\star^\alpha$. This information is essentially sufficient to establish uniqueness of tangent flows that are static triple junctions, as well as the corresponding rate of convergence of blow-up sequences. To prove the structure theorem (statement (iii) in the informal statement presented as Theorem \ref{thm:main-informal}), the other important ingredient is Proposition \ref{p:NH_property}, which establishes the validity of the following ``no-hole property'': provided the flow is sufficiently $L^2$-close to $\bC$, at every time $t$ and for every point $y$ on the spine of $\bC$ the slice of $V_t$ perpendicular to the spine of $\bC$ and passing through $y$ contains at least one point where the Gaussian density of the flow is (bigger than or) equal to $\tfrac32$. Once these results have been established, the proof of the structure theorem is obtained upon comparing the oscillation of the (unique) tangent flows at different ``no-hole'' points, and is by now considered standard.

With the no-hole property established in Section \ref{sec:NH}, essentially all the effort through Sections \ref{sec:graph}, \ref{sec:HS}, \ref{sec:BU}, and \ref{sec:proof-main} is directed to the proof of the Decay Theorem \ref{thm:decay}. As in many similar regularity proofs (starting from the pioneering work of De Giorgi \cite{DG_frontiere}) the main argument is a ``blow-up'' procedure: after scaling, we focus on a sequence of Brakke flows $\{V_t^{(m)}\}_t$ with forcing fields $u^{(m)}$ which are close at scale $1$ to a reference triple junction $\bC$. The distance between the flow $\{V_t^{(m)}\}_t$ and $\bC$, measured in a space-time $L^2$-sense, is a relevant parameter, it will be called {\em excess}, cf. Definition \ref{def:eps-nbd}, and denoted by $\mu^{(m)}$. The other relevant parameter is the (scale invariant) norm of $u^{(m)}$ in $L^q_tL^p_X$, denoted $\|u^{(m)}\|$. Essentially all relevant analytic estimates are obtained with respect to the control quantity $\max\{\mu^{(m)},\|u^{(m)}\|\}$. The first step is performed in Section \ref{sec:graph}: there, we show that, for any choice of $\sigma > 0$, upon assuming that the flow is sufficiently close to $\bC$, and thus that $\max\{\mu^{(m)},\|u^{(m)}\|\}$ is sufficiently small depending on $\sigma$, the regions in the flow at distance at least $\sigma$ from the spine of $\bC$ can be parameterized as normal graphs of functions defined on the three half-planes in $\bC$, with estimates on a suitable parabolic $C^{1,\alpha}$ norm in terms of the control quantity. This Graphicality Theorem \ref{thm:graphical} is obtained by taking advantage of the end-time regularity theorem for Brakke flows with forcing that are close to a multiplicity-one static plane, proved by the authors in \cite{ST_endtime}. This is an extension to the end-time of Brakke's regularity theorem (see also \cite{Brakke_mcf,DPGS,Kasai-Tone,Ton-2,Ton1}). The availability of an end-time regularity theorem is crucial in this step: inspired by \cite{Simon_cylindrical}, we cover the space by toroidal regions of width comparable to the distance from the spine of $\bC$, we then extend them in the time direction appropriately, and finally we apply the end-time regularity theorem at all scales such that the planar excess is small. At the end of Section \ref{sec:graph}, we will have complete geometric control on the part of the flow that is far away from the spine, but the geometry near the spine will still be unresolved.

Section \ref{sec:HS} is the technical core of the paper. Here, we obtain the parabolic counterpart to the fundamental estimates of Simon aimed at proving that the $L^2$ excess does not concentrate near the singular spine. The main result of this section is Theorem \ref{thm:HS}, which establishes the two fundamental estimates \eqref{e:HS1} and \eqref{e:HS2}. The first states that any no-hole point, that is space-time points $(\Xi,\tau)$ with Gaussian density $\Theta(\Xi,\tau) \geq \tfrac32$, must lie in a tubular neighborhood of the spine of the triple junction $\bC$ whose width is bounded above by the control quantity $\max\{\mu,\|u\|\}$: this implies that the graphical representation established in Section \ref{sec:graph} can be pushed at least until this scale. The second states that if $(\Xi,\tau)$ is a no-hole point then the $L^2$ distance in space between the flow and the translated triple junction $\Xi +\bC$, when weighted by the $k$-dimensional backward heat kernel $\rho_{(\Xi,\tau)}$ with pole $(\Xi,\tau)$, must tend to zero as $t \to \tau^-$ at a rate of at least a positive power $(\tau-t)^\kappa$ with respect to the control quantity, uniformly in $t\leq \tau$. This is the parabolic counterpart to the estimate in Simon's \cite[Theorem 3.1(i)]{Simon_cylindrical}, and its formulation was inspired by \cite{tone-wic}. While in Simon's work one obtains an integral estimate for the $L^2$ distance weighted by the singular kernel $|X-\Xi|^{-k+\kappa}$, where $k$ is the dimension and $\kappa \in (0,1)$, the parabolic formulation weights the $L^2$ distance with the time-singular kernel $(\tau-t)^{\kappa} \rho_{(\Xi,\tau)}$, which is of order $\mathrm{O}((\tau-t)^{-\frac{k}{2}+\kappa})$ indeed (this is the correct scaling, as time is effectively two-dimensional in parabolic regularity), and the estimate is uniform in time.

The proof of Theorem \ref{thm:HS} hinges upon gaining control of two key geometric quantities: the deviation from stationarity, measured by $\iint |h|^2$, and the deviation from self-similarity, measured by the Huisken integral $\iint |h - (\nabla\rho_{(\Xi,\tau)})^\perp/\rho_{(\Xi,\tau)}|^2 \rho_{(\Xi,\tau)}$. This is done in Proposition \ref{p:error_t_est}. As Simon's estimates were obtained by suitably testing the stationarity \emph{identity} $\delta V=0$ along cleverly chosen vector fields, here we must test Brakke's \emph{inequality} with appropriate choices of (non-negative, compactly supported) test functions. It is here, to control the error terms coming from the necessary use of cut-off functions for localization inside Brakke's inequality, that we need the structure assumption (A6). 

Once Theorem \ref{thm:HS} is proved, and the graphical representation of the flow has been pushed to distance comparable to excess from the spine of the cone, we pass to the blow-up limit in Section \ref{sec:BU}: after normalization by $\mu^{(m)}$, the graphing functions $f^{(m)}$ are shown to converge to a solution $\tilde f$ of the heat equation on each branch of $\bC$, satisfying compatibility conditions at the spine. Such conditions eventually lead to a Taylor-type expansion for the graphing functions (see Corollary \ref{newcone-decay}) which is then the key towards the proof of the Decay Theorem \ref{thm:decay} and of the Main Theorem \ref{thm:main} in Section \ref{sec:proof-main}.

In Section \ref{lastapp}, we show that the structure assumption (A6) is automatically satisfied in the important cases of flows with an underlying cluster structure and flows of currents mod~3, thus reaching the proof of Corollary \ref{cor-applications}. Finally, the last Section \ref{sec:finalrmk} contains some concluding remarks on future directions of research.

\medskip

\noindent\textbf{Acknowledgements.} 
S.S. acknowledges support from the project PRIN 2022PJ9EFL “Geometric Measure Theory: Structure of Singular Measures, Regularity Theory and Applications in the Calculus of Variations,” funded by the European Union under NextGenerationEU and by the Italian Ministry of University and Research, as well as partial support from the \emph{Gruppo Nazionale per l'Analisi Matematica, la Probabilità e le loro Applicazioni} (INdAM). 
Y.T. was partially supported by JSPS grant 23H00085.

\section{Notation and main results} \label{sec:results}

\subsection{General notation}
The integers $1\leq k<n$ are fixed, and the space-time
coordinate $(X,t)\in \mathbb R^n\times \mathbb R$
is often used, with the variable $t$ referred to as
``time''. The symbol $0_k$ denotes the origin in $\R^k$.  The standard orthonormal basis of $\R^n$ is denoted $e_1,\ldots,e_n$. For any Borel set $A\subset\mathbb R^n$, the symbols $\mathcal L^n(A)$ and $\mathcal H^k(A)$ denote, respectively, the Lebesgue measure and
the $k$-dimensional Hausdorff measure of $A$.  
When $X\in\mathbb R^n$ and $r>0$, $U_r(X)$ and $B_r(X)$ denote the open and closed ball centered at $X$ with radius $r$, respectively, and $U_r$ and $B_r$ are used for $U_r(0)$ and $B_r(0)$, respectively. 
More generally, $U_r^k(X)$ and $B_r^k(X)$ denote the open and closed ball in $\mathbb R^k$ and $\omega_k:=\mathcal L^k(B^k_1)$. 
In $\R^n\times\R$, $P_r(X,t)$ denotes
the open parabolic cylinder $U_r(X)\times(t-r^2,t)$ and $P_r$ is used for $P_r(0,0)$. 

For two subsets $A,B\subset \R^n$, ${\rm dist}_H(A,B)$ is the
Hausdorff distance between $A$ and $B$.

For $0<\alpha<1$ and a function $f:B_R^k\times[-R^2,0]\rightarrow \mathbb R^\ell$ for some $\ell\geq 1$, we define 
\begin{equation*}\begin{split}
    \|f\|_{C^{1,\alpha}}:=&\sup_{(X,t)}\big(R^{-1}|f(X,t)|
    +|\nabla f(X,t)|\big)+\sup_{(X,t)\neq (X',t') }R^{\alpha}
    \frac{|\nabla f(X,t)-\nabla f(X',t')|}{|X-X'|^\alpha+|t-t'|^{\alpha/2}}\\
    &+\sup_{(X,t)\neq (X,t')} R^\alpha\frac{|f(X,t)-f(X,t')|}{|t-t'|^{(1+\alpha)/2}}
    \end{split}
\end{equation*}
throughout the paper, and whenever $\|f\|_{C^{1,\alpha}}$
is used for a different domain, it is understood that it is defined similarly so that it is in this specific invariant form.  

\subsection{$k$-dimensional triple junctions}

We will let $\bC$ denote a $k$-dimensional triple junction in $\R^n$. This is the product $\bC = \hat\bC \times \bS$, where $\bS$ is a $(k-1)$-dimensional linear subspace of $\R^n$, and $\hat\bC$ is the subset of a two-dimensional linear subspace $Z \subset \bS^\perp$ defined, in an orthonormal system of coordinates $(x_1,x_2)$ in $Z$, by
\[
\hat\bC := \{(s,0) \, \colon \, s \geq 0\} \cup \{(-s,\sqrt{3}s) \, \colon \, s \geq 0\} \cup \{(-s,-\sqrt{3}s) \, \colon \, s \geq 0\}\,.
\]

For a triple junction $\bC$ as above, the linear subspace $\bS = \bS(\bC)$ is called the \emph{spine} of $\bC$. Upon a suitable choice of the coordinates in $\R^n$, we may assume that 
\[
\bS =  \{0_{n-k+1}\}  \times \R^{k-1},\qquad {\bf S}^\perp=\R^{n-k+1}\times\{0_{k-1}\} \qquad \mbox{and} \qquad Z = \R^2\times \{0_{n-2}\}  \,.
\]
The three unit vectors $w_1,w_2,w_3$ in $Z$ are defined in these
coordinates as 
\begin{equation}\label{defwi}
w_1=(1,0,0_{n-2})=e_1,\,w_2=(-1/2,\sqrt{3}/2,0_{n-2}),\, w_3=(-1/2,-\sqrt{3}/2,0_{n-2}).
\end{equation}
Define for each $i=1,2,3$
\begin{equation}\label{halfpdef}
{\bf H}_i:=\{sw_i+y: s>0,y\in {\bf S}\}
\quad\mbox{and}\quad {\bf P}_i:=\{sw_i+y: s\in\R, y\in{\bf S}\}.
\end{equation}
Accordingly, we have ${\bf C}={\bf S}\cup \bigcup_{i=1}^3{\bf H}_i$.
To simplify the notation, we often use the same notation
$\bC$, $\bS$ and $Z$ to represent $\bC\times \R$, $\bS\times \R$ and $Z\times \R$, respectively, which are ``static'' 
in space-time $\R^n\times \R$, and similarly for 
${\bf H}_i$ and ${\bf P}_i$. These identifications
should not cause confusion. 
The coordinates of a point $X \in \R^n = \R^{n-k+1} \times \R^{k-1} $ are $X = (x,y)$, and we will often write $x$ and $y$ in place of the more cumbersome $(x,0)$ and $(0,y)$, respectively, so to have $X = x + y$. In particular, $x(X) = \mathbf{p}_{\bS^\perp}(X)$, where $\mathbf{p}_W$ denotes the orthogonal projection onto a subspace $W$, and $\abs{x(X)} = \dist(X,\bS)$ is the distance of the point $X \in \R^n$ from the spine ${\bf S}$ of $\bC$. 

\subsection{Varifolds}
The symbol ${\bf G}(n,k)$ is the Grassmannian of the unoriented $k$-dimensional linear subspace of $\R^n$. We often identify $S\in {\bf G}(n,k)$ with the orthogonal projection map $\R^n\rightarrow S$ and also the matrix that represents the map $S$ in the standard coordinates. The orthogonal projection to the orthogonal complement of $S$ is denoted by $S^\perp$. 
A $k$-dimensional varifold in an open set $U\subset\R^n$ is defined as a positive Radon measure $V$ in
the space $G_k(U):=U\times{\bf G}(n,k)$.
For a comprehensive exposition of varifold theory, see \cite{Allard, Simon}. The set of all $k$-varifolds in $U$ is denoted by ${\bf V}_k(U)$. We let $\|V\|$ and 
$\delta V$ denote the weight
measure and first variation of $V$, respectively. When $\delta V$ is locally bounded as vector measure and
absolutely continuous with respect to $\|V\|$, we let 
$h(\cdot,V)\in L^1_{\rm loc}(\|V\|;\R^n)$ denote the generalized mean curvature vector of $V$, so that $\delta V=-h(\cdot, V)\|V\|$.
A subset $M\subset \R^n$ is countably $k$-rectifiable if it is $\mathcal H^k$-measurable and satisfies
\[
\mathcal H^k(M\setminus
\cup_{i=1}^\infty f_i(\R^k))=0
\]
for some countably many Lipschitz maps $f_i:\R^k\rightarrow\R^n$. Additionally, if $M$ has
locally finite $\mathcal H^k$-measure, $M$ is said to be (locally) $\mathcal H^k$-rectifiable. For such $M$, for $\mathcal H^k$-a.e.
$X\in M$ there exists a unique approximate tangent space
denoted by $T_X M$ or ${\rm Tan}(M,X)$. 
If $M$ is $\mathcal H^k$-rectifiable and $\theta\in L^1_{\mathrm{loc}}(\mathcal H^k\mres_{M})$ is positive and
integer-valued, we let ${\bf var}(M,\theta)$ denote the
varifold  ${\bf var}(M,\theta):=\theta\mathcal H^k\mres_{M}\otimes\delta_{T_{\cdot}M}$, where 
$\delta_{T_X M}$ is the Dirac delta on ${\bf G}(n,k)$ 
with $\delta_{T_X M}(\{T_X M\})=1$. This varifold is called an integral $k$-varifold and we write $V\in {\bf IV}_k(U)$. The function $\theta$ is called multiplicity. In addition, if $\theta=1$ $\mathcal H^k$-a.e.~on $M$, $V$ is said to be a unit-density varifold.
We write ${\rm spt}\|V\|$ for the support of $\|V\|$.
If $X\in {\rm spt}\|V\|$ and there exists $r>0$ such that
$U_r(X)\cap {\rm spt}\|V\|$ is an embedded $k$-dimensional surface of class $C^1$, we write $X\in {\rm reg}\,V$, and
${\rm spt}\|V\|\setminus {\rm reg}\,V$ is denoted by
${\rm sing}\,V$. 

\subsection{Brakke flows with forcing}

For $R>0$ we write $I=(-R^2,0] \subset \R$. 
For every $t \in I$ let $V_t$ be a $k$-varifold in $U_R$ and $u (\cdot,t) \colon U_R \to \R^n$ a $\|V_t\|$-measurable vector field such that the following conditions hold. See
\cite{Ton1} for a comprehensive treatment of Brakke flows in general. 
\begin{itemize}
    \item[(A1)] For a.e. $t \in I$, $V_t \in {\bf IV}_k (U_R)$, the first variation $\delta V_t$ is bounded and absolutely continuous with respect to $\|V_t\|$, so that the generalized mean curvature vector $h(\cdot,V_t)$ exists and $\int_I\int_{U_R}|h(X,V_t)|^2\,d\|V_t\|dt<\infty$; 
    \item[(A2)] there exists $E_1 \in [1,\infty)$ such that for every $t \in I$ and $B_r(X)\subset U_R$, we have
    \begin{equation} \label{e:uniform-area-bound}
        \|V_t\|(B_r (X)) \leq \omega_k\,r^k\,E_1;
\end{equation}
\item[(A3)] let $p \in [2,\infty)$ and $q \in (2,\infty)$ be such that
    \begin{equation} \label{e:the-Holder-exp}
        \alpha := 1 - \frac{k}{p} - \frac{2}{q} > 0\,
    \end{equation}
    and with these $p$ and $q$, $u$ satisfies
\begin{equation} \label{e-bound forcing}
   \|u\|:= R^{\alpha }\left( \int_I \left( \int_{U_R} |u (X,t)|^p \,d\|V_t\|(X) \right)^\frac{q}{p} \,dt \right)^\frac{1}{q} <\infty\,;
\end{equation}
\item[(A4)] for each $\phi \in C^1 (U_R \times I ; \R^+)$ with $\phi(\cdot, t) \in C^1_c (U_R)$ for every $t \in I$, and for any $t_1,t_2 \in I$ with $t_1 < t_2$
\begin{equation} \label{e:Brakke-ineq}
\begin{split}
    \|V_{t_2}\|(\phi(\cdot, t_2)) & - \|V_{t_1}\|(\phi(\cdot, t_1)) \\
    &\leq \int_{t_1}^{t_2} \mathcal{B}(V_t,u(\cdot, t),\phi(\cdot,t))\,dt + \int_{t_1}^{t_2} \int_{U_R} \frac{\partial \phi}{\partial t}(X,t) \,d\|V_t\|(X)\,dt\,,
\end{split}
\end{equation}
where for $V\in {\bf IV}_k(U_R)$,
\begin{equation} \label{Brakke derivative}
    \mathcal{B}(V,u,\phi) := \int_{U_R} (-\phi(X) h(X,V) + \nabla \phi(X)) \cdot (h(X,V)+(u(X))^\perp)  \,d\|V\|(X)\,.
\end{equation}
\end{itemize}
Here, $V_t$ is in ${\bf IV}_k(U_R)$ for a.e.~$t\in I$ by (A1), thus $V_t={\var}(M_t,\theta_t)$ for some locally $\mathcal H^k$-rectifiable set $M_t$ and $\theta_t\in L^1_{\mathrm{loc}}(\mathcal H^k\mres_{M_t})$. 
The symbol $u(X,t)^\perp d\|V_t\|(X)$
is a simplified notation for $(T_X M_t)^\perp(u(X,t))\,d\|V_t\|(X)$. 
The formulation (A4) is an integral formulation of
``normal velocity $= h+u^\perp$'' in the measure-theoretic
manner, originally due to Brakke \cite{Brakke_mcf}
in the case of $u\equiv 0$. From this point onwards, we introduce the notation $\left.\psi(t)\right|_{t=t_1}^{t_2}:=\psi(t_2)-\psi(t_1)$ for any function $\psi$ of time, so that, for instance, the left-hand side of \eqref{e:Brakke-ineq} can be shortened to $\left. \|V_t\|(\phi(\cdot,t))\right|_{t=t_1}^{t_2}$.

The well-known
monotonicity formula due to Huisken \cite{Huisken_mono} 
for MCF can be extended similarly for more general flows with forcing as above.
For $X,\hat X\in\mathbb R^n$ and $s>t$ with $t\in \R$, define
\begin{equation}\label{defgaussian}
    \rho_{(\hat X,s)}(X,t):=\frac{1}{(4\pi(s-t))^{k/2}}\exp\Big(-\frac{|X-\hat X|^2}{4(s-t)}\Big).
\end{equation}
To localize the formula, fix
$\gamma\in (0,1/10)$ and
introduce a cut-off function 
$\eta\in C^\infty_c(U_R)$ such that
$\eta=1$ on $B_{(1-\gamma) R}$
and $\eta=0$ outside of $U_{(1-\gamma/2)R}$. We set $\hat\rho_{(\hat X,s)}(X,t):=\eta(X)\rho_{(\hat X,s)}(X,t)$. 
\begin{proposition} (\cite[Proposition 6.2]{Kasai-Tone}) \label{p:Huisken}
  For $\hat X\in B_{(1-2\gamma)R}$ and 
  $s>t_2>t_1$ with $t_1,t_2\in I$, we have
  \begin{equation}
      \int \hat\rho_{(\hat X,s)}(X,t)\,d\|V_{t}\|(X)\Big|_{t=t_1}^{t_2}
      \leq c\|u\|^2 E_1^{1-\frac{2}{p}}R^{-2\alpha}(t_2-t_1)^\alpha +cE_1 R^{-2}(t_2-t_1)\,,
      \label{Huis}
  \end{equation} 
  where the constant $c$ depends only on $k,p,q$ and $\gamma$. 
\end{proposition}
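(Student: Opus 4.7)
The plan is to test Brakke's inequality (A4) with the non-negative, compactly supported function $\phi(X,t) := \hat\rho_{(\hat X,s)}(X,t) = \eta(X)\rho_{(\hat X,s)}(X,t)$, which is admissible since $\eta \in C^\infty_c(U_R)$ and $\rho_{(\hat X,s)}(\cdot,t)$ is smooth on $\R^n$ for each $t < s$. Expanding the Brakke derivative $\mathcal{B}(V_t,u,\hat\rho)$ from \eqref{Brakke derivative} and combining with $\int \partial_t \hat\rho\,d\|V_t\|$, the unforced portion rearranges, via the standard Huisken identity for the backward heat kernel on a $k$-dimensional submanifold, into a non-positive principal term
\[
-\int \eta\,\rho_{(\hat X,s)}\Bigl|h + \frac{(X-\hat X)^\perp}{2(s-t)}\Bigr|^2\,d\|V_t\|
\]
that may be discarded, plus cutoff error terms $\mathcal{E}_\eta(V_t)$ in which at least one derivative falls on $\eta$, plus forcing terms coupling $u$ with $h$ and $\nabla\hat\rho$.

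Next I would dispose of $\mathcal{E}_\eta$. Its integrand is supported on the annulus $\{(1-\gamma)R \le |X| \le (1-\gamma/2)R\}$, which, since $\hat X \in B_{(1-2\gamma)R}$, lies at distance at least $\gamma R$ from the pole. There, $\rho_{(\hat X,s)}$ together with its spatial derivatives is bounded by $c(s-t)^{-k/2-1}e^{-\gamma^2 R^2/(4(s-t))}$, paired with $|\nabla\eta|+R|\nabla^2\eta| \le c R^{-1}$. Since the exponential factor dominates whenever $s-t\ll R^2$ and the polynomial pre-factor contributes at most $R^{-k-2}$ in the complementary regime, the uniform mass bound \eqref{e:uniform-area-bound} and integration in $t\in[t_1,t_2]$ yield $\int_{t_1}^{t_2}|\mathcal{E}_\eta(V_t)|\,dt \leq cE_1 R^{-2}(t_2-t_1)$, which is exactly the second right-hand side term.

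The forcing contribution produces the two terms $-\int \hat\rho\,h\cdot u^\perp\,d\|V_t\|$ and $\int (\nabla\hat\rho)^\perp \cdot u^\perp\,d\|V_t\|$. The first is absorbed into a small multiple of the discarded non-positive $\int \hat\rho|h|^2$ via Young's inequality, leaving a residual $c\int \hat\rho|u|^2\,d\|V_t\|$. For the second, the pointwise bound $|\nabla \rho_{(\hat X,s)}|\leq c\,\rho_{(\hat X,s)}|X-\hat X|/(s-t)$ and a further Young absorption against the same non-positive term reduces it to the same structural expression. The key reduction is then Hölder in space with the conjugate pair $(p/2,\,p/(p-2))$,
\[
\int \hat\rho|u|^2\,d\|V_t\| \leq \left(\int |u|^p\,d\|V_t\|\right)^{2/p}\left(\int \hat\rho^{p/(p-2)}\,d\|V_t\|\right)^{(p-2)/p},
\]
and interpolating the second factor between the pointwise sup $\hat\rho \leq c(s-t)^{-k/2}$ and the mass bound $\|V_t\|(U_R)\leq \omega_k R^k E_1$ produces the power $E_1^{1-2/p}$ together with a time-singular factor. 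A final Hölder in $t$ with the pair $(q/2,\,q/(q-2))$ against the $L^q_t$ norm in \eqref{e-bound forcing}, together with the identity $\alpha = 1-k/p-2/q$ from \eqref{e:the-Holder-exp}, delivers $c\|u\|^2 E_1^{1-2/p} R^{-2\alpha}(t_2-t_1)^\alpha$.

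The main obstacle is precisely this last bookkeeping: one must balance the time-singular behaviour of $\rho_{(\hat X,s)}$ against the spatial $L^p$-integrability of $u$ and the time $L^q$-integrability from (A3) to extract exactly the exponent $1-2/p$ on $E_1$ and exactly $\alpha$ on $(t_2-t_1)$. The restriction $\hat X \in B_{(1-2\gamma)R}$ and the explicit quantitative profile of $\eta$ are chosen precisely so that the exponential smallness of the kernel away from the pole absorbs all cutoff derivatives into the clean $cE_1 R^{-2}(t_2-t_1)$ term without incurring additional polynomial losses.
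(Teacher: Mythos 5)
Your overall plan is the correct one and is essentially the argument in Kasai--Tonegawa: test Brakke's inequality with $\eta\rho_{(\hat X,s)}$, use the Huisken identity $\partial_t\rho + S\cdot\nabla^2\rho + |(\nabla\rho)^\perp|^2/\rho = 0$ to complete the square and produce a non-positive principal term, estimate the cutoff errors using the exponential smallness of the Gaussian kernel on the annulus $\{\gamma R \le |X - \hat X|\}$ (which delivers the $cE_1 R^{-2}(t_2-t_1)$ term), and handle the forcing via H\"older with the exponent pair $(p/2,q/2)$. Your cutoff annulus estimate and the final exponent bookkeeping (the cancellation $(1-\theta)(q-2)/q = \alpha$) are right. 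However, two steps as written do not go through.

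First, the forcing-term absorption. There is no discarded non-positive $\int\hat\rho|h|^2$ in the Huisken-completed inequality; the discarded term is $-\int\eta\rho\,|h - (\nabla\rho)^\perp/\rho|^2$. Absorbing $-\int\hat\rho\,h\cdot u^\perp$ against $\int\hat\rho|h|^2$ is therefore not available, and if one tries to absorb $\int\eta(\nabla\rho)^\perp\cdot u^\perp$ separately by Young, one produces the positive uncontrolled term $\int\eta|(\nabla\rho)^\perp|^2/\rho$, whose cancellation has already been spent in the Huisken identity. The two forcing sub-terms must be combined first: since $(\nabla(\eta\rho))^\perp - \eta\rho\,h = \rho(\nabla\eta)^\perp - \eta\rho\bigl(h - (\nabla\rho)^\perp/\rho\bigr)$, the forcing contribution is $\rho(\nabla\eta)^\perp\cdot u^\perp - \eta\rho\bigl(h - (\nabla\rho)^\perp/\rho\bigr)\cdot u^\perp$, and only the second summand is absorbed by Young against the principal non-positive term, leaving the residual $\eta\rho|u|^2$; the first is a cutoff error.

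Second, the space-H\"older step. After $\int\hat\rho|u|^2\,d\|V_t\| \leq (\int|u|^p\,d\|V_t\|)^{2/p}(\int\hat\rho^{p/(p-2)}\,d\|V_t\|)^{(p-2)/p}$, interpolating $\int\hat\rho^{p/(p-2)}\,d\|V_t\|$ between $\sup\hat\rho\le c(s-t)^{-k/2}$ and the single-scale mass bound $\|V_t\|(U_R)\le\omega_k R^k E_1$ yields $(s-t)^{-k/2}E_1^{1-2/p}$ after taking the $(p-2)/p$-th power, and the subsequent H\"older in time then requires $\int_{t_1}^{t_2}(s-t)^{-kq/(2(q-2))}\,dt < \infty$, which already fails for $k\ge 2$. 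The correct ingredient is the multi-scale area ratio bound \eqref{e:uniform-area-bound} applied at all radii, giving the Gaussian estimate $\int\rho_{(\hat X,s)}(X,t)\,d\|V_t\|(X)\le c(k,\gamma)E_1$ uniformly in $t$. Writing $\hat\rho^{p/(p-2)} = \hat\rho^{2/(p-2)}\cdot\hat\rho$ and using $\sup\hat\rho^{2/(p-2)}\le c(s-t)^{-k/(p-2)}$ together with this Gaussian mass bound yields $(\int\hat\rho^{p/(p-2)}\,d\|V_t\|)^{(p-2)/p}\le cE_1^{1-2/p}(s-t)^{-k/p}$, which is exactly the time singularity compatible with $\alpha>0$ and produces $(t_2-t_1)^\alpha$ after H\"older in $t$.
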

With \eqref{Huis}, for all $\hat X\in B_{(1-2\gamma)R}$, one can prove
the existence of the Gaussian density
\begin{equation}
    \Theta(\hat X,s):=\lim_{t\rightarrow s-}
    \int \hat\rho_{(\hat X,s)}(X,t)\,d\|V_t\|(X)
\end{equation}
and the standard argument for monotone quantities (see \cite[17.8]{Simon}) shows the
upper semicontinuity of $\Theta$
in $B_{(1-2\gamma)R}\times I$. Ultimately one can prove that $\Theta$ does not depend on the choice of $\gamma$ or
the cut-off function $\eta$ and $\Theta$ is defined in 
$U_R\times I$ as well. Another important 
property of the flow is:
\begin{proposition} (\cite[(3.5)]{Kasai-Tone})
For non-negative $\phi\in C_c^2(U_R)$, there 
exists a constant $c=c(\|\phi\|_{C^2},E_1,\|u\|)$ such that
$\|V_t\|(\phi)-ct$ is a non-increasing function of $t$
on $I$.
\end{proposition}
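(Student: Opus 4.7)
The strategy is a direct application of Brakke's inequality (A4) to the time-independent test function $\phi$. For any $t_1<t_2$ in $I$, the $\partial_t\phi$ term vanishes and \eqref{e:Brakke-ineq} reduces to
\[
\|V_{t_2}\|(\phi)-\|V_{t_1}\|(\phi)\leq \int_{t_1}^{t_2}\mathcal B(V_t,u(\cdot,t),\phi)\,dt.
\]
Hence it suffices to dominate the right-hand side by $c(t_2-t_1)$ with $c=c(\|\phi\|_{C^2},E_1,\|u\|)$, which reduces the monotonicity claim to an essentially pointwise-in-$t$ upper bound on the Brakke derivative $\mathcal B(V_t,u,\phi)$.

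To obtain this bound, I would split the integrand in \eqref{Brakke derivative} into its four contributions and treat the curvature and forcing pieces separately. Completing the square absorbs $\nabla\phi\cdot h$ into the dissipation:
\[
-\phi|h|^2+\nabla\phi\cdot h\;\leq\;-\tfrac{\phi}{2}|h|^2+\frac{|\nabla\phi|^2}{2\phi}\;\leq\;-\tfrac{\phi}{2}|h|^2+c(\|\phi\|_{C^2}),
\]
where the last step uses the classical inequality $|\nabla\phi|^2\leq c\|\phi\|_{C^2}\,\phi$ valid for any non-negative $C^2$ function. For the two forcing cross-terms, Young's inequality with weight $\phi$ gives
\[
-\phi h\cdot u^\perp\leq \tfrac{\phi}{4}|h|^2+\phi|u|^2,\qquad \nabla\phi\cdot u^\perp\leq \tfrac{|\nabla\phi|^2}{4\phi}+\phi|u|^2.
\]
Collecting terms, discarding the remaining non-positive $|h|^2$ contribution, and using the uniform area bound (A2) to estimate $\|V_t\|(\spt\,\phi)\leq c(\phi,E_1)$, one obtains
\[
\mathcal B(V_t,u,\phi)\;\leq\;c(\|\phi\|_{C^2},E_1)\;+\;c\|\phi\|_\infty\int|u(\cdot,t)|^2\,d\|V_t\|.
\]

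The first summand integrates to the desired linear factor $c(t_2-t_1)$. For the forcing contribution, Hölder's inequality in space (with exponent $p\geq 2$) together with (A2) yields $\int|u|^2\,d\|V_t\|\leq cE_1^{1-2/p}\|u(\cdot,t)\|_{L^p(d\|V_t\|)}^2$, and Hölder in time (with $q>2$) combined with (A3) bounds $\int_{t_1}^{t_2}\|u(\cdot,t)\|_{L^p}^2\,dt$ by a quantity controlled by $\|u\|$. The main subtlety lies precisely here: the $L^q_tL^p_X$ integrability assumption only produces a sub-linear $(t_2-t_1)^{1-2/q}$ control of the forcing contribution, rather than a genuine pointwise-in-$t$ bound on $d\|V_t\|(\phi)/dt$. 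However, because $I$ has bounded length $R^2$, this sub-linear correction can be absorbed into the final constant $c=c(\|\phi\|_{C^2},E_1,\|u\|)$; equivalently, one can state the result in the more transparent form that $\|V_t\|(\phi)-c_0 t-F(t)$ is non-increasing, where $F(t):=c_1\int_{-R^2}^t\|u(\cdot,s)\|_{L^p(d\|V_s\|)}^2\,ds$ is a non-decreasing continuous function controlled by $\|u\|$, and then fold $F$ into a larger $ct$ on the bounded window $I$ to recover the stated form.
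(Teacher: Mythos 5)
Your computational strategy is the correct and essentially standard one: feed a time-independent nonnegative $\phi$ into Brakke's inequality, expand $\mathcal B(V_t,u,\phi)$, complete the square to absorb $\nabla\phi\cdot h$ against $-\phi|h|^2$, invoke the classical inequality $|\nabla\phi|^2\leq c\,\|\nabla^2\phi\|_\infty\,\phi$ for nonnegative $C^2$ functions (which also disposes of the apparent singularity of $|\nabla\phi|^2/\phi$ on $\{\phi=0\}$), handle the forcing cross-terms by Young's inequality, and control the remaining integrals by (A2) and H\"older in space and time.

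There is, however, a genuine gap in your final step. You correctly arrive at the bound
\[
\|V_{t_2}\|(\phi)-\|V_{t_1}\|(\phi)\;\leq\;c_0\,(t_2-t_1)+c_1\int_{t_1}^{t_2}\big(\textstyle\int_{\spt\phi}|u(\cdot,s)|^p\,d\|V_s\|\big)^{2/p}\,ds,
\]
and you observe—correctly—that the last integral is only controlled, after H\"older in time, by $\|u\|^2(t_2-t_1)^{1-2/q}$ with $1-2/q\in(0,1)$, i.e.\ a sub-linear modulus. You then assert that on a bounded interval one may ``fold $F$ into a larger $ct$.'' This is false: the function $F(t)=c_1\int_{-R^2}^{t}\|u(\cdot,s)\|_{L^p(d\|V_s\|)}^2\,ds$ has derivative only in $L^{q/2}_t$, so $F$ is H\"older-$(1-2/q)$ continuous but in general not Lipschitz, and there is no constant $c$ for which $F(t_2)-F(t_1)\leq c\,(t_2-t_1)$ holds uniformly as $t_2-t_1\to 0^+$. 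Hence one cannot recover from your computation the literal claim that $\|V_t\|(\phi)-ct$ is non-increasing; a sharp linear bound on the Brakke derivative would require $t\mapsto\|u(\cdot,t)\|_{L^p(d\|V_t\|)}$ to lie in $L^\infty_t$, which (A3) does not assume.

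The fix is simply to stop one line earlier: what your argument proves, and what is actually needed, is that $\|V_t\|(\phi)-c_0 t-F(t)$ is non-increasing, where $F$ is continuous and non-decreasing with $F(t_2)-F(t_1)\leq c\,\|u\|^2\,(t_2-t_1)^{1-2/q}$. This is the form consistent with the companion monotonicity estimate \eqref{Huis}, which likewise carries a sub-linear remainder $(t_2-t_1)^\alpha$. The weaker conclusion is entirely sufficient for the use made of it here (a monotone function of $t$ has at most countably many discontinuities, so $\|V_t\|(\phi)$ is continuous on a co-countable set, and the left-continuous redefinition (A5) goes through unchanged). In short: your proof is right, your closing sentence is not, and the statement of the proposition should be read with the subtracted function replaced by a continuous non-decreasing (H\"older, not necessarily linear) function of $t$.
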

This shows that $\|V_t\|(\phi)$ is continuous on a 
co-countable set, and we may redefine $\|V_t\|(\phi)$ 
for discontinuous times
so that it is left-continuous on
$I$ while keeping the inequality \eqref{e:Brakke-ineq}.
By density argument, we may re-define $\|V_t\|$ on
countable times so that it is left-continuous as Radon
measures (but not necessarily as varifolds), still having \eqref{e:Brakke-ineq}. 
The replacement is only for a countable set of times, so
all properties (A1)-(A4) are kept, and we additionally have
the left-continuity of $\|V_t\|$. Note that this
eliminates a certain arbitrariness of $\|V_t\|$: for example, we may have
$\|V_t\|=\mathcal H^k\mres_{\bC}$ for $t\in (-1,0)$ 
and $\|V_0\|=0$ which satisfies (A1)-(A4) with $h=u=0$,
but this left-continuous replacement results in the
extension of $\|V_t\|=\mathcal H^k\mres_{\bC}$ to $t=0$ as well, which is more natural. In the following,
we assume that this replacement is always performed.
\begin{itemize}
    \item[(A5)] $\|V_t\|$ is left-continuous with respect to $t$ as Radon measures on $U_R$.
\end{itemize}
Although we phrase (A5) as part of the assumption, in reality it is simply a convention that we use in the present paper.

\subsection{Flow close to triple junction}
We 
are interested in the situation where the flow is close to
$\bC$ in the weak sense of measure in $U_R\times I$.  
For this purpose, we define the following.
\begin{definition}\label{def:eps-nbd}
For $\{V_t\}_{t\in I}$ and $\{u(\cdot,t)\}_{t\in I}$ satisfying (A1)-(A5),
$\nu\in (0,1]$ and $\varepsilon\in(0,1)$,
we write $$(\{V_t\}_{t\in I},\{u(\cdot,t)\}_{t\in I})\in \mathscr N_{\varepsilon,\nu}(U_R\times I)$$ if the following conditions are all satisfied:
\begin{equation}
    \mu := \left(R^{-k-4}  \int_I \int_{U_R} \dist(X,\bC)^2 \, d\|V_t\| (X) \, dt \right)^{\frac12}<\varepsilon,  \label{def:L2-excess}
\end{equation}
\begin{equation}\label{smuterm}
    \|u\|:=R^\alpha\left( \int_I \left( \int_{U_R} |u (X,t)|^p \,d\|V_t\|(X) \right)^\frac{q}{p} \,dt \right)^\frac{1}{q}<\varepsilon,
\end{equation}
\begin{equation} \label{def:mass deficit}
     \|V_{-9R^2/10}\|(B_{R/2})\leq  (3-\nu)\omega_k (R/2)^k\,,
\end{equation}
\begin{equation}\label{non0}
    {\rm spt}\,\|V_0\|\cap B_{\frac{R}{24}}\big(\frac{R}{2}w_i\big)\neq \emptyset\mbox{ for }i=1,2,3\quad\mbox{ (see \eqref{defwi} for $w_i$)}.
\end{equation}
\end{definition}
The first \eqref{def:L2-excess} requires the closeness
of $V_t$ to ${\bf C}$ in the ``$L^2$-excess'', and
this notion is suitable with respect to the topology of
weak convergence of measures in the framework of Brakke flow in general. The second \eqref{smuterm} is automatically
fulfilled for sufficiently small $R$. 
Inequality \eqref{def:mass deficit} requires that
the measure within $B_{R/2}$ at time $t=-9R^2/10$
be strictly less than that of
triple junction with multiplicity $=2$. 
The particular value of $-9R^2/10$
is not important,
and it can be replaced by any number in $(-R^2,0)$ with
a suitable modification on the side of conclusion.
The last \eqref{non0} requires that the measure at $t=0$ is not zero near $Rw_i/2$ ($i=1,2,3$), excluding
the possibility that $V_t$ is trivial. Otherwise, note that $V_t=0$ for all $t\in I$ satisfies \eqref{def:L2-excess}-\eqref{def:mass deficit} trivially. Moreover, we need some non-zero condition for each neighborhood of $Rw_i/2$ at $t=0$,
$i=1,2,3$, otherwise we could have $V_t={\bf var}(\bC,1)$ for $t\in (-R^2,-R^2\delta)$
and $V_t={\bf var}({\bf H}_1\cup {\bf H}_2,1)$ for $t=-R^2\delta$ which subsequently flows and moves little during $t\in(-R^2\delta,0]$ for small $\delta>0$. This flow can have non-zero $\|V_0\|$ around $Rw_1/2$ and $Rw_2/2$, but not around $Rw_3/2$, while the flow is a
Brakke flow satisfing \eqref{def:L2-excess} and \eqref{def:mass deficit}. 
Obviously, if $\varepsilon<\varepsilon'$,
then $\mathscr N_{\varepsilon,\nu}(U_R\times I)\subset\mathscr N_{\varepsilon',\nu}(U_R\times I)$. We record the following simple observation, which follows immediately from the above considerations and from the general theory of weak convergence of Brakke flows.
\begin{remark}\label{rmk:tangents}
    For $\mathscr{V}=\{V_t\}_{t\in I}$ and $\{u(\cdot,t)\}_{t\in I}$ satisfying (A1)-(A5), $\nu\in (0,1]$ and $\varepsilon\in(0,1)$, if a \emph{tangent flow} (see \cite{Ilmanen_mono,Ton1,White_stratification})  to $\mathscr{V}$ at $(0,0)$ is $V'_t \equiv \var(\bC,1)$ for all $t \leq 0$ then there exists $r > 0$ such that $(\{V_t\},\{u(\cdot,t)\})\in \mathscr N_{\varepsilon,\nu}(U_{rR}\times (-(rR)^2,0])$.
\end{remark}

For all sufficiently small $\varepsilon>0$, we have
the following. 
\begin{proposition}\label{prouniden}
    Given any $r\in (0,4/5]$, there exists $\Cl[eps]{onlyunit}=\Cr{onlyunit}(n,k,p,q,E_1,r,\nu)\in(0,1)$
    such that, if $(\{V_t\}_{t\in I},\{u(\cdot,t)\}_{t\in I})\in \mathscr N_{\Cr{onlyunit},\nu}(U_R\times I)$ and 
    (A1)-(A5) are all satisfied, then $V_t$ is a 
    unit-density varifold in $U_{r R}$ for a.e. $t\in (-r R^2,0)$.
\end{proposition}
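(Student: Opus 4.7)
The plan is to argue by contradiction and compactness. Suppose the conclusion fails: then there exist $\varepsilon_m\downarrow 0$, Brakke flows $(\{V^{(m)}_t\}_{t\in I},\{u^{(m)}(\cdot,t)\}_{t\in I})\in \mathscr N_{\varepsilon_m,\nu}(U_R\times I)$ satisfying (A1)--(A5), together with points $(X_m,t_m)\in U_{rR}\times(-rR^2,0)$ such that $X_m\in \spt\|V^{(m)}_{t_m}\|$ is a density point of the integer-rectifiable varifold $V^{(m)}_{t_m}$ with multiplicity $\theta^{(m)}_{t_m}(X_m)\ge 2$. By the standard compactness theorem for integral Brakke flows with forcing (see \cite{Ton1}), after passing to a subsequence, $V^{(m)}_t\to V^\infty_t$ in the varifold sense for a.e.~$t\in I$, where $\{V^\infty_t\}_{t\in I}$ is a Brakke flow in $U_R$ without forcing (since $\|u^{(m)}\|\to 0$ by \eqref{smuterm}); the vanishing $\mu^{(m)}\to 0$ yields $\spt\|V^\infty_t\|\subseteq \bC$ for a.e.~$t$, and I may further assume $(X_m,t_m)\to(X_\infty,t_\infty)\in \overline{B_{rR}}\times[-rR^2,0]$.

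The next step is the identification of the limit $V^\infty$. Since $V^\infty_t$ is integer-rectifiable, supported in $\bC=\bigcup_{i=1}^{3}\bH_i$, and $\mathcal H^k(\bS)=0$, the multiplicity on each $\bH_i$ is a constant non-negative integer $m_i(t)$. Because the sequence satisfies (A1) (and hence the limit inherits $\delta V^\infty_t\ll \|V^\infty_t\|$ via weak lower semicontinuity of $\int\!\!\int |h|^2\,d\|V^{(m)}_t\|\,dt$), any singular contribution to $\delta V^\infty_t$ concentrated on $\bS$ must vanish; the balance-of-forces identity $\sum_i m_i(t)w_i=0$ then forces $m_1(t)=m_2(t)=m_3(t)=:m_0(t)\in\Na\cup\{0\}$. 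Testing Brakke's inequality \eqref{e:Brakke-ineq} with a time-independent nonnegative $\phi\in C^1_c(U_R)$ gives that $m_0$ is nonincreasing in $t$. Passing to the limit in \eqref{def:mass deficit} at $t=-9R^2/10$ gives $m_0(-9R^2/10)\leq 1$, while passing to the limit in \eqref{non0} at $t=0$, combined with Brakke's clearing-out lemma, gives $m_0(0)\geq 1$; since $r\leq 4/5$ implies $[-rR^2,0]\subset(-9R^2/10,0]$, monotonicity forces $m_0\equiv 1$ on $[-9R^2/10,0]$, and in particular $V^\infty_{t_\infty}=\var(\bC,1)$.

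To close the argument, I combine Huisken's monotonicity formula (Proposition \ref{p:Huisken}) with the upper semicontinuity of the Gaussian density under Brakke flow convergence (see \cite{Ilmanen_mono,Ton1}). The left-continuity (A5) of $\|V^{(m)}_t\|$ and the monotonicity formula yield the pointwise lower bound $\Theta^{(m)}(X_m,t_m)\geq \theta^{(m)}_{t_m}(X_m)\geq 2$ at any density point. On the other hand, $V^\infty_{t_\infty}=\var(\bC,1)$ gives $\Theta^\infty(\cdot,t_\infty)\leq \tfrac{3}{2}$ everywhere (with equality only on the spine $\bS$), so upper semicontinuity yields
\[
\limsup_{m\to\infty}\Theta^{(m)}(X_m,t_m) \;\leq\; \Theta^\infty(X_\infty,t_\infty) \;\leq\; \tfrac{3}{2} \;<\; 2,
\]
contradicting the lower bound. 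The main obstacle I anticipate is the rigorous identification of $V^\infty$ in the second step: using (A1) to rule out singular first variations on $\bS$ so as to force the balance of multiplicities, and then propagating the multiplicity bound $m_0\leq 1$ from the mass deficit at $t=-9R^2/10$ through to the positivity $m_0\geq 1$ at $t=0$, is the heart of the argument, and it is here that the assumptions \eqref{def:mass deficit} and \eqref{non0} play their essential role.
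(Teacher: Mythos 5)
Your proof is correct and follows essentially the same compactness-by-contradiction route as the paper: extract a convergent subsequence of flows, identify the limit as $\var(\bC,1)$ on $(-9R^2/10,0]$ via constancy of multiplicity on each half-plane together with the mass deficit \eqref{def:mass deficit} and the non-triviality \eqref{non0}, and derive a contradiction from the Gaussian density at the bad points. The only stylistic difference is that you invoke upper semicontinuity of the Gaussian density under Brakke-flow convergence as a black box (together with the inequality $\Theta^{(m)}(X_m,t_m)\geq\theta^{(m)}_{t_m}(X_m)$, which itself rests on the almost-monotonicity and (A5)), whereas the paper unwinds exactly this statement by hand, choosing $\delta^{(m)}\downarrow 0$ so that the Gaussian integral with pole $(X^{(m)},t^{(m)}+\delta^{(m)})$ exceeds $1.9$ at time $t^{(m)}$, propagating it backward to a fixed earlier time $\hat t-s$ via \eqref{Huis}, and comparing against $\int\hat\rho_{(\hat X,0)}(\cdot,-s)\,d\|\bC\|\leq 3/2$.
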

\begin{proof} Assume without loss of generality that $R=1$.
For a contradiction, assume that there exists a sequence
$(\{V_t^{(m)}\}_{t\in I},\{u^{(m)}(\cdot,t)\}_{t\in I})\in \mathscr N_{\varepsilon^{(m)},\nu}(U_1\times I)$ with $\lim_{m\rightarrow\infty}\varepsilon^{(m)}=0$
such that $V_t^{(m)}$ is not a unit-density varifold in $U_r$ for
$t$ with positive measure on $(-r,0)$. By (A1) this implies that 
there exists $t^{(m)}\in (-r,0)$ such that 
$V_{t^{(m)}}^{(m)}\in {\bf IV}_k(U_1)$ and it has a non-zero portion of multiplicity $\geq 2$. In particular, there is a point 
$X^{(m)}\in U_{r }$ so that the blow-up of $V^{(m)}_{t^{(m)}}$ at $X^{(m)}$ is a plane with multiplicity $\geq 2$. 
Then one can choose $\delta^{(m)}>0$ with $\lim_{m\rightarrow \infty}\delta^{(m)}=0$ such that 
\begin{equation}
    1.9\leq \int\hat\rho_{(X^{(m)},t^{(m)}+\delta^{(m)})}(X,t^{(m)})\,d\|V_{t^{(m)}}^{(m)}\|(X).
\end{equation}
We may assume by choosing a subsequence (denoted by
the same index) that
$X^{(m)}$ and $t^{(m)}$ converge to some $\hat X\in B_{r}$ and $\hat t\in [-r,0]$. 
Then, by \eqref{Huis} and for all sufficiently 
large $m$, we may choose $s>0$ depending only on 
$n,k,p,q,E_1,r$ such that
\begin{equation}\label{limbigm}
    1.8\leq \int \hat\rho_{(X^{(m)},t^{(m)}+\delta^{(m)})}(X,\hat t-s)\,d\|V_{\hat t-s}^{(m)}\|(X).
\end{equation}
Note that $\hat\rho_{(X^{(m)},t^{(m)}+\delta^{(m)})}(X,
\hat t-s)$ as a function of $X$ converges uniformly to
$\hat\rho_{(\hat X,\hat t)}(X,\hat t-s)=\hat\rho_{(\hat X,0)}(X,-s)$ as $m\rightarrow \infty$. By the compactness
theorem of Brakke flow (which also holds with the forcing $\|u^{(m)}\|\rightarrow 0$, see \cite{Ton1}),
there exists a further subsequence (denoted by the same index) and the limit Brakke flow $\{\hat V_t\}_{t\in I}$
with forcing $=0$
such that $\lim_{m\rightarrow\infty}\|V_t^{(m)}\|=\|\hat V_t\|$ as Radon measures on $U_1$ for all $t\in I$. By \eqref{def:L2-excess}, we have
${\rm spt}\,\|\hat V_t\|\subset\bC$, and since $\hat V_t\in {\bf IV}_k(U_1)$ and $h(\cdot,\hat V_t)\in L^2(\|\hat V_t\|)$ for a.e.~$t$, the multiplicity
of $\|\hat V_t\|$ on ${\bf H}_i\cap U_1$ is a constant function
with integer-value, and again by $h(\cdot, \hat V_t)\in L^2(\|\hat V_t\|)$, it is constant on ${\bf C}\cap U_1$. By the property of Brakke flow, one can
check that this multiplicity has to be non-increasing in $t$. The inequality
\eqref{def:mass deficit} shows that $\|\hat V_{-9/10}\|(U_{1/2})\leq (3-\nu)\omega_k/2^k$, thus $\hat V_t$
is either ${\bf var}({\bf C},1)$ or $0$
for $t\in (-9/10,0]$ (note that $\|{\bf var}({\bf C},2)\|(U_{1/2})=3\omega_k/2^k$). If $\hat V_{t'}=0$ for some $t'\in (-9/10,0)$, then again
the property of being a Brakke flow shows that $\hat V_t$ remains $0$ for $t>t'$. However, using \eqref{non0} and
\eqref{Huis}, one can prove a positive lower bound of $\|\hat V_t\|(U_{1})$ for $t$ close to $0$ which depends only on $t$ and $k$, and this leads to a contradiction. This shows that $\hat V_t={\bf var}(\bC,1)$
in $U_1$ for $t\in(-9/10,0]$. Since $\int\hat\rho_{(\hat X,0)}(X,-s)\,d\|\bC\|(X)\leq 1.5$, this would be a contradiction to \eqref{limbigm} for large $m$. 
\end{proof}
\begin{remark}
    In the following, since the value of $\nu\in(0,1]$ is not particularly important, 
    we fix $\nu=1$ and write $\mathscr N_{\varepsilon,1}(U_R\times I)$ as $\mathscr N_{\varepsilon}(U_R\times I)$ unless otherwise stated. 
\end{remark}
\subsection{A further technical assumption on the flow}
\label{futh}
Let $\Cr{onlyunit}\in(0,1)$ be the constant corresponding to
$r=3/4$ in Proposition \ref{prouniden},
and assume that
$(\{V_t\}_{t\in I},\{u(\cdot,t)\}_{t\in I})$ satisfies
(A1)-(A5) in $U_R\times I$. 
\begin{itemize}
    \item[(A6)]
    There exists a constant $\Cl[con]{conslice}>0$ with the following property. Consider an arbitrary parabolic cylinder $P_r(X,t)\subset U_R\times I$, and change space-time coordinates so that $X$ and $t$ are moved to the respective origins and $P_r(X,t)$ is expressed as $P_r$ in the new coordinates. Assume that, after an orthogonal transformation of $\R^n$, we have
    $(\{V_t\}_{t\in I'},\{u(\cdot,t)\}_{t\in I'})\in \mathscr{N}_{\Cr{onlyunit}}(P_r)$
    (where $I'=(-r^2,0]$). Then, by Proposition \ref{prouniden}, 
there exists a $\mathcal H^k$-rectifiable set
$M_t\subset B_{3r/4}$ for 
a.e.~$t\in(-3r^2/4,0)$ such that 
$$V_t={\bf var}(M_t,1)$$ in $B_{3r/4}$.
For this $M_t$, we write the slice of $M_t$ by $\R^{n-k+1}\times\{y\}$ as
\begin{equation}
    M_t^y:=\{x\in\R^{n-k+1}: (x,y)\in M_t\}\mbox{ for }y\in \R^{k-1}
\end{equation}
which is a $\mathcal H^1$-rectifiable for $\mathcal H^{k-1}$-a.e.~$y$. 

Define 
\begin{equation}\label{anasup}
    K:=\sup \frac{{\rm dist}_H(M_t^y\cap 
    \{r/8\leq |x|\leq r/2\}, \hat\bC\cap \{r/8\leq |x|\leq r/2\})}{r},
\end{equation}
where $\sup$ is taken over $y\in B_{r/2}^{k-1}$ and $t\in (-3r^2/4,0)$\footnote{A definite distance away from the spine, note that ${\spt}\,\|V_t\|$ can be expressed as a $C^{1,\alpha}$ graph over $\bC$ with small $C^1$-norm by Theorem \ref{thm:graphical} so that $M_t^y$ is a $C^{1,\alpha}$ curve in $\{r/8\leq |x|\leq r/2\}$}. With this, we assume the validity of the following two inequalities
for $\mathcal H^{k-1}$-a.e.~$y\in B_{r/2}^{k-1}$ and a.e.~$t\in (-3r^2/4,0)$:
    \begin{equation}\label{cotriple}
        \frac{\mathcal H^1(M_t^y\cap B_{r/2}^{n-k+1})}{r}\geq \frac32-\Cr{conslice} K^2
    \end{equation}
    and
    \begin{equation}\label{cotriple2}
       \frac{1}{r^3}\int_{M_t^y\cap B_{r/2}^{n-k+1}}|x|^2\,d\mathcal H^1(x)\geq \frac{1}{r^3}\int_{\hat\bC\cap B_{r/2}^{n-k+1}} |x|^2\,d\mathcal H^1(x)-\Cr{conslice}K^2.
    \end{equation}
\end{itemize}

As discussed in Section \ref{sec:intro}, assumption (A6) is a key structural hypothesis of our main theorem. It provides a quantitative lower bound on the mass and second moment (with respect to the axis $\bS(\bC)$) of 1-dimensional slices. The assumption leverages the geometric behavior of each slice in an annulus \emph{away from the spine} --- a region where the flow is well-controlled by the graphical representation, see Section \ref{sec:graph} --- to enforce a crucial measure-theoretic bound on the entire slice within the disc. This condition expressly prohibits topological degeneracies such as that illustrated in Figure \ref{fig:A6}.
\begin{figure}[ht]
    \centering
\includegraphics{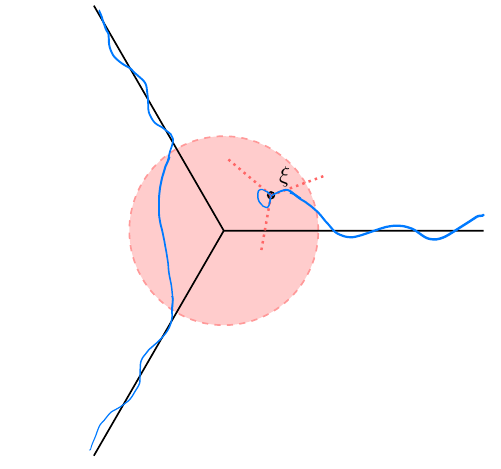}
    \caption{An illustration of the condition (A6). The depicted slice configuration, despite containing a triple junction point, has a shorter length than that of the triple junction in the red disc and is excluded by the assumption.}
    \label{fig:A6}
\end{figure}

Although we shall prove that each slice $M_t^y$ contains a singular point with density at least $3/2$ (see Proposition \ref{p:NH_property}), the parabolic monotonicity formula of Huisken is not sufficient to leverage this local information into the global mass bound (for the slice) we need, in contrast to the stationary theory of Simon. Nonetheless, assumption (A6) is naturally satisfied by flows with additional structure. In Section \ref{lastapp}, we demonstrate that (A6) holds, in codimension $1$, if the flow arises as the boundary of a partition of space, as in the case of the multiphase flows constructed in \cite{ST_canonical} (see also \cite{kim-tone}). Furthermore, it holds in higher codimension for flows representing a mod~3 integral current, where homological constraints guarantee the required connectivity of the slices.

\subsection{The Main Theorem}
With the assumptions stated precisely above, the
main theorem of the present paper is as follows.
\begin{theorem} \label{thm:main}
    For every $n,\,k\in\mathbb N$ with $k<n$, $p \in [2,\infty)$, $q \in (2,\infty)$ satisfying \eqref{e:the-Holder-exp}, $E_1 \in [1,\infty)$ and $\Cr{conslice}\in [0,\infty)$, there exist $\Cl[eps]{e_main} \in (0,1)$ and $\Cl[con]{c_main}\in (1,\infty)$ with the following property. Let $I=(-R^2,0]$ and suppose that $(\{V_t\}_{t\in I},\{u(\cdot,t)\}_{t\in I})\in {\mathscr N}_{\Cr{e_main},1}(U_R\times I)$ and (A1)-(A6) are all satisfied. Then there exists 
    $\xi\in C^{1,\alpha}(U_{R/2}^{k-1}\times[-R^2/2,0];{\bf S}^\perp)$ such that 
    \begin{equation}
        {\rm sing}\, V_t\cap U_{R/2}={\rm graph}\,\xi(\cdot,t)\cap U_{R/2}
        \,\,\mbox{ for all }t\in[-R^2/2,0]\mbox{ and }
    \end{equation}
    \begin{equation}
        \|\xi\|_{C^{1,\alpha}(U_{R/2}^{k-1}\times[-R^2/2,0])}\leq \Cr{c_main}\max\{\mu,\|u\|\}.
    \end{equation}
    Furthermore, for each $i=1,2,3$ and $t\in[-R^2/2,0]$, define
    \begin{equation}
        \Omega_i:=\{(x,y,t)\in ({\bf P}_i\cap U_{R/2})\times[-R^2/2,0]: x\cdot w_i> \xi(y,t)\cdot w_i\}.
    \end{equation}
    Then, for each $i=1,2,3$,
    there exists $f_i\in C^{1,\alpha}(\Omega_i;{\bf P}_i^\perp)$ such that 
    \begin{equation}
        {\rm spt}\,\|V_t\|\cap U_{R/2}=\big({\rm graph}\,\xi(\cdot,t)\cup\,\cup_{i=1}^3{\rm graph}\,f_i(\cdot,t)\big)\cap U_{R/2}
    \end{equation}
    for $t\in[-R^2/2,0]$ and 
    \begin{equation}\label{e:main-thm-est}
        \|f_i\|_{C^{1,\alpha}(\Omega_i)}\leq \Cr{c_main}\max\{\mu,\|u\|\}.
    \end{equation}
\end{theorem}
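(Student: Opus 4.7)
The plan follows the architecture outlined in the introduction and assembles four principal ingredients established in Sections \ref{sec:NH}--\ref{sec:BU}: the no-hole Proposition \ref{p:NH_property}, the Hardt--Simon type estimates of Theorem \ref{thm:HS}, the Graphicality Theorem \ref{thm:graphical}, and the central Decay Theorem \ref{thm:decay}. The overall scheme mirrors the classical blow-up iteration of \cite{Simon_cylindrical}, adapted to the parabolic setting via the backward heat kernel.

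\textbf{Step 1: uniqueness and Hölder rate at no-hole points.} Fix a no-hole point, i.e.\ a space-time point $(\Xi,\tau)$ with Gaussian density $\Theta(\Xi,\tau)\geq 3/2$. I would iterate the Decay Theorem \ref{thm:decay} along dyadic scales $r_j=\theta_\star^j R$: each application produces a triple junction $\bC_j=a_j+O_j(\bC)$ such that the rescaled space-time excess around $(\Xi,\tau)$ at scale $r_j$ is at most $C\theta_\star^{j\alpha}\max\{\mu,\|u\|\}$, with the consecutive updates $(a_{j+1},O_{j+1})-(a_j,O_j)$ themselves dominated by the excess at the previous scale. Summing the resulting geometric series shows that $(a_j,O_j)$ is Cauchy with limit $(a_\infty,O_\infty)$, defining a unique tangent triple junction $\bC_{(\Xi,\tau)}$ with the quantitative estimate $|(a_j,O_j)-(a_\infty,O_\infty)|\leq C\theta_\star^{j\alpha}\max\{\mu,\|u\|\}$.

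\textbf{Step 2: construction of $\xi$ and $f_i$.} By Proposition \ref{p:NH_property} every slice of $V_t$ perpendicular to $\bS$ contains a no-hole point, and by the first estimate of Theorem \ref{thm:HS} all such points lie within a tubular neighborhood of $\bS$ of width $\lesssim\max\{\mu,\|u\|\}$. Comparing Step 1 at two no-hole points $(\Xi_1,\tau_1),(\Xi_2,\tau_2)$ at the first dyadic scale $r_j$ comparable to their parabolic distance yields spatial Hölder exponent $\alpha$ and temporal Hölder exponent $\alpha/2$ for the map sending each no-hole point to its tangent triple junction. A Whitney-type extension then produces $\xi\in C^{1,\alpha}(U_{R/2}^{k-1}\times[-R^2/2,0];\bS^\perp)$ whose graph coincides with $\mathrm{sing}\,V_t$ in $U_{R/2}$ and whose norm is bounded by a constant times $\max\{\mu,\|u\|\}$. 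For the three sheets $f_i$, the Graphicality Theorem \ref{thm:graphical} already provides $C^{1,\alpha}$ graphing functions on the portion of ${\bf P}_i$ at distance $\geq\sigma$ from $\bS$, with estimates polynomial in the excess. Letting $\sigma$ shrink to the scale of the singular set --- again $\lesssim\max\{\mu,\|u\|\}$ by Theorem \ref{thm:HS} --- and using the iterated Decay Theorem at non-spine points (which there returns planar tangent flows) extends $f_i$ up to the common boundary $\{x\cdot w_i=\xi(y,t)\cdot w_i\}$ with the claimed bound \eqref{e:main-thm-est}; matching with $\xi$ follows since both parameterize the same $\mathrm{spt}\,\|V_t\|$.

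\textbf{Main obstacle.} The most delicate point is bridging the scale-by-scale decay of Step 1 with the parabolic Hölder regularity of $\xi$ in Step 2. Unlike in \cite{Simon_cylindrical}, where the inequality \eqref{simonineq} is directly available from the stationary monotonicity formula, here its only substitute is the uniform weighted $L^2$-decay of Theorem \ref{thm:HS}, whose derivation is precisely where the structural assumption (A6) enters. It is the interplay between (A6), the no-hole property, and the weighted Huisken integrals that makes the whole iteration work; the Decay Theorem itself is obtained in Section \ref{sec:BU} by a blow-up analysis in which the limit profile is a solution of the heat equation on the three branches of $\bC$ subject to compatibility conditions at the spine, and the Taylor expansion for this limit (Corollary \ref{newcone-decay}) supplies the better triple junction at the next scale.
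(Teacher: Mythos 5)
Your proposal captures the architecture of the paper's proof accurately: iterate the Decay Theorem \ref{thm:decay} along dyadic scales (the paper packages this as Proposition \ref{prop:iteration} and Proposition \ref{prop:tangent-flow}) to obtain a unique tangent triple junction with H\"older rate at every Gaussian-density-$\geq 3/2$ point; compare those tangent cones at pairs of nearby no-hole points at the scale of their parabolic distance to obtain the $C^{1,\alpha}$ oscillation estimates for $\xi$; and then feed the decay through Theorem \ref{thm:graphical} at each support point, rescaled about the nearest singular point, to obtain the graphs $f_i$ up to the moving free boundary. This is essentially the same path the paper follows.

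One small remark: you invoke a ``Whitney-type extension'' to produce $\xi$, but no extension is actually required. Proposition \ref{p:NH_property} already guarantees a no-hole point over \emph{every} spine location $(y,t)$ in the relevant cylinder, so the only nontrivial step is uniqueness of that point above each $(y,t)$, which the paper extracts from the same H\"older oscillation estimates \eqref{eq:holder-con1}--\eqref{eq:holder-con2} by a short contradiction argument; after that, $\xi$ is defined directly and its regularity is immediate. Relatedly, when you speak of ``letting $\sigma$ shrink to the scale of the singular set'', this should more precisely be phrased as applying Theorem \ref{thm:graphical} to the flow rescaled about the nearest no-hole point $(\Xi,\tau)$ at the characteristic parabolic scale and with the improved cone $\bC_{(\Xi,\tau)}$; a single fixed $\sigma\lesssim\max\{\mu,\|u\|\}$ would not by itself yield graphicality all the way to the curved boundary $\mathrm{graph}\,\xi$. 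These are matters of precision rather than substance; the strategy is correct.
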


The theorem states that if a flow satisfying (A1)-(A6) is sufficiently close, is a parabolic cylinder $P_R$ and in the topology defined by the neighborhoods $\mathcal N_\varepsilon$, to the static $\bC$ then in $P_{R/2}$ it is a $C^{1,\alpha}$ deformation of $\bC$. A direct consequence of the proof is the uniqueness of the tangent flow at each singular point, namely at each point $(X,t)$ on the graph of the map $\xi$. The (static) tangent cone is of the form $O_{X,t}(\mathbf{C})$, where the rotation $O_{X,t} \in \mathrm{O}(n)$ varies in a $C^\alpha$ H\"older continuous manner in space-time. Away from this singular set, on each domain $\Omega_i$, the graphing function $f_i$ possesses higher regularity. An application of Motegi's results \cite{Motegi} shows that $f_i$ has weak derivatives $\partial_t f_i, \nabla^2 f_i \in L^2_{\mathrm{loc}}(\Omega_i)$ and satisfies the mean curvature flow equation with forcing as a strong solution.

Higher regularity depends on the forcing term. If $u$ is $C^{\ell,\alpha}$, standard parabolic theory implies $f_i \in C^{\ell+2,\alpha}_{\mathrm{loc}}(\Omega_i)$ \cite{Ton-2}; in particular, if $u \equiv 0$, the sheets $f_i$ are $C^\infty$. 

Achieving such an optimal regularity \emph{up to the singular set} for the flow is a major open problem. In the stationary case, the work of Kinderlehrer, Nirenberg, and Spruck \cite{KNS78} on free-boundary problems implies that the sheets $f_i$ are real-analytic up to their common boundary $\mathrm{graph}\,\xi$ (the free boundary in this problem), and that the map $\xi$ is real-analytic as well. On the other hand, the techniques of \cite{KNS78} rely on the divergence form structure of the minimal surfaces equation, a structure that is absent in the mean curvature flow system. Therefore, it is not known if the sheets $f_i$ are even $C^2$ up to the boundary $\overline{\Omega_i} \cap \mathrm{graph}\,\xi$, even for smooth $u$. While a recent result by Krummel \cite{Krummel} establishes that $C^{2,\alpha}$ regularity of the sheets up to the boundary would imply smoothness (for $u \equiv 0$), bridging the gap between the $C^{1,\alpha}$ regularity established here and the required $C^{2,\alpha}$ condition remains a significant challenge.

We conclude this section by showing how Theorem \ref{thm:main} (and Corollary \ref{cor-applications}) imply Theorem \ref{thm:structure-intro}.

\begin{proof}[Proof of Theorem \ref{thm:structure-intro}]
    By assumption, $\mathscr{V}=\{V_t\}_t$ satisfies (A1)-(A4), and we can assume that the convention (A5) is enforced. Let $(X,t)$ be a point in $\Sing\,\mathscr{V}$ where a tangent flow is a static stationary cone splitting a Euclidean factor $\R^{k-1}$. By the assumption \eqref{ipotesi-H}, the density of such cone at the origin may only be $1$ or $\tfrac32$: in the first case, the cone is a multiplicity-one plane, and this cannot be the case because then $(X,t)$ would be regular by the regularity theorem in \cite{ST_endtime}; in the second case, the cone is a multiplicity-one triple junction. Hence, by Remark \ref{rmk:tangents} and modulo a suitable translation and rotation, the flow belongs to $\mathcal N_\varepsilon$ for $\bC$ at some scale. Since $\mathscr{V}$ has multi-phase cluster structure or is a flow of currents mod~3, by Corollary \ref{cor-applications} (A6) is satisfied. Hence, the flow is, locally at $(X,t)$, a $C^{1,\alpha}$ deformation of $\bC$, and the singular set is a $C^{1,\alpha}$ graph, by Theorem \ref{thm:main}. We define $\mathcal R$ to be the set of these points. All other points in $\Sing\,\mathscr{V}$, which include points where the flow has a static tangent flow which is a stationary cone with strictly less than $k-1$ spatial symmetries as well as quasi-static or shrinking tangent flows, are in $\mathcal S$. By White's stratification theorem \cite{White_stratification}, $\mathcal S$ has parabolic Hausdorff dimension $\dim_{\mathcal P}(\mathcal S) \leq k$. 
\end{proof}

\section{Graphical parametrization} \label{sec:graph}

The main theorem of this section is Theorem \ref{thm:graphical}: it establishes the crucial geometric fact that, when the flow is $L^2$-close to $\bC$, namely when it belongs to $\mathscr N_{\varepsilon}(U_R\times I)$, then it is $C^{1,\alpha}$-close to $\bC$, and in fact a graph over $\bC$ with small $C^{1,\alpha}$ norm, \emph{outside of a small tubular neighborhood of the spine $\bS(\bC)$}. This is a parabolic analogue to \cite[Lemma 2.6]{Simon_cylindrical}, and the idea of the proof is similar. The main technical tool is the end-time $\varepsilon$-regularity theorem for unit density $k$-dimensional Brakke-type flows close to a static $k$-dimensional plane proved by the authors in \cite[Theorem 2.2]{ST_endtime}, and recorded here as Proposition \ref{e-reg}. 

\begin{proposition}\label{e-reg}
    There exist $\Cl[eps]{e_den}\in (0,1)$ and $\Cl[con]{linestimate}\in (1,\infty)$
    depending only on $n,k,p,q,E_1$ with the following property. Assume that $\{V_t\}_{t \in I}$ and $\{u(\cdot,t)\}_{t \in I}$ defined in $U_R$ satisfy (A1)-(A5). Suppose furthermore that 
    \[
    \|V_0\|(B_{R/2})>0,\hspace{.5cm}\,\,\,(R/2)^{-k}\|V_{-9R^2/10}\|(B_{R/2})\leq  \omega_{k}+\Cr{e_den}
    \]
    and that, for some $T\in{\bf G}(n,k)$,
    \[
    {\mathscr E}:=\Big(R^{-k-4}\int_{I} \int_{U_R} {\rm dist}(X,T)^2 \, d\|V_t\|(X) \,dt\Big)^{\frac12} < \Cr{e_den}\,\,\, \mbox{ and }\|u\|<\Cr{e_den}.
    \]
    Then, setting $\tilde D:=\left(B_{R/2} \cap T \right) \times[-R^2/2,0]$ there is
    a function $f\in C^{1,\alpha}(\tilde D;T^\perp)$ 
such that
\begin{equation} \label{udef5-12}
    {\rm spt}\|V_t\|\cap T^{-1}(B_{R/2}\cap T)\cap B_{3R/4}={\rm graph}\,f(\cdot,t)
    \mbox{ for all }t\in[-R^2/2,0],
\end{equation}
\begin{equation}\label{udef5}
\|f\|_{C^{1,\alpha}(\tilde D)}\leq \Cr{linestimate} \, \max\{\mathscr E,\|u\|\}.
\end{equation}
\end{proposition}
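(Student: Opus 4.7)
The strategy is to reduce the statement directly to the end-time $\varepsilon$-regularity theorem for unit-density Brakke-type flows close to a multiplicity-one $k$-plane established by the authors in \cite[Theorem 2.2]{ST_endtime}; this proposition is essentially a restatement of that theorem in the notational conventions of the present paper. By the parabolic scaling $(X,t) \mapsto (X/R, t/R^2)$ I first reduce to the case $R = 1$, noting that the quantities $\mathscr{E}$, $\|u\|$, and the normalized mass $(R/2)^{-k}\|V_{-9R^2/10}\|(B_{R/2})$ are all invariant under this rescaling, so all the hypotheses are preserved.

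The next step is to verify that the hypotheses above imply those of \cite[Theorem 2.2]{ST_endtime}. The smallness assumptions $\mathscr{E} < \Cr{e_den}$ and $\|u\| < \Cr{e_den}$ are literally the scale-invariant $L^2$-excess and forcing bounds required there. The remaining ingredient is an upper density bound close to $1$ at every space-time point in the relevant sub-cylinder; this is where I would use the localized Huisken monotonicity formula (Proposition \ref{p:Huisken}) starting from $t = -9R^2/10$. Combining the mass hypothesis $(R/2)^{-k}\|V_{-9R^2/10}\|(B_{R/2}) \leq \omega_k + \Cr{e_den}$ with an appropriate Gaussian kernel test (choosing the pole $(\hat X,s)$ with $\hat X \in B_{3R/4}$, $s \in (-R^2/2,0]$) and with the $L^2$-excess control gives $\Theta(\hat X, s) \leq 1 + O(\Cr{e_den})$, which is the missing piece. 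The non-triviality $\|V_0\|(B_{R/2}) > 0$ is used to exclude the alternative scenario $V_t \equiv 0$.

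Once these hypotheses are in place, \cite[Theorem 2.2]{ST_endtime} produces directly the $C^{1,\alpha}$ graph $f \in C^{1,\alpha}(\tilde D; T^{\perp})$ satisfying the parametrization identity \eqref{udef5-12} and the quantitative bound \eqref{udef5}. The genuinely difficult point — pushing the graphical $C^{1,\alpha}$ control all the way up to the endpoint $t = 0$, rather than only at times strictly less than $0$ as in classical Brakke regularity — has already been carried out in \cite{ST_endtime}, and that is where the real work lies. For the present proposition, the task is therefore the essentially mechanical one of matching definitions and normalizations, with the only nontrivial technical ingredient being the Huisken-based density upper bound described above.
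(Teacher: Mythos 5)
Your understanding is correct. The paper itself gives no proof of Proposition \ref{e-reg}: the sentence immediately preceding it explains that the statement is simply ``recorded'' from \cite[Theorem 2.2]{ST_endtime}, so the proposition is a direct citation up to matching notation and normalization. Your proposal essentially agrees with this, and the reduction by parabolic scaling to $R=1$ is the standard bookkeeping. The one thing you add --- an auxiliary Huisken-monotonicity argument to upgrade the time-slice mass bound at $t=-9R^2/10$ to a pointwise Gaussian density bound --- would be a sensible bridging step if the hypotheses of \cite[Theorem 2.2]{ST_endtime} were phrased in terms of the Gaussian density $\Theta$; but the way the proposition is displayed in the paper, with exactly this mass-ratio hypothesis at a fixed initial time together with the scale-invariant $L^2$-excess and forcing bounds, strongly suggests that \cite[Theorem 2.2]{ST_endtime} already takes its hypotheses in this very form, making the extra Huisken step unnecessary. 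Either way, the substance is the same: there is nothing to prove here beyond the citation, and the analytic content lives entirely in \cite{ST_endtime}, as you correctly emphasize.
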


With a slight abuse of notation, we
use the following notation for the
space-time measure defined as
$$
d\|\mathscr V\|:=d\|V_t\| dt.
$$
\begin{theorem} \label{thm:graphical}

For every $\beta \in (0,1)$ and $\sigma \in (0,\sfrac14)$ there exists $\Cl[eps]{eps-tau} = \Cr{eps-tau}(k,n,p,q,E_1,\beta,\sigma) \in(0,1)$ with the following property. Assume that $(\{V_t\},\{u(\cdot,t)\})\in\mathscr N_{\varepsilon}(U_R\times I)$ satisfies
(A1)-(A5) with $\varepsilon \leq \Cr{eps-tau}$. Then, there are a
relatively open set 
\[
U\subset \bC \cap (U_R\times(-R^2,0))
\]
such that
\begin{equation}\label{three}
    (x,y,t) \in U \implies (\tilde x,y,t) \in U \qquad \mbox{whenever $(\tilde x, y) \in \bC$ with $|\tilde x| = |x|$}\,, 
\end{equation}
\begin{equation}\label{three2}
     U \supset \{(x,y,t) \in \bC \cap (B_{R/2}\times(-R^2/2,0))  \, \colon \, |x| > \sigma R\}\,,
\end{equation}
and a function $f \in C^{1,\alpha} (U ; \bC^\perp)$ with the property that
    \begin{align} \label{e:bound}
        &\sup_{(X,t)\in U}\big( |x|^{-1} \, |f(X,t)| + |\nabla f(X,t)|\big) \leq \beta\,, 
        \\ \label{parametrization}
        & \spt\, \|\mathscr V\| \cap (B_{R/2}\times(-R^2/2,0))\cap \{(x,y,t) \, \colon \, |x| > \sigma R\} \subset {\rm graph} \, f \subset \spt \|\mathscr V\| \,, \\ 
        \label{para1}
        & \int_{(B_{R/2}\times(-R^2/2,0)) \setminus {\rm graph } f} |x|^2 \, d\|\mathscr V\|(X,t)\leq \Cl[con]{nonsig} \mu^2 R^{k+4},
        \\
        \label{parC}
        & \int_{(B_{R/2}\times(-R^2/2,0))\cap \bC\setminus U}|x|^2\,d\mathcal H^k(X)dt\leq \Cr{nonsig} \mu^2 R^{k+4},\\
        \label{para1-ex}
        & \int_{U} |x|^2 \, |\nabla f|^2 \, d\Ha^k(X)dt \leq \Cr{nonsig} \, \max\{\mu,\|u\|\}^2 \,R^{k+4}
    \end{align}
for $\Cr{nonsig}=\Cr{nonsig}(k,n,p,q,E_1,\beta)$ which does not depend on $\sigma$. Moreover, for a constant $\Cl[con]{nonsing1}=\Cr{nonsing1}(k,n,p,q,E_1,\beta,\sigma)$, we have
\begin{equation}\label{expara1}
    \|f\|_{C^{1,\alpha}(U\cap \{|x|>\sigma R\})}\leq \Cr{nonsing1}\max\{\mu,\|u\|\}.
\end{equation}
\end{theorem}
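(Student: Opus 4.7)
I adapt the planar covering argument of \cite[Lemma 2.6]{Simon_cylindrical} to the parabolic setting, with Proposition \ref{e-reg} taking the role of Allard's regularity theorem. For each candidate $(X_0,t_0)\in\bC$ with $\rho:=|x(X_0)|>0$, I consider the parabolic cylinder $P_{\rho/K}(X_0,t_0)$, where $K=K(\beta)$ is a large constant. Since $X_0\in{\bf H}_i$ for a unique $i$, and the distances from $X_0$ to the other branches ${\bf H}_j$ ($j\ne i$) and to the spine $\bS$ all equal $\rho$, for $K>1$ the cylinder is disjoint from $\bS$ and from every ${\bf H}_j$ with $j\ne i$; inside it, $\bC$ coincides with the extended plane ${\bf P}_i$. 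I then apply Proposition \ref{e-reg} with $T={\bf P}_i$ to produce a $C^{1,\alpha}$ graph of $\spt\|\mathscr V\|$ over ${\bf P}_i$ in a sub-cylinder, and I define $U$ as the union of all $(X_0,t_0)\in\bC\cap(U_R\times(-R^2,0))$ where the hypotheses of Proposition \ref{e-reg} can be verified, with $f$ obtained by patching the local graphs.

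\textbf{Verification of hypotheses.} A direct computation shows $\dist(X,{\bf P}_i)=\dist(X,{\bf H}_i)$ for every $X\in B_{\rho/K}(X_0)$ when $K>1$, so for points close to ${\bf H}_i$ one has $\dist(X,{\bf P}_i)=\dist(X,\bC)$, while Chebyshev applied to the global $L^2$-excess controls the negligible contribution from stray support points. The scale-invariant planar excess from ${\bf P}_i$ on $P_{\rho/K}(X_0,t_0)$ is thus bounded by $C(K)(R/\rho)^{(k+4)/2}\mu$, and similarly the scale-invariant forcing norm is bounded by $C(K)(R/\rho)^{\alpha}\|u\|$; both fall below $\Cr{e_den}$ as soon as $\max\{\mu,\|u\|\}\le c(K)\Cr{e_den}(\rho/R)^{(k+4)/2}$, which in the subregion $\{|x|>\sigma R\}$ holds uniformly once $\Cr{eps-tau}=\Cr{eps-tau}(\sigma,\beta)$ is chosen small enough. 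This will give \eqref{three2}. The remaining hypothesis, the density upper bound $(\rho/(2K))^{-k}\|V_{t_0-9(\rho/K)^2/10}\|(B_{\rho/(2K)}(X_0))\le\omega_k+\Cr{e_den}$, is the most delicate and I establish it by contradiction: for a sequence of violating flows with $\mu^{(m)}\to 0$, Proposition \ref{prouniden} ensures unit density, (A2) supplies uniform mass control, and Brakke compactness produces a limit flow supported in ${\bf P}_i$ with unit density and with ball-density strictly larger than $\omega_k$ on a fixed past ball, contradicting the constancy of multiplicity for a Brakke flow on a single plane.

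\textbf{Assembly and integral estimates.} Proposition \ref{e-reg} then returns local graphs $f_{X_0,t_0}$ with $C^{1,\alpha}$ norm bounded by $\Cr{linestimate}\max\{(R/\rho)^{(k+4)/2}\mu,(R/\rho)^{\alpha}\|u\|\}$; taking $K$ large compared to $\beta^{-1}$ forces the Lipschitz norm below $\beta$, yielding \eqref{e:bound}. Two graphs over ${\bf P}_i$ whose images both locally coincide with $\spt\|\mathscr V\|$ must agree, so patching is unambiguous and produces a single $f$ on $U$; the axial symmetry \eqref{three} is built in, as the hypotheses at $(X_0,t_0)$ depend only on $\rho$ and on the branch containing $X_0$. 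The uniform bound \eqref{expara1} on $\{|x|>\sigma R\}$ is immediate from the uniform scale $\rho\ge\sigma R$. For \eqref{para1} and \eqref{parC}, a Vitali-type covering of the ``bad'' set (points of $\bC$ at which Proposition \ref{e-reg} fails at scale $\rho/K$) is carried out at each dyadic value of $\rho$: at each failure point the local $L^2$-excess is at least a fixed multiple of $\Cr{e_den}^2(\rho/K)^{k+2}$, and weighting by $|x|^2\sim\rho^2$ and summing the dyadic contributions against the global budget $\mu^2 R^{k+4}$ gives the $\sigma$-independent bounds. Finally, \eqref{para1-ex} follows from integrating the pointwise $|\nabla f|^2$ bound supplied by \eqref{udef5} against the $|x|^2$ weight on a Whitney decomposition of $U$ and summing in $\rho$.

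\textbf{Main obstacle.} The principal technical difficulty is the shifted-time density upper bound needed to apply Proposition \ref{e-reg}: $L^2$-closeness at scale $\rho$ does not directly yield a mass bound at the past time $t_0-9(\rho/K)^2/10$, and I must couple unit density (Proposition \ref{prouniden}), Huisken monotonicity, and Brakke compactness to close the gap by contradiction. The remaining ingredients are standard parabolic covering arguments, but the bookkeeping must be done carefully to make the constants in \eqref{para1}, \eqref{parC}, and \eqref{para1-ex} genuinely independent of $\sigma$.
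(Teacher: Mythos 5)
Your overall strategy closely mirrors the paper's: a Whitney-type cover by parabolic regions of radius comparable to the distance from the spine, application of the end-time regularity theorem on each piece, a compactness argument for the past-time density hypothesis, and a Vitali covering to bound the contribution of the non-graphical region. The paper uses toroidal regions $T_{\rho,\kappa}(\zeta,s)$ rather than individual cylinders, but that is in itself a packaging choice. The compactness argument you sketch for the density upper bound matches what the paper does, and your dyadic Vitali bookkeeping is the right shape for \eqref{para1}--\eqref{para1-ex}.

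However, there is a genuine gap, and it is not where you locate it. You verify two of the three hypotheses of Proposition~\ref{e-reg} (the excess bound and the past-time density bound) but never address the remaining one, $\|V_0\|(B_{R/2})>0$, i.e.\ that the support is actually present near $X_0$ at time $t_0$. This matters at two places. First, your dichotomy ``failure of $e$-reg at $(X_0,t_0)$ implies the local $L^2$-excess is at least a fixed multiple of $\Cr{e_den}^2(\rho/K)^{k+4}$'' — which is the engine behind the Vitali argument for \eqref{para1} and \eqref{parC} — is false as stated: if the support has disappeared near $X_0$ (a genuine possibility for Brakke flows, which can lose mass), then $e$-reg fails but the local excess $\int\dist^2(X,\bC)\,d\|\mathscr V\|$ is small or even zero, because it only sees displaced mass, not missing mass. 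So the sum over bad cylinders is not controlled by $\mu^2R^{k+4}$. Second, your claim that \eqref{three} is ``built in, as the hypotheses at $(X_0,t_0)$ depend only on $\rho$ and on the branch containing $X_0$'' is incorrect precisely because of this nonemptiness hypothesis: the support can be present over ${\bf H}_1,{\bf H}_2$ and absent over ${\bf H}_3$ at the same $(|x|,y,t)$, in which case your $U$ contains two of the three reflected points and violates \eqref{three}.

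The paper resolves both issues at once with the toroidal construction and a propagation argument. The Step~1 conditions are formulated on a full torus and require nonemptiness on all three branches simultaneously (condition \eqref{spt-int}), so axial symmetry of $U$ holds by definition. Nonemptiness is then seeded at $t=0$ by \eqref{non0} near each $w_j$, and is pushed across space and time in Step~2 using the graphicality already established on larger and earlier tori; at the boundary of the good region, Step~3 extracts nonemptiness for the threshold radius $g(y,s)$ from the openness of $\tilde U$ and the graphicality at radii $(1+\delta)g(y,s)$, which is exactly what legitimizes the conclusion that failure at scale $g(y,s)$ forces large local excess. You identified the past-time density bound as ``the main obstacle,'' but that is handled by a routine compactness argument (also in the paper); the nonemptiness propagation is the ingredient your proposal does not supply and is the one that makes the dichotomy and the symmetry \eqref{three} actually work.
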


\begin{proof}
The proof is a multi-step argument based on a covering argument and a dichotomy. We first define a set $U$ where the flow is known to be graphical as a consequence of Proposition \ref{e-reg} (Steps 1-2). The core of the argument is a dichotomy established in Step 3: if the graphical representation fails at a certain scale, then the $L^2$-excess at that scale must be large. Finally, we use a covering argument (Steps 4-6) to show that the total volume of the 'non-graphical' region must be small, as the total $L^2$-excess over the entire domain is controlled.

\smallskip

    Since the statement is scale invariant, we may
    assume $R=2$. For every $\zeta \in \R^{k-1}$ with $|\zeta| < 9/8$, $s \in (-9/4,0]$, $\rho\in (0,9/8]$ and $\kappa\in(0,1]$, consider the region
    \begin{equation}\begin{split}
    T_{\rho, \kappa} (\zeta,s) := \Big\lbrace (x,y,t) &\in \R^{n-k+1} \times \R^{k-1} \times \R \, \colon \\ & (|x|-\rho)^2 + |y-\zeta|^2 < \frac{\kappa^2\rho^2}{4} \,, \quad s-\frac{\kappa^2\rho^2}{4} < t < s   \Big\rbrace\,,
    \end{split}
    \end{equation}
    see Figure \ref{fig:torus_geom}.
    \begin{figure}[ht]
\centering
\begin{tikzpicture}[
    scale=1.3,
    axis/.style={dashed, gray, -{Stealth[length=2mm]}},
    cone/.style={blue!60!black, thick},
    annulus/.style={fill=red!30}
]
    % DEFINE NUMERICAL VALUES FOR DRAWING (this does not affect labels)
    \pgfmathsetmacro{\rhovalue}{2.5}
    \pgfmathsetmacro{\kappavalue}{0.4}
    \pgfmathsetmacro{\innerR}{\rhovalue * (1 - \kappavalue/2)}
    \pgfmathsetmacro{\outerR}{\rhovalue * (1 + \kappavalue/2)}

    % Draw axes for the S-perp plane
    \draw[axis] (-3.5,0) -- (3.5,0);
    \draw[axis] (0,-3.5) -- (0,3.5);

    % Draw the annulus using the calculated values
    \fill[annulus, even odd rule] (0,0) circle (\outerR) circle (\innerR);
    \draw[dashed, red] (0,0) circle (\innerR);
    \draw[dashed, red] (0,0) circle (\outerR);

    % Draw the three rays of the cone hat{C}
    \draw[cone] (0,0) -- (0:3.2) node[anchor=west]{$\mathbf{H}_1$};
    \draw[cone] (0,0) -- (120:3.2) node[anchor=east]{$\mathbf{H}_2$};
    \draw[cone] (0,0) -- (240:3.2) node[anchor=east]{$\mathbf{H}_3$};

    % Label the origin as zeta
    \fill (0,0) circle (1.5pt);
    \node[anchor=north west] at (0,0) {$\zeta$};

    % Add measurement guides using the calculated values for positioning
    % BUT using symbolic LaTeX commands for the labels.
    \draw[<->, shorten <=1pt, shorten >=1pt] (135:\innerR) -- (135:\outerR) node[midway, fill=white, inner sep=1pt] {$\kappa\rho$};
    \draw[->, shorten >=1pt] (0,0) -- (30:\innerR) node[midway, above, sloped] {$\rho(1-\frac{\kappa}{2})$};
    
    % Label the annulus
    \node[red!80!black] at (20:{\rhovalue}) {$T_{\rho, \kappa}(\zeta,s)$};

\end{tikzpicture}
\caption{The figure illustrates the 2D annulus obtained by slicing the space-time toroidal cylinder $T_{\rho, \kappa}(\zeta,s)$ at a fixed spine location $y=\zeta$ and a fixed time $t$. Notice that the width of each toroidal region and its distance from the spine are comparable quantities, as in classical Whitney-type domain decompositions.}
\label{fig:torus_geom}
\end{figure}
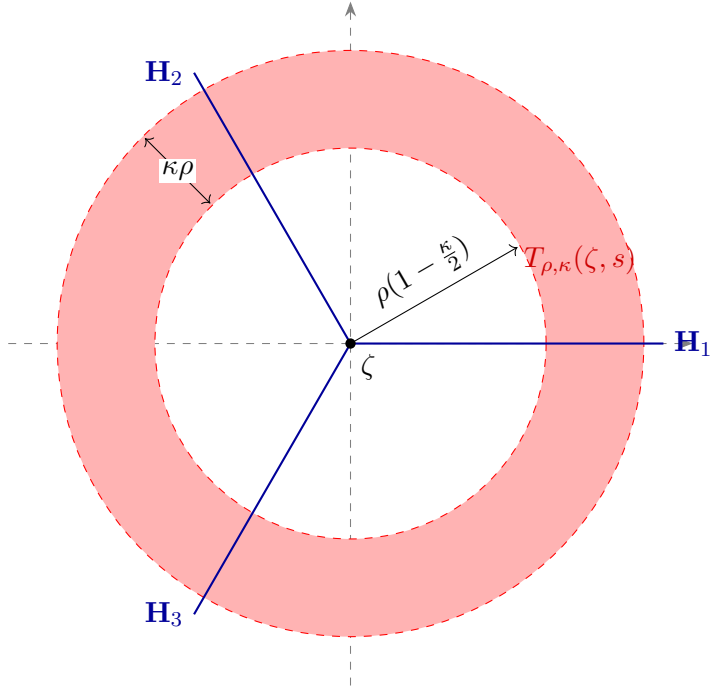

     Next, for each $j \in \{1,2,3\}$ let $$T^j_{\rho, \kappa} (\zeta,s) := T_{\rho, \kappa} (\zeta,s) \cap \mathbf{H}_j,$$ and notice that $T^j_{\rho, \kappa} (\zeta,s) = P_{\kappa \rho/2}(\zeta + \rho w_j,s) \cap\mathbf{H}_j$.
    
    We define the subset $\tilde U\subset\bC$ to be the union of all $T_{|x|,1/4}(y,s)\cap \bC$ 
    over all $(x,y,s)\in \bC \cap (U_{\sfrac98} \times (-9/4,0))$ such that there exists $f^j\in C^{1,\alpha}(T_{|x|,3/8}^j(y,s);{\mathbf H}_j^\perp)$ ($j=1,2,3$) with
    \begin{equation}\label{gr1}
        {\rm spt}\,\|\mathscr V\|\cap T_{|x|,5/16}(y,s)\subset\cup_{j=1}^3 {\rm graph}\, f^j\subset {\rm spt}\,\|\mathscr V\|
    \end{equation}
    and 
\begin{equation}\label{e:gra2}
    |x|^{-1} \sup_{ T^j_{|x|,3/8} (y,s) }|f^j|+\sup_{T^j_{|x|,3/8} (y,s) }
    |\nabla f^j|\leq \beta
\end{equation}
for each $j=1,2,3$. This $f^j$ depends on the choice
of $(x,y,s)$ initially, but since the graph of $f^j$ represents 
${\rm spt}\|\mathscr V\|$ as in \eqref{gr1}, it is uniquely
defined on each $T_{|x|,1/4}^j(y,s)\cap \bC$ and hence
on $\tilde U$ as well. We then define 
\begin{equation}\label{tildeU}
    U=\tilde U\cap (U_1\times(-2,0)).
\end{equation}
From the way $U$ is defined, 
note that \eqref{three} and \eqref{e:bound}
are satisfied already. 
\newline
{\bf Step 1}. There exists $\Cl[eps]{eps-lip}
=\Cr{eps-lip}(k,n,p,q,E_1,\beta)\in(0,1)$ such that
the following holds. Suppose
\begin{itemize}
\item[(1)] 
\begin{equation} (\{V_t\},\{u(\cdot,t)\})\in \mathscr N_{\Cr{eps-lip}}(U_2\times(-4,0]),
\end{equation}
\item[(2)] for $(x_0,y_0,s_0)\in \bC$ with $(x_0,y_0)\in U_{\sfrac98}$ and $s_0\in (-9/4,0]$,
\begin{equation}\label{e:small_excess_tori}
    |x_0|^{-(k+4)}\int_{T_{|x_0|,1}(y_0,s_0)} {\rm dist}(X,\bC)^2\,d\|\mathscr V\|(X,t)<\Cr{eps-lip}, 
\end{equation}
\item[(3)] 
for all $(\tilde x,y_0)\in \bC$ with $|\tilde x|=|x_0|$,
\begin{equation}\label{spt-int}
    {\rm spt}\|\mathscr V\|\cap (B_{|x_0|/10}(\tilde x,y_0)
\times\{s_0\})\neq \emptyset.
\end{equation}
\end{itemize}
Then $T_{|x_0|,1/4}(y_0,s_0)\cap \bC\subset \tilde U$. 
\newline
{\bf Proof of Step 1}. 
Assume that $(x_0,y_0,s_0)$ satisfies
(1) and (2) with $\Cr{eps-lip}$ 
to be determined and write $\rho:=|x_0|$. First we claim that 
\begin{equation}\label{e:small_ht}
    {\rm spt}\|\mathscr V\|\cap T_{\rho,7/8}(y_0,s_0)\setminus\cup_{j=1}^3\{(X,t):|\mathbf H_j^\perp(X)|\geq \rho/20\}=\emptyset
\end{equation}
if $\Cr{eps-lip}$ is sufficiently small depending only on $k,n,p,q,E_1$. 
Indeed, suppose by contradiction that it 
contains a point $(X_1,t_1)$, so in particular 
${\rm dist}(X_1,\bC)\geq \rho/20$. Then, by \cite[Corollary 6.3]{Kasai-Tone}, there exist small constants $\Cl[con]{loden1}, \Cl[con]{loden2}> 0$ depending only on $k,n,p,q,E_1$ 
such that $\|V_t\|(B_{\sfrac{\rho}{40}}(X_1)) \geq \Cr{loden1} \,\rho^k$ for every $t \in (t_1 - 2\Cr{loden2}\rho^2, t_1 - \Cr{loden2}\rho^2)$. Since ${\rm dist}(B_{\sfrac{\rho}{40}}(X_1),\bC)\geq \rho/40$, this contradicts \eqref{e:small_excess_tori} for a suitable choice of $\Cr{eps-lip}$ depending only on the stated constants.

Next, we wish to apply the $\varepsilon$-regularity theorem, Proposition \ref{e-reg} in $P_{3\rho/8}(\tilde x,y_0,s_0)$ with $|\tilde x|=\rho$ where we now assume (3) in addition. Suppose that $(\tilde x,y_0)\in \mathbf H_j$. Note that $P_{3\rho/8}(\tilde x,y_0,s_0)\subset T_{\rho,1}(y_0,s_0)$,
so in particular we have by \eqref{e:small_excess_tori}
\begin{equation}
    \rho^{-(k+4)}\int_{P_{3\rho/8}(\tilde x,y_0,s_0)}
    {\rm dist}(X,\bC)^2\,d\|\mathscr V\|(X,t)<\Cr{eps-lip},
\end{equation}
and by \eqref{e:small_ht},
\begin{equation}
    {\rm spt}\|\mathscr V\|\cap P_{3\rho/8}(\tilde x,y_0,s_0)\subset\{(X,t):|{\mathbf H}_j^{\perp} (X)|<\rho/20\},
\end{equation}
and \eqref{spt-int} implies
\begin{equation}\label{non-empty}
{\rm spt}\|\mathscr V\|\cap (B_{\rho/10}(\tilde x,y_0)\times\{s_0\})\neq \emptyset.
\end{equation}
We now only need to have 
\begin{equation}\label{int-flat}
    (3\rho/16)^{-k}\|V_{s_0-81\rho^2/640}\|(B_{3\rho/16}(\tilde x, y_0))\leq  \omega_k + \Cr{e_den}
\end{equation}
to apply the $\varepsilon$-regularity 
theorem. This can be achieved by 
compactness argument as in the proof of 
Proposition \ref{prouniden}. 
Thus we may apply the
$\varepsilon$-regularity theorem and obtain 
$f_j$ defined on $(B_{3\rho/16}(\tilde x,y_0)\cap\mathbf H_j)\times [s_0-(3\rho/8)^2/2,s_0]$, and thus in particular on  $T^j_{\rho,3/8}(y_0,s_0)$, satisfying \eqref{gr1}, and
for sufficiently small $\Cr{eps-lip}$ depending
also on $\beta$, \eqref{e:gra2}. This shows
that $T_{\rho,1/4}^j(y_0,s_0)\subset \tilde U$ for each
$j=1,2,3$ and ends the
proof of Step 1.

{\bf Step 2}.
For any $\sigma\in(0,1/4)$, there exists
$\Cr{eps-tau}=\Cr{eps-tau}(k,n,p,q,E_1,\beta,\sigma)\in(0,\Cr{eps-lip})$ such that $\mu<\Cr{eps-tau}$ implies 
\begin{equation}\label{Ucov}
   \{(x,y,s) \in \bC : (x,y) \in U_{\sfrac98},\,s\in(-9/4,0),\,|x| \geq \sigma\}\subset\tilde U,
\end{equation}
and which also implies
\eqref{three2}. We also have \eqref{expara1} for $\Cr{nonsing1}=\Cr{nonsing1}(k,n,p,q,E_1,\beta,\sigma)$.  
\newline
{\bf Proof of Step 2}. 
By choosing $\Cr{eps-tau}>0$ small so
that $\sigma^{-(k+4)}\Cr{eps-tau} <
\Cr{eps-lip}$, for $\mu<\Cr{eps-tau}$, we can make sure that
\eqref{e:small_excess_tori} is 
satisfied for all point on 
$A$.
Then,
the only condition to be checked for the application of 
Step 1 is \eqref{spt-int}.
Since we have ${\rm spt}\|
\mathscr V\|\cap (B_{1/12}(w_j)\times\{0\})
\neq \emptyset$ for each $j=1,2,3$ by \eqref{non0}, 
(1)-(3) of Step 1 are satisfied in a neighborhood of $w_j$. 
For
$(x,y,s)$ close to $\bC\cap T_{1,1/2}(0,0)$, note 
that \eqref{spt-int} is satisfied.
We may repeat this argument until all points with $|x|\geq \sigma$ are covered
by $\tilde U$.
This proves \eqref{Ucov}, and 
combined with \eqref{gr1} and \eqref{tildeU}, we also showed
\eqref{parametrization}. Due to the
estimate \eqref{udef5}, we immediately
obtain \eqref{expara1} on $U\cap \{|x|>\sigma\}$. 

{\bf Step 3}. 
For each $y\in\R^{k-1}$ with $|y|<9/8$ and $s\in (-9/4,0)$, let
\begin{equation}\label{Ucov3}
g(y,s):=\inf\{r>0\,:\,(x,y,s)\in \tilde U
\mbox{ for all }(x,y)\in \bC \cap U_{\sfrac98} \mbox{ with }r<|x|\}.
\end{equation}
Then, whenever $g(y,s)>0$, we have
\begin{equation}\label{Ucov2}
    (g(y,s))^{-(k+4)}\int_{T_{g(y,s),1}(y,s)}
    {\rm dist}(X,\bC)^2\,d\|\mathscr V\|(X,t)\geq \Cr{eps-lip}. 
\end{equation}
{\bf Proof of Step 3}. Because of \eqref{Ucov},
note that $g(y,s)\leq \sigma$. Suppose that $g(y,s)>0$ and \eqref{Ucov2} does not hold. 
Let $\tilde x$ be such that $(\tilde x,y,s)\in \bC\cap (U_{\sfrac98} \times (-9/4,0))$
with $|\tilde x|=g(y,s)$. By the definition \eqref{Ucov3}, for all sufficiently small $\delta>0$,
$((1+\delta)\tilde x,y,s)\in \tilde U$. Since $\tilde U$ is open, 
one can argue that
${\rm spt}\|\mathscr V\|\cap (B_{|\tilde x|/10}(\tilde x,y)\times\{s+\delta\})\neq \emptyset$
for all sufficiently small $\delta>0$.
By the continuity of integral, for all sufficiently
small $\delta>0$, the negation of \eqref{Ucov2} gives
\begin{equation*}
    |\tilde x|^{-(k+4)} \int_{T_{|\tilde x|,1}(y,s+\delta)}
    {\rm dist}(X,\bC)^2\,d\|\mathscr V\|(X,t)<\Cr{eps-lip}.
\end{equation*}
Then the conclusion of Step 1 shows that $T_{|\tilde x|,1/4}(y,s+\delta)\cap\bC\subset \tilde U$, and implies $(\tilde x, y,s)\in \tilde U$. This is a 
contradiction to \eqref{Ucov3}. This ends the proof 
of Step 3. 

{\bf Step 4}. 
There exists a (at most countable) set of points $\{(x_i,y_i,s_i)\}_{i\in \Lambda}
\subset \bC\cap (B_1\times(-2,0])$ such that 
the number of intersection of $T_{|x_i|,1}(y_i,s_i)$ is bounded by a constant $\Cl[con]{c-int}=\Cr{c-int}(n,k)$ and with 
\begin{equation}\label{pcover}
    (B_1\times[-2,0))\setminus \bigcup_{i\in \Lambda} T_{|x_i|,1/4}
    (y_i,s_i)\subset \bigcup_{|y|<9/8,\,s\in (-9/4,0)}\tilde P_{3g(y,s)}(0,y,s)
\end{equation}
and 
\begin{equation}\label{pcos}
\bC\cap T_{|x_i|,1}(y_i,s_i)\subset \tilde U
\end{equation}
for each $i\in \Lambda$. Here we use the symbol \[\tilde P_r(x_0,y_0,s_0):=\{(x,y,s):|x-x_0|^2+|y-y_0|^2<r^2, |s-s_0|<r^2\}.\]
\newline
{\bf Proof of Step 4}. 
We may choose a set of points $\{(x_i,y_i,s_i)\}_{i\in
\mathbb N}\subset\bC\cap (B_{1}\times(-2,0])$ such that $(B_1\times[-2,0))\subset \cup_{i\in\mathbb N} T_{|x_i|,1/4}
    (y_i,s_i)$
    and the number of intersection of 
    $T_{|x_i|,1}(y_i,s_i)$ is bounded by a 
    constant $\Cr{c-int}$ depending only on $n$ and $k$. In essence, this can be done by
    considering first the covering of 
    ${\bf H}_1\cap B_{1}$ by balls of 
    type ${\bf H}_1\cap B_{|x|/8}(x,y)$ with $(x,y)\in{\bf H}_1\cap B_{1}$ so that the 
    intersection number of ${\bf H}_1\cap B_{|x|/2}(x,y)$ is bounded by a constant,
    and then extend the covering in the time
    direction appropriately. Without loss of 
    generality, we may assume that $(x_i,y_i)\in {\bf H}_1$ and $T_{|x_i|,1}(y_i,s_i)\subset U_{\sfrac98}\times(-9/4,0)$. With these points fixed, we 
    define $\Lambda\subset\mathbb N$ as follows:
    $i\in \Lambda$ if we have $g(y,s)<|x|$ for all $(x,y,s)\in \bC\cap  T_{|x_i|,1}(y_i,s_i)$. 
    The definition implies that $\bC\cap 
T_{|x_i|,1}(y_i,s_i)\subset \tilde U$ if $i\in \Lambda$. Note that $(B_1\times[-2,0))\subset \cup_{i\in\mathbb N}
T_{|x_i|,1/4}(y_i,s_i)$, thus, if $(x,y,s)\in
(B_1\times[-2,0))\setminus \bigcup_{i\in \Lambda} T_{|x_i|,1/4}
    (y_i,s_i)$, then there exists $i\in \mathbb N\setminus \Lambda$ such that $(x,y,s)\in 
    T_{|x_i|,1/4}(y_i,s_i)$,
    and $i\notin \Lambda$ implies
    that there exists
    $(\tilde x,\tilde y,\tilde s)
    \in T_{|x_i|,1}(y_i,s_i)$
    such that $|\tilde x|\leq g(\tilde y,\tilde s)$.
    These inclusions imply
    \begin{equation*}
    \begin{split}
      &  |x|<\frac{9 |x_i|}{8},\,\,
        |y-y_i|<\frac{|x_i|}{8},\,\,s\in (s_i-\frac{|x_i|^2}{64},s_i),\\
        &\frac{|x_i|}{2}<|\tilde x|<\frac{3 |x_i|}{2},\,\,|\tilde y-y_i|<\frac{|x_i|}{2},\,\,
        \tilde s\in (s_i-\frac{|x_i|^2}{4},s_i),
       \end{split} 
    \end{equation*}
    and consequently, $|s-\tilde s|<|x_i|^2/4<|\tilde x|^2\leq (g(\tilde y,\tilde s))^2$ and
    \begin{equation*}
        |x|^2+|y-\tilde y|^2
        <\frac{81|x_i|^2}{64}
        +\frac{25|x_i|^2}{64}<
        \frac{53|\tilde x|^2}{8}\leq 9 (g(\tilde y,\tilde s))^2.
    \end{equation*}
    In particular, we have
    $(x,y,s)\in \tilde P_{3g(\tilde y,\tilde s)}(0,\tilde y,\tilde s)$, and
    this proves \eqref{pcover}.
    This ends the proof of Step 4. 

    {\bf Step 5}. We have \eqref{para1}
    and \eqref{parC}.
    \newline
    {\bf Proof of Step 5}.
    If $(x,y,t)\in{\rm spt}\,\|\mathscr V\|\cap  T_{|x_i|,1/4}(y_i,s_i)$ for some $i\in \Lambda$, 
    since $\bC\cap T_{|x_i|,1}(y_i,s_i)\subset \tilde U$ by 
    Step 4, $(x,y,t)\in{\rm graph}\, f$. By \eqref{pcover}, thus we only need to estimate the 
    integral over the region 
    on the right-hand side of \eqref{pcover}.
    By Vitali's covering lemma, there exists a
    set of points $\{(y_j,s_j)\}$ with $|y_j|<9/8$ and
    $s_j\in (-9/4,0]$ such that $\{\tilde P_{3g(y_j,s_j)}(0,y_j,s_j)\}$ is pairwise disjoint and 
    \begin{equation} \label{pcover1}
      \bigcup_{|y|<9/8,\,s\in (-9/4,0]} \overline{\tilde P_{3g(y,s)}(0,y,s)}\subset \bigcup_{j} \overline{\tilde P_{15g(y_j,s_j)}(0,y_j,s_j) }
      \subset \bigcup_{j} \tilde P_{16g(y_j,s_j)}(0,y_j,s_j) .
    \end{equation}
    Since $T_{g(y,s),1}(y,s)\subset P_{3g(y,s)}(0,y,s)$, we have as a consequence of Step 3 that for each $j$
    \begin{equation}\label{pcover2}
        \Cr{eps-lip}(g(y_j,s_j))^{k+4}\leq 
        \int_{P_{3g(y_j,s_j)}(0,y_j,s_j)}{\rm dist}(X,\bC)^2\,d\|\mathscr V\|(X,t)\,. 
    \end{equation}
    By \eqref{e:uniform-area-bound}, we also have
    \begin{equation}\label{pcover3}
        \int_{\tilde P_{16g(y_j,s_j)}(0,y_j,s_j)} |x|^2
        \,d\|\mathscr V\|(X,t)\leq 2\omega_k E_1 (16g(y_j,s_j))^{k+4}.
    \end{equation}
    Thus, by combining \eqref{pcover1}-\eqref{pcover3}, we obtain
    \begin{equation}
    \begin{split}
        \int_{\cup \tilde P_{3g(y,s)}(0,y,s)} |x|^2\,d\|\mathscr V\| &\leq 2\omega_k E_1 \sum_j 
        (16g(y_j,s_j))^{k+4} \\ &\leq 2\omega_k(16)^{k+4}E_1\Cr{eps-lip}^{-1} 
        \sum_{j}\int_{P_{3g(y_j,s_j)}(0,y_j,s_j)}
        {\rm dist}(X,\bC)^2\,d\|\mathscr V\|\\
        &\leq 2\omega_k(16)^{k+4}E_1\Cr{eps-lip}^{-1} \int_{P_2}
        {\rm dist}(X,\bC)^2\,d\|\mathscr V\|,
        \end{split}\label{pcosub}
    \end{equation}
    where we used that the $\{P_{3g(y_j,s_j)}(0,y_j,s_j)\}_j$ are disjoint. This proves \eqref{para1} with 
$\Cr{nonsig}=2\omega_k(16)^{k+4}E_1\Cr{eps-lip}^{-1}$.
    The proof for \eqref{parC} proceeds similarly. If $(x,y,t)\in (B_1\times[-2,0))\cap \bC\setminus U=(B_1\times[-2,0))\cap \bC\setminus \tilde U$, then for all $i\in \Lambda$,
    $(x,y,t)\notin T_{|x_i|,1}(y_i,s_i)$
    due to \eqref{pcos}. Thus by \eqref{pcover},
    \begin{equation}\label{pcosub2}\begin{split}
        (B_1\times[-2,0))\cap \bC\setminus U&\subset
        (B_1\times[-2,0))\cap \bC\setminus\bigcup_{i\in \Lambda} T_{|x_i|,1}(y_i,s_i)\\ &\subset\bigcup_{|y|<9/8,\,s\in(-9/4,0)}\bC\cap \tilde P_{3g(y,s)}(0,y,s).\end{split}
    \end{equation}
 We have equally \eqref{pcover2}, and 
 \eqref{pcover3} holds true for integration over $\bC$. Then arguing similarly in \eqref{pcosub} using \eqref{pcosub2}, we obtain \eqref{parC}.
 
    {\bf Step 6}. We have \eqref{para1-ex}.
    \newline
    {\bf Proof of Step 6}. The estimate of integral over 
    $U\setminus \cup_{i\in \Lambda} T_{|x_i|,1/4}(y_i,s_i)$
    can be carried out similarly as in Step 5 using
    \eqref{pcover} and $|\nabla f|\leq \beta$ pointwise
    on $U$ thus we only need to estimate
    the summation of integral on each $T_{|x_i|,1/4}(y_i,s_i)\cap\bC$ for $i\in \Lambda$. Since $T_{|x_i|,1}(y_i,s_i)\cap \bC\subset\tilde U$, ${\rm spt}\|\mathscr V\|$ is 
    represented as ${\rm graph}\,f$ with the
    size of gradient less than $\beta$.
   By restricting $\beta$ suitably small, one can apply Proposition \ref{e-reg} to conclude
\begin{equation}\label{pcover5}
    \int_{\bC\cap T_{|x_i|,1/4}(y_i,s_i)}
    |x|^2|\nabla f|^2
    \leq \Cr{linestimate}\Big(\int_{T_{|x_i|,1}(y_i,s_i)}
    {\rm dist}(X,\bC)^2\,d\|\mathscr V\|
    +|x_i|^{k+4+2\alpha}\|u\|^2\Big),
\end{equation}
where $\|u\|=\|u\|_{L^{p,q}(P_2)}$. To sum the second summand in this estimate over $i \in \Lambda$, we argue by dyadic decomposition. We partition the index set $\Lambda$ into shells $\Lambda_j = \{ i \in \Lambda \,:\, 2^{-j-1} < |x_i| \le 2^{-j} \}$ for $j \in \mathbb{N}$. Since $\{T_{|x_i|,1}\cap\bC\}_{i \in \Lambda_j}$ have a volume of order $(2^{-j})^{k+2}$ and bounded overlap, the cardinality $\#(\Lambda_j)$ is bounded by $C(n,k)(2^{-j})^{-(k+2)}$. Summing the forcing term's contribution over all shells then yields:
\begin{equation*}
\begin{split}
\sum_{i\in \Lambda} |x_i|^{k+4+2\alpha}\|u\|^2 &= \sum_{j=0}^{\infty} \sum_{i \in \Lambda_j} |x_i|^{k+4+2\alpha}\|u\|^2 \le \|u\|^2 \sum_{j=0}^{\infty} \#(\Lambda_j) (2^{-j})^{k+4+2\alpha} \\
&\le C \|u\|^2 \sum_{j=0}^{\infty} (2^{-j})^{2+2\alpha}.
\end{split}
\end{equation*}
As $\alpha>0$, this geometric series converges. Summing the full estimate \eqref{pcover5} over all $i \in \Lambda$ and using the bounded overlap of the domains on the right-hand side, we obtain
\begin{equation}
    \int_{\cup_{i\in \Lambda}
    T_{|x_i|,1/4}(y_i,s_i)\cap U}|x|^2|\nabla f|^2
    \leq \Cl[con]{listing}\Big(\int_{P_2}{\rm dist}(X,\bC)^2\,d\|\mathscr V\|
    +\|u\|^2\Big),
\end{equation}
where $\Cr{listing}$ depends on $n,k$, and $\Cr{linestimate}$. This completes the proof of Step 6 and of Theorem \ref{thm:graphical}.
\end{proof}

\begin{remark} \label{rmk:C1a-deg}
We note two further consequences of the proof of Theorem \ref{thm:graphical}, which we record here for later use.

First, the argument provides a quantitative relationship between the global excess $\mu$ and the size of the non-graphical region. The proof requires the condition $\mu < \Cr{eps-tau}(\sigma)$, where the threshold can be chosen such that $\Cr{eps-tau}(\sigma) < \sigma^{k+4}\, \Cr{eps-lip}$. By inverting this, we see that for a given $\mu$, the conclusions of the theorem hold for any $\sigma > C' \mu^{1/(k+4)}$. In particular, the support of the flow is guaranteed to be a graph in the region
\[
    \{ (x,y,t) \in B_{R/2} \times (-R^2/2, 0) : |x| > C' R \mu^{1/(k+4)} \}\,,
\]
where $C'=C'(k,n,p,q,E_1)$. In other words, the information currently at our disposal implies that, in a ``blow-up'' regime, with $\mu \to 0$, the graphicality region approaches the spine at the rate $\mathrm{O}(\mu^{1/(k+4)})$. The non-concentration estimates of Section \ref{sec:HS}, among other things, have the goal of greatly improving this picture, as they show that triple junction points (roughly speaking the points where graphicality fails), albeit always present, are situated at a distance $\mathrm{O}(\mu)$ from the spine. 

Second, the local nature of Proposition \ref{e-reg} implies a weighted $C^{1,\alpha}$ estimate for the graphing function $f$ on its entire domain of definition $U$. Indeed, if $(x,y,t) \in U$ then by definition the flow is graphical in the parabolic cylinder $P_{|x|/14}(x,y,t)$, and the estimates \eqref{udef5} can be applied with space-time center $(x,y,t)$ at the scale $R=|x|/14$. Since $(x,y,t) \in U$ is arbitrary, one obtains, in particular, that
\[
    \sup_{(X,t) \in U} |x|^{k/2 + 2} \left( |x|^{-1}|f(X,t)| + |\nabla f (X,t)| \right) \leq C R^{k/2 + 2} \max\{\mu,\|u\|\}\,.
\]
The H\"older seminorms of $\nabla f$ are controlled in a similar fashion. This estimate, which provides a more precise version of \eqref{expara1}, quantifies the natural degeneration of $C^1$ regularity in space when approaching the spine. 
\end{remark}

\section{No-hole property} \label{sec:NH}

This section is dedicated to establishing a crucial structural property of the flow, which we call, as it is customary in the literature, the ``no-hole'' property (Proposition \ref{p:NH_property}). This result guarantees that for any time $t$ and any slice location $y$ on the spine, there must exist a singular point $\Xi$ (with Gaussian density $\Theta(\Xi,t) \ge 3/2$) nearby. This proposition is an essential prerequisite for the analysis in Section 5, which requires a singular point to serve as a center for the non-concentration estimates. The proof is by contradiction, leveraging the graphical control from Section \ref{sec:graph} against the constraints imposed by White's stratification theorem on the dimension of the singular set.

\begin{proposition} \label{p:NH_property}
    For every $\delta \in \left(0, \sfrac18\right)$ there exists $\Cl[eps]{e_NH} = \Cr{e_NH}(k,n,p,q,E_1,\delta) \in(0,1)$ with the following property. Assume that $(\{V_t\},\{u(\cdot,t)\})\in \mathscr N_{\Cr{e_NH}}(U_R\times(-R^2,0])$
    and satisfies (A1)-(A5). Then
    we have the following:
    \begin{equation} \label{e:NH}
    \begin{split}
        &\mbox{for every $y \in B^{k-1}_{R/2} \subset \R^{k-1} = \mathbf{S}(\mathbf{C})$ and for every $t \in [-R^2/2,0]$} \\
        &\mbox{there exists $\Xi \in 
        B_{\delta R}^{n-k+1}\times\{y\}$ such that $\Theta(\Xi,t) \ge \sfrac32$}\,. 
    \end{split}\tag{\rm{NH}}
    \end{equation}
\end{proposition}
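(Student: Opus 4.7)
The plan is a contradiction argument combining a Brakke-compactness limit, the graphical parametrization of Theorem \ref{thm:graphical}, and White's parabolic stratification theorem. Suppose the conclusion fails; then one obtains sequences $\varepsilon^{(m)} \downarrow 0$, $(\{V_t^{(m)}\}, u^{(m)}) \in \mathscr N_{\varepsilon^{(m)}}(U_R \times (-R^2, 0])$ satisfying (A1)-(A5), $y^{(m)} \in B^{k-1}_{R/2}$, and $t^{(m)} \in [-R^2/2, 0]$ for which $\Theta^{(m)}(\Xi, t^{(m)}) < \sfrac32$ holds for every $\Xi \in B^{n-k+1}_{\delta_0 R} \times \{y^{(m)}\}$. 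After scaling to $R = 1$ and extracting subsequences so that $y^{(m)} \to y_0 \in B^{k-1}_{1/2}$ and $t^{(m)} \to t_0 \in [-\sfrac12, 0]$, Brakke compactness (with forcing $\|u^{(m)}\|\le\varepsilon^{(m)} \to 0$) produces a limit flow $V^\infty$. Invoking $\mu^{(m)} \to 0$ from \eqref{def:L2-excess} together with the mass deficit \eqref{def:mass deficit} and non-triviality \eqref{non0} in the same way as in Proposition \ref{prouniden} identifies $V^\infty_t = \var(\bC, 1)$ throughout $U_1 \times (-1, 0]$, so in particular $\Theta^\infty(y_0, t_0) = \sfrac32$.

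Next, I would apply Theorem \ref{thm:graphical} at scale $\sigma = \delta_0/4$ with a small fixed $\beta$: for all sufficiently large $m$, the support of $V^{(m)}$ in the annular region $\{|x| > \delta_0/4\} \cap (B_{1/2} \times (-\sfrac12, 0])$ coincides with the graphs of three $C^{1,\alpha}$-smooth sheets $f_i^{(m)}$ converging in $C^{1,\alpha}$ to the three half-planes $\mathbf{H}_i$. Joint upper semicontinuity of $\Theta^{(m)}$ in $(X,t)$ combined with the compactness of the closed slice $\overline{B^{n-k+1}_{\delta_0}} \times \{y^{(m)}\} \times \{t^{(m)}\}$ upgrades the hypothesis $\Theta^{(m)} < \sfrac32$ on the slice to a strict bound $\Theta^{(m)} < \sfrac32 - \eta^{(m)}$ on an open space-time neighborhood $\mathcal N^{(m)}$ of the slice, for some $\eta^{(m)} > 0$. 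Within $\mathcal N^{(m)}$, no tangent flow can be a static triple junction or any other static cone of density $\ge \sfrac32$ (multi-planes, etc.), and the admissible singularities are confined to non-static self-similar shrinkers with density in $(1, \sfrac32)$.

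The final step, which is the main obstacle, is to extract a contradiction from this restriction via White's stratification theorem \cite{White_stratification}. The decomposition $\mathcal S^{(m)} = \bigcup_j \mathcal S^{(m),j}$ with $\dim_{\mathcal P}\mathcal S^{(m),j} \leq j$ (where $\mathcal S^{(m),j}$ collects points whose tangent flows have spatial translational symmetry of dimension $\leq j$) interacts with the limit as follows: $V^\infty = \var(\bC,1)$ exhibits a $(k-1)$-dimensional static triple-junction stratum along $\bS \times (-1, 0]$ of density $\sfrac32$, and this stratum must \emph{persist} under Brakke convergence, forcing density-$\sfrac32$ singularities of $V^{(m)}$ to accumulate on every neighborhood of every spine point of the limit and, applied at $(y^{(m)}, t^{(m)})$, to intrude into $\mathcal N^{(m)}$ for $m$ large. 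I would make this persistence rigorous through a dichotomy on the inner extension of the three graphical sheets: either all three $f_i^{(m)}$ can be extended smoothly into $\{|x| < \delta_0/4\}$ via Brakke's interior regularity at density-$1$ points, so that they must meet at $(y^{(m)}, t^{(m)})$ with accumulated density $\ge \sfrac32$ --- the desired contradiction --- or at least one sheet terminates at an interior singular endpoint whose tangent flow, constrained by the strict density bound to be a non-static shrinker of spatial symmetry $\leq k-2$, is ruled out by Huisken's monotonicity (Proposition \ref{p:Huisken}) and the static $L^2$-limit $\var(\bC,1)$: no dynamic shrinking tangent flow can persist inside $\mathcal N^{(m)}$ as $m\to\infty$. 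The hardest point is making this persistence-of-strata/collapse-of-shrinkers dichotomy genuinely quantitative at the prescribed slice $y^{(m)}$, rather than merely at a generic spine point.
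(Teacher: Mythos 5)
Your overall outline (compactness limit $\var(\bC,1)$, graphicality away from the spine, a role for White's stratification, and ultimately a topological obstruction to three arcs meeting a disc boundary without a branch point) overlaps with the paper's proof, but the argument as written has a genuine gap at exactly the place you flag as ``the main obstacle,'' and the paper's proof takes a route that you never quite reach.

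The paper does \emph{not} prove (NH) at the prescribed slice directly. Its first and decisive step is the reduction: by upper semicontinuity of $\Theta$, it suffices to prove (NH) for $\mathcal{H}^{k-1}$-a.e.~$y$ and a.e.~$t$. This turns the problem from a pointwise one at $(y^{(m)},t^{(m)})$ into a measure-theoretic one, and makes a covering/slicing argument available. Specifically, after fixing $\Theta<2$ via compactness, the paper applies White's stratification theorem directly to the flow $\mathscr V$ (not to a blow-up limit): the set $G$ of points whose tangent flows are neither a multiplicity-one plane nor a multiplicity-one triple junction has parabolic Hausdorff dimension $\leq k$. A Vitali-type covering argument then shows that the set of pairs $(y,t)$ whose slice $B_1^{n-k+1}\times\{y\}\times\{t\}$ meets $G$ has $\mathcal{H}^{k-1}\otimes\mathcal L^1$-measure zero. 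For each good $(y,t)$, every point on the slice is either a regular point or a triple-junction point. If no triple-junction point exists on the slice, the flow is locally a $C^1$ surface near the entire slice, so the slice of $\spt\|V_s\|$ is a compact $1$-manifold in a disc whose boundary trace, by graphicality away from the spine, consists of exactly three points --- an odd number, contradicting the parity of boundary components of a $1$-manifold.

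Your proposal never carries out (or even states) the reduction to a.e.~$(y,t)$, so you are forced to argue at the exact slice $(y^{(m)},t^{(m)})$. There the persistence idea breaks down: singular sets do \emph{not} persist under weak convergence of Brakke flows. Upper semicontinuity of $\Theta$ gives $\limsup_m \Theta^{(m)}(\Xi^{(m)},\tau^{(m)}) \leq \Theta^{\infty}(\Xi_0,\tau_0)$ when $(\Xi^{(m)},\tau^{(m)})\to(\Xi_0,\tau_0)$, which is an inequality in the wrong direction for forcing high-density points of the limit to be shadowed by high-density points of the sequence (a smooth sequence can collapse to a static higher-multiplicity plane, for example). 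Likewise, upgrading $\Theta^{(m)}<\tfrac32$ to $\Theta^{(m)}<\tfrac32-\eta^{(m)}$ on a neighborhood does not by itself exclude singular points of the $m$-th flow on the slice: a priori, there could be shrinking tangent flows of density in $(1,\tfrac32)$, and nothing in Huisken's monotonicity or the static nature of the limit rules such points out \emph{on the specific slice}. What controls them in the paper is precisely the dimension count plus the choice of a generic slice. The final ``dichotomy'' you state --- extend all three sheets smoothly inward or else hit a singular endpoint ruled out by monotonicity --- is an informal paraphrase of the topological argument, but the ``ruled out by monotonicity'' branch is not justified and, without the a.e.-reduction and the slicing argument, cannot be.

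In short: your structure is not the paper's structure (contradiction-by-compactness versus direct slicing), and the missing ingredient is the reduction to a.e.~$(y,t)$ followed by the parabolic covering estimate showing that a.e.~slice avoids $G$. With that in hand, the hard ``persistence'' step you identify disappears entirely.
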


\begin{proof}
Without loss of generality, assume $R=2$.
First notice that it is sufficient to prove the validity of \eqref{e:NH} for $\mathcal H^{k-1}$-a.e. $y\in B_{1}^{k-1}$ and a.e. $t \in [-2,0]$ because the function $\Theta(X,t)$ is upper semi-continuous. Next observe that, under the present assumption 
with $\varepsilon$ sufficiently small, the Gaussian density satisfies $\Theta(X,t) < 2$ for every $(X,t) \in B_1\times[-2,0]$
by arguing as in the proof of Proposition \ref{prouniden}. Now, at each point $(X,t)\in {\rm spt}\,\|\mathscr V\| \cap (B_1 \times [-2,0])$, we have a set of tangent flows, each of which may be classified as static, quasi-static, or shrinking (see \cite{White_stratification}). Any static tangent flow with spine of dimension $=k$ is a multiplicity one static $k$-dimensional plane due to the fact that $\Theta (X,t) < 2$. By \cite{ST_endtime}, any point where the flow admits such a tangent flow has a regular neighborhood. Analogously, if the flow admits at $(X,t)$ a static tangent flow with spine of dimension $k-1$ then $\Theta(X,t) = \sfrac{3}{2}$ and the tangent flow is a static multiplicity one $k$-dimensional triple junction. Consider then the set of point of ${\rm spt}\|\mathscr V\|\cap(B_1\times[-2,0])$ where the flow does not admit any of the two types of static tangent flows discussed above, and 
call it $G$. In other words, $G$ is a set
of point where any static tangent flow has 
spine dimension $\leq k-2$, or where there may be quasi-static or shrinking tangent flows.
By the stratification theorem of White \cite[Theorem 9]{White_stratification}, $G$ has parabolic Hausdorff dimension $\leq k$. 
For $t\in[-2,0]$, define
\begin{equation}
\begin{split}
    &G_t:=\{X=(x,y)\in B_1^{n-k+1}\times B_1^{k-1} : 
    (X,t)\in G\}, \\
    &{\bf S}(G_t):=\{y\in B_1^{k-1} : 
    (B_1^{n-k+1}\times\{y\})\cap G_t\neq \emptyset\},
    \end{split}
\end{equation}
and define for each $m\in\mathbb N$
\begin{equation}
    T_{G,m}:=\{t\in[-2,0] : \mathcal H^{k-1}(
    {\bf S}(G_t))>1/m\}\,. 
\end{equation}
We claim that $\mathcal H^1(T_{G,m})=0$, which in turn shows
$\mathcal H^1(\{t\in[-2,0] : \mathcal H^{k-1}({\bf S}(G_t))>0\})=0$. Since the parabolic Hausdorff dimension of $G$ is $\leq k$, for every $\varepsilon>0$ we may choose a set of 
parabolic cylinders $\{P(i)\}_{i\in\mathbb N}$
with the form $P(i)=B_{r_i}(X_i)\times(t_i-r_i^2,t_i)$ such that 
\begin{equation}
    G\cap ((B_1^{n-k+1}\times B_1^{k-1})\times[-2,0])\subset \cup_{i=1}^\infty
    P(i)
\end{equation}
and 
\begin{equation}
    \sum_{i=1}^\infty r_i^{k+1}<\varepsilon.
\end{equation}
If we write $\tilde P(i)$ as the projection to 
the spine, namely, $$\tilde P(i):=\{(y,t)\in 
\mathbb R^{k-1}\times \mathbb R : (\mathbb R^{n-k+1}\times\{y\}\times\{t\})\cap P(i)\neq \emptyset\},
$$
then 
\begin{equation}
   \cup_{t\in T_{G,m}} {\bf S}(G_t)\times \{t\}
   \subset \cup_{i=1}^\infty \tilde P(i).
\end{equation}
Note that $\mathcal H^{k}(\cup_{i=1}^\infty \tilde P(i))\leq c(k)\sum_{i=1}^\infty r_i^{k+1}$ while
$\mathcal H^k(\cup_{t\in T_{G,m}} {\bf S}(G_t)\times\{t\})\geq \mathcal H^1(T_{G,m})
\times \sfrac{1}{m}$. Since $\varepsilon>0$ is arbitrary, this in particular implies $\mathcal H^1(T_{G,m})=0$, proving the claim. We have then proved that for a.e.~$t\in[-2,0]$
and a.e.~$y\in B_1^{k-1}$, the set $B_1^{n-k+1}\times\{y\}\times\{t\}$ does not intersect $G$: hence, at every point of 
${\rm spt}\,\|\mathscr V\|\cap (B_1^{n-k+1}\times\{y\}\times\{t\})$, the flow has a static tangent flow consisting of either a multiplicity one
$k$-dimensional plane or a multiplicity one $k$-dimensional triple junction. If there is no point with triple junction tangent flow, then, we must have a space-time neighborhood 
of $B_1^{n-k+1}\times\{y\}\times\{t\}$ such that ${\rm spt}\|\mathscr V\|$ is a flow of $C^{1,\alpha}$
$k$-dimensional surfaces. On 
the other hand, away from a tubular neighborhood of the spine, ${\rm spt}\,\|\mathscr V\|$ is a graph over the three 
$k$-dimensional planes in $\bC$ by Theorem \ref{thm:graphical}. In particular, for such times $s$ in a neighborhood of $t$ and for such points $z$ in a neighborhood of $y$ the slice of $\spt\|V_s\|$ with the plane through $z$ and orthogonal to the spine is a $C^1$ curve in a disc centered at $\{0_{n-k+1}\}$ with trace given by three points on the sphere. This is a contradiction. Thus, for $t$ and $y$ as above there must be at least one point $\Xi \in B_1^{n-k+1}\times\{y\}$ where a tangent flow is a triple junction, and the Gaussian density at such point is equal to $3/2$. By choosing 
$\varepsilon>0$ sufficiently small, we may guarantee that this point is within the $\delta$-neighborhood of the spine, again by Theorem \ref{thm:graphical}. This concludes the proof.
\end{proof}

\section{Non-concentration estimates}\label{sec:HS}

The main result of this section is the following
set of estimates \eqref{e:HS1}-\eqref{e:HS3}. They are the technical core of our paper: as the parabolic analogue of the estimates at the heart of Simon's regularity theory for minimal surfaces \cite{Simon_cylindrical}, they are the essential ingredient to carry out the blow-up method in Section \ref{sec:BU}. First, \eqref{e:HS1} shows that any point of high Gaussian density must lie close to the spine of the reference cone, with a distance linearly controlled by the global excess $\mu$. Second, \eqref{e:HS2} is the central analytical result: it implies, in particular, that the $L^2$-excess does not concentrate at small scales, as specified in the corollary recorded as Proposition \ref{p:nonconc-excess}. Finally, \eqref{e:HS3} translates \eqref{e:HS2} into an estimate for the graphing function in the graphicality region. 

\begin{theorem}\label{thm:HS}
There exist $\Cl[eps]{e_HS}=\Cr{e_HS}(k,n,p,q,E_1) \in (0,1)$ 
and $\Cl[con]{c_HS} =\Cr{c_HS}(k,n,p,q,E_1,\Cr{conslice})\in(1,\infty)$
so that the following holds. Assume that $(\{V_t\},\{u(\cdot,t)\})\in \mathscr N_{\Cr{e_HS}}(U_5\times[-25,0])$ satisfies (A1)-(A6). Then, for any point $(\Xi,\tau) \in P_1$ with $\Theta (\Xi,\tau) \geq \sfrac32$ we have, setting $\Xi = (\xi,\zeta)$ with $|\xi| = \dist (\Xi, \mathbf{S}(\mathbf{C}))$:
    \begin{equation} \label{e:HS1}
        |\xi| \leq \Cr{c_HS}\,\max\{\mu, \|u\|\}\,.
    \end{equation}
    Furthermore, for every $\kappa \in [0,1)$ there exists $\Cl[con]{c_HS2}
    =\Cr{c_HS2}(k,n,p,q,E_1,\Cr{conslice},\kappa)\in (1,\infty)$ such that
    \begin{equation} \label{e:HS2}
        \sup_{t \in [-1+\tau,\tau)} (\tau-t)^{-\kappa} \int_{B_1} \dist^2(X-\Xi,\mathbf{C}) \, \rho_{(\Xi,\tau)}(X,t)\, d\|V_t\|(X) \leq \Cr{c_HS2}\,\max\{\mu,\|u\|\}^2\,,
    \end{equation}
as well as
\begin{equation}\label{e:HS3}
      \sup_{t\in[-1+\tau,\tau)}  
      (\tau-t)^{-\kappa-\frac{k}{2}}\int e^{-\frac{|X+f(X,t)-\Xi|^2}{4(\tau-t)}}|f(X,t)-\xi^{\perp_X}|^2\,d\mathcal H^k(X)\leq \Cr{c_HS2}\,\max\{\mu,\|u\|\}^2,
    \end{equation}
    where $f$ is as in Theorem \ref{thm:graphical}, the integration is over
    $\{X\in B_1\cap \bC : (X,t)\in U\}$, and $\xi^{\perp_X}$ is the
    projection of $\xi$ to $\bC^\perp$
    at $X\in \bC$.
\end{theorem}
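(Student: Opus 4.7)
The plan is to establish the three estimates in turn, the central one being \eqref{e:HS2}; the other two are a geometric preparation \eqref{e:HS1} and a translation into graphical language \eqref{e:HS3}. For \eqref{e:HS1} I would argue by contradiction: suppose a sequence of flows in $\mathscr N_{\Cr{e_HS}}$ produced singular points $(\Xi^{(m)},\tau^{(m)})$ with $|\xi^{(m)}|/\max\{\mu^{(m)},\|u^{(m)}\|\}\to\infty$. Since $\Theta(\Xi^{(m)},\tau^{(m)})\ge\sfrac32$, Huisken's monotonicity \eqref{Huis} at $(\Xi^{(m)},\tau^{(m)})$ together with a Brakke-flow compactness argument modeled on the proof of Proposition \ref{prouniden} forces the parabolic rescalings of the flows at scale $r^{(m)}\sim|\xi^{(m)}|$ to subsequentially converge to a static density-$\sfrac32$ triple junction $\bC_\ast$ passing through the origin. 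But the original flows are $\mu^{(m)}$-close in $L^2$ to the fixed reference $\bC$, whose spine contains the origin, while $\bC_\ast$ and $\bC$ differ in a tubular neighborhood of $\bS(\bC)$ by a quantity of order $|\xi^{(m)}|$. This forces the scale-invariant $L^2$-excess of the original flow from $\bC$ to be of order $|\xi^{(m)}|$, contradicting \eqref{def:L2-excess}.

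The proof of \eqref{e:HS2} is the parabolic analogue of Simon's central estimate in \cite{Simon_cylindrical}. Rather than testing the stationarity identity $\delta V=0$ against Simon's cleverly designed vector fields, the plan is to test Brakke's inequality \eqref{e:Brakke-ineq} against non-negative compactly supported test functions built from the backward heat kernel $\rho_{(\Xi,\tau)}$ and suitable radial cut-offs around $\bS(\bC)$. The key intermediate result is Proposition \ref{p:error_t_est}, which provides sharp estimates for two fundamental quantities: the deviation from stationarity $\iint|h|^2\rho_{(\Xi,\tau)}$ and the Huisken deviation from self-similarity $\iint|h-(\nabla\rho_{(\Xi,\tau)})^\perp/\rho_{(\Xi,\tau)}|^2\rho_{(\Xi,\tau)}$. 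The former would come from Brakke tested with a spatial cut-off together with the area bound; the latter follows from Huisken's monotonicity at $(\Xi,\tau)$, whose ``area deficit'' is bounded by $C\max\{\mu,\|u\|\}^2(\tau-t)^\kappa$ for any $\kappa\in[0,1)$ because $\Theta(\Xi,\tau)\ge\sfrac32 = \Theta(\bC,0)$ and the flow is $\mu$-close to $\bC$ (up to comparable forcing errors). A Poincar\'e-type inequality tailored to the triple-junction geometry would then convert these two integral controls into the desired $L^2$ bound for $\dist^2(X-\Xi,\bC)$ weighted by $\rho_{(\Xi,\tau)}$, uniformly in $t\in[-1+\tau,\tau)$.

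The main obstacle, and the precise role of (A6), lies in controlling the error terms that appear when testing Brakke's inequality with cut-offs localizing near $\bS(\bC)$. In Simon's stationary work, the identity \eqref{simonineq} permits a clean integration against radial cut-offs with the derivative terms absorbed into the monotonicity. In the parabolic setting, Huisken's monotonicity is intrinsically non-local and does not provide a direct analogue. When one tests Brakke with a $\nabla\phi$ supported in a thin annulus around $\bS(\bC)$, the resulting contribution involves integrals over the one-dimensional slices $M_t^y$ orthogonal to $\bS(\bC)$: specifically, terms such as $r^{-1}\mathcal H^1(M_t^y\cap B_{r/2}^{n-k+1})$ and $r^{-3}\int_{M_t^y\cap B_{r/2}^{n-k+1}}|x|^2\,d\mathcal H^1$ appear, and they must be bounded from below by their counterparts for $\hat\bC$ in order to produce the desired sign and cancellation. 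This is exactly the content of \eqref{cotriple} and \eqref{cotriple2}, with the admissible quadratic error $\Cr{conslice}K^2$ absorbable into the main term via the $C^{1,\alpha}$-control of Theorem \ref{thm:graphical} in the graphical region away from the spine. Implementing this cancellation uniformly at the parabolic scale $\sqrt{\tau-t}$ around $(\Xi,\tau)$, and summing over a dyadic sequence of scales to extract the sharp rate $(\tau-t)^\kappa$, is the technical heart of the proof.

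Finally, \eqref{e:HS3} would follow from \eqref{e:HS2} by substituting the graphical parametrization of $\spt\|V_t\|$ provided by Theorem \ref{thm:graphical}. On the graphical region $U$, a direct computation yields the pointwise identity $\dist(X+f(X,t)-\Xi,\bC)=|f(X,t)-\xi^{\perp_X}|+\mathrm{O}(|f|^2+|\xi|\,|\nabla f|)$, whose quadratic corrections are absorbed using the smallness of $\|f\|_{C^{1,\alpha}}$ together with the already-established \eqref{e:HS1}. The contribution coming from the non-graphical portion $\spt\|V_t\|\setminus\mathrm{graph}\,f$ is controlled by the non-graphical excess bound \eqref{para1}, which is of the right order $\mathrm{O}(\mu^2)$ and compatible with the desired estimate. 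Combining these with \eqref{e:HS2} yields \eqref{e:HS3}.
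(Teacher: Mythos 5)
Your plan for \eqref{e:HS1} contains a genuine gap, and the compactness--contradiction argument you describe cannot produce the linear rate $|\xi|\lesssim\max\{\mu,\|u\|\}$. The problem is one of scaling: if you parabolically rescale the flows at scale $r^{(m)}\sim|\xi^{(m)}|$, the scale-invariant $L^2$-excess of the rescaled flow from $\bC$ is the excess of the \emph{original} flow at scale $|\xi^{(m)}|$, which a priori is only controlled by
\[
\Big(\tfrac{5}{|\xi^{(m)}|}\Big)^{\frac{k+4}{2}}\mu^{(m)}
\]
and does \emph{not} tend to zero under the contradiction hypothesis $|\xi^{(m)}|/\mu^{(m)}\to\infty$ unless $k+4\le 2$. (This is precisely the phenomenon in Remark \ref{rmk:C1a-deg}: the a priori graphicality control only reaches a distance $\mathrm{O}(\mu^{2/(k+4)})$ from the spine.) Consequently the blow-up limit extracted by compactness need not be close to $\bC$, and the step ``$\bC_\ast\approx\bC$, hence $\Xi_\infty\in\bS(\bC)$'' does not follow. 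A soft argument of this kind gives at best $|\xi|\lesssim\mu^{2/(k+4)}$, which is the weaker rate already implicit in Section \ref{sec:graph} and is insufficient for the excess decay. The paper proves \eqref{e:HS1} by a direct, self-improving estimate: Propositions \ref{p:error_t_est} and \ref{p:HS_technical}, applied to the flow translated so that $(\Xi,\tau)$ is the new space-time origin, give
\[
\int\dist^2(X-\Xi,\bC)\,\rho_{(\Xi,\tau)}(X,t)\,\eta(X-\Xi)\,d\|V_t\|\ \lesssim\ (\tau-t)^{1/2}\big(\mu^2+\|u\|^2+|\xi|^2\big),
\]
with $|\xi|^2$ deliberately kept on the right; one then combines a lower bound on the heat kernel at a fixed small scale $r_0$ with the graphical representation of Theorem \ref{thm:graphical} to derive $|\xi|^2\le Cr_0(\mu^2+\|u\|^2+|\xi|^2)+Cr_0^{-k-2}\mu^2$, and the $|\xi|^2$ on the right is absorbed by choosing $r_0$ small. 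So the logical order is the opposite of what you propose: \eqref{e:HS1} is \emph{not} a geometric preliminary proved independently of the curvature estimates, it is a consequence of them.

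Two further remarks on \eqref{e:HS2}. Your statement that the Huisken deficit is bounded by $C\max\{\mu,\|u\|\}^2(\tau-t)^\kappa$ ``because the flow is $\mu$-close to $\bC$'' compresses exactly the technical heart of the argument: bounding the boundary term $\int\eta\rho\,d\|V_s\|-\tfrac32$ by $\max\{\mu,\|u\|\}^2$ is where Propositions \ref{lateral-pro}--\ref{p:error0} (and in particular (A6), used to control the \emph{negative part} of the mass comparison coming from Brakke's one-sided inequality) enter, together with the selection of a good time slice. It is not a soft consequence of $L^2$-closeness. Also, the conversion of the integral curvature controls into the uniform-in-time weighted $L^2$ bound is not via a Poincar\'e-type inequality: Proposition \ref{p:HS_technical} tests Brakke's inequality \emph{directly} with $\phi=|t|^{-\kappa}g(t)\rho\,\mathrm{d}^2\eta$, where $\mathrm{d}$ is a homogeneous surrogate for $\dist(\cdot,\bC)$ and the factor $g$ is engineered so that $g'/g$ absorbs the bad terms. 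Your handling of \eqref{e:HS3} by restriction to the graphical region is consistent with the paper.
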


The core calculation is in the estimates of Proposition \ref{p:HS_technical}. The right-hand sides of \eqref{e:HS_technical} and \eqref{e:HS_technical2} are further estimated in terms of $\max\{\mu,\|u\|\}^2$ thanks to Proposition \eqref{p:error_t_est}. All preliminary estimates of this section culminate there. The analysis begins with Proposition \ref{lateral}, where we obtain \eqref{nocon1} from a localized version of Huisken's monotonicity formula and a comparison between the flow $\{V_t\}_t$ and $\bC$. We must then aim at estimating the right-hand side of \eqref{nocon1}. An essential step towards this goal is made in deriving formula \eqref{e:compar} in Proposition \ref{prononwith}, similar to the formula in \cite[p.614 (3)]{Simon_cylindrical}. The technical work to estimate the terms appearing in \eqref{e:compar} is carried in Propositions \ref{lateral-pro} to \ref{p:error0}. This is where Assumption (A6) is crucially needed.

\subsection{Preliminary estimates}\label{preest}
Throughout Subsection \ref{preest},
we assume that $(\{V_t\},\{u(\cdot,t)\})\in\mathscr N_{\varepsilon}(U_4\times(-16,0])$, it
satisfies (A1)-(A6), and assume
$\varepsilon\leq \min\{\Cr{onlyunit},
\Cr{eps-tau}\}$, where
$\Cr{onlyunit}$ corresponds to $r=15/16$ in Proposition \ref{prouniden} and $\Cr{eps-tau}$ to $\beta=\sigma=1/40$ in
Theorem \ref{thm:graphical}. 

\begin{proposition}\label{lateral}
    Suppose that $\tilde\eta:[0,\infty)\rightarrow[0,1]$ is a $C^\infty$ function such that $\tilde\eta=1$ on $[0, 1/2]$, $\tilde\eta=0$ on $[1,\infty)$ and $\tilde\eta'\leq 0$. 
    Set $\eta(x,y):=\tilde\eta(|x|)\tilde\eta(|y|)$
    and $\rho:=\rho_{(0,0)}$. Assume that $\Theta(0,0)\geq\sfrac32$. Then for $0>s>-16$, 
    \begin{equation}
    \begin{split}
    \frac12\int_s^0\int \eta\rho&\left|h-\frac{(\nabla\rho)^\perp}{\rho}\right|^2\,d\|V_t\|dt\leq \int \eta\rho(\cdot,s)\,d\|V_s\|-\int
    \eta\rho(\cdot,s)\,d\|\bC\| \\ &+\int_s^0\int\nabla\rho\cdot\nabla\eta+\frac32\rho\eta|u|^2+\rho\frac{3|(\nabla\eta)^\perp|^2}{2\eta}\,dV_tdt-\int_s^0\int\nabla\rho\cdot\nabla\eta\,d\|\bC\|dt.
    \end{split}\label{nocon1}
    \end{equation}
\end{proposition}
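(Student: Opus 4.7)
My plan is to test Brakke's inequality \eqref{e:Brakke-ineq} with the non-negative time-dependent cutoff $\phi:=\eta\rho$ on a slab $[s,\tau]\subset[s,0)$, rewrite the resulting integrand via Huisken's pointwise identity and a tangential integration by parts, absorb the cross terms via Young's inequality with carefully chosen parameters, pass to the limit $\tau\to 0^-$ using the hypothesis $\Theta(0,0)\ge \tfrac32$, and finally convert the resulting constant $\tfrac32$ into the integrals against $\|\bC\|$ appearing in the statement by exploiting that $\bC$ is a static stationary cone through the origin.

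The key algebraic step, obtained by combining the pointwise Huisken identity $\partial_t\rho+\mathrm{tr}_S(D^2\rho)=-|(\nabla\rho)^\perp|^2/\rho$ (valid on any $k$-plane $S$) with the first variation formula $\int\mathrm{div}_S(\eta\nabla\rho)\,d\|V_t\|=-\int\eta\nabla\rho\cdot h\,d\|V_t\|$ (which holds for a.e.\ $t$ by (A1)), is the identity
\[
\mathcal{B}(V_t,u,\eta\rho)+\int\eta\,\partial_t\rho\,d\|V_t\|=-\int\eta\rho\Big|h-\tfrac{(\nabla\rho)^\perp}{\rho}\Big|^2 d\|V_t\|+\int \nabla\eta\cdot\nabla\rho\,d\|V_t\|+J_1+J_2+J_3,
\]
where $J_1:=\int\rho\big(h-\tfrac{(\nabla\rho)^\perp}{\rho}\big)\cdot(\nabla\eta)^\perp d\|V_t\|$, $J_2:=-\int\eta\rho\big(h-\tfrac{(\nabla\rho)^\perp}{\rho}\big)\cdot u^\perp d\|V_t\|$, $J_3:=\int\rho(\nabla\eta)^\perp\cdot u^\perp d\|V_t\|$. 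I will then absorb these three cross terms via $|ab|\leq\tfrac{\epsilon}{2}a^2+\tfrac{1}{2\epsilon}b^2$; the choice $\epsilon_1=\epsilon_2=\tfrac12$ on $J_1,J_2$ and $\epsilon_3=1$ on $J_3$ is uniquely adapted to yield coefficient $-\tfrac12$ in front of the Huisken deviation and $\tfrac32$ in front of both $\rho|(\nabla\eta)^\perp|^2/\eta$ and $\eta\rho|u|^2$. Integrating Brakke's inequality in time from $s$ to $\tau\in(s,0)$, rearranging, and letting $\tau\to 0^-$ (at which point $\|V_\tau\|(\eta\rho(\cdot,\tau))\to \Theta(0,0)\ge\tfrac32$ thanks to Proposition \ref{p:Huisken} and the $\gamma$-independence of $\Theta$) produces the inequality with $-\tfrac32$ on the right-hand side in place of the desired $-\int\eta\rho(\cdot,s)\,d\|\bC\|-\iint_s^0\nabla\eta\cdot\nabla\rho\,d\|\bC\|\,dt$.

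The final step is to identify this last quantity with $\tfrac32$. The crucial observation is that on the smooth part of $\bC=\hat\bC\times\mathbf S$, since $\bC$ is a cone with vertex on $\mathbf S\ni 0$, the position vector $X=x+y$ and the unit directions $x/|x|$ and $y/|y|$ all lie in the tangent plane $T_X\bC$; consequently $(\nabla\rho)^\perp\equiv 0\equiv(\nabla\eta)^\perp$ $\|\bC\|$-a.e.\ (the latter relies on the specific product form $\eta(x,y)=\tilde\eta(|x|)\tilde\eta(|y|)$, whose gradient points purely in the $x/|x|$ and $y/|y|$ directions). Testing the stationarity of $\bC$ against the field $\eta\nabla\rho$ then reduces to $\int\eta\,\mathrm{tr}_S(D^2\rho)\,d\|\bC\|=-\int\nabla\eta\cdot\nabla\rho\,d\|\bC\|$, and the Huisken identity yields $\tfrac{d}{dt}\int\eta\rho\,d\|\bC\|=\int\nabla\eta\cdot\nabla\rho\,d\|\bC\|$. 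Combining with $\lim_{t\to 0^-}\int\eta\rho\,d\|\bC\|=\Theta(\bC,0)=\tfrac32$ (which follows from the scale-invariance of the $k$-dimensional cone and $\eta(0)=1$, via dominated convergence), time integration on $[s,0]$ gives the identity $\tfrac32=\int\eta\rho(\cdot,s)\,d\|\bC\|+\iint_s^0\nabla\eta\cdot\nabla\rho\,d\|\bC\|\,dt$; substitution into the estimate of the previous paragraph yields \eqref{nocon1}.

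The principal obstacle is the tight bookkeeping of Young's inequality in the second step: the three parameters must be simultaneously tuned so that the coefficients $\tfrac12,\tfrac32,\tfrac32$ appear exactly, with no residual $|h|^2$ remaining. A secondary but essential point is the clean vanishing of both $(\nabla\rho)^\perp$ and $(\nabla\eta)^\perp$ on $\bC$; without either property, one would inherit spurious perpendicular remainders in the cone-side identity and the sharp form of \eqref{nocon1} would break down.
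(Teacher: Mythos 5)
Your proposal is correct and follows essentially the same route as the paper's proof: test Brakke's inequality with $\phi=\eta\rho$, isolate the squared Huisken deviation via the pointwise Huisken identity and tangential integration by parts, absorb the three cross terms (the paper's \eqref{add2}--\eqref{add3} are precisely your $J_1$, $J_2$, $J_3$) by Young's inequality with the same parameter choices, pass $t_2\to 0^-$ using $\Theta(0,0)\ge\tfrac32$, and convert $\tfrac32$ into the $\|\bC\|$-integrals by the corresponding exact identity for the static stationary cone (the paper's \eqref{add4}). Your bookkeeping of the coefficients $\tfrac12,\tfrac32,\tfrac32$ checks out, and the observation that $(\nabla\rho)^\perp=(\nabla\eta)^\perp=0$ on $\bC$ is exactly what makes \eqref{add4} an equality rather than an inequality.
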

\begin{proof}
Use \eqref{e:Brakke-ineq} with $\phi=\eta\rho$ and
$s<t_2<0$
to obtain
\begin{equation}
\begin{split}
    &\|V_t\|(\eta\rho)\Big|_{t=s}^{t_2}\\&\leq\int_{s}^{t_2}\int(\nabla(\eta\rho)-\eta \rho h)\cdot(h+u^\perp)+\eta\partial_t \rho\,dV_t dt 
    =\int_s^{t_2}\int -\left|h-\frac{(\nabla\rho)^\perp}{\rho}\right|^2\rho\eta\,dV_t dt \\& \quad+\int_{s}^{t_2}\int-h\cdot(\eta\nabla \rho-\rho\nabla\eta)+\frac{|(\nabla\rho)^\perp|^2}{\rho}\eta
    +u^\perp\cdot(\nabla(\eta\rho)-\eta\rho h)+\eta\partial_t \rho\,dV_tdt.
\end{split}\label{add1}
\end{equation}
Since 
\begin{equation}
\begin{split}
&    \int -h\cdot(\eta\nabla\rho-\rho\nabla\eta)\,dV_t \\
&\qquad \qquad=\int \eta\nabla^2\rho\cdot S+(\nabla\eta\otimes\nabla\rho)\cdot S+\nabla \eta\cdot\left(h-\frac{(\nabla\rho)^\perp}{\rho}\right)\rho+\nabla\eta\cdot(\nabla\rho)^\perp\,dV_t 
\end{split}
\end{equation}
and $(\nabla\eta\otimes\nabla\rho)\cdot S+\nabla\eta\cdot(\nabla\rho)^\perp=\nabla\eta\cdot\nabla\rho$, we may easily estimate
\begin{equation}\label{add2}
\begin{split}
&\int -h\cdot(\eta\nabla\rho-\rho\nabla\eta)\,dV_t \\
& \qquad \qquad\leq \int\eta\nabla^2\rho\cdot S+\nabla\eta\cdot\nabla\rho
    +\frac14\left|h-\frac{(\nabla\rho)^\perp}{\rho}\right|^2 \rho\eta+\rho\frac{|(\nabla\eta)^\perp|^2}{\eta}\,dV_t.
\end{split}
\end{equation}
For the term involving $u$, 
\begin{equation}\label{add3}
\begin{split}
    \int u^\perp\cdot(\nabla(\eta\rho)-\eta\rho h)\,dV_t&=\int \rho\eta\left(\frac{(\nabla\rho)^\perp}{\rho}-h\right)\cdot u^\perp+\rho\nabla\eta\cdot u^\perp \\
    &\leq \int \frac14\rho\eta\left|h-\frac{(\nabla\rho)^\perp}{\rho}\right|^2
    +\frac32\rho\eta|u|^2+
    \rho\frac{|(\nabla\eta)^\perp|^2}{2\eta}\,dV_t.
    \end{split}
\end{equation}
For $\bC$, we have
\begin{equation}\label{add4}
    \|\bC\|(\eta\rho)\big|_{t=s}^{t_2}=\int_{s}^{t_2}\int\eta\partial_t \rho\,d\|\bC\|dt
=-\int_{s}^{t_2}\int\eta\nabla^2\rho\cdot S\,d\bC dt=\int_{s}^{t_2}\int\nabla\eta\cdot\nabla\rho\,d\|\bC\|dt.
\end{equation}
Since $\lim_{t_2\rightarrow 0-}\|V_{t_2}\|(\eta\rho)\geq\sfrac32$ and $\lim_{t_2\rightarrow 0-}\|\bC\|(\eta\rho)=\sfrac32$, using \eqref{add1}-\eqref{add3} and subtracting \eqref{add4}, the identity
\begin{equation}\label{e:Huisken_id}
    \partial_t \rho+S\cdot\nabla^2 \rho+\frac{|S^\perp(\nabla\rho)|^2}{\rho}=0, 
\end{equation} 
yields \eqref{nocon1}.
\end{proof}

The following proposition establishes the parabolic analogue of \cite[p. 614, (3)]{Simon_cylindrical}. In the stationary case, the formula is obtained essentially by testing the first variation formula with the gradient of the distance squared from $\bS(\bC)$ (multiplied by a suitable cut-off); here, we will test Brakke's inequality precisely with $\dist^2(\cdot,\bS(\bC))$ multiplied by a suitable cut-off. We recall that, in our system of coordinates, $\mathrm{span}(e_j)_{n-k+2\leq j\leq n}=\bS(\bC)$.

\begin{proposition}\label{prononwith}
    For any non-negative function $\psi\in C^\infty_c(U_2\times[-4,0])$
    with $\psi(X,-4)=0$,
    we have 
\begin{equation}\label{e:compar}
    \begin{split}
       & \iint \psi \left( \frac{|x|^2}{2} \, \left| h \right|^2 + 1 + \sum_{j=n-k+2}^n |S^\perp e_j|^2 \right) \, d V_t \,dt - \iint \psi \, d\|\bC\|\,dt \\
       %&\qquad\leq \iint \frac{|x|^2}{2} \psi \, d\|V_{t}\|\Big|_{t=t_1}^{t_2} - \iint \frac{|x|^2}{2} \psi\, d\|\bC\|\Big|_{t=t_1}^{t_2}\\
       &\qquad \quad
       \leq \int \frac{|x|^2}{2} \psi(\cdot,0) \, d\|\bC\| - \int \frac{|x|^2}{2} \psi(\cdot,0) \, d\|V_0\| \\
        &\qquad\quad +\iint \frac{|x|^2}{2}\psi_t\,d\|V_t\|dt-\iint\frac{|x|^2}{2}\,\psi_t\,d\|\bC\|dt\\
       &\qquad \quad +2  \iint S^\perp x \cdot \nabla_{\mathbf{S}}\psi \,  dV_t\,dt \\
       &\qquad \quad -  2\iint S x \cdot \nabla_{\mathbf S^\perp}\psi \, dV_tdt+ 2 \iint x \cdot \nabla_{\mathbf{S}^\perp}\psi \, d\|\bC\|\,dt \\
       &\qquad\quad 
       -\iint \frac{|x|^2}{2}\nabla^2\psi\cdot S\,dV_t dt
       +\iint \frac{|x|^2}{2}\nabla^2\psi\cdot S\,d\bC dt \\
    &\qquad\quad+  \iint  \left| \nabla^\perp \left( \frac{|x|^2}{2} \psi  \right) \right| \,  |u| + \frac{|x|^2}{4} \psi  |u|^2 \,dV_t\,dt=: I_0+\cdots+I_5,
    \end{split}
\end{equation}   
where all the integrals take place on $P_2=U_2\times(-4,0)$ or $G_k(U_2)\times(-4,0)$, and where $\psi_t=\partial_t\psi$. Also, $\nabla_{\bS}$ and $\nabla_{\bS^\perp}$ are the projections of the gradient operator on $\bS(\bC)$ and its orthogonal complement, respectively. 
\end{proposition}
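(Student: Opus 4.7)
The plan is to test Brakke's inequality (A4) with $\phi(X,t):=\tfrac{|x|^{2}}{2}\psi(X,t)$ on the interval $[-4,0]$ and then to symmetrize the resulting inequality against analogous identities for the static, stationary cone $\bC$. The idea mirrors Simon's argument in the stationary setting, where testing the first variation formula with the vector field $\nabla\!\bigl(\tfrac{|x|^{2}}{2}\psi\bigr)=\psi\,x+\tfrac{|x|^{2}}{2}\nabla\psi$ produces the structural integrand $1+\sum_{j}|S^{\perp}e_{j}|^{2}$; here Brakke's inequality replaces stationarity, so the additional terms $\phi|h|^{2}$, the time derivatives $\partial_{t}\phi$, and the forcing contribution appear naturally.

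First I compute, using $\nabla(|x|^{2}/2)=x$ (the vector $\mathbf{p}_{\bS^{\perp}}X$) and $\nabla^{2}(|x|^{2}/2)=\mathbf{p}_{\bS^{\perp}}$,
\begin{equation*}
\nabla^{2}\phi\cdot S=\psi\,\mathrm{tr}(\mathbf{p}_{\bS^{\perp}}S)+2\,Sx\cdot\nabla\psi+\tfrac{|x|^{2}}{2}\nabla^{2}\psi\cdot S.
\end{equation*}
Since $\dim\bS=k-1$ and $|Se_{j}|^{2}+|S^{\perp}e_{j}|^{2}=1$, one has $\mathrm{tr}(\mathbf{p}_{\bS^{\perp}}S)=1+\sum_{j=n-k+2}^{n}|S^{\perp}e_{j}|^{2}$, which is exactly the structural term on the LHS of \eqref{e:compar}. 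The first variation identity $\int\nabla\phi\cdot h\,d\|V_{t}\|=-\int\nabla^{2}\phi\cdot S\,dV_{t}$, valid by (A1), converts the $\nabla\phi\cdot h$ term inside $\mathcal{B}$ into $-\iint\nabla^{2}\phi\cdot S\,dV_{t}\,dt$. Combining this with the forcing bound $(\nabla\phi-\phi h)\cdot u^{\perp}\leq |(\nabla\phi)^{\perp}||u|+\phi|h||u|$ and Young's inequality applied to the last summand, and using $\psi(\cdot,-4)\equiv 0$ so that the lower-time boundary term vanishes, the integrated Brakke inequality rearranges into a preliminary estimate in which the $\iint\phi|h|^{2}$ and $\iint\psi(1+\sum|S^{\perp}e_{j}|^{2})$ terms sit on the LHS and the $V$-part of each $I_{j}$ sits on the RHS.

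To obtain the $\bC$-symmetric form of \eqref{e:compar} I use two elementary identities for the cone. Since $\|\bC\|$ is static and $\psi(\cdot,-4)\equiv 0$, the trivial equality $\int\tfrac{|x|^{2}}{2}\psi(\cdot,0)\,d\|\bC\|=\iint\tfrac{|x|^{2}}{2}\partial_{t}\psi\,d\|\bC\|\,dt$ can be added to both sides of the preliminary inequality, producing the full form of $I_{0}+I_{1}$. Since $\bC$ is stationary with $\sum_{j}|S^{\perp}e_{j}|^{2}=0$ on its regular part, the variational identity $\int\psi\,d\|\bC\|+2\!\int Sx\cdot\nabla\psi\,d\|\bC\|+\!\int\tfrac{|x|^{2}}{2}\nabla^{2}\psi\cdot S\,d\|\bC\|=0$ holds at each $t$; subtracting $\iint\psi\,d\|\bC\|\,dt$ from the LHS (to match the target form) and substituting this identity furnishes the $\|\bC\|$-contributions appearing in $I_{3}$ and $I_{4}$. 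To cast all the $Sx\cdot\nabla\psi$ terms into the displayed form, I decompose $\nabla\psi=\nabla_{\bS}\psi+\nabla_{\bS^{\perp}}\psi$ and exploit $x\cdot\nabla_{\bS}\psi=0$ (since $x\in\bS^{\perp}$ and $\nabla_{\bS}\psi\in\bS$) to obtain $Sx\cdot\nabla_{\bS}\psi=-S^{\perp}x\cdot\nabla_{\bS}\psi$ via $Sx=x-S^{\perp}x$.

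The hard part will be the delicate bookkeeping needed to match the \emph{asymmetric} form of $I_{3}$, in which $Sx\cdot\nabla_{\bS^{\perp}}\psi$ is integrated against $V$ but only $x\cdot\nabla_{\bS^{\perp}}\psi$ against $\|\bC\|$. The geometric fact that resolves this is that on the regular part of each half-plane $\mathbf{H}_{i}\subset\bC$ the position vector satisfies $x=(x\cdot w_{i})w_{i}$ with $w_{i}\in S$, so $S^{\perp}x\equiv 0$ on $\bC$; this cancellation kills the stray term $-2\iint S^{\perp}x\cdot\nabla_{\bS^{\perp}}\psi\,d\|\bC\|$ that the decomposition above would otherwise leave behind, and recovers precisely the form of $I_{3}$. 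Apart from this geometric input, the computation is a careful but essentially mechanical manipulation of Brakke's inequality, the first variation identity, and Young's inequality for the forcing term.
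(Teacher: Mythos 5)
Your proposal is essentially the paper's own argument: test Brakke's inequality with $\phi = \tfrac{|x|^2}{2}\psi$ (the paper uses $|x|^2\psi$ and halves at the end, a trivial scaling), expand $\nabla^2\phi\cdot S$ to extract $\operatorname{tr}(\mathbf{p}_{\bS^\perp}S)=1+\sum_j|S^\perp e_j|^2$, use the first variation identity for the $\nabla\phi\cdot h$ term, Young's inequality on the forcing, and then symmetrize against the stationarity identity and FTC for the static cone $\bC$. You correctly isolate the decisive geometric facts on the cone — $\sum_j|S^\perp e_j|^2\equiv 0$ and $S^\perp x\equiv 0$, hence $Sx=x$ and $x\cdot\nabla\psi = x\cdot\nabla_{\bS^\perp}\psi$ — which explain why the $V$-side term $Sx\cdot\nabla_{\bS^\perp}\psi$ pairs against the $\bC$-side term $x\cdot\nabla_{\bS^\perp}\psi$ in $I_3$, and you derive the same splitting $Sx\cdot\nabla\psi = Sx\cdot\nabla_{\bS^\perp}\psi - S^\perp x\cdot\nabla_\bS\psi$ used in the paper for $I_2$. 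One small remark that is really about the paper, not your argument: with the Young weights $\tfrac12,\tfrac12$ that both you and the paper use to absorb $\phi|h||u|$, the surviving $|h|^2$-coefficient on the left-hand side comes out as $\tfrac{|x|^2}{4}\psi$ (this is in fact what the paper's intermediate display \eqref{d2B2} shows), not the $\tfrac{|x|^2}{2}\psi$ written in \eqref{e:compar}; this is an inconsistency internal to the paper's statement, and it is harmless downstream since the $|h|^2$ term is discarded in the applications of the proposition.
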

\begin{proof}

We test Brakke's inequality \eqref{e:Brakke-ineq} with
\[
\phi(X,t) = |x|^2\,\psi(X,t) = \dist^2(X, \mathbf{S}(\bC))\,\psi(X,t)\,,
\]
on the time interval $[-4,0]$. By the hypothesis on $\psi$, we then get
\begin{equation}\label{d2B}
\int |x|^2 \psi(X,0) \, d\|V_0\| \leq \iint (\nabla(|x|^2\psi)-|x|^2\psi \,h) \cdot (h + u^\perp) + |x|^2\psi_t\,dV_t\,dt\,,
\end{equation}
and we work on each summand on the right-hand side. First, for a.e. $t \in (-4,t_2)$ 
\[
\int \nabla(|x|^2\psi)\cdot h \, d\|V_t\|\,dt=-\int \nabla^2(|x|^2\psi) \cdot S\, dV_t\,dt\,.
\]
Notice now that
\begin{equation} \label{hessian term}
\nabla^2(|x|^2\psi)  \cdot S = 2\,\psi\,  \mathbf{S}^\perp \cdot S + 4 x \otimes \nabla \psi \cdot S + |x|^2 \nabla^2 \psi \cdot S\,.
\end{equation}
Let us denote $(e_1, \ldots, e_{n-k+1})$ and $(e_{n-k+2},\ldots e_n)$ orthonormal bases of $\mathbf{S}^\perp$ and $\mathbf{S}$, respectively.  We can then write
\[
\mathbf{S}^\perp \cdot S=\sum_{i=1}^{n-k+1} e_i \cdot S e_i = (n-k+1) - \sum_{i=1}^{n-k+1} e_i \cdot S^\perp e_i\,,
\]
and since
\[
\sum_{i=1}^{n-k+1} e_i \cdot S^\perp e_i + \sum_{i=n-k+2}^{n} e_i \cdot S^\perp e_i = {\rm tr}(S^\perp) = n-k,
\]
we have
\begin{equation} \label{hessian term 1}
\mathbf{S}^\perp \cdot S= 1 + \sum_{j=n-k+2}^{n} e_j \cdot S^\perp e_j = 1 + \sum_{j=n-k+2}^n |S^\perp e_j|^2\,.
\end{equation}

Then, we estimate the term involving $u$ by
\[
(\nabla(|x|^2\psi)-|x|^2\psi\,h)\cdot u^\perp \leq \left|\nabla^\perp (|x|^2\psi)\right||u| + \frac12 \psi |x|^2|h|^2 + \frac12\psi|x|^2|u|^2\,.
\]

Finally, we notice that for every vector $w \in \R^n$,
\[
Sx \cdot w = Sx \cdot (\mathbf{S}w + \mathbf{S}^\perp w) = Sx \cdot \mathbf{S}^\perp w - S^\perp x \cdot \mathbf{S} w\,,
\]
since $x \cdot \mathbf{S}w = \mathbf{S}^\perp X \cdot \mathbf{S}w=0$. In particular,
\begin{equation} \label{hessian term 2}
Sx \cdot \nabla \psi = Sx \cdot \nabla_{\mathbf{S}^\perp}\psi - S^\perp x \cdot \nabla_{\mathbf{S}}\psi\,.
\end{equation}

We can then conclude from \eqref{d2B}-\eqref{hessian term 2} that
\begin{equation}\label{d2B2}
\begin{split}
  & \iint  \psi \left( \frac14 |x|^2|h|^2 + 1 + \sum_{j=n-k+2}^{n}|S^\perp e_j|^2 \right) \,dV_t\,dt \\ &\quad\leq - \frac12\int |x|^2 \psi(\cdot,0) \, d\|V_0\| 
  +\iint \frac12 |x|^2\psi_t + 2 S^\perp x \cdot \nabla_{\mathbf{S}}\psi - 2 Sx \cdot \nabla_{\mathbf{S}^\perp}\psi \\&\qquad -\frac12|x|^2\nabla^2\psi \cdot S+ 
  \frac12\left|\nabla^\perp (|x|^2\psi)\right||u| + \frac14\psi|x|^2|u|^2\,dV_t\,dt\,.
  \end{split}
\end{equation}

Applying now Brakke's inequality (as an equality) to the constant Brakke flow identically equal to $\bC$, again with test function $\phi=|x|^2\psi$ on $[-4,0]$, we get
\begin{equation} \label{d2cone}
    \iint \psi \, d\|\bC\|\,dt =-\frac12 \int |x|^2 \psi(\cdot,0) \, d\|\bC\| +\iint \frac12 |x|^2\psi_t  - 2 x \cdot \nabla_{\mathbf{S}^\perp}\psi -\frac12|x|^2\nabla^2\psi \cdot S \,d\bC\,dt\,.
\end{equation}

The inequality \eqref{e:compar} is obtained by subtracting \eqref{d2cone} from \eqref{d2B2}. 
\end{proof}

In the following, we will work on the term
\[
\iint \psi\,d\|V_t\|\,dt-\iint\psi \,d\|\bC\|\,dt
\]
appearing on the left-hand side of \eqref{e:compar}. Writing it as
\[
\int_{-4}^0\left(\int \psi \,d\|V_t\| -\int \psi \,d\|\bC\|\right)_+\,dt -  \int_{-4}^0\left(\int \psi \,d\|\bC\| -\int \psi \,d\|V_t\|\right)_+\,dt\,,
\]
we bring the second summand on the right-hand side of \eqref{e:compar}, and we proceed to estimate it in terms of the square of the excess. This is where assumption (A6) is crucial.

\begin{proposition}\label{lateral-pro}
    Suppose that $\psi=\psi(x,y,t)\in C^\infty(P_2)$ is non-negative and
    radially symmetric with respect to $x$. Then 
    there exists $\Cl[con]{la1}=\Cr{la1}(n,k,p,q,E_1,\Cr{conslice},\|\psi\|_{C^3})\in(1,\infty)$ such that
    \begin{equation}
        \int_{-4}^0\Big(\int_{B_1^{n-k+1}\times B_1^{k-1}
    }\psi\,d\|\bC\|-\int_{B_1^{n-k+1}\times B_1^{k-1}
    }\psi\,d\|V_t\|\Big)_+\,dt\leq \Cr{la1}\max\{\mu,\|u\|\}^2.
    \end{equation}
\end{proposition}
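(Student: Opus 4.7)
The plan is to reduce the required bound to a slice-wise comparison between $\|V_t\|$ and $\|\bC\|$ along the direction orthogonal to the spine $\bS(\bC)$, combining the slice estimates of (A6) with the $C^{1,\alpha}$ graphical structure of Theorem \ref{thm:graphical}.

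First, I would apply the coarea formula relative to the projection $\mathbf{p}_{\bS(\bC)}\colon \R^n \to \R^{k-1}$ to write, for each $t\in(-4,0)$,
\[
\int_{B_1^{n-k+1}\times B_1^{k-1}} \psi \, d\|V_t\| = \int_{B_1^{k-1}} \int_{M_t^y \cap B_1^{n-k+1}} \frac{\psi(x, y, t)}{|J^{M_t} \mathbf{p}_{\bS(\bC)}|(x, y)} \, d\mathcal{H}^1(x) \, dy,
\]
with the analogous identity for $\|\bC\|$ in which the coarea factor is identically $1$. Away from the spine, Theorem \ref{thm:graphical} yields $||J^{M_t} \mathbf{p}_{\bS(\bC)}|-1| \leq C|\nabla f|^2$, so that the difference of the two integrals on the graphical region $\{|x| \geq \sigma\}$ (for a suitable small $\sigma$) is bounded by $C \max\{\mu, \|u\|\}^2$ after integration in $(y, t)$. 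The delicate contribution is therefore concentrated near the spine.

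Exploiting the radial symmetry of $\psi$ in $x$ (which in particular forces the first- and third-order $x$-derivatives at $x=0$ to vanish), I would Taylor expand
\[
\psi(x, y, t) = \psi(0, y, t) + \beta(y, t)\, |x|^2 + R(x, y, t), \qquad |R(x, y, t)| \leq C \|\psi\|_{C^3}\, |x|^3.
\]
Integrating against the slice measure, the slice-wise deficit $\int_{\hat\bC \cap B_1^{n-k+1}} \psi - \int_{M_t^y \cap B_1^{n-k+1}} \psi$ becomes a linear combination of the two moments that (A6) at master scale $r=2$ controls from below, namely
\[
3 - \mathcal{H}^1(M_t^y \cap B_1^{n-k+1}) \leq 2\Cr{conslice} K^2, \qquad 1 - \int_{M_t^y \cap B_1^{n-k+1}} |x|^2 \, d\mathcal{H}^1 \leq 8\Cr{conslice} K^2,
\]
plus a remainder involving $R$. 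By Theorem \ref{thm:graphical}, the Hausdorff constant $K$ appearing in (A6) satisfies $K \leq C \max\{\mu, \|u\|\}$. Since $\psi \geq 0$ forces $\psi(0,y,t) \geq 0$, the first term contributes at most $C K^2 \|\psi\|_\infty$ to the positive part of the deficit.

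The main obstacle is the second-order term $\beta(y,t)\,|x|^2$, where the coefficient $\beta(y,t)$ may be negative; in that case, (A6)'s one-sided control of the second moment is insufficient and a matching \emph{upper} bound on $\int_{M_t^y\cap B_1^{n-k+1}}|x|^2\,d\mathcal H^1$ must be derived. I would obtain it by combining the graphical parametrization of $M_t^y$ on the annulus $\{1/8 \leq |x| \leq 1\}$ (which yields an $O(K^2)$ control on the annular second moment from above, via the area formula for graphs together with $|f|, |\nabla f| \lesssim K$) with the crude bound $\int_{M_t^y \cap B_{1/8}^{n-k+1}} |x|^2 \, d\mathcal{H}^1 \leq (1/8)^2 \mathcal{H}^1(M_t^y \cap B_{1/8}^{n-k+1})$, where the disk-length is in turn controlled by subtracting the annular graph length from $\mathcal{H}^1(M_t^y \cap B_1^{n-k+1})$ (and using (A6) combined with the annular upper bound). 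The higher-order remainder $R$ is handled analogously, using $|R| \lesssim \|\psi\|_{C^3}\, |x|^3$ together with the same length bounds on $M_t^y \cap B_\rho^{n-k+1}$. Integrating the resulting slice-wise estimates in $y \in B_1^{k-1}$ and $t \in (-4, 0)$ produces the desired bound $\Cr{la1} \max\{\mu, \|u\|\}^2$, with $\Cr{la1}$ depending only on $n,k,p,q,E_1,\Cr{conslice},\|\psi\|_{C^3}$.
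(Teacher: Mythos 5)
Your blueprint shares the paper's essential ingredients — coarea slicing, graphical control from Theorem \ref{thm:graphical} away from the spine, (A6) near it — but the Taylor decomposition $\psi = \psi(0,y,t) + \beta(y,t)|x|^2 + R$ leads to a step that, as described, does not close.

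You correctly identify the obstacle: when $\beta(y,t) < 0$, (A6)'s one-sided (lower) control of the slice second moment is insufficient, and an \emph{upper} bound on $\int_{M_t^y\cap B_1^{n-k+1}}|x|^2\,d\mathcal H^1$ is needed. The workaround you propose — bound the disk second moment by $(1/8)^2\mathcal H^1(M_t^y\cap B_{1/8}^{n-k+1})$, then control the disk-length by ``subtracting the annular graph length from $\mathcal H^1(M_t^y\cap B_1^{n-k+1})$'' — implicitly requires an \emph{upper} bound on the total slice length $\mathcal H^1(M_t^y\cap B_1^{n-k+1})$. No such slice-wise upper bound is available: (A2) controls the $k$-dimensional mass (hence, by coarea, only the $y$-\emph{integral} of slice lengths, not a pointwise-in-$y$ bound), and (A6) only ever gives a \emph{lower} bound on the slice length. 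A slice could carry a lot of extra length inside the small disk without violating any hypothesis, making the pointwise slice moment uncontrolled. The cubic remainder $R$ over the disk presents the same difficulty.

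The paper sidesteps this by a different decomposition: $\psi = |x|^2\phi + \psi(0,y,t)\tilde\eta(|x|)$ with $\phi := |x|^{-2}\big(\psi - \psi(0,y,t)\tilde\eta(|x|)\big)$, which is a bounded $C^1$ function precisely because the radial symmetry of $\psi$ kills the odd $x$-derivatives at $x=0$. The key difference from your approach is that the $|x|^2\phi$ contribution is \emph{not} handled slice-wise: it is estimated directly with the space-time integral bounds \eqref{para1}, \eqref{parC}, \eqref{para1-ex} from Theorem \ref{thm:graphical}. These bound $|x|^2$-weighted integrals on the non-graphical region and on the graphical discrepancy by $\Cr{nonsig}\max\{\mu,\|u\|\}^2$ in \emph{absolute value}, so the sign of $\phi$ is irrelevant. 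Only the leftover term $\psi(0,y,t)\tilde\eta(|x|)$ is treated slice-wise with (A6), and there the coefficient $\psi(0,y,t)$ is non-negative by hypothesis, so only the one-sided inequality \eqref{cotriple} is needed. In short: all sign-ambiguous pieces must be grouped under a factor of $|x|^2$ and fed to the space-time estimates of Theorem \ref{thm:graphical}, with (A6) invoked only for the constant-in-$|x|$ part (via $\tilde\eta$) whose sign is fixed. If you replace your quadratic Taylor block $\beta(y,t)|x|^2 + R$ by $|x|^2\phi$ and invoke \eqref{para1}--\eqref{parC} for the non-graphical parts, your argument becomes the paper's.
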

\begin{proof}
We use Theorem \ref{thm:graphical} with 
   $\beta=\sigma=1/40$ and $R=4$ to obtain 
   a graphical representation $f:U\rightarrow \bC^\perp$ on
$U\subset \bC\cap (U_2\times(-8,0))$  with the error estimates \eqref{para1}-\eqref{para1-ex}. In the following, we redefine $U$ to be $U\cap (B_1^{n-k+1}\times B_1^{k-1}\times(-4,0))$ and note that all the estimates related to
$f$ hold just as well even for this new $U$.
Let $\tilde\eta$ be as in Proposition \ref{lateral} and define
\begin{equation}\label{lateral2}
    \phi(x,y,t):=|x|^{-2}\big\{
    \psi(x,y,t)-\psi(0,y,t)\tilde\eta(|x|)\big\}. 
\end{equation}
Since $\psi$ is radially symmetric with respect to $x$ and smooth, $\phi$ is a $C^1$ function
with $\|\phi\|_{C^1}\leq c(n,k,\|\psi\|_{C^3},\|\tilde\eta\|_{C^1})$. 
By \eqref{lateral2},
\begin{equation}\label{lateral3}
\psi(x,y,t)=|x|^2\phi(x,y,t)+\psi(0,y,t)\tilde\eta(|x|),
\end{equation}
and we estimate the integral of each term. For the first term, we claim
\begin{equation}\label{lateral4}
    \int_{-4}^{0}\Big|\int_{B_1^{n-k+1}\times B_1^{k-1}} |x|^2\phi\,d\|V_t\|
    -\int_{B_1^{n-k+1}\times B_1^{k-1}}|x|^2\phi\,d\|\bC\|\Big|dt\leq \Cl[con]{dete}\min\{\mu,\|u\|\}^2
\end{equation}
where $\Cr{dete}$ depends only on $n,k,\|\phi\|_{C^1}$
and $\Cr{nonsig}$. 
The non-graphical part is estimated by $\Cr{nonsig}\mu^2$ due to
\eqref{para1} and \eqref{parC}. 
For the graphical part, we may estimate the difference for
the corresponding term over $\bC$ as
\begin{equation}
    \int_{U}|(|x|^2+|f|^2)\phi J_{\nabla f}
    - |x|^2\phi|\,d\mathcal H^kdt\leq c(n,k,\|\phi\|_{C^1})
    \int_U |f|^2+|x|^2|\nabla f|^2d\mathcal H^kdt.
\end{equation}
The last expression is bounded by a constant multiple of 
$\max\{\mu,\|u\|\}^2$ due to \eqref{para1-ex} and the definition 
of $\mu$. This gives \eqref{lateral4}.

For the second term of \eqref{lateral3}, we use (A6), in particular \eqref{cotriple}. 
Note that $\psi(0,y,t)\tilde\eta(|x|)$ is a constant 
function on $|x|\leq 1/2$ for each fixed $y$ and $t$. 
By the graphical
representation 
as well as \eqref{e:bound} and Proposition \ref{e-reg}, the 
$C^1$-norm of $f$ on $1/16\leq |x|\leq 1$ is 
bounded by a constant multiple of
$\max\{\mu,\|u\|\}$. Thus, $K$ in \eqref{anasup} is bounded by 
a constant multiple of $\max\{\mu,\|u\|\}$, and by \eqref{cotriple} and $\mathcal H^{k-1}$-a.e.~$y$ with $|y|<1$, we have
\begin{equation}
    \int_{B_1^{n-k+1}\cap\hat{\bC}}\tilde\eta(|x|)\,
    d\mathcal H^1(x)-\int_{B_1^{n-k+1}\cap M_t^y}\tilde\eta(|x|)\,d\mathcal H^1(x)\leq \Cl[con]{ancon1}\max\{\mu,\|u\|\}^2,
\end{equation}
where $\Cr{ancon1}$ depends in addition on $\Cr{conslice}$. 
By the coarea
formula, we have for a.e.~$t$
\begin{equation}\begin{split}
    \int_{y\in B_1^{k-1}}\psi(0,y,t)&\int_{B_1^{n-k+1}\cap M_t^y} \tilde\eta(|x|)\,d\mathcal H^1(x)d\mathcal H^{k-1}(y)\\&
    \leq \int_{M_t\cap (B_1^{n-k+1}\times B_1^{k-1})}\psi(0,y,t)\tilde\eta(|x|)\,d\|V_t\|,
    \end{split}
\end{equation}
and thus 
\begin{equation}
\begin{split}
    \int_{B_1^{n-k+1}\times B_1^{k-1}}\psi(0,y,t)\tilde\eta(|x|)\,d\|\bC\|&-\int_{B_1^{n-k+1}\times B_1^{k-1}} \psi(0,y,t)\tilde\eta(|x|)\,d\|V_t\|\\
    &\leq \Cr{ancon1}(\sup\psi)\max\{\mu,\|u\|\}^2.
    \end{split}
\end{equation}
Combined with \eqref{lateral4}, we proved the desired estimate.
\end{proof}
The same proof with $\phi=0$ together with (A5) shows the following,
which we record for the later use.
\begin{proposition}\label{san-eq}
    Suppose that $\psi=\tilde\psi(y,t)\tilde\eta(|x|)\in C^\infty(P_2)$ is non-negative, where $\tilde\eta$ is as in Proposition \ref{lateral}. Then there exists $\Cl[con]{lmin}=\Cr{lmin}(n,k,p,q,E_1,\Cr{conslice},\|\psi\|_{C^3})\in(1,\infty)$ such that, for each $t\in [-4,0]$, we have
    \begin{equation}
        \int_{B_1^{n-k+1}\times B_1^{k-1}}\psi\,d\|\bC\|-\int_{B_1^{n-k+1}\times B_1^{k-1}}\psi\,d\|V_t\|\leq \Cr{lmin}\max\{\mu,\|u\|\}^2.
    \end{equation}
\end{proposition}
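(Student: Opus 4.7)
The plan is to specialize the strategy used in Proposition \ref{lateral-pro} to the case where the decomposition \eqref{lateral3} has $\phi\equiv0$, since the hypothesis $\psi=\tilde\psi(y,t)\tilde\eta(|x|)$ already puts $\psi$ in the form of the second summand of \eqref{lateral3} (note that $\tilde\eta(0)=1$, so $\tilde\psi(y,t)=\psi(0,y,t)$). Consequently the term \eqref{lateral4}, which required integration in time, does not appear, and the estimate becomes inherently pointwise in $t$, at the price of needing to upgrade an a.e.~statement in $t$ to an everywhere statement via (A5).

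The main body of the argument then runs exactly as in Proposition \ref{lateral-pro}. First, applying Theorem \ref{thm:graphical} with $\beta=\sigma=1/40$ (on $U_4\times(-16,0]$), the support of the flow outside a small tubular neighborhood of $\bS$ is a $C^{1,\alpha}$ graph over $\bC$ with $C^1$-norm controlled by $\max\{\mu,\|u\|\}$, so the constant $K$ in \eqref{anasup} satisfies $K\leq c\max\{\mu,\|u\|\}$. Then (A6), in particular \eqref{cotriple}, yields for $\mathcal H^{k-1}$-a.e.\ $y\in B_1^{k-1}$ and a.e.\ $t\in(-4,0)$ the slice estimate
\begin{equation*}
\int_{\hat\bC\cap B_1^{n-k+1}}\tilde\eta(|x|)\,d\mathcal H^1(x)-\int_{M_t^y\cap B_1^{n-k+1}}\tilde\eta(|x|)\,d\mathcal H^1(x)\leq \Cr{ancon1}\max\{\mu,\|u\|\}^2,
\end{equation*}
exactly as derived in the proof of Proposition \ref{lateral-pro}. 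Multiplying by the non-negative factor $\tilde\psi(y,t)$ and integrating over $y\in B_1^{k-1}$, the left-hand side reproduces $\int\psi\,d\|\bC\|$ (by the product structure of $\bC=\hat\bC\times\bS$), while the right-hand side is bounded above by $\int\psi\,d\|V_t\|$ via the coarea formula applied to the projection onto $\bS$ (whose Jacobian on the tangent plane of $M_t$ is $\leq 1$). This gives the desired inequality for a.e.\ $t\in[-4,0]$, with $\Cr{lmin}$ depending on $n,k,p,q,E_1,\Cr{conslice}$ and $\sup|\tilde\psi|\leq \|\psi\|_{C^3}$.

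The remaining step is to upgrade the inequality from a.e.\ $t$ to every $t\in[-4,0]$; this is where (A5) enters. For any fixed $t$, pick a sequence $t_j\uparrow t$ of times at which the inequality holds. The left side $\int\psi(\cdot,t_j)\,d\|\bC\|$ converges to $\int\psi(\cdot,t)\,d\|\bC\|$ by smoothness of $\psi$ in time. For the right side, left-continuity of $\|V_{\cdot}\|$ as Radon measures (assumption (A5)) combined with the uniform continuity of $\psi$ in $t$ gives $\int\psi(\cdot,t_j)\,d\|V_{t_j}\|\to \int\psi(\cdot,t)\,d\|V_t\|$, and the pointwise estimate at time $t$ follows. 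The only delicate point is this passage to the limit, since without (A5) the value of $\|V_t\|$ at a fixed $t$ could differ arbitrarily from its values at nearby times; once (A5) is invoked, the extension is straightforward.
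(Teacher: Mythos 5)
Your proposal is correct and is precisely the argument the paper has in mind: the paper's entire ``proof'' is the single sentence preceding the statement (``The same proof with $\phi=0$ together with (A5) shows the following''), and you have correctly unpacked it — noting that the product form $\psi=\tilde\psi(y,t)\tilde\eta(|x|)$ makes the $\phi$-piece of the decomposition \eqref{lateral3} vanish, so the time-integrated estimate \eqref{lateral4} drops out, leaving a pointwise-in-$t$ bound from (A6)/\eqref{cotriple} and the coarea formula, which is then upgraded from a.e.\ $t$ to every $t$ via left-continuity (A5).
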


We next turn our attention to the other terms on the right-hand side of \eqref{e:compar}, and finally prove the following estimate.
    
\begin{proposition} \label{lat4}
Suppose that $\psi(x,y,t)\in C_c^\infty(U_1^{n-k+1}\times U_1^{k-1}\times[-4,0])$ is 
non-negative and radially symmetric in $x$ and $y$, that is, there exists $\tilde\psi(s_1,s_2,t)\in C^\infty(\R^3)$ such that $\psi(x,y,t)=\tilde\psi(|x|,|y|,t)$.
Moreover assume $\psi(X,0)=\psi(X,-4)=0$. Then
there exists $\Cl[con]{er1}=\Cr{er1}(n,k,p,q,E_1,\Cr{conslice},\|\psi\|_{C^3})>0$ such that
   \begin{equation}\label{kore}
   \iint_{P_2}
       \sum_{j=n-k+2}^n \psi |S^\perp e_j|^2\,dV_t dt+
    \int_{-4}^0 \Big(\int\psi\,d\|V_t\|-\int\psi\,d\|\bC\|\Big)_+ dt\leq \Cr{er1}\max\{\mu,\|u\|\}^2.
\end{equation}
\end{proposition}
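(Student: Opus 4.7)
The plan is to start from inequality \eqref{e:compar} in Proposition \ref{prononwith} and to bound each of the six terms $I_0,\ldots,I_5$ on its right-hand side by a constant multiple of $\max\{\mu,\|u\|\}^2$. Setting $a(t):=\int \psi\,d\|V_t\|-\int \psi\,d\|\bC\|$ and using $\int_{-4}^0 a(t)\,dt=\int (a)_+\,dt-\int (a)_-\,dt$, then dropping the non-negative term $\iint \psi\,|x|^2|h|^2/2\,dV_t\,dt$ from the left-hand side of \eqref{e:compar} and moving $\int(a)_-\,dt$ to the right, one arrives at
\begin{equation*}
\iint_{P_2}\psi\sum_{j=n-k+2}^n |S^\perp e_j|^2\,dV_t\,dt+\int_{-4}^0 (a(t))_+\,dt\leq \sum_{l=0}^5 I_l+\int_{-4}^0 (a(t))_-\,dt.
\end{equation*}
Since $\psi$ is radially symmetric in $x$, Proposition \ref{lateral-pro} controls $\int(a)_-\,dt\leq C\max\{\mu,\|u\|\}^2$, so it remains to estimate each $I_l$ by the same quantity.

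The bound $I_0=0$ is immediate from the hypothesis $\psi(\cdot,0)\equiv 0$. The terms $I_1$, $I_4$, and --- after writing $Sx=x-S^\perp x$ and exploiting the radial-symmetry identity $x\cdot\nabla_{\mathbf{S}^\perp}\psi=|x|^2 g(|x|,|y|,t)$ for some smooth $g$ --- the ``symmetric'' half of $I_3$, all reduce to integrals of the form $\iint |x|^2\phi\,(dV_t-d\|\bC\|)\,dt$ with smooth bounded $\phi$. For such terms, the technique already employed in the proof of Proposition \ref{lateral-pro} for \eqref{lateral4} yields a two-sided bound: on the graphical region of Theorem \ref{thm:graphical}, a Taylor expansion of the integrand in $(f,\nabla f)$ produces quadratic remainders controlled by $\int(|f|^2+|x|^2|\nabla f|^2)\leq C\max\{\mu,\|u\|\}^2$ via the definition of $\mu$ and \eqref{para1-ex}; on the non-graphical region, the $|x|^2$ factor combined with \eqref{para1} and \eqref{parC} gives a $\mu^2$-bound.

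The delicate terms are $I_2=2\iint S^\perp x\cdot \nabla_{\mathbf{S}}\psi\,dV_t\,dt$ and the residual $2\iint S^\perp x\cdot\nabla_{\mathbf{S}^\perp}\psi\,dV_t\,dt$ inside $I_3$. Crucially, $S^\perp x\equiv 0$ on $\|\bC\|$, since $x\in \mathbf{P}_i$ on each half-plane $\mathbf{H}_i$ and $\mathbf{P}_i$ is the tangent plane there. On the graphical region a first-order Taylor expansion gives $S^\perp x\cdot v\approx -f\cdot\nabla_v f$ for every $v\in\mathbf{P}_i$, and using $f\cdot\nabla_v f=\tfrac12 \partial_v(|f|^2)$ followed by integration by parts along $v$ on each half-plane (the boundary contributions vanish thanks to the compact support of $\psi$ and to the smoothness of $\tilde\psi_{|x|}/|x|$ at $|x|=0$ guaranteed by the radial symmetry and $C^3$-regularity of $\psi$), one reduces the graphical contribution to $C\int |f|^2\,\|\nabla^2\psi\|_\infty\,d\mathcal H^k\,dt\leq C\mu^2$. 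On the non-graphical region only the crude bound $|S^\perp x|\leq |x|$ is available; here Assumption (A6), through the slice estimates \eqref{cotriple}--\eqref{cotriple2}, is precisely what is needed to upgrade the naive first-order estimate of $\iint_{\text{non-graph}}|x|\,dV_t\,dt$ into the required quadratic bound.

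Finally, $I_5$ is controlled by Cauchy--Schwarz: on the graphical region $|\nabla^\perp(|x|^2\psi/2)|\leq C(|f|+|x||\nabla f|)$ is small in $L^2(dV_t\,dt)$ by \eqref{para1-ex}, whence the graphical part is $\leq C\max\{\mu,\|u\|\}\cdot\|u\|$; on the non-graphical region, $|\nabla^\perp(|x|^2\psi/2)|\leq C|x|$ combined with \eqref{para1} gives $\iint_{\text{non-graph}}|x||u|\,dV_t\,dt\leq C\mu\|u\|$; and the quadratic forcing term contributes at most $C\|u\|^2$. All together, $I_5\leq C\max\{\mu,\|u\|\}^2$. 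The main technical obstacle throughout is the non-graphical contribution to $I_2$ and to the $S^\perp$-part of $I_3$, where Assumption (A6) is indispensable for closing the estimate at the required quadratic order; all the other contributions follow the by-now standard combination of graphical Taylor expansion against \eqref{para1}--\eqref{para1-ex} and the radial symmetry of $\psi$.
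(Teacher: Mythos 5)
There is a genuine gap in your treatment of $I_2$ and of the $S^\perp x$ contribution inside $I_3$, and a related misattribution of the role of Assumption (A6).

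First, your first-order Taylor claim $S^\perp x\cdot v\approx -f\cdot\nabla_v f$ (for $v$ tangent to $\mathbf P_i$) is incomplete. Writing $X=Z+f(Z,t)$ with $Z=rw_i+y\in\mathbf H_i$, one has $x=rw_i+f$, and the $\mathbf P_i$-component of $S^\perp x$ is, to leading order, $(\nabla f)^*(\nabla_{rw_i}f - f)$. Dotting with $v\in\mathbf P_i$ gives
\begin{equation*}
S^\perp x\cdot v \;=\; -\,f\cdot\nabla_v f \;+\; r\,\partial_r f\cdot\nabla_v f \;+\;\mathrm{O}\bigl(|f|^2|\nabla f|+|x||\nabla f|^3\bigr)\,,
\end{equation*}
and the middle term is of size $|x|\,|\nabla f|^2$, which is \emph{not} controlled by \eqref{para1-ex} (that controls $|x|^2|\nabla f|^2$, and the extra factor $|x|^{-1}$ is not integrable near the spine given only the pointwise degeneracy in Remark \ref{rmk:C1a-deg}). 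Worse, for $v\in\bS(\bC)$ the factor $\nabla_v f$ \emph{is} $S^\perp e_j$ to leading order, so any attempt to bound the product $\partial_r f\cdot\nabla_v f$ by Cauchy--Schwarz reintroduces $\iint\psi\sum_j|S^\perp e_j|^2\,dV_t\,dt$ --- exactly the quantity one is trying to prove is small. Your integration by parts handles only the $-\tfrac12\partial_v|f|^2$ piece and leaves these terms open. The paper avoids all of this: it uses the self-adjointness of $S^\perp$ to rewrite $S^\perp x\cdot e_j=S^\perp x\cdot S^\perp e_j$, applies Cauchy--Schwarz to get $|S^\perp x|\bigl(\sum_j|S^\perp e_j|^2\bigr)^{1/2}$, and then Young's inequality to split into $2\|\psi\|_{C^2}|S^\perp x|^2+\tfrac{\psi}{4}\sum_j|S^\perp e_j|^2$. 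The second term is \emph{absorbed} into the left-hand side of \eqref{e:compar}; the first is genuinely quadratic and is bounded on the graph by $c(|f|^2+|x|^2|\nabla f|^2)$ via \eqref{Ses} and \eqref{para1-ex}, and off the graph by $|x|^2$ via \eqref{para1}. Your route has no substitute for this absorption step.

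Second, your assertion that on the non-graphical region ``only $|S^\perp x|\leq|x|$ is available'' and that (A6) is ``precisely what is needed to upgrade the naive first-order estimate of $\iint_{\mathrm{non\text{-}graph}}|x|\,dV_t\,dt$'' is wrong on both counts. Because of the Young step described above, the relevant quantity is $|S^\perp x|^2\leq|x|^2$, not $|S^\perp x|$, and $\iint_{\mathrm{non\text{-}graph}}|x|^2\,dV_t\,dt\leq\Cr{nonsig}\mu^2 R^{k+4}$ follows directly from \eqref{para1} with no input from (A6). Assumption (A6), via the slice inequalities \eqref{cotriple}--\eqref{cotriple2}, enters this proof \emph{only} through Proposition \ref{lateral-pro} (and its variants \ref{san-eq}, \ref{p:error0}), i.e.\ to bound the one-sided mass deficit $\int\bigl(\int\psi\,d\|\bC\|-\int\psi\,d\|V_t\|\bigr)_+\,dt$. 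It controls deficits of mass and second moments of slices; it does not and cannot turn a first-order estimate $\iint_{\mathrm{non\text{-}graph}}|x|\,dV_t\,dt$ into a quadratic one. The remainder of your outline (the $I_0$, $I_1$, $I_4$, $I_5$ estimates and the use of Proposition \ref{lateral-pro} for the negative part) matches the paper's strategy.
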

\begin{proof}
   As in Proposition \ref{lateral-pro}, 
   we use Theorem \ref{thm:graphical} with 
   $\beta=\sigma=1/40$ and $R=4$ to obtain 
   a graphical representation $f:U\rightarrow \bC^\perp$ on 
$U\subset \bC\cap P_2$ with the error estimates \eqref{para1}-\eqref{para1-ex}.
In \eqref{e:compar}, $I_0=0$ due to assumption $\psi(X,0)=0$. Thanks to Proposition \ref{lateral-pro}, we only need to estimate $I_1$ to $I_5$. 
\newline
{\bf Estimate of $I_1$}.
\newline
The integration on
$P_2\setminus \,{\rm graph}\,f$ and $P_2\cap \bC\setminus U$ may be estimated by $\Cr{nonsig}\mu^2$ due to \eqref{para1} and
\eqref{parC}, so we only need
to estimate the integration over $U$. We have
\begin{equation}
    \iint_{{\rm graph}\,f}\frac{|x|^2}{2}\psi_t\,d\|V_t\|dt=\iint_{U} \frac{|f(X,t)|^2+|x(X)|^2}{2}\psi_t(X+f(X,t),t)J_{\nabla f(\cdot,t)}\,d\mathcal H^k(X)dt\,,
\end{equation}
and $|\psi_t(X+f(X,t),t)-\psi_t(X,t)| \leq c|f(X,t)|^2$ because, due to radial symmetry, $\nabla\psi_t(X,t) \cdot f(X,t)=0$. Also we have $|f(X,t)|\leq |x|/10$ and 
$|J_{\nabla f(X,t)}-1|\leq c|\nabla f(X,t)|^2$ for $X\in \bf C$ with some $c$ 
depending only on $n,k$.  
Thus we may conclude that
\begin{equation}
    \Big|\iint_{{\rm graph}\,f}\frac{|x|^2}{2}\psi_t\,d\|V_t\|dt-\iint_{U} \frac{|x|^2}{2}\psi_t\,d\mathcal H^k dt\Big|\leq c\max\{\mu,\|u\|\}^2.
\end{equation}
Thus $|I_1|$ is estimated by a constant multiple of $\max\{\mu,\|u\|\}^2$.
\newline
{\bf Estimate of $I_2$}.
\newline
Note that $\psi(x,y,t)=\tilde\psi(|x|,|y|,t)$ so that 
$\nabla_{\bf S}\psi(x,y,t)=\tilde\psi_{s_2}(|x|,|y|,t)y/|y|$ and
\begin{equation}
\begin{split}
    |S^\perp x&\cdot\nabla_{\bf S}\psi|=|y|^{-1}\left|\tilde\psi_{s_2}
    S^{\perp}x\cdot\sum_{j=n-k+2}^n y_j S^\perp e_j\right|\leq 
    |\tilde\psi_{s_2}||S^\perp x|
    \left(\sum_{j=n-k+2}^n|S^\perp e_j|^2\right)^{\frac12} \\
    &\leq \frac{|\nabla\psi|^2}{\psi}|S^\perp x|^2+\frac{\psi}{4}\sum_{j=n-k+2}^n|S^\perp e_j|^2
    \leq 2\|\psi\|_{C^2}|S^\perp x|^2+\frac{\psi}{4}\sum_{j=n-k+2}^n|S^\perp e_j|^2.
    \end{split}
\end{equation}
The second term
will be absorbed to the left-hand side of \eqref{e:compar}.
For the integral of the first term, note that
on $U$ where the support of $\|V_t\|$ is
expressed as ${\rm graph}\,f$, 
\begin{equation}\label{Ses}
|S^\perp x|^2\leq c(n,k)(|f(X,t)|^2 + |x|^2\nabla f|^2)
 \end{equation}   
for some $c(n,k)$. Since $|f|\leq \beta|x|$,
separating integration on $U$ and $B_2\setminus {\rm graph}\,f$ and using 
\eqref{para1}-\eqref{para1-ex}, we may obtain
\begin{equation}
    |I_2|\leq \Cl[con]{I2}\max\{\mu,\|u\|\}^2+\frac{1}{2}\iint \psi\sum_{j=n-k+2}^n|S^\perp e_j|^2\,dV_t dt
\end{equation}
with $\Cr{I2}=\Cr{I2}(n,k,p,q,E_1,\|\psi\|_{C^2})$.
\newline
{\bf Estimate of $I_3$}.
\newline
Since $\nabla_{{\bf S}^\perp}\psi(x,y,t)=\tilde\psi_{s_1}(|x|,|y|,t)x/|x|$, in particular $|Sx\cdot \nabla_{\bS^\perp}\psi|\leq \|\psi\|_{C^2}|x|^2$.
Thus the integral outside of ${\rm graph}\,f$ 
is estimated by the constant multiple of $\max\{\mu,\|u\|\}^2$.
On the integral over $U$, to write the computations
explicitly, we may write the graph representation $f(x_1,y,t):U\subset 
\mathbb R\times
\mathbb R^{k-1}\times\mathbb R\rightarrow\mathbb R^{n-k}$ on one of
the half-space ${\bf H}_1$. Writing $f:=f(x_1,y,t)$,
\begin{equation}\label{Ses2}
\begin{split}
&|(x_1,f)\cdot\nabla_{\bS^\perp}\psi(x_1,f,y,t)
-(x_1,0)\cdot\nabla_{\bS^\perp}\psi(x_1,0,y,t)| 
\\ &
= \Big|\tilde\psi_{s_1}(\sqrt{x_1^2+|f|^2},|y|,t)\sqrt{x_1^2+|f|^2}
-\tilde\psi_{s_1}(x_1,|y|,t)x_1\Big| \leq 
\|\psi\|_{C^2}|f|^2.
\end{split}
\end{equation}
Also, we have
\begin{equation}\label{Ses3}
    |S^\perp x\cdot\nabla_{\bS^\perp}\psi|=|\tilde\psi_{s_1}(|x|,|y|,t)||x|^{-1} |S^\perp x|^2\leq \|\psi\|_{C^2}|S^\perp x|^2.
\end{equation}
Thus using \eqref{Ses2}, \eqref{Ses3} and \eqref{Ses}, we 
may estimate the integral of $I_3$ on the graphical part as 
\begin{equation}\begin{split}
     &\Big|\iint_{{\rm graph}\, f} Sx\cdot\nabla_{\bS^\perp}\psi\,dV_tdt
    -\iint_U x\cdot\nabla_{\bS^\perp}\psi\,d\|\bC\|dt\Big|
    \\ &\leq \Big|\iint_{{\rm graph}\,f} x\cdot\nabla_{\bS^\perp} \psi\,dV_tdt
    -\iint_U x\cdot\nabla_{\bS^\perp}\psi\,d\|\bC\|\Big|
    +\Big|\iint_{{\rm graph}\,f} S^\perp x\cdot\nabla_{\bS^\perp}\psi\,dV_tdt\Big| \\
    &\leq c(n,k,\|\psi\|_{C^2})\iint_U(f^2+|x|^2|\nabla f|^2)\,d\|\bC\|dt. 
\end{split}    
\end{equation}
Here we used that $|x|^2 |J_{\nabla f}-1| \leq |x|^2|\nabla f|^2$. Then the integral is bounded by $c\max\{\mu,\|u\|\}^2$. 
\newline
{\bf Estimate of $I_4$}.
\newline
The integral of the non-graphical part near the
spine may be estimated by $\Cr{nonsig}\mu^2$ due to
\eqref{para1} and \eqref{parC}, so we need to
estimate the graphical part. 
Using the same notation as above, we have
\begin{equation}
\begin{split}
\nabla^2\psi=&(\tilde\psi_{s_1 s_1}-|x|^{-1}\tilde\psi_{s_1})\frac{x\otimes x}{|x|^2}+|x|^{-1}\tilde\psi_{s_1}I_{n-k+1}
+\tilde\psi_{s_1 s_2}\frac{x\otimes y+y\otimes x}{|x||y|} \\
&+(\tilde\psi_{s_2 s_2}-|y|^{-1}\tilde\psi_{s_2})\frac{y\otimes y}{|y|^2}
+|y|^{-1}\tilde\psi_{s_2}I_{k-1}.
    \end{split}
\end{equation}
We need to check the difference between the evaluations of $\nabla^2\psi$ at $(x_1,f(x_1,y,t),y,t)$
and $(x_1,0,y,t)$ using the Taylor theorem. 
For example, one can check 
that ($S$ evaluated as the tangent plane at $(x_1,f,y,t)$)
\begin{equation}\label{fr1}
\begin{split}
    \Big|&(\tilde\psi_{s_1 s_1}-\frac{\tilde\psi_{s_1}}{|x|})\big|_{x=(x_1,f)}
    \frac{(1,f/x_1)\otimes (1,f/x_1)}{1+|f/x_1|^2} 
   \cdot S - (\tilde\psi_{s_1 s_1}-\frac{\tilde\psi_{s_1}}{|x|})\big|_{x=(x_1,0)}
    \Big|\\
    &
    \leq c(n,k)\|\psi\|_{C^3}(|f|^2+|\nabla f|^2).
    \end{split}
\end{equation}
Here we used the fact that $(1,0)\otimes(0,f/x_1)\cdot S=O(|f||\nabla f|)$ and
$\tilde \psi$ has vanishing odd derivatives at $0$.
Using \eqref{fr1} (with also $J_{\nabla f}$ being considered), the integral stemming from the first term (whose integrand also contains $|x|^2$) can be estimated by $c(n,k)\|\psi\|_{C^3}\max\{\mu,\|u\|\}^2$. 
For the second term, Note that $I_{n-k+1}\cdot S-1=
O(|\nabla f|^2)$ and the difference of $|x|^{-1}\tilde\psi_{s_1}$ evaluated at $x=(x_1,f)$
and $x=(x_1,0)$ may be estimated by $c(n,k)\|\psi\|_{C^3}|f|^2$. Using these fact, we may estimate the integral related to the second term similarly. For the third term, first note
that on $\bC$, 
\begin{equation}
    ((1,0)\otimes y+y\otimes (1,0))\cdot S=0. 
\end{equation}
On ${\rm graph}\,f$, since $(x\otimes y)\cdot I_n=0$, we have $(x\otimes y)\cdot S=(y\otimes x)\cdot S=-(x\otimes y)\cdot S^\perp$, and 
\begin{equation}
(x\otimes y)\cdot S^\perp=((x_1,f)\otimes y)\cdot S^\perp=S^\perp((x_1,0))\cdot S^\perp(y)
    +S^\perp((0,f))\cdot S^\perp(y).
\end{equation}
Considering that $S$ is a tangent plane of ${\rm graph}\,f$, we have
$|S^\perp(y)|\leq c(n,k)|y||\nabla f|$ and $|S^\perp((x_1,0))|\leq c(n,k)|x_1||\nabla f|$, and thus
\begin{equation}
    \Big|\tilde\psi_{s_1 s_2}\frac{x\otimes y}{|x||y|}\cdot S\Big|\leq c(n,k)\|\psi\|_{C^2}(|f|^2+|\nabla f|^2),
\end{equation}
and the integral of the third term is bounded by 
$c(n,k)\|\psi\|_{C^2}\max\{\mu,\|u\|\}^2$. 
For the fourth term, the difference
of $\tilde\psi_{s_2 s_2}-|y|^{-1}\tilde\psi_{s_2}$ at ${\rm graph}\,f$ and $\bC$ can be estimated by $c(n,k)\|\psi\|_{C^3}|f|^2$ due to the
radial symmetry. On ${\rm graph}\,f$, $S\cdot(y\otimes y)=|y|^2-|S^\perp(y)|^2$, and $|S^\perp(y)|\leq c(n,k)|y||\nabla f|$, and on $\bC$, this quantity is equal to $|y|^2$. Thus we can handle this 
term similarly as others. For the last term,
$I_{k-1}\cdot S=(k-1)-I_{k-1}\cdot S^\perp=k-1-\sum_{j=n-k+2}^n|S^\perp e_j|^2$, and 
on ${\rm graph}\,f$, $|S^\perp(e_j)|\leq c(n,k)|\nabla f|$. Using this, the integral
of the last term can be bounded by $c(n,k)\|\psi\|_{C^3}\max\{\mu,\|u\|\}^2$.
\newline
{\bf Estimate of $I_5$}.
\newline
The first term is bounded by $(\iint|S^\perp x|^2\psi\,d\|V_t\|dt)^{\frac12}(\iint|u|^2\psi\,d\|V_t\|dt)^{\frac12}$ and can be handled as in the estimate of $I_2$ for the integral of $|S^\perp x|^2$ and by the H\"{o}lder inequality for $|u|^2$. The second term is
bounded by a constant multiple of $\iint|u|^2\psi\,d\|V_t\|dt$, so by the H\"{o}lder inequality, is estimated by $\|u\|^2$. 
\newline
{\bf Summary}.
\newline
Combined with all the estimates above as well
as Proposition \ref{lateral-pro}, we 
obtain \eqref{kore}.
\end{proof}

We shall need a slight modification of Proposition \ref{lat4}, allowing for test functions which may not vanish at time $t=0$. Under our assumptions on the flow, such refinement is possible, as long as the test function is constant in the variable $s_1=|x|$ for small $|x|$.

\begin{proposition} \label{p:error0}
Suppose that $\psi(y,t) \in C^\infty_c(U_1^{k-1}\times[-4,0])$ is non-negative and radially symmetric in $y$, that is there exists $\tilde\psi(s_2,t)\in C^\infty(\R^2)$ such that $\psi(y,t)=\tilde\psi(|y|,t)$. Moreover, assume that $\psi(y,-4)=0$, and let $\tilde\eta=\tilde\eta(|x|)$ be as in Proposition \ref{lateral}. Then, there exists $\Cl[con]{er0}=\Cr{er0}(n,k,p,q,E_1,\Cr{conslice},\|\psi\|_{C^3},\|\tilde\eta\|_{C^3})>0$ such that
\begin{equation} \label{kore:0}
\begin{split}
 &   \iint_{P_2} \sum_{j=n-k+2}^{n} \tilde\eta\psi|S^\perp e_j|^2\, dV_t\,dt + \int_{-4}^0 \left( \int \tilde\eta \psi d\|V_t\| - \int \tilde\eta \psi d\|\bC\|   \right)_+\, dt \\
 & \qquad \qquad \qquad \leq \Cr{er0}\max\{\mu,\|u\|\}^2\,.
\end{split}
\end{equation}
\end{proposition}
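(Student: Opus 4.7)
The argument parallels the proof of Proposition \ref{lat4} with a single additional boundary contribution to estimate. I apply Proposition \ref{prononwith} with the test function $\tilde\eta(|x|)\psi(y,t)$, which is non-negative, radially symmetric in both $x$ and $y$, and vanishes at $t=-4$. This yields \eqref{e:compar} with six error terms $I_0, I_1, \ldots, I_5$ on the right-hand side. The summand $-\int_{-4}^0 (\int\tilde\eta\psi\,d\|\bC\|-\int\tilde\eta\psi\,d\|V_t\|)_+\,dt$, once moved across the inequality, is controlled by Proposition \ref{san-eq} integrated on $[-4,0]$, contributing only $4\Cr{lmin}\max\{\mu,\|u\|\}^2$. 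It therefore suffices to bound $I_0, I_1, \ldots, I_5$ by $C\max\{\mu,\|u\|\}^2$.

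The terms $I_1, I_2, I_3, I_4, I_5$ are estimated exactly as in the proof of Proposition \ref{lat4}. Each of the Taylor expansions and covering arguments used there relies only on smoothness, on radial symmetry in $x$ and $y$, on Theorem \ref{thm:graphical}, and on the excess bounds \eqref{para1}-\eqref{para1-ex}. Since $\tilde\eta(|x|)\psi(y,t)$ enjoys these structural properties --- the fact that $\tilde\eta$ is constant in $|x|$ near the origin being immaterial (indeed advantageous) for the computations --- each of these bounds goes through verbatim, yielding $\sum_{i=1}^5|I_i|\le C\max\{\mu,\|u\|\}^2$ with $C$ depending on $\|\tilde\eta\psi\|_{C^3}$.

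The main new task, and the principal obstacle, is controlling the boundary term
\[
I_0 = \int \frac{|x|^2}{2}\tilde\eta(|x|)\psi(y,0)\,d\|\bC\| - \int \frac{|x|^2}{2}\tilde\eta(|x|)\psi(y,0)\,d\|V_0\|,
\]
which vanished trivially in Proposition \ref{lat4} thanks to the additional assumption $\psi(X,0)=0$. This is precisely where assumption (A6), specifically the second moment inequality \eqref{cotriple2}, becomes indispensable. First, Theorem \ref{thm:graphical} (applied as in Subsection \ref{preest} with $R=4$, $\sigma=\beta=1/40$) controls the flow as a small $C^{1,\alpha}$ graph over $\bC$ on $\{1/4\leq|x|\leq 1\}$, so the quantity $K$ in \eqref{anasup}, evaluated at unit scale, satisfies $K\leq C\max\{\mu,\|u\|\}$. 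For a.e.~$t$ close to $0$ and $\mathcal H^{k-1}$-a.e.~$y \in B_1^{k-1}$, the slice bound \eqref{cotriple2} applied at scale $r=2$ (centered at $(0,0)$, which belongs to $\mathscr N_{\Cr{onlyunit}}(P_2)$ for $\varepsilon$ small enough), combined with the graphical comparison on the annulus $\{1/2<|x|\leq 1\}$ (where $\tilde\eta$ is no longer identically $1$), yields
\[
\int_{\hat\bC\cap B_1^{n-k+1}} |x|^2\tilde\eta(|x|)\,d\mathcal H^1(x) - \int_{M_t^y\cap B_1^{n-k+1}} |x|^2\tilde\eta(|x|)\,d\mathcal H^1(x) \leq C\max\{\mu,\|u\|\}^2.
\]
Integrating against $\psi(y,0)/2$ over $y\in\bS$, together with the exact coarea formula on $\bC=\hat\bC\times\bS$ and the one-sided coarea inequality on the rectifiable set $M_t$ (whose Jacobian with respect to the projection onto $\bS$ is at most $1$), then gives
\[
\int \frac{|x|^2\tilde\eta(|x|)\psi(y,0)}{2}\,d\|\bC\| - \int \frac{|x|^2\tilde\eta(|x|)\psi(y,0)}{2}\,d\|V_t\| \leq C\max\{\mu,\|u\|\}^2
\]
for a.e.~$t$ near $0$. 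Finally, I pass to the limit $t\to 0^-$ along a good sequence using the left-continuity of $\|V_t\|$ as Radon measures from assumption (A5), applied to the bounded continuous compactly supported test function $\tilde\eta(|x|)|x|^2\psi(y,0)/2$, thereby obtaining the required bound on $I_0$. Assembling all the estimates yields \eqref{kore:0}.
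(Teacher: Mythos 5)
Your proposal is correct and follows essentially the same route as the paper: invoke Proposition \ref{prononwith} with the test function $\tilde\eta(|x|)\psi(y,t)$, observe that $I_1,\dots,I_5$ are handled exactly as in the proof of Proposition \ref{lat4} (those estimates never used $\psi(X,0)=0$), and treat the only new contribution $I_0$ via (A6)\eqref{cotriple2} combined with the graphical control on the annulus, the one-sided coarea inequality, and the left-continuity convention (A5) to extend the a.e.-in-$t$ bound to $t=0$. The small discrepancies --- you apply \eqref{cotriple2} at scale $r=2$ and then correct with graphical estimates where $\tilde\eta\neq1$, whereas the paper splits at $|x|=1/2$ and applies \eqref{cotriple2} at the matching scale directly; you cite Proposition \ref{san-eq} for the mass-deficit term where the paper invokes Proposition \ref{lateral-pro} --- are cosmetic and both routes yield the same estimate.
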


\begin{proof}
    The only difference with Proposition \ref{lat4} is that the summand $I_0$ in \eqref{e:compar} corresponding to the choice of $\tilde\eta(|x|)\psi(y,t)$ for $\psi(x,y,t)$ there does not vanish necessarily. We then proceed to estimate it. Recall that
    \begin{equation} \label{who-is-I0}
    2I_0 = \int |x|^2 \tilde\eta(|x|)\psi(y,0) d\|\bC\| - \int |x|^2 \tilde\eta(|x|)\psi(y,0) d\|V_0\|\,.
    \end{equation}
    Let us look at the quantity on the right-hand side of \eqref{who-is-I0} at an arbitrary $t \in [-4,0]$, namely the integral
    \begin{equation} \label{I at time t}
        2 I(t) := \int |x|^2 \tilde\eta(|x|)\psi(y,t) d\|\bC\| - \int |x|^2 \tilde\eta(|x|)\psi(y,t) d\|V_t\|\,.
    \end{equation}
     As usual, we split into integrals over the graphical and non-graphical regions. Let us focus first on the graphical region. By symmetry, we may assume to be working on one of the $k$-dimensional planes in $\bC$: we will let $(x_1,y)$ be the variables on such plane, and the other $(n-k)$ variables will be $(x_2,\ldots,x_{n-k+1})=f (x_1,y,t)$. With this notation set in place,
    \[
    \begin{split}
    &\int_{\mathrm{graph} f(\cdot,t)} |x|^2\tilde\eta(|x|)\psi(y,t) d\|V_t\| \\
    &\quad = \int_{U} \left(|x_1|^2+|f(x_1,y,t)|^2\right) \tilde\eta\left(\left(|x_1|^2 + |f(x_1,y,t)|^2\right)^{\sfrac12}\right)\psi(y,t) J_{\nabla f(\cdot,t)}(x_1,y) d\mathcal H^{k}(x_1,y)\,,
    \end{split}
    \]
    whereas 
    \[
    \int_U |x|^2 \tilde\eta(|x|)\psi(y,t) d\|\bC\| = \int_{\bC\cap U} |x_1|^2\tilde\eta(|x_1|)\psi(y,t)\,d\mathcal H^k(x_1,y)\,.
    \]
    We can then estimate the difference of these quantities in terms of 
    integral of $|f|^2$ and $|\nabla f|^2$. On $|x|\geq 1/2$, by 
    Proposition \ref{e-reg}, we have
    \begin{equation} \label{I0a}
    \begin{split}
    &\left| \int_{\{|x|\geq 1/2\}} |x|^2 \tilde\eta(|x|)\psi(y,t) d\|\bC\| - \int_{\{|x|\geq 1/2\}} |x|^2 \tilde\eta(|x|)\psi(y,t) d\|V_t\| \right| \\
    & \qquad \qquad \qquad \leq \Cr{linestimate}^2\max\{\mu,\|u\|\}^2\,.
    \end{split}
    \end{equation}
   
   Coming to the part in $\{|x|<1/2\}$, we see that this lies within the region where $\tilde\eta=1$. Thus (as $K(y,t,0)$ in \eqref{anasup} with $\xi=0$ is bounded by $\Cr{linestimate}\max\{\mu,\|u\|\}$), by \eqref{cotriple2}, for $\mathcal H^{k-1}$-a.e. $y \in B^{n-k+1}_1$ it holds
    \[
    \int_{\{|x|<1/2\}\cap\hat \bC} \tilde \eta(|x|) |x|^2 \, d\mathcal H^{1} \leq \int_{M_t^y\cap \{|x|<1/2\}} \tilde\eta(|x|) |x|^2 \, d\mathcal H^{1} + \Cr{conslice}\Cr{linestimate}^2\max\{\mu,\|u\|\}^2\,,
    \]
    and thus, by the coarea formula, 
    \begin{equation} \label{I0b}
    \begin{split}
  &  \int_{\{|x|<1/2\}} \tilde\eta(|x|) \psi(y,t) |x|^2 \, d\|\bC\| - \int_{\{|x|<1/2\}} \tilde\eta(|x|) \psi(y,t) |x|^2 \, d\|V_t\| \\
  & \qquad \qquad \qquad \leq \Cr{conslice}\Cr{linestimate}\max\{\mu,\|u\|\}^2\,.
  \end{split}
    \end{equation}
    By combining \eqref{I0a} and \eqref{I0b}, we obtain the estimate
    \begin{equation}
        2 I(t) \leq \Cr{linestimate}^2(\Cr{conslice}+1) \max\{\mu,\|u\|\}^2 \qquad \mbox{for a.e. $t \in [-4,0]$}\,.
    \end{equation}
    Since $\|V_t\|$ is left-continuous with respect to $t$ as Radon measures, the estimate holds for every $t \in [-4,0]$, and in particular the same estimate hold also for $2I_0$. 
\end{proof}

The technical work done in Propositions \ref{lateral-pro} to \ref{p:error0} is used here to show that the square of the excess controls both the space-time $L^2$ norm squared of the mean curvature and Huisken's integral appearing on the left-hand side of \eqref{nocon1}, in a parabolic neighborhood of a high-density point.

\begin{proposition} \label{p:error_t_est}
    Let $\eta$ be as in Proposition \ref{lateral}
    and assume $\Theta(0,0)\geq \sfrac32$. Then there exists $\Cl[con]{cdis}=\Cr{cdis}(n,k,p,q,\Cr{conslice},E_1)>0$ such that
    \begin{equation} \label{error estimates}
       \int_{-2}^0 \int \eta \left|h\right|^2\,d\|V_t\|\,dt+ \int_{-2}^0\int\eta\rho\Big|h-\frac{(\nabla\rho)^\perp}{\rho}\Big|^2\,d\|V_t\|dt\leq 
        \Cr{cdis}\max\{\mu,\|u\|\}^2.
    \end{equation}
\end{proposition}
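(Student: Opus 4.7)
The plan is to derive both estimates from Proposition \ref{lateral} and from Brakke's inequality with test function $\phi = \eta$, relying on the averaged deficit bounds of Propositions \ref{lat4} and \ref{p:error0} (which themselves rest on assumption (A6)).

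For the Huisken integral (second summand in \eqref{error estimates}), I would start from Proposition \ref{lateral}, which for each $s \in (-16,0)$ gives
\[
\tfrac{1}{2}\int_s^0 \int \eta\rho\bigl|h-(\nabla\rho)^\perp/\rho\bigr|^2\, d\|V_t\|\,dt \;\leq\; \mathcal{D}(s) + \mathcal{E}(s),
\]
where $\mathcal{D}(s) := \int \eta\rho(\cdot,s)\,(d\|V_s\| - d\|\bC\|)$ is the Gaussian deficit at time $s$ and $\mathcal{E}(s)$ collects the cut-off and forcing errors on $[s,0]$. Since $\mathcal{D}(s)$ cannot be controlled by the excess at any single time, the key idea is to integrate this inequality over $s \in [-3,-2]$: by Fubini the integrated LHS dominates a positive multiple of the desired Huisken integral over $[-2,0]$. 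The integrated deficit $\int_{-3}^{-2}\mathcal{D}(s)\,ds$ is then bounded by observing that $\eta(X)\rho(X,t)$ is radially symmetric in $x$ and in $y$ separately---because $\rho$ depends on $|X|^2 = |x|^2 + |y|^2$---and applying Proposition \ref{lat4} to $\chi(t)\eta(X)\rho(X,t)$ for a smooth temporal cut-off $\chi$ equal to $1$ on $[-3,-2]$ and vanishing at $t=-4$ and $t=0$; the signed integrated deficit is majorized by the positive part appearing in Proposition \ref{lat4}'s conclusion. The terms of $\mathcal{E}(s)$ are handled as follows: the cut-off errors are supported where $\nabla\eta \neq 0$, hence at positive distance from the spine, and are estimated via the $C^{1,\alpha}$ graphical parameterization of Theorem \ref{thm:graphical} and the bounds \eqref{para1}--\eqref{para1-ex}, \eqref{expara1}; the forcing term is bounded by $C\|u\|^2$ via H\"older's inequality using the definition \eqref{smuterm} of $\|u\|$.

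For the $|h|^2$ bound (first summand in \eqref{error estimates}), I would apply Brakke's inequality \eqref{e:Brakke-ineq} directly on $[-2,0]$ with $\phi = \eta$. Using $\nabla\eta\cdot h = (\nabla\eta)^\perp\cdot h$ (since $h\perp S$) and Young's inequality to absorb cross terms yields
\[
\tfrac{1}{2}\int_{-2}^0\int \eta|h|^2\, d\|V_t\|\,dt \;\leq\; \|V_{-2}\|(\eta) - \|V_0\|(\eta) + C\max\{\mu,\|u\|\}^2,
\]
where the error uses the graphicality bound $|(\nabla\eta)^\perp|\lesssim |\nabla\eta||\nabla f|$ on the support of $\nabla\eta$. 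To control the deficit $\|V_{-2}\|(\eta) - \|V_0\|(\eta)$, the key observation is that Brakke's inequality itself provides a \emph{quantitative} near-monotonicity: all positive error terms in $\mathcal{B}(V_t,u,\eta)$ integrate to $O(\max\{\mu,\|u\|\}^2)$ over a bounded time interval, which gives $\|V_{t_2}\|(\eta) \leq \|V_{t_1}\|(\eta) + C\max\{\mu,\|u\|\}^2$ for $t_1<t_2$ in $[-3,0]$. Applying Proposition \ref{p:error0} with $\psi(y,t) = \tilde\eta(|y|)\chi(t)$ for a suitable cut-off $\chi$ and invoking the mean value theorem produces a time $t^* \in [-3,-2]$ at which $\|V_{t^*}\|(\eta) \leq \|\bC\|(\eta) + C\max\{\mu,\|u\|\}^2$; the refined near-monotonicity propagates this bound forward to $t=-2$. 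Combined with the lower bound $\|V_0\|(\eta) \geq \|\bC\|(\eta) - C\max\{\mu,\|u\|\}^2$ from Proposition \ref{san-eq}, this gives the required deficit control.

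The principal obstacle is the control of the pointwise Gaussian deficit $\mathcal{D}(s)$: this step cannot be avoided without the averaging trick and ultimately without the structural hypothesis (A6), which is what underpins Propositions \ref{lat4}, \ref{p:error0} and \ref{san-eq}. The fact that $\rho(X,t)$ is separately radial in the spine-adapted coordinates $(x,y)$---an algebraic consequence of $|X|^2$ splitting as $|x|^2+|y|^2$---is the key ingredient that makes Proposition \ref{lat4} applicable to $\chi\,\eta\,\rho$ in the first place.
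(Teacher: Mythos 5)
Your overall strategy mirrors the paper's: test \eqref{nocon1} and Brakke's inequality with $\eta\rho$ and $\eta$, use Propositions \ref{lat4}, \ref{p:error0}, \ref{san-eq} to control the various deficits (the averaging over $s\in[-3,-2]$ is essentially the paper's mean-value argument in integrated form, and using Proposition \ref{p:error0} rather than \ref{lat4} for the good-time selection is a harmless variation). However, there is a genuine gap in the treatment of the cut-off errors that would cause the argument to fail as written.

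The claim ``the cut-off errors are supported where $\nabla\eta\neq 0$, hence at positive distance from the spine'' is false. Recall $\eta(x,y)=\tilde\eta(|x|)\tilde\eta(|y|)$, so $\nabla\eta\neq 0$ on $\{1/2<|x|<1\}\cup\{1/2<|y|<1\}$. The second region contains points with $|x|$ arbitrarily small, i.e.\ arbitrarily close to the spine $\bS(\bC)=\{|x|=0\}$, where graphicality is unavailable. Concretely, decomposing as in \eqref{lat8},
\[
\frac{|(\nabla\eta)^\perp|^2}{\eta}\rho
=\frac{(\tilde\eta'(|x|))^2}{\tilde\eta(|x|)}\tilde\eta(|y|)\frac{|S^\perp x|^2}{|x|^2}\rho
+\frac{(\tilde\eta'(|y|))^2}{\tilde\eta(|y|)}\tilde\eta(|x|)\frac{|S^\perp y|^2}{|y|^2}\rho,
\]
and only the first summand is supported in $\{|x|>1/2\}$, where Theorem \ref{thm:graphical} and \eqref{Ses} give $|S^\perp x|^2\lesssim |f|^2+|x|^2|\nabla f|^2$. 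The second summand is supported in $\{|y|>1/2\}$ but its $x$-support reaches the spine, so it cannot be controlled by the graphical parameterization at all; it must be dominated via $|S^\perp y|^2\leq |y|^2\sum_{j=n-k+2}^n|S^\perp e_j|^2$ and then by the \emph{first} summand on the left of \eqref{kore} and \eqref{kore:0}, namely $\iint\psi\sum_j|S^\perp e_j|^2\,dV_t\,dt\leq C\max\{\mu,\|u\|\}^2$. This is precisely the place where (A6), filtered through Propositions \ref{lat4} and \ref{p:error0}, enters the cut-off error estimate, and it is the technical heart of the proof; your proposal omits it entirely and would not close.

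A secondary gap: the terms $\int_s^0\int\nabla\rho\cdot\nabla\eta\,dV_t\,dt - \int_s^0\int\nabla\rho\cdot\nabla\eta\,d\|\bC\|\,dt$ on the right-hand side of \eqref{nocon1} form a \emph{signed deficit} comparing $V_t$ with $\bC$; it is not a cut-off error estimable by graphicality. The paper bounds its positive part by applying Proposition \ref{lat4} with the (nonnegative, radially symmetric) choice $\psi=\nabla\rho\cdot\nabla\eta\,\eta_2(t)$; your proposal subsumes it into a generic ``$\mathcal E(s)$'' handled by graphicality, which is not correct. Finally, a minor point: the temporal cut-off multiplying $\eta\rho$ must vanish in a full neighborhood of $t=0$ (not merely at $t=0$) for $\chi\eta\rho$ to be an admissible $C^\infty_c$ test function in Proposition \ref{lat4}, since $\rho$ is singular as $t\to 0^-$ near $X=0$; the paper avoids this by using $\eta_1$ supported in $[-5/2,-1/2]$.
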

\begin{proof}
We first show how to bound the second integral on the left-hand side; the bound on the first integral is simpler, and will be addressed at the end.

We use Proposition \ref{lateral} and estimate
the right-hand side of \eqref{nocon1}.
Let $\eta_1\in C^\infty(\R)$ be a function
    such that $\eta_1(t)=0$ for $t\notin [-5/2,-1/2]$, $\eta_1(t)=1$ for $t\in[-2,-1]$, 
    $0\leq \eta_1(t)\leq 1$ and $|\eta_1'(t)|\leq 3$ for all $t$. We set
    $\psi(x,y,t)=\rho(x,y,t)\eta(x,y)\eta_1(t)$ 
    and note that $\psi$ satisfies all the 
    assumption for Proposition \ref{lat4}.
    Thus we have
    \begin{equation}\label{lat5}
        \int^0_{-4}\Big(\int \rho\eta\eta_1\,d\|V_t\|-\int\rho\eta\eta_1\,d\|\bC\|\Big)_+ dt\leq \Cr{er1}\max\{\mu,\|u\|\}^2.
    \end{equation}
    Since $\eta_1(t)=1$ on $t\in[-2,-1]$, 
    there exists some $s\in[-2,-1]$ such that
    the integrand of \eqref{lat5} is bounded 
    by the same constant,
    that is, 
    \begin{equation}
        \int\rho\eta\,d\|V_s\|-\int\rho\eta\,d\|\bC\|\leq \Cr{er1}\max\{\mu,\|u\|\}^2.
    \end{equation}
    We fix this $s$. Next let $\eta_2\in C^{\infty}(\R)$ be a function such that
    $\eta_2(t)=1$ for $t\geq -2$, $\eta_2(t)=0$
    for $t\leq -3$, $0\leq \eta_2(t)\leq 1$ and
    $|\eta_2'(t)|\leq 2$ for all $t$. We set
    $\psi(x,y,t)=\nabla\rho(x,y,t)\cdot\nabla \eta(x,y)\eta_2(t)$ and note that this $\psi$
    also satisfies the assumption for Proposition 
    \ref{lat4}, particularly since $\nabla\rho\rightarrow 0$ on the support of 
    $\nabla\eta$ as $t\rightarrow 0-$. Thus 
    we obtain \eqref{kore} for $\psi$, and 
    by restricting the integral to $[s,0]$ 
    where $\eta_2=1$, we have
\begin{equation}
    \int_{s}^0\Big(\int \nabla\rho\cdot\nabla\eta\,d\|V_t\|-\int\nabla\rho\cdot\nabla\eta\,d\|\bC\|\Big)\,dt\leq \Cr{er1}\max\{\mu,\|u\|\}^2.
\end{equation}
Next, we choose and fix 
a non-negative smooth function
$\eta_3\in C^\infty(\R)$ 
such that $(\tilde\eta')^2/\tilde\eta\leq \eta_3$,
and which vanishes outside of $[1/4,5/4]$. Note that
$\tilde\eta'=0$ outside of $[1/2,1]$ so we may choose such a function. 
We then set 
\begin{equation}
    \psi(x,y,t)=\rho(x,y,t)\eta_2(t) \eta_3(|y|)\tilde\eta(|x|).
\end{equation}
With this choice of $\psi$ in Proposition \ref{lat4}, 
we obtain 
\begin{equation}\label{lat7}
    \iint_{P_2}\eta_2(t)
    \eta_3(|y|)\tilde\eta(|x|)\sum_{j=n-k+2}^n|S^\perp e_j|^2\rho\,dV_tdt\leq \Cr{er1}\max\{\mu,\|u\|\}^2.
\end{equation}
Now 
\begin{equation}\label{lat8}
    \frac{|(\nabla\eta)^\perp|^2}{\eta}\rho
    =\frac{(\tilde\eta'(|x|))^2}{\tilde\eta(|x|)}\tilde\eta(|y|)\frac{|S^\perp x|^2}{|x|^2}\rho+
    \frac{(\tilde\eta'(|y|))^2}{\tilde\eta(|y|)}\tilde\eta(|x|)\frac{|S^\perp y|^2}{|y|^2}\rho.
\end{equation}
The first term of \eqref{lat8} 
vanishes for $|x|\leq 1/2$, so that the integral 
is only over the graphical part of $V_t$,
and $\rho$ is bounded by an absolute
constant. 
By using \eqref{Ses}, it is bounded by a constant multiple of $\max\{\mu,\|u\|\}^2$.
For the second term of \eqref{lat8}, we have
\begin{equation}\label{lat9}
 \frac{(\tilde\eta'(|y|))^2}{\tilde\eta(|y|)}\tilde\eta(|x|)\frac{|S^\perp y|^2}{|y|}\rho   \leq \eta_2(t)\eta_3(|y|)\tilde\eta(|x|)\sum_{j=n-k+2}^{n}|S^\perp e_j|^2\rho
\end{equation}
for $-2\leq t<0$ due to the choice of $\eta_2$ 
and $\eta_3$. Combined with \eqref{lat7}, 
the integral of the second term is also 
bounded similarly. Finally, the term
involving $u$ is bounded in terms of 
$\|u\|^2$ and $E_1$ by the H\"{o}lder
inequality (see \cite[Proposition 6.2]{Kasai-Tone}). Thus all the terms on the right-hand side of \eqref{nocon1} are bounded by a constant multiple of
$\max\{\mu,\|u\|\}^2$ and this concludes the proof of the estimate for the second integral on the left-hand side of \eqref{error estimates}.

We finally come to the first integral. With the same choice of $\eta_1$, we set now $\psi(x,y,t)=\eta(x,y)\eta_1(t)$, and we note that this $\psi$ also satisfies all the assumptions for Proposition \ref{lat4}, so that
\begin{equation} \label{diss1}
    \int_{-4}^{0} \left( \int \eta \eta_1\,d\|V_t\| - \int \eta\eta_1\,d\|\bC\| \right)_+ \, dt \leq \Cr{er1}\max\{\mu,\|u\|\}^2\,,
\end{equation}
and since $\eta_1(t) =1$ for all $t \in [-2,-1]$ there exists $s\in [-2,-1]$ such that 
\begin{equation} \label{diss2}
    \int \eta \,d\|V_s\| - \int \eta\,d\|\bC\|\leq \Cr{er1}\max\{\mu,\|u\|\}^2\,.
\end{equation}
We now test Brakke's inequality \eqref{e:Brakke-ineq} with $\phi=\eta(x,y)$, with $t_1=s$ and $t_2=0$. This yields
\begin{equation} \label{diss3}
    \int_{s}^{0}\int \eta |h|^2\,d\|V_t\|\,dt \leq  \int \eta \,d\|V_s\| - \int \eta \,d\|V_{0}\| 
    +\int_{s}^{0}\int \nabla\eta\cdot(h+u^\perp) \,d\|V_t\|\,dt\,,
\end{equation}
which then immediately gives, using \eqref{diss2} 
\begin{equation} \label{diss4}
\begin{split}
&    \frac12 \int_{s}^0\int\eta |h|^2\,d\|V_t\|\,dt \\
& \qquad \leq \int \eta \,d\|\bC\| -\int \eta \,d\|V_0\| + \int_{s}^0 \int \frac{|\nabla^\perp\eta|^2}{\eta} + \frac12\eta|u|^2 + \Cr{er1}\max\{\mu,\|u\|\}^2\,.
\end{split}
\end{equation}
The difference 
\[
\int\eta\,d\|\bC\|-\int\eta\,d\|V_0\|
\]
is bounded by $\Cr{lmin}\max\{\mu,\|u\|\}^2$ by Proposition \ref{san-eq}, and the term involving $u$ is bounded by $c(E_1)\|u\|^2$ by H\"older's inequality. For the remaining term, we proceed as in \eqref{lat8} with $\rho\equiv 1$. Again the first summand vanishes for $|x|\leq 1/2$, so that the integral is only over the graphical part of $V_t$, and it is bounded by a constant multiple of $\max\{\mu,\|u\|\}^2$ thanks to \eqref{Ses} and \eqref{para1-ex}. The second summand is bounded as in \eqref{lat9}, namely
\[
\frac{(\tilde\eta'(|y|))^2}{\tilde\eta(|y|)}\tilde\eta(|x|)\frac{|S^\perp y|^2}{|y|}  \leq \eta_2(t)\eta_3(|y|)\tilde\eta(|x|)\sum_{j=n-k+2}^{n}|S^\perp e_j|^2
\]
whenever $-2 \leq t < 0$. Now, Proposition \ref{p:error0} guarantees that we also have the estimate
\[
\int_{-2}^0\int \eta_2(t)\eta_3(|y|)\tilde\eta(|x|)\sum_{j=n-k+2}^{n}|S^\perp e_j|^2 \,dV_t\,dt \leq \Cr{er0}\max\{\mu,\|u\|\}^2\,,
\]
and this completes the proof.
\end{proof}

In turn, as we show in the next proposition, at a high density point the space-time integral quantities on the left-hand side of \eqref{error estimates} control the $L^2$-distance of the flow from the triple junction \emph{uniformly in time}. This passage from an integral-in-time control to a pointwise-in-time control is a key feature of parabolic estimates. The proposition provides two distinct but complementary bounds.

The first, \eqref{e:HS_technical}, is parabolic or ``caloric'' in nature. When combined with Proposition \ref{p:error_t_est}, it yields the decay estimate
\[
(\max\{\mu,\|u\|\})^{-2}\,\int \dist^2(X,\bC) \,\rho(X,t) \eta(X)\,d\|V_t\|(X) \lesssim |t|^\kappa\,.
\]
The factor $|t|^{\kappa}$ forces the \emph{weighted} excess to vanish at a H\"older rate as $t \to 0^-$. This information, however, comes at the price of spatial localization due to the Gaussian weight $\rho$, which effectively confines the estimate to a parabolic neighborhood of the origin of scale $\sqrt{|t|}$.

The second estimate, \eqref{e:HS_technical2}, is more elliptic in character. It provides a uniform-in-time bound on the \emph{unweighted} $L^2$-distance over a fixed ball. It exchanges the time decay rate of the first estimate for more robust spatial control. Both types of estimate are essential for the subsequent proof of Theorem \ref{thm:HS}, which relies on both the fine decay structure near the singularity and the uniform control away from it.

\begin{proposition} \label{p:HS_technical}
Let $\eta$ be as in Proposition \ref{lateral}
    and assume $\Theta(0,0)\geq \sfrac32$. Then,
    for each $\kappa\in [0,1)$, there exists $\Cl[con]{c_nc1}=\Cr{c_nc1}(n,k,p,q,E_1,\kappa)>0$ such that
\begin{equation} \label{e:HS_technical}
\begin{split}
    &\sup_{t \in [-1,0)}|t|^{-\kappa} \int \dist^2(X,\mathbf{C}) \, \rho(X,t)\eta(X)\, d\|V_t\|(X) \\
    & \qquad \leq \Cr{c_nc1}\Big(\int_{-2}^{0}\int \Big| h - \frac{(\nabla\rho)^\perp}{\rho} \Big|^2\,\rho(X,t)\eta(X)\,d\|V_t\|\,dt + \max\{\mu,\|u\|\}^2\Big) \,,
    \end{split}
\end{equation}
and
\begin{equation} \label{e:HS_technical2}
\begin{split}
    &\sup_{t \in [-1,0)} \int \dist^2(X,\mathbf{C}) \, \eta(X)\, d\|V_t\|(X) \\
    & \qquad \leq \Cr{c_nc1}\Big(\int_{-2}^{0}\int |h|^2\,\eta(X)\,d\|V_t\|\,dt + \max\{\mu,\|u\|\}^2\Big) \,.
    \end{split}
\end{equation}
\end{proposition}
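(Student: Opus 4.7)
Both \eqref{e:HS_technical} and \eqref{e:HS_technical2} upgrade the integrated-in-time Huisken and dissipation controls of Proposition \ref{p:error_t_est} to pointwise-in-time bounds, via the graphical parametrization of Theorem \ref{thm:graphical} and the error estimates of Propositions \ref{lateral-pro}--\ref{p:error0}.

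\textbf{For \eqref{e:HS_technical2}.} For each $t_0 \in [-1,0)$ and $t_1\in[-2,-3/2]$, I would test Brakke's inequality \eqref{e:Brakke-ineq} on $[t_1,t_0]$ with a smooth approximation of $\dist^2(X,\bC)\eta(X)$, constructed by gluing $\dist^2(X,\mathbf H_i)\eta(X)$ on neighborhoods of each half-plane via a partition of unity; the spine contribution is quadratically small thanks to \eqref{para1}--\eqref{parC} and Assumption (A6). The standard integration by parts, as in the proof of Proposition \ref{prononwith}, produces a Hessian contribution of the form $\sum_j|S^\perp e_j|^2\eta$ together with mixed terms in $|f|^2$ and $|x|^2|\nabla f|^2$ on the graphical region, all of which are bounded by $\max\{\mu,\|u\|\}^2$ exactly as in the proofs of Propositions \ref{lat4} and \ref{p:error0}. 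The resulting inequality
\[
\int\dist^2(X,\bC)\eta\,d\|V_{t_0}\| - \int\dist^2(X,\bC)\eta\,d\|V_{t_1}\| \leq C\int_{t_1}^{t_0}\int \eta|h|^2\,d\|V_t\|\,dt + C\max\{\mu,\|u\|\}^2
\]
is averaged over $t_1 \in [-2,-3/2]$; the initial term $\int_{-2}^{-3/2}\int\dist^2\eta\,d\|V_{t_1}\|\,dt_1 \leq C\mu^2$ is controlled directly by the definition of $\mu$, giving \eqref{e:HS_technical2}.

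\textbf{For \eqref{e:HS_technical}.} Apply the monotonicity inequality \eqref{nocon1} on $[s,0]$ for each $s\in[-1,0)$ and bound all error terms on its right-hand side by $C\max\{\mu,\|u\|\}^2$ using Propositions \ref{lateral-pro}--\ref{p:error0} as in the proof of Proposition \ref{p:error_t_est}. On the other hand, by Huisken's monotonicity formula and upper semicontinuity of the Gaussian density (which for a flow sufficiently close to a multiplicity-one triple junction pins $\Theta(0,0)=\sfrac32$), the mass gap
\[
G(s) := \int\eta\rho(\cdot,s)\,d\|V_s\| - \int\eta\rho(\cdot,s)\,d\|\bC\|
\]
is also bounded \emph{above} by $C\max\{\mu,\|u\|\}^2$, so $G(s)$ is pinched. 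Expanding $G(s)$ using the graphical parametrization of Theorem \ref{thm:graphical} and the orthogonality $X\cdot f(X,s)=0$ on $\bC$ (valid since $f\in\bC^\perp$ pointwise) yields
\[
G(s) = \int_U \eta\rho(X,s)\Bigl[\tfrac12|\nabla f|^2 - \tfrac{|f|^2}{4|s|}\Bigr]\,d\Ha^k + O(\max\{\mu,\|u\|\}^2)\,,
\]
giving a uniform-in-$s$ bound on $|s|^{-1}\int_U\eta\rho|f|^2 d\Ha^k$ in terms of $\int_U\eta\rho|\nabla f|^2 d\Ha^k$ and the excess squared: this is \eqref{e:HS_technical} at $\kappa=0$, after transferring the non-graphical contribution via \eqref{para1}. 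To upgrade to $\kappa\in(0,1)$, I would apply the uniform bound on a sequence of dyadic scales approaching the singularity, making use of the freedom to shift the Gaussian pole in time by amounts comparable to $|s|$; summing the resulting geometric series — convergent precisely when $\kappa<1$ — produces the claimed $|s|^{-\kappa}$ bound.

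\textbf{Main obstacle.} The hardest step is the upgrade to the prescribed $|t|^{-\kappa}$ decay in \eqref{e:HS_technical}. It requires simultaneously handling (i) the Taylor expansion of $G(s)$, in which the positive $|\nabla f|^2$ and negative $|f|^2/|s|$ contributions must be balanced against each other; (ii) the two-sided pinching of $G(s)$, which rests on the upper semicontinuity of the Gaussian density; and (iii) a careful control of cutoff errors at the scale $\sqrt{|s|}$. The restriction $\kappa<1$ is sharp and reflects the borderline failure of scale invariance of the bound at the critical exponent $\kappa=1$, corresponding to the divergence of the geometric series.
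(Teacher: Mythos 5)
Your plan for \eqref{e:HS_technical2} is broadly in the right spirit, but the paper handles it more cleanly: instead of a partition-of-unity gluing of $\dist^2(\cdot,\mathbf H_i)\eta$, the paper introduces a single positively one-homogeneous function ${\rm d}:\R^n\to\R$, smooth off $\bC$, equal to $\dist(\cdot,\bC)$ in a thin neighborhood of $\bC$, with $|\nabla{\rm d}|\le1$; then ${\rm d}^2\eta$ is globally $C^1$ (the gradient of ${\rm d}^2$ vanishes at the spine), one tests Brakke's inequality with $\phi={\rm d}^2\eta$, and the terms $\tfrac12\eta|h|^2+2|\nabla(\eta{\rm d}^2)|^2/\eta+\ldots$ are absorbed or estimated by $C\max\{\mu,\|u\|\}^2$ directly — no Hessian term and no appeal to the (A6)--dependent Propositions~\ref{lat4},~\ref{p:error0} is needed at this stage. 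Proposition~\ref{p:HS_technical} is deliberately designed to be independent of (A6); the assumption enters only in Proposition~\ref{p:error_t_est}, which controls the right-hand sides of \eqref{e:HS_technical}--\eqref{e:HS_technical2}.

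For \eqref{e:HS_technical} your route is genuinely different from the paper's, and it has two gaps that I do not see how to close. First, the two-sided pinching of $G(s)=\int\eta\rho(\cdot,s)\,d\|V_s\|-\tfrac32$ to size $O(\max\{\mu,\|u\|\}^2)$ uniformly in $s\in[-1,0)$ is not available. The upper bound on $G(s)$ comes from Proposition~\ref{lateral} / Proposition~\ref{lat4}, but the lower bound from Huisken monotonicity plus $\Theta(0,0)\ge\tfrac32$ carries the unavoidable error $c\,E_1|s|+c\|u\|^2|s|^{\alpha}$ from \eqref{Huis}, which is $O(1)$ (not $O(\mu^2)$) for $|s|$ bounded away from $0$; the pinching therefore fails for a range of times, and this is precisely the range needed to start an iteration. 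Also, Proposition~\ref{san-eq}, which would give a pointwise-in-$t$ lower bound of the right size, requires a test function of the special product form $\tilde\psi(y,t)\tilde\eta(|x|)$, and $\rho(X,s)\eta(X)$ is not of that form. Second, even granting the pinching, the Taylor expansion
\[
G(s)\approx\int_U\eta\rho\Bigl[\tfrac12|\nabla f|^2-\tfrac{|f|^2}{4|s|}\Bigr]\,d\Ha^k
\]
leaves $\int_U\eta\rho(\cdot,s)|\nabla f|^2\,d\Ha^k$ on the right — a \emph{pointwise-in-time} quantity over which no uniform control is available (only space--time bounds such as \eqref{para1-ex}, and the degeneracy $|\nabla f|\lesssim|x|^{-k/2-2}\mu$ of Remark~\ref{rmk:C1a-deg} is fatal against the $\rho$-weight as $s\to0^-$). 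Moreover, the asserted conclusion \eqref{e:HS_technical} has the time-integrated Huisken quantity $\iint\eta\rho|h-(\nabla\rho)^\perp/\rho|^2$ on the right, not $\int\eta\rho|\nabla f|^2$; your argument does not connect the two.

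The paper's actual proof tests Brakke's inequality directly with
\[
\phi(X,t)=|t|^{-\kappa}\,g(t)\,\rho_{(0,0)}(X,t)\,{\rm d}(X)^2\,\eta(X)\,,\qquad g(t)=\exp\Bigl(-5\textstyle\int_{t_1}^{t}|s|^{-\kappa}\,ds\Bigr),
\]
where $t_1\in[-2,-1]$ is chosen so that $\int\eta\dist^2(\cdot,\bC)\,d\|V_{t_1}\|\le\mu^2$. Two structural facts are essential and absent from your proposal: the homogeneity identity $X\cdot\nabla{\rm d}^2=2{\rm d}^2$, which converts the term $\nabla\hat\rho\cdot\nabla(\eta{\rm d}^2)=-\tfrac{\hat\rho}{2|t|}X\cdot\nabla(\eta{\rm d}^2)+\ldots$ into the \emph{negative} contribution $-\hat\rho\eta{\rm d}^2/|t|$; and the derivative of the weight, $\rho\,\tfrac{d}{dt}\bigl(|t|^{-\kappa}g(t)\bigr)=\tfrac{\kappa\hat\rho}{|t|}-5|t|^{-\kappa}\hat\rho$, whose large negative piece $-5|t|^{-\kappa}\hat\rho{\rm d}^2\eta$ absorbs all the positive ${\rm d}^2$-terms accumulated along the way (from $I_1,I_2,I_4$). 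What survives is exactly $\iint\eta\rho|h-(\nabla\rho)^\perp/\rho|^2$ plus $\max\{\mu,\|u\|\}^2$ errors, yielding \eqref{e:HS_technical} after dividing by the lower bound $\inf g$. Finally, the restriction $\kappa<1$ is not a dyadic-series phenomenon: it is needed so that $g$ has a uniform positive lower bound, i.e.\ so that $\int_{t_1}^{0}|s|^{-\kappa}\,ds<\infty$.
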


\begin{proof}
We begin with the proof of \eqref{e:HS_technical}: \eqref{e:HS_technical2} is simpler. We proceed mainly as in \cite[Proposition 5.2]{tone-wic} with some modifications. Let ${\rm d}\colon \R^n \to \R$ be a function satisfying the following properties: ${\rm d}$ is positively homogeneous of degree one, it is smooth away from $\bC$, and furthermore, writing as usual $X = (x,y)$,
\begin{align}
    & {\rm d}(X) = \dist(X,\bC) \qquad \qquad  \quad \qquad \qquad \mbox{$\forall\,X$ with $\dist(X,\bC) < \frac{|x|}{5}$}\,, \label{d1} \\
    & \frac12 \,\dist(X,\bC) \leq {\rm d}(X) \leq 2\,\dist(X,\bC) \quad \quad \forall\,X\in\R^n\,, \label{d2}\\ \label{d3}
    & |\nabla {\rm d}(X)| \leq 1 \qquad \qquad  \quad \qquad \qquad \qquad\quad \forall\,X \notin \bC\,.
\end{align}
By homogeneity, we have $X\cdot\nabla({\rm d}^2/|X|^2)=0$ and this leads to 
\begin{equation}\label{dhom}
    X\cdot\nabla{\rm d}^2=2{\rm d}^2.
\end{equation}

By the definition of $\mu$, we can choose $t_1 \in [-2,-1]$ such that
\begin{equation}\label{dhom2}
    \int \eta(X)\,\dist^2(X,\bC) \, d\|V_{t_1}\|(X) \leq \mu^2\,.
\end{equation}
We can then fix a smooth function $g=g(t)$ with 
\begin{equation}\label{ginq}
0 < g(t) \leq 1
\end{equation}
and test Brakke's inequality \eqref{e:Brakke-ineq} with
\begin{equation} \label{e:test1}
\phi(X,t) := |t|^{-\kappa}\, g(t) \,\rho_{(0,0)}(X,t)\, {\rm d}(X)^2\,\eta(X)
\end{equation}
for $t \in [t_1,t_2]$, and for arbitrary $t_2 \in [-1,0)$. For notational convenience, we denote $\rho(X,t) := \rho_{(0,0)}(X,t)$ and $\hat\rho (X,t):= |t|^{-\kappa}\, g(t) \,\rho(X,t)$, so that $\phi = \hat\rho\eta{\rm d^2}$ and $\eta{\rm d}^2$ is independent of the variable $t$. Thus, \eqref{e:Brakke-ineq} yields:
\begin{equation}\label{HS_calc1}
    \left.\int \hat\rho\eta{\rm d}^2 \, d\|V_t\|\right|_{t=t_1}^{t_2} \leq \int_{t_1}^{t_2} \int\left\lbrace\left( -\hat\rho \eta {\rm d}^2 h + \nabla (\hat\rho \eta {\rm d}^2) \right) \cdot \left( h + u^\perp \right) + \eta {\rm d}^2 \frac{\pa \hat\rho}{\pa t} \right\rbrace \,d\|V_t\|\,dt\,.
\end{equation}
By direct calculation, 
\begin{equation}
\begin{split}
    (-h\hat\rho&\eta{\rm d}^2+\nabla(\hat\rho\eta{\rm d}^2))\cdot(h+u^\perp) \\
    &=(-|h|^2\hat\rho+(\nabla\hat\rho\cdot h))\eta{\rm d}^2+\hat\rho\nabla(\eta{\rm d}^2)
    \cdot h+\eta{\rm d}^2(-h\hat\rho+\nabla\hat\rho)\cdot u^\perp+\hat\rho\nabla(\eta{\rm d}^2)\cdot u^\perp \\
    &\leq -\hat\rho\big|h-\frac{(\nabla\hat\rho)^\perp}{\hat\rho}\big|^2 \eta{\rm d}^2-(\nabla\hat\rho\cdot h)\eta{\rm d}^2+\frac{|(\nabla\hat\rho)^\perp|^2}{\hat\rho}\eta{\rm d}^2+\hat\rho\nabla(\eta{\rm d}^2)\cdot h\\
    &\quad\,+\frac12\hat\rho\big|h-\frac{(\nabla\hat\rho)^\perp}{\hat\rho}\big|^2\eta{\rm d}^2+\frac12 \hat\rho\eta{\rm d}^2|u|^2+\hat\rho\nabla(\eta{\rm d}^2)\cdot u^\perp,
    \end{split}
\end{equation}
and it follows from \eqref{HS_calc1} that
\begin{equation}\label{dhom3}
    \begin{split}
    \int{\rm d}^2\eta\hat\rho\,d\|V_t\|\Big|_{t=t_1}^{t_2}\leq \int_{t_1}^{t_2}\int& -(\nabla\hat\rho\cdot h)\eta{\rm d}^2+\frac{|(\nabla\hat\rho)^\perp|^2}{\hat\rho}\eta{\rm d}^2+\hat\rho\nabla(\eta{\rm d}^2)\cdot h 
    \\ &+\frac12 \hat\rho\eta{\rm d}^2|u|^2+\hat\rho\nabla(\eta{\rm d}^2)\cdot u^\perp+\eta{\rm d}^2\,\frac{\partial\hat\rho}{\partial t}\,d\|V_t\|dt.
    \end{split}
\end{equation}
For a.e.~$t$, we have
\begin{equation}\label{dhom4}
    \int -(\nabla\hat\rho\cdot h)\eta{\rm d}^2\,d\|V_t\|=\int S\cdot(\eta{\rm d}^2\nabla^2\hat\rho+\nabla\hat\rho\otimes
    \nabla(\eta{\rm d}^2))\,dV_t(\cdot, S).
\end{equation}
Using \eqref{dhom4} in \eqref{dhom3}
as well as \eqref{e:Huisken_id}, we obtain
\begin{equation}\label{dhom10}
\begin{split}
 \int\hat\rho\eta{\rm d}^2\,d\|V_t\|\Big|_{t=t_1}^{t_2}&\leq \int_{t_1}^{t_2}\int  S\cdot(\nabla\hat\rho\otimes\nabla (\eta{\rm d}^2))+\hat\rho\nabla(\eta{\rm d}^2)\cdot h 
 +\frac12\hat\rho\eta{\rm d}^2|u|^2 \\
 &+\hat\rho\nabla(\eta{\rm d}^2)\cdot u^\perp
 +\eta{\rm d}^2\rho\frac{d}{dt}(|t|^{-\kappa}g(t))\,dV_t(\cdot,S)dt\\
 &=I_1+I_2+I_3+I_4+I_5. 
 \end{split}
\end{equation}
{\bf Estimate of $I_1+I_2$}.
\newline
Since $S=I-S^\perp$, 
\begin{equation}\label{dhom5}
    \begin{split}
S\cdot&(\nabla\hat\rho\otimes\nabla(\eta{\rm d}^2))+\hat\rho\nabla(\eta{\rm d}^2)\cdot h=
\nabla\hat\rho\cdot\nabla(\eta{\rm d}^2)-(\nabla\hat\rho)^\perp\cdot\nabla(\eta{\rm d}^2)+\hat\rho\nabla(\eta{\rm d}^2)\cdot h\\
&=\nabla\hat\rho\cdot\nabla(\eta{\rm d}^2)+\hat\rho\nabla(\eta{\rm d}^2)\cdot\big(h-\frac{(\nabla\hat\rho)^\perp}{\hat\rho}\big)
\\
&\leq -\frac{\hat\rho}{2|t|}X\cdot\nabla(\eta{\rm d}^2)+\frac{\rho}{2}\big|h-\frac{(\nabla\rho)^\perp}{\rho}\big|^2\eta+\frac{g(t)|t|^{-\kappa}}{2\eta} |\nabla(\eta{\rm d}^2)|^2\,\hat\rho.
    \end{split}
\end{equation}
The terms involving $\nabla\eta$ is non-zero
only on $(B_1^{n-k+1}\times B_1^{k-1})
\setminus (B_{\sfrac12}^{n-k+1}\times B_{\sfrac12}^{k-1})$, and $|t|^{-1}\hat\rho$ is uniformly bounded 
by a constant on the domain. Using also 
\eqref{ginq}, \eqref{d1}-\eqref{dhom}
in \eqref{dhom5},
we obtain
\begin{equation}\label{dhom6}
    I_1+I_2\leq \int_{t_1}^{t_2}\int
    -\frac{\hat\rho\eta {\rm d}^2}{|t|}
    +\frac{\rho}{2}\big|h-\frac{(\nabla\rho)^\perp}{\rho}\big|^2
    \eta+4|t|^{-\kappa} {\rm d}^2\hat\rho\eta\,d\|V_t\|dt+c(n,k)\mu^2.
\end{equation}
{\bf Estimate of $I_3$}.
\newline
We separate the integration into two 
regions, $A_1(t)=\{X\in \mathbb R^n : |X|\leq |t|^{\kappa/2}\}$
and the complement $A_2(t)=\mathbb R^{n}\setminus A_1(t)$. On $A_1(t)$, 
${\rm d}(X)\leq 2\,{\rm dist}(X,\bC)\leq 2|t|^{\kappa/2}$ by \eqref{d2}, so
that $\hat\rho {\rm d}^2\leq 4\rho$.
Thus,
\begin{equation}
\int_{t_1}^{t_2}\int_{A_1(t)}\hat\rho\eta{\rm d}^2|u|^2\,d\|V_t\|dt
    \leq 4\int_{t_1}^{t_2}\int \rho\eta|u|^2\,d\|V_t\|dt\leq c(p,q,E_1)\|u\|^2.
\end{equation}
On $A_2(t)$, $\hat\rho$ is uniformly bounded by a constant that depends only on 
$k$ and $\kappa$. Thus the 
integral over $A_2(t)$ is similarly estimated and we have
\begin{equation} \label{dhom7}
    I_3\leq c(k,p,q,E_1,\kappa)\|u\|^2. 
\end{equation}
\newline
{\bf Estimate of $I_4$}. 
\newline
\begin{equation}
    I_4\leq\int_{t_1}^{t_2}\int \hat\rho(|\nabla \eta||u|{\rm d}^2+2\eta|u|{\rm d})\,d\|V_t\|dt,
\end{equation}
and since $\hat\rho$ is bounded on ${\rm spt}|\nabla\eta|$ and and $|u|{\rm d}^2\leq |u|^2+{\rm d}^4$, the first term can be
bounded by $c(\mu^2+\|u\|^2)$. Also since
$2\eta|u|{\rm d}\hat\rho\leq \eta|u|^2\rho+|t|^{-\kappa}\hat\rho{\rm d}^2\eta$, we have
\begin{equation}\label{dhom8}
    I_4\leq \int_{t_1}^{t_2}\int |t|^{-\kappa} \hat\rho{\rm d}^2\eta\,d\|V_t\|dt+c(n,k,p,q,E_1,\kappa)(\mu^2+\|u\|^2).
\end{equation}
{\bf Estimate of $I_5$}.
\newline
We make the explicit choice of $g$ given by 
\begin{equation}
    g(t)=\exp\big(-5\int_{t_1}^t |s|^{-\kappa}\,ds\big)
\end{equation}
for $t\in[t_1,0)$. Note that $g(t)\leq 1$ 
and 
\begin{equation}
    \inf_{t\in[t_1,0)} g(t)=\exp\big(-\frac{5|t_1|^{1-\kappa}}{1-\kappa}\big)
\end{equation}
and since $t_1\in[-2,-1]$, $g(t)$ is 
bounded from below by a positive constant 
depending only on $\kappa$. The function 
$g$ is chosen so that 
\begin{equation}
    \rho\frac{d}{dt}((|t|^{-\kappa}g(t))=
    \frac{\kappa\hat\rho}{|t|}-5|t|^{-\kappa}\hat\rho,
\end{equation}
and thus 
\begin{equation}\label{dhom9}
    I_5\leq \int_{t_1}^{t_2} \int\kappa\frac{\hat\rho\eta{\rm d}^2}{|t|}-5|t|^{-\kappa}\hat\rho{\rm d^2}\eta\,d\|V_t\|dt.
\end{equation}
Finally, add \eqref{dhom6}, \eqref{dhom7}, \eqref{dhom8} and \eqref{dhom9} to estimate
the left-hand side of \eqref{dhom10}.
Then use \eqref{dhom2} and the lower bound 
of $g(t)$ to obtain, with a suitable 
choice of $\Cr{c_nc1}$, the inequality \eqref{e:HS_technical}.
\newline
{\bf Proof of \eqref{e:HS_technical2}.}
\newline
We define the function ${\rm d}$ and the initial time $t_1$ as in the proof of \eqref{e:HS_technical}. Then, we test Brakke's inequality with
\begin{equation} \label{e:test2}
    \phi(X,t) :={\rm d}(X)^2\eta(X)\,.
\end{equation}
We then have $\hat\rho \equiv 1$ in the subsequent calculations, and we rapidly see that for arbitrary $t_2 \in [-1,0)$ it holds
\[
\left.\int {\rm d}^2 \eta \, d\|V_t\| \right|_{t=t_1}^{t_2} \leq \int_{t_1}^{t_2} \int\left\lbrace \nabla(\eta{\rm d}^2)\cdot h + \frac12 \eta{\rm d}^2|u|^2+\nabla(\eta{\rm d}^2)\cdot u^\perp \right\rbrace\, dV_t(\cdot,S)\,dt
\]
in place of \eqref{dhom10}. Using \eqref{d2}-\eqref{dhom2}, we then estimate
\[
\int \dist^2(X,\bC)\eta(X)\,d\|V_{t_2}\|\leq 4\mu^2 + \int_{t_1}^{t_2}\int \left\lbrace\eta |h|^2 + 2\frac{|\nabla(\eta{\rm d}^2)|^2}{\eta} + \eta{\rm d}^2 |u|^2 + \eta |u|^2\right\rbrace\,d\|V_t\|dt\,,
\]
and, since ${\rm d}$ is bounded by a constant on $\spt(\eta)$ and \eqref{d3} holds, the last three summands are bounded by $c(k,n,p,q,E_1) \max\{\mu,\|u\|\}^2$, thus completing the proof.
\end{proof}

\subsection{Proof of Theorem \ref{thm:HS}}
We can now finally come to the proof of Theorem \ref{thm:HS}. Before that, we isolate the following simple remark. From this point onwards, we introduce the following notation. Given $\Xi\in\R^n$ and $\lambda > 0$, we define $\iota_{\Xi,\lambda}(X):=\lambda^{-1}(X-\Xi)$, and we also set $\iota_{\Xi,1}=:\iota_\Xi$. Furthermore, given a flow $\left(\{V_t\}_t,\{u(\cdot,t)\}_t\right)$, a point $(\Xi,\tau)$ in space-time, and $\lambda > 0$, we also define the translated and rescaled flow $\left( \{V^{(\Xi,\tau),\lambda}_t\}_t, \{u^{(\Xi,\tau),\lambda}(\cdot,t)\}_t \right)$ by setting $V^{(\Xi,\tau),\lambda}_t :=(\iota_{\Xi,\lambda})_\sharp V_{\tau + \lambda^2t}$ and $u^{(\Xi,\tau),\lambda}(X,t):=u(\Xi+\lambda X,\tau+\lambda^2 t)$. As customary, we omit the index $\lambda$ when $\lambda=1$.

\begin{lemma}\label{l:translation}
    Under the assumptions of Theorem \ref{thm:HS}, upon choosing $\Cr{e_HS}$ sufficiently small
    depending only on $n,k,p,q,E_1$, the assumptions in Subsection \ref{preest} are satisfied
    for the flow $(\{V^{(\Xi,\tau)}_t\}_t, \{u^{(\Xi,\tau)}(\cdot,t)\}_t)$ on $U_4 \times (-16,0]$. In particular, Propositions \ref{p:error_t_est} and \ref{p:HS_technical} hold for this flow.
\end{lemma}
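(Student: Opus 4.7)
The plan is to verify, condition by condition, that the translated flow on $U_4\times(-16,0]$ satisfies the hypotheses of Subsection~\ref{preest}: the structural assumptions (A1)--(A6) together with membership in $\mathscr{N}_{\varepsilon_0}(U_4\times(-16,0])$ for $\varepsilon_0:=\min\{\Cr{onlyunit},\Cr{eps-tau}\}$. Since $(\Xi,\tau)\in P_1$, we have $U_4(\Xi)\times(\tau-16,\tau]\subset U_5\times[-25,0]$, so (A1), (A2), (A4), (A5) carry over by restriction, and the forcing norm transforms as $\|u^{(\Xi,\tau)}\|\leq (4/5)^{\alpha}\|u\|$ under the change of variables, giving (A3) with a small constant. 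Assumption (A6) is stated for arbitrary parabolic subcylinders contained in the reference domain, and is therefore inherited by the translated flow since any such cylinder in $U_4\times(-16,0]$ corresponds to one in $U_5\times[-25,0]$, where (A6) holds by hypothesis.

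For the membership in $\mathscr{N}_{\varepsilon_0}$, the key computation is the translated excess. Writing $\xi:=\p_{\mathbf{S}^\perp}\Xi$ and noting that $\mathbf{C}+\Xi=\mathbf{C}+\xi$ by translation invariance of $\mathbf{C}$ along $\mathbf{S}$, the triangle inequality $\dist(X-\Xi,\mathbf{C})=\dist(X,\mathbf{C}+\xi)\leq\dist(X,\mathbf{C})+|\xi|$ combined with a change of variables and the area bound (A2) yields
\[
\bigl(\mu^{(\Xi,\tau)}\bigr)^{2}\leq C(n,k)\bigl(\mu^{2}+E_{1}|\xi|^{2}\bigr).
\]
The mass deficit \eqref{def:mass deficit} and the non-emptiness condition \eqref{non0} both follow from the graphical representation of the original flow over $\mathbf{C}$ at scale $5$ provided by Theorem~\ref{thm:graphical}: near each translated point $\Xi+2w_{i}$ the support is a small $C^{1,\alpha}$-graph over $\mathbf{H}_{i}$ and hence non-empty, while the mass in $B_{2}(\Xi)$ at the required intermediate time is close to $\tfrac{3}{2}\omega_{k}2^{k}<2\omega_{k}2^{k}$, using that the flow is nearly unit-density (Proposition~\ref{prouniden}) and $L^2$-close to the cone.

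The hard part --- indeed the only step requiring real content --- is to bound $|\xi|$, since the excess estimate above is useless otherwise. For this I would invoke Theorem~\ref{thm:graphical} once more, now qualitatively: for every $\sigma>0$ one can choose $\Cr{e_HS}$ small enough (depending on $\sigma$) that the support of the flow is a $C^{1,\alpha}$-graph over $\mathbf{C}$ with arbitrarily small $C^{1}$-norm outside of $\{|x|\leq\sigma\}$. At any point in this graphical region the tangent planes are close to those of $\mathbf{C}$, so the Gaussian density is close to the planar value $1$; this contradicts $\Theta(\Xi,\tau)\geq\tfrac{3}{2}$ unless $(\Xi,\tau)$ lies in $\{|x|\leq\sigma\}$, that is $|\xi|\leq\sigma$. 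Choosing $\Cr{e_HS}$ (and hence $\sigma$) small enough, dependently only on $n,k,p,q,E_{1}$, forces $\mu^{(\Xi,\tau)}+\|u^{(\Xi,\tau)}\|<\varepsilon_{0}$, completing the verification. The applicability of Propositions~\ref{p:error_t_est} and \ref{p:HS_technical} to the translated flow then follows immediately, together with the identity $\Theta(0,0)=\Theta(\Xi,\tau)\geq\tfrac{3}{2}$.
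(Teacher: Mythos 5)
Your proposal is correct and follows essentially the same route as the paper: (A1)--(A6) are inherited by restriction, the translated excess is bounded via the triangle inequality $\dist(X-\Xi,\bC)\le\dist(X,\bC)+|\xi|$ together with the area bound (A2), and the crucial bound $|\xi|\le\sigma$ is obtained by invoking the graphicality theorem and observing that a high-density point ($\Theta\ge\tfrac32$) cannot lie in the graphical region (where the tangent flow is a multiplicity-one plane with Gaussian density $1$). The remaining conditions of $\mathscr N_{\varepsilon_0}$ are likewise checked from the graphical representation, exactly as in the paper's \eqref{xiquant}.

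One cosmetic point: the order of quantifiers in your final sentence should be reversed. One first chooses $\sigma$ small (depending only on $n,k,p,q,E_1$, via the target $\varepsilon_0$ and the constant in the excess bound), and only then chooses $\Cr{e_HS}\le\Cr{eps-tau}(\sigma)$; $\sigma$ is not a function of $\Cr{e_HS}$. The paper makes this explicit by fixing $\sigma=c\min\{\Cr{onlyunit},\Cr{eps-tau}\}$ for an explicit $c=c(k,E_1)$ before determining $\Cr{e_HS}$.
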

\begin{proof}
    The assumptions (A1)-(A6) are automatically
    satisfied for $(\{V^{(\Xi,\tau)}_t\}_t, \{u^{(\Xi,\tau)}(\cdot,t)\}_t)$. Thus we only need to prove that 
    $(\{V^{(\Xi,\tau)}_t\}_t, \{u^{(\Xi,\tau)}(\cdot,t)\}_t)\in \mathscr N_{\varepsilon}(U_4\times(-16,0])$ for
    $\varepsilon=\min\{\Cr{onlyunit},
\Cr{eps-tau}\}$ if $(\{V_t\}_t,\{u(\cdot,t)\}_t)\in \mathscr N_{\Cr{e_HS}}(U_5\times(-25,0])$. 
    %Both the cones $\mathbf{C}_0$ and $\mathbf{C}$ are invariant with respect to translations along vectors in $\mathbf{S}(\mathbf{C})$. 
    We use Theorem \ref{thm:graphical},
    with $\beta=1/40$ and $\sigma=c\min\{\Cr{onlyunit},
\Cr{eps-tau}\}$ with small $c$ (to be chosen depending only on $k$ and $E_1$) and obtain a new $\overline{\Cr{eps-tau}}$ ($\overline\cdot$ to
indicate the new $\Cr{eps-tau}$, depending on this choice of $\sigma$). If $(\{V_t\},\{u(\cdot,t)\})\in\mathscr N_{\overline{\Cr{eps-tau}}}
(U_5\times(-25,0])$, then by the conclusion of Theorem \ref{thm:graphical}, we have (writing $\Xi=(\xi,\zeta)$ so that $|\xi|=\dist(\Xi,\bS(\bC))$) that
$|\xi|\leq 5\sigma=5c\min\{\Cr{onlyunit},
\Cr{eps-tau}\}$. We have
\begin{equation}\label{xiquant}
\begin{split}
    &\iint_{P_4}
    {\rm dist}(X,\bC)^2\,d\|V_t^{(\Xi,\tau)}\|(X)\,dt\Big)^{1/2}\\
    &=\Big(4^{-k-4}
    \iint_{P_4(\Xi,\tau)}
    {\rm dist}(X-\Xi,\bC)^2\,d\|V_t\|(X)\,dt\Big)^{1/2}\\ &\leq \Big( 4^{-k-4}\iint_{P_5}({\rm dist}(X,\bC)+|\xi|)^2\,d\|V_t\|(X)dt\Big)^{1/2} \\
    &\leq 2\, \big(\frac54\big)^{\frac{(k+4)}{2}}\Big(5^{-k-4}
    \iint_{P_5}{\rm dist}(X,\bC)^2\,d\|V_t\|dt\Big)^{1/2}+2^{-k-3}25(E_1 5^k\omega_k)^{1/2}c\min\{\Cr{onlyunit},
\Cr{eps-tau}\},
    \end{split}
\end{equation}
and if we set $c=2^{k+2}(25)^{-1}(E_1 5^k \omega_k)^{-1/2}$ (which fixes $\overline{\Cr{eps-tau}}$) and set
\[\Cr{e_HS}=\min\{\overline{\Cr{eps-tau}},
2^{-2}(4/5)^{(k+4)/2}\min\{\Cr{onlyunit},
\Cr{eps-tau}\}\},\] 
then we see that the left-hand side of 
\eqref{xiquant}
is $\leq \min\{\Cr{onlyunit},
\Cr{eps-tau}\}$. This gives \eqref{def:L2-excess} for $V^{(\Xi,\tau)}_t$. The inquality \eqref{smuterm} for $u^{(\Xi,\tau)}$ is achieved by
restricting $\Cr{e_HS}$ depending only on
$\alpha$, and \eqref{def:mass deficit}
and \eqref{non0} can be achieved by using 
the graphical representation and restricting
$\Cr{e_HS}$ if necessary depending on 
$\min\{\Cr{onlyunit},
\Cr{eps-tau}\}$. This ends the proof.
\end{proof}

We are ready to prove Theorem \ref{thm:HS}.

\begin{proof}[Proof of Theorem \ref{thm:HS}]
By Lemma \ref{l:translation}, we can apply Propositions \ref{p:error_t_est} and \ref{p:HS_technical} with $\kappa=\sfrac{1}{2}$ to the flow $\left( \{V_t^{(\Xi,\tau)}\}_t , \{u^{(\Xi,\tau)}(\cdot,t)\}_t\right)$  to conclude that 
\begin{equation} \label{fHS1}
\begin{split}
&    \sup_{t\in[-1+\tau,\tau)} (\tau-t)^{-\sfrac{1}{2}} \int \dist^2(X-\Xi,\mathbf C)\,\rho_{(\Xi,\tau)}(X,t)\,\eta(X-\Xi)\,d\|V_t\|(X) \\ 
& \qquad \qquad \qquad \leq  \Cl[con]{c_nc2}\,\max\{\mu^{(\Xi,\tau)},\|u^{(\Xi,\tau)}\|\}^2\,,
\end{split}
\end{equation}
where $\Cr{c_nc2}:=\Cr{c_nc1}(\Cr{cdis}+1)$, $\|u^{(\Xi,\tau)}\|$ is defined via integration over $B_4\times(-16,0)$, and
\begin{equation}\label{fHSadd}
4^{k+4}(\mu^{(\Xi,\tau)})^2 = \iint_{P_4(\Xi,\tau)}\dist^2(X-\Xi,\mathbf C) \, d\|V_t\|(X)\,dt\,.
\end{equation}
On the other hand, since $\bC$ is invariant with respect to translations along vectors in $\mathbf{S}(\mathbf{C})$, we have that $\dist(X-\Xi,\bC) \leq |\xi| + \dist(X,\bC)$. Combined with \eqref{fHSadd}, we have
\begin{equation} \label{fHS3}
(\mu^{(\Xi,\tau)})^2 \leq C\mu^2 + C|\xi|^2\,,
\end{equation}
where $C>0$ is a constant depending only on $k$ and $E_1$. 
Now, we have that
\begin{equation} \label{fHS2}
\|u^{(\Xi,\tau)}\| \leq \|u\|
\end{equation}
if we define $\|u\|$ via integration over $P_5$. 
Combining \eqref{fHS1} with \eqref{fHS3} and \eqref{fHS2}, and using the properties of the function $\eta$, we have that for every $t \in \left[-1+\tau,\tau\right)$
\begin{equation} \label{fHS4}
    \int_{B_{\frac12}(\Xi)} \dist^2(X-\Xi,{\bf C}) \, \rho_{(\Xi,\tau)}(X,t) \,d\|V_t\|(X) \leq \Cl[con]{c_nc3}(\tau-t)^{\sfrac12} (\mu^2 + \|u\|^2 + |\xi|^2)\,.
\end{equation}
Now, let us fix $0 < r_0< \sfrac14$ to be fixed later, and let $\Cr{e_HS}$ be smaller than $\Cr{eps-tau}(k,n,p,q,E_1,\beta,\sfrac{r_0}{8})$ with $\beta=1/40$, where $\Cr{eps-tau}$ is the threshold of Theorem \ref{thm:graphical}. This, in particular, guarantees that $|\xi| \leq \sfrac{r_0}{4}$. Also choose $t_1 \in \left[\tau-2r_0^2,\tau-r_0^2\right] \subset \left[-\sfrac{5}{4},\tau\right)$ such that
\begin{equation}\label{good-time}
    \int_{U_5}\dist^2(X,\bC) \,d\|V_{t_1}\|(X)\leq \frac{C}{r_0^2}\,\mu^2\,.
\end{equation}
Now, choose $i\in\{1,2,3\}$ so that the half-plane $\mathbf{H}_{i}$ (see \eqref{halfpdef}) maximizes the quantity $|\mathbf{p}_{\mathbf{P}_{j}^\perp}(\Xi)|=|\mathbf{p}_{\mathbf{P}_{j}^\perp}(\xi)|$. Here, $\mathbf{p}_{\mathbf{P}_{j}^\perp}$ denotes the orthogonal projection operator onto the linear subspace orthogonal to the plane $\mathbf P_j$ containing $\mathbf H_{j}$. Notice that $|\mathbf{p}_{\mathbf{P}_{i}^\perp}(\xi)| \geq \frac{\sqrt{3}}{2} |\xi|$. Also, if $Z \in \mathbf{H}_i$ is such that $\dist(Z,\mathbf{S}(\bC)) > |\xi|$ then 
\begin{equation}\label{geometric}
\dist(Z-\Xi,\bC)=|\mathbf{p}_{\mathbf P_i^\perp}(\xi)| \geq \frac{\sqrt{3}}{2} |\xi|\,.
\end{equation}
Without loss of generality, assume that $\mathbf H_{i}=\mathbf H_{1}=[0,\infty)\times \{0_{n-k}\} \times \R^{k-1}$, and choose $\Cr{e_HS}$ so small that the domain of the function $f(\cdot,t_1)$, whose existence is guaranteed by Theorem \ref{thm:graphical}, contains the region $\Omega := \left[\frac{r_0}{2},r_0\right] \times \{0_{n-k}\}\times B^{k-1}_{r_0}(\zeta)$. Notice that if $Z\in\Omega$ then $\dist(Z,\mathbf{S}(\bC)) \geq \frac{r_0}{2}> \frac{r_0}{4}\geq |\xi|$, and thus \eqref{geometric} holds. If $(X,t_1)$ belongs to the graph of $f(\cdot,t_1)$ over $\Omega$, and if $Z$ is its projection onto $\mathbf{H}_i$, then by triangle inequality we can estimate
\begin{equation} \label{concluding HS}
    |\xi|\leq\frac{2}{\sqrt{3}}\dist(Z-\Xi,\bC)\leq \frac{2}{\sqrt{3}}\dist(X-\Xi,\bC)+\frac{2}{\sqrt{3}}|f(Z,t_1)|\,.
\end{equation}
Furthermore, for such points $X=(x_1,f(Z,t_1),y), \, Z=(x_1,0_{n-k},y)$, and with $\,\Xi=(\xi,\zeta)$ we also have
\begin{equation}\label{loheat2}
    \begin{split}
|X-\Xi|^2&= |(x_1,f(Z,t_i))-\xi|^2+|y-\zeta|^2 \\
&\leq 
2(|x_1|^2+|f(Z,t_i)|^2+|\xi|^2)+|y-\zeta|^2\leq Cr_0^2 <\frac14
\end{split}
\end{equation}
for a geometric constant $C$ depending on $\beta=1/40$
and \eqref{e:bound}. We also restricted $r_0$ so that the last inequality holds. Since $r_0^2\leq \tau-t_1 \leq 2 r_0^2$ we immediately estimate
\begin{equation}\label{loheat}
\rho_{(\Xi,\tau)}(X,t_1) = \frac{1}{(4\pi(\tau-t_1))^{\sfrac{k}{2}}} \,\exp\left(-\frac{|X-\Xi|^2}{2(\tau-t_1)}\right)\geq r_0^{-k} \frac{1}{(8\pi)^{\sfrac{k}{2}}}\,e^{-C/2}\,.
\end{equation}
In particular, if we square \eqref{concluding HS} and we integrate over $\Omega$ 
and noting from \eqref{loheat2} that 
${\rm graph}\,f(\cdot,t_1)|_{\Omega}\subset B_{\frac12}(\Xi)$, 
we obtain
\[
\begin{split}
|\xi|^2 &\leq \frac{8}3 (\omega_{k-1}r_0^k)^{-1}\left(\int_{B_{\frac12}(\Xi)} \dist^2(X-\Xi,\bC)\,d\|V_{t_1}\|+\int_{\Omega} |f(Z,t_1)|^2\,d\mathcal{H}^k(Z)\right) \\
&\leq \frac{8 (8\pi)^{\sfrac{k}{2}}e^{C/2}}{3 \omega_{k-1}} \int_{B_{\frac12}(\Xi)}\dist^2(X-\Xi,\bC)\,\rho_{(\Xi,\tau)}(X,t_1)\,d\|V_{t_1}\| + C r_0^{-(k+2)}\mu^2 \\
&\leq \frac{8 (8\pi)^{\sfrac{k}{2}}e^{C/2}}{3 \omega_{k-1}} \Cr{c_nc3}r_0(\mu^2+\|u\|^2+|\xi|^2) + C r_0^{-(k+2)} \mu^2\,.
\end{split}
\]
Here we used \eqref{loheat}, \eqref{good-time} and \eqref{fHS4}.
By suitably choosing $r_0$ depending on $k$ and $\Cr{c_nc3}$ (thus ultimately only on $k,n,p,q,E_1,\Cr{conslice}$) so to absorb the $|\xi|^2$ summand on the right-hand side, we conclude \eqref{e:HS1}. Then, by combining \eqref{fHS1}-\eqref{fHS3}-\eqref{fHS2} with \eqref{e:HS1} we get \eqref{e:HS2}. 

Finally, to prove \eqref{e:HS3} we restrict \eqref{e:HS2} to the region where the support of the flow coincides with the graph of $f$: in such region, points $X \in \spt\|V_t\|$ are parametrized as $X=Z+f(Z,t)$ with $(Z,t) \in U \cap (B_1\times(-1,0))$, and thus $\dist^2(X-\Xi,\bC)= |f(Z,t)-\xi^{\perp_Z}|^2$.
\end{proof}

We conclude by recording a corollary of Theorem \ref{thm:HS}.

\begin{proposition} \label{p:nonconc-excess}
Let $\sigma \in \left( 0, \sfrac{1}
{40}\right)$ and 
$\kappa \in \left( 0, 1\right)$ be given. Then there are constants $ \Cl[eps]{e-binding}=\Cr{e-binding} (k,n,p,q,E_1,\sigma)\in (0,1)$ and $\Cl[con]{c-binding}=\Cr{c-binding}(k,n,p,q,E_1,\Cr{conslice},\kappa)\in (0,1)$ with the following property. Assume that $(\{V_t\},\{u(\cdot,t)\})\in \mathscr N_{\Cr{e-binding}}(U_5\times(-25,0])$ satisfies (A1)-(A6). Then, it holds
\begin{equation} \label{e:nonconc-cor}
   \iint_{P_{\frac12}}
   \frac{\dist^2(X,\bC)}{\max\{|x|,\sigma\}^{1-2\kappa}} \, d\|V_t\|(X)\,dt  \leq \Cr{c-binding}\,\max\{\|u\|,\mu\}^2.
\end{equation}
\end{proposition}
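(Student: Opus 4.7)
The plan is to derive \eqref{e:nonconc-cor} as a corollary of Theorem \ref{thm:HS} combined with the no-hole property (Proposition \ref{p:NH_property}) via a dyadic covering of $P_{1/2}$ in the transverse scale $|x|$. Observe first that the case $\kappa\in[\sfrac12,1)$ is trivial: since $1-2\kappa\leq 0$ and $\max\{|x|,\sigma\}\leq 1$ on $P_{1/2}$, the integrand is controlled by a constant multiple of $\dist^2(X,\bC)$ and the bound follows from the definition of $\mu$ in \eqref{def:L2-excess}. I therefore fix $\kappa\in(0,\sfrac12)$ and choose $\Cr{e-binding}$ small, in dependence of $\sigma$, so that Theorem \ref{thm:HS} and Proposition \ref{p:NH_property} are applicable on suitable rescalings and $\Cr{c_HS}\max\{\mu,\|u\|\}\leq \sigma/4$.

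For each dyadic scale $r=2^{-j}$ with $\sigma\leq r\leq \sfrac14$ and each $\tau\in(-\sfrac12,0)$, I will cover $B^{k-1}_1$ by $O(r^{-(k-1)})$ balls of radius $r/2$ centered at points $\{y_\ell\}_\ell$, and invoke Proposition \ref{p:NH_property} at each $(y_\ell,\tau)$ to obtain a high-density point $\Xi_{y_\ell,\tau}=(\xi_{y_\ell,\tau},y_\ell)$ with $|\xi_{y_\ell,\tau}|\leq r/4$ by \eqref{e:HS1}. Applying \eqref{e:HS2} at $(\Xi_{y_\ell,\tau},\tau)$ with $t=\tau-r^2$, and using that $\rho_{(\Xi,\tau)}(\,\cdot\,,\tau-r^2)\geq c(k)\,r^{-k}$ on $B_r(\Xi)$, yields
\[
\int_{B_r(\Xi_{y_\ell,\tau})}\dist^2(X-\Xi_{y_\ell,\tau},\bC)\,d\|V_{\tau-r^2}\|(X)\leq C\,r^{k+2\kappa}\max\{\mu,\|u\|\}^2.
\]
Combining with the spine-translation invariance $\dist(X-\Xi,\bC)=\dist(X-(\xi,0),\bC)$ (since $\bC=\hat\bC\times\bS$), the triangle inequality $\dist^2(X,\bC)\leq 2\dist^2(X-(\xi,0),\bC)+2|\xi|^2$, the mass bound $\|V_{\tau-r^2}\|(B_r)\leq E_1\omega_k r^k$ from (A2), and $|\xi|^2\leq C\max\{\mu,\|u\|\}^2$, this upgrades to
\[
\int_{B_r(\Xi_{y_\ell,\tau})}\dist^2(X,\bC)\,d\|V_{\tau-r^2}\|\leq C\,r^k\max\{\mu,\|u\|\}^2,
\]
in which, for $r<1$, the $|\xi|^2$ contribution absorbs the $r^{2\kappa}$ gain from \eqref{e:HS2}.

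Summing over the $O(r^{-(k-1)})$ balls (whose union covers the tube $\{|x|\leq r/2\}\cap B_{1/2}$ thanks to $|\xi|\leq r/4$) and integrating $\tau$ over a range of length $\sim 1$ yields
\[
\iint_{\{|x|\leq r/2\}\cap P_{1/2}}\dist^2(X,\bC)\,d\|V_t\|(X)\,dt\leq C\,r\max\{\mu,\|u\|\}^2.
\]
Multiplying by the weight $\max\{|x|,\sigma\}^{2\kappa-1}\sim r^{2\kappa-1}$ on the annular shell $|x|\sim r$ gives a per-scale contribution of $Cr^{2\kappa}\max\{\mu,\|u\|\}^2$, and the geometric series $\sum r^{2\kappa}$ over dyadic $r\in[\sigma,\sfrac14]$ converges since $\kappa>0$. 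The innermost region $|x|<\sigma$ is handled by the same estimate at the single scale $r=\sigma$, contributing at most $C\sigma^{2\kappa}\max\{\mu,\|u\|\}^2\leq C\max\{\mu,\|u\|\}^2$. Summing all contributions produces \eqref{e:nonconc-cor}, with the constant $\Cr{c-binding}$ depending on the parameters of Theorem \ref{thm:HS} and on $\kappa$ through the geometric series.

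The main obstacle is that the error $|\xi|^2\,\|V_{\tau-r^2}\|(B_r)\sim r^k\max\{\mu,\|u\|\}^2$ introduced by the triangle inequality in fact dominates the sharper $r^{k+2\kappa}\max\{\mu,\|u\|\}^2$ bound coming from \eqref{e:HS2} at every scale $r<1$; nevertheless, the resulting crude per-shell estimate $Cr^{2\kappa}\max\{\mu,\|u\|\}^2$ combines with the convergent series $\sum r^{2\kappa}$ to give exactly the desired bound. The $\sigma$-dependence of $\Cr{e-binding}$, enforcing $|\xi_{y,\tau}|\leq\sigma$, is what both localizes the error and terminates the dyadic sum at the cutoff $r=\sigma$ appearing in the statement.
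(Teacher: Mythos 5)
Your dyadic covering in the transverse variable $|x|$ mirrors, in a slightly different bookkeeping, the cylinder-and-torus covering the paper uses, and the key analytic ingredient (the no-hole property plus \eqref{e:HS1} and \eqref{e:HS2} at scale $r \sim |x|$ with time offset $\sim r^2$) is the same. The per-shell accounting, the cardinality count $O(r^{-(k-1)})$, the triangle-inequality translation from $\Xi$ to the origin, and the geometric series $\sum r^{2\kappa}$ all check out. The observation that $\kappa\geq\sfrac12$ is trivially handled by the definition of $\mu$ is also fine.

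However, there is a genuine gap at the end-time. Your covering controls, for each dyadic scale $r$, the integral over the tube $\{|x|\leq r/2\}$ only at times $t=\tau-r^2$ with $\tau\in(-\sfrac12,0)$, i.e., $t\in(-\sfrac12-r^2,-r^2)$. The slab $t\in(-r^2,0)$ is never reached: pushing $\tau\to 0^-$ does not help, since the backward Gaussian $\rho_{(\Xi,\tau)}(X,t)$ evaluated at $|X-\Xi|\sim r$ and time gap $\tau-t\ll r^2$ is exponentially small and \eqref{e:HS2} provides no usable lower bound on the kernel there. The naive bound $\dist^2\leq r^2$ plus the mass estimate only yields $Cr^5$ with no $\max\{\mu,\|u\|\}^2$ factor, which cannot be absorbed when $r$ is moderate but $\max\{\mu,\|u\|\}$ is tiny. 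The paper closes this slab using the \emph{elliptic} estimate \eqref{e:HS_technical2} from Proposition \ref{p:HS_technical} (combined with Proposition \ref{p:error_t_est}), which gives the uniform-in-time bound
\begin{equation*}
\sup_{t\in[-1+\tau,\tau)}\int_{B_{1/2}}\dist^2(X,\bC)\,d\|V_t\|\leq C\max\{\mu,\|u\|\}^2\,,
\end{equation*}
and therefore, integrated over the slab $t\in(-r^2,0)$, a contribution $\leq Cr^2\max\{\mu,\|u\|\}^2$, which is more than enough per scale. You should add this ingredient to cover the slabs $t\in(-r^2,0)$ (both inside the tube $\{|x|\leq\sigma\}$ and on each dyadic annulus); otherwise the argument, as written, does not produce the claimed estimate.

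A minor point: you cover $B_1^{k-1}$ by balls, but since you want $P_{1/2}$ it suffices (and is slightly cleaner) to cover $B_{1/2}^{k-1}$; this does not affect the count.
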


\begin{proof}
    Let $\sigma \in \left( 0, \sfrac{1}{40}\right)$. For $\Cr{e-binding}$ sufficiently small
    depending on $k,n,p,q,E_1$ and
    $\sigma$, we can apply Proposition \ref{p:NH_property} to conclude that for every $\zeta \in B_{\sfrac12}^{k-1}\subset \mathbf{S}(\bC)$ and for every $\tau \in [-\sfrac14,0]$
    there exists $\Xi=(\xi,\zeta) \in B^{n-k+1}_\sigma(0) \times \{\zeta\}$ such that $\Theta(\Xi,\tau) \geq \sfrac32$. If we further assume $\Cr{e-binding}\leq \Cr{e_HS}$, Theorem \ref{thm:HS} then implies that
    \begin{equation}\label{axiest1}
     |\xi| \leq \Cr{c_HS}\,\max\{\mu, \|u\|\}\,,
    \end{equation}
    and
    \begin{equation}\label{axiest2}
    \sup_{t\in [-1+\tau,\tau)} (\tau-t)^{-\kappa} \int_{B_1} \dist^2(X-\Xi,\bC)\,\rho_{(\Xi,\tau)}(X,t)\,d\|V_t\|(X) \leq \Cr{c_HS2}\,\max\{\|u\|,\mu\}^2\,.
    \end{equation}
    In particular, for any $(\zeta,\tau)$ as above it holds
    \begin{equation}\label{axiest3}
    \begin{split}
&\int_{\tau-2\sigma^2}^{\tau-\sigma^2} \int_{B_\sigma((0,\zeta))} \dist^2(X,\bC) \,d\|V_t\|\,dt \\ &\qquad\leq 2\int_{\tau-2\sigma^2}^{\tau-\sigma^2} \int_{B_{2\sigma}(\Xi)} \dist^2(X-\Xi,\bC) \,d\|V_t\|\,dt + 2\Cr{c_HS}^2\omega_k E_1\,\max\{\mu, \|u\|\}^2\,\sigma^{k+2}
\end{split}
\end{equation}
using ${\rm dist}^2(X,\bC)\leq 2\,{\rm dist}^2(X-\Xi,\bC)+2|\xi|^2$
if $X\in B_{\sigma}((0,\zeta))$,
$B_{\sigma}((0,\zeta))\subset
B_{2\sigma}(\Xi)$, \eqref{e:uniform-area-bound}
and \eqref{axiest1}. The first term of \eqref{axiest3} is
\begin{equation}\label{axiest4}
    \begin{split}
    &\qquad\leq c(k)\sigma^k \int_{\tau-2\sigma^2}^{\tau-\sigma^2} \int_{B_{2\sigma}(\Xi)} \dist^2(X-\Xi,\bC) \,\rho_{(\Xi,\tau)}(X,t) \,d\|V_t\|\,dt \\
    & \qquad \leq c(k)\Cr{c_HS2} \max\{\mu, \|u\|\}^2\,\sigma^{k+2+2\kappa}\,
    \end{split}
    \end{equation}
due to 
$\sigma^k\rho_{(\Xi,\tau)}(X,t)\geq c(k)$ if $(X,t)\in B_{2\sigma}(\Xi)\times[\tau-2\sigma^2,\tau-\sigma^2]$
and \eqref{axiest2}.
Combining \eqref{axiest3} and \eqref{axiest4}, we obtain with a constant $\Cl[con]{axiestcon}=\Cr{axiestcon}(n,k,p,q,E_1,\kappa,\Cr{conslice})$
\begin{equation}
   \int_{\tau-2\sigma^2}^{\tau-\sigma^2} \int_{B_\sigma((0,\zeta))} \dist^2(X,\bC) \,d\|V_t\|\,dt\leq \Cr{axiestcon}\sigma^{k+2}\max\{\mu,\|u\|\}^2.
\end{equation}
Denoting $B_\sigma(\bS(\bC))$ the $\sigma$-tubular neighborhood of the spine $\bS(\bC)$, we can then cover $B_{\sfrac12} \cap B_\sigma(\bS(\bC)) \times (-\sfrac14,-\sigma^2)$ with $\mathrm{O}(\sigma^{-k-1})$ cylinders $B_\sigma((0,\zeta_i))\times[\tau_i-2\sigma^2,\tau_i-\sigma^2]$ with finite intersection property to conclude that
\begin{equation} \label{nonconc-close}
    \int_{-\frac14}^{-\sigma^2}\int_{B_{\sfrac12}\cap B_\sigma(\bS(\bC))} \frac{\dist^2(X,\bC)}{\sigma} \,d\|V_t\|\,dt\leq \Cr{axiestcon}\max\{\mu,\|u\|\}^2\,.
\end{equation}
    
On the other hand, the estimates \eqref{e:HS_technical2} and \eqref{error estimates} immediately imply that
\begin{equation}\label{nonconc-close-ET}
     \int_{-\sigma^2}^0 \int_{B_{\sfrac12}} \dist^2(X,\bC)\,d\|V_t\|\,dt \leq C \sigma^{2} \max\{\|u\|,\mu\}^2 \,,
\end{equation}
so that it holds, in fact,
\begin{equation} \label{nonconc-close-final}
    \int_{-\frac14}^{0}\int_{B_{\sfrac12}\cap B_\sigma(\bS(\bC))} \frac{\dist^2(X,\bC)}{\sigma} \,d\|V_t\|\,dt\leq C\max\{\mu,\|u\|\}^2\,.
\end{equation}

Next, in order to obtain the estimate in the region away from $\bS(\bC)$, observe first that, upon possibly further reducing $\Cr{e-binding}$, we can make sure that $(B_{\sfrac12} \setminus B_\sigma(\bS(\bC))) \times (-\sfrac14,0)$ is contained in the region where the support of the flow coincides with $\mathrm{graph}\,f$. Arguing as in the proof of Theorem \ref{thm:graphical}, we can cover this region with (at most countably many) sets $T_{|x_i|,\sfrac12}(y_i,s_i)$ with the property that the number of intersections of $T_{|x_i|,1}(y_i,s_i)$ is bounded by a constant $c(n,k)$, whereas Proposition \ref{p:NH_property} guarantees that for every $i$ there exists $\Xi_i=(\xi_i,y_i) \in B_\sigma^{n-k+1}(0)\times\{y_i\}$ such that $\Theta(\Xi_i,s_i) \geq \frac32$. By Theorem \ref{thm:HS}, $|\xi_i| \leq \Cr{c_HS}\max\{\mu,\|u\|\}$, and thus we may argue as above. Precisely, setting 
\[
\begin{split}
\tilde T_{|x_i|,\sfrac12}(y_i,s_i) :&= T_{|x_i|,\sfrac12}(y_i,s_i) \cap \left\lbrace s_i- \frac{|x_i|^2}{16} < t<s_i - \frac{|x_i|^2}{32}\right\rbrace \\
&= \left\lbrace (x,y,t) \ \colon \, \left( |x| - |x_i| \right)^2 + |y-y_i|^2 < \frac{|x_i|^2}{16}\,, \quad s_i- \frac{|x_i|^2}{16} < t<s_i - \frac{|x_i|^2}{32} \right\rbrace\,,
\end{split}
\]
we may observe that for any $(x,y,t) \in \tilde T_{|x_i|,\sfrac12}(y_i,s_i)$, $\frac34 |x_i| < |x| < \frac54 |x_i| $, as well as that, since $T_{|x_i|,\sfrac12}(y_i,s_i)$ is contained in the graphicality region, $|\xi_i|\leq |x_i|$, and thus $|x-\xi_i|^2\leq 5|x_i|^2$. Combining this with $|y-y_i|^2 \leq |x_i|^2$ and $s_i-t \geq \frac{|x_i|^2}{32}$, we have that $\rho_{(\Xi_i,s_i)} \geq c |x_i|^{-k}$ in $T_{|x_i|,\sfrac12}(y_i,s_i)$. Therefore, we can estimate
\[
\begin{split}
&\iint_{\tilde T_{|x_i|,\sfrac12}(y_i,s_i)} \frac{\dist^2(X,\bC)}{|x|^{1-2\kappa}} \,d\|V_t\|\,dt \\
&\qquad \qquad  \leq C\iint_{\tilde T_{|x_i|,1}(y_i,s_i)} \frac{\dist^2(X-\Xi_i,\bC)}{|x|^{1-2\kappa-k}}\,\rho_{(\Xi_i,s_i)} \,d\|V_t\|\,dt + C \max\{\mu,\|u\|\}^2 |x_i|^{k+1+2\kappa} \\
&\qquad \qquad \leq C |x_i|^{k+4\kappa-1} \int_{s_i-\frac{|x_i|^2}{16}}^{s_i-\frac{|x_i|^2}{32}} (s_i-t)^{-\kappa}\int_{B_1}\dist^2(X-\Xi_i,\bC)\, \rho_{(\Xi_i,s_i)} \,d\|V_t\|\,dt \\
&\qquad\qquad\qquad\qquad\qquad\qquad+ C \max\{\mu,\|u\|\}^2 |x_i|^{k+1+2\kappa} \\
&\qquad\qquad\leq C |x_i|^{k+1+2\kappa} \max\{\mu,\|u\|\}^2\,,
\end{split}
\]
    where we have used \eqref{e:HS2}. Now, for every $j \in \mathbb N$ let $\mathcal{C}_j$ be the collection of tori $\tilde T_{|x_i|,1}(y_i,s_i)$ which intersect $\left(B_{\sfrac12} \setminus B_\sigma(\bS(\bC)) \right)\times \left(-\sfrac14,0\right)$ such that $2^{-j}\leq|x_i|\leq 2^{1-j}$: notice that the cardinality of $\mathcal C_j$ is at most $C (2^{-j})^{-k-1}$, where $C$ is independent of $j$. We can then estimate
    \begin{equation} \label{nonconc-away}
        \begin{split}
            & \int_{-\frac14}^{-\sigma^2}\int_{B_{\sfrac12}\setminus B_\sigma(\bS(\bC))} \frac{\dist^2(X,\bC)}{|x|^{1-2\kappa}} \,d\|V_t\|\,dt \\
            & \qquad \qquad \qquad\leq C \sum_{j\in\mathbb N} \sum_{\{i\,\colon\, \tilde T_{|x_i|,1}(y_i,s_i) \in \mathcal C_j\}} \iint_{\tilde T_{|x_i|,\sfrac12}(y_i,s_i)} \frac{\dist^2(X,\bC)}{|x|^{1-2\kappa}} \,d\|V_t\|\,dt \\
             & \qquad \qquad \qquad\leq C \,\max\{\mu,\|u\|\}^2 \sum_j (2^{-j})^{2\kappa} \\
             & \qquad \qquad \qquad\leq C \,\max\{\mu,\|u\|\}^2\,.
        \end{split}
    \end{equation}
Since, by estimates \eqref{e:HS_technical2} and \eqref{error estimates},
\begin{equation} \label{nonconc-away-et}
    \int_{-\sigma^2}^{0}\int_{B_{\sfrac12}\setminus B_\sigma(\bS(\bC))} \frac{\dist^2(X,\bC)}{|x|^{1-2\kappa}} \,d\|V_t\|\,dt \leq C \sigma^{1+2\kappa} \max\{\mu,\|u\|\}^2\,,
\end{equation}
the proof of \eqref{e:nonconc-cor} follows by combining \eqref{nonconc-close-final}, \eqref{nonconc-away}, and \eqref{nonconc-away-et}.

Finally, the same argument 
\end{proof}

\section{Blow-up and decay for the linear problem} \label{sec:BU}

Having established the main non-concentration estimates in the previous section, we now turn to their central application: the analysis of the asymptotic behavior of the flow at a singular point. The strategy is to perform a blow-up analysis: we consider a sequence of flows whose $L^2$-excess $\mu^{(m)}$ vanishes as $m \to \infty$, and we study the limit of the corresponding graphing functions $f^{(m)}$ after normalization by $(\mu^{(m)})^{-1}$.

The main goal of this section is to prove that the resulting limit function, $\tilde f$, is a classical solution to the heat equation on each of the three half-planes forming the triple junction. Furthermore, we will show that the geometric constraints on the original flow impose powerful symmetry conditions on the boundary values of $\tilde f$ at the spine. This detailed understanding of the linearized problem, obtained via a reflection argument, is the crucial input for deriving the excess decay in the next section.

\smallskip

Let $\{\sigma^{(m)}\}_{m \in \mathbb N}$ be a decreasing sequence in $\left(0, \sfrac{1}{40}\right)$ such that $\lim_{m \to \infty} \sigma^{(m)} =0$. For fixed $p \in [2,\infty)$, $q \in (2,\infty)$, and $E_1\in [1,\infty)$, we let $\varepsilon^{(m)}$ denote the threshold $\Cr{e-binding}(k,n,p,q,E_1,\sigma^{(m)})$.

\begin{definition} \label{def:BU}
    A blow-up sequence is a sequence of pairs $$\left( \{V^{(m)}_t\}_{t \in I}, \{u^{(m)}(\cdot, t)\}_{t\in I} \right) \in \mathcal N_{\varepsilon^{(m)}}(U_{R} \times I)$$ for $I=[-R^2,0]$ and $\varepsilon^{(m)}\in (0,1)$ so that assumptions (A1)-(A6) are satisfied for the above choice of parameters $p,q,E_1$ and for a standard triple junction $\bC$, and for which, additionally, it holds
    \begin{equation} \label{blow-up cond}
        \mu^{(m)} \to 0 \qquad \mbox{and} \qquad (\mu^{(m)})^{-1} \|u^{(m)}\| \to 0 \qquad \mbox{as $m \to \infty$}\,.
    \end{equation}
    Coordinates in the ambient space $\R^n$ are chosen so that all conventions and notation set forth in Section \ref{sec:results} are in place.
\end{definition}

Given that all estimates are scale invariant, in what follows we will assume without loss of generality that $R=5$. For every $m \in \mathbb N$, apply Theorem \ref{thm:graphical} and conclude the existence of open sets $U^{(m)}\subset \bC \cap (U_{5}\times(-25,0))$ satisfying \eqref{three}-\eqref{three2} with $U=U^{(m)}$ and $\sigma=\sigma^{(m)}$, as well as functions $f^{(m)}\in C^{1,\alpha}(U^{(m)};\bC^\perp)$ satisfying \eqref{e:bound}-\eqref{para1-ex} with $\{V_t\}_t, \{u(\cdot,t)\}_t,\sigma,U,f,\mu$ replaced by $\{V^{(m)}_t\}_t, \{u^{(m)}(\cdot,t)\}_t,\sigma^{(m)},U^{(m)},f^{(m)},\mu^{(m)}$, respectively. Furthermore, by \eqref{expara1}, we have that, for any $\sigma > 0$ and upon denoting $Q_\sigma:=\{(x,y,t)\in \bC \cap P_{2} \,\colon\,|x|>5\sigma \}$,
\begin{equation} \label{BU:C1a est}
    \|f^{(m)}\|_{C^{1,\alpha}(Q_\sigma)} \leq \Cr{nonsing1}(\sigma) \max\{\|u^{(m)}\|,\mu^{(m)}\}\,,
\end{equation}
for all $m$ such that $\sigma^{(m)} \leq \sigma$. In particular, by the Ascoli-Arzel\`a theorem the functions $\tilde f^{(m)} := (\mu^{(m)})^{-1}\,f^{(m)}$ converge in $C^1_{{\rm loc}}$ to a function $\tilde f \colon \bC \cap P_{2} \cap \{|x|>0\} \to \bC^\perp$.

Next, as a consequence of Proposition \ref{p:NH_property}, for all sufficiently large $m$ property \eqref{e:NH} holds with $\left(\{V_t^{(m)}\}_t, \{u^{(m)}(\cdot,t)\}_t\right)$ in place of $\left(\{V_t\}_t, \{u(\cdot,t)\}_t\right)$. Thus, upon fixing $\delta \in \left(0,\sfrac14\right)$, and upon denoting $\Theta^{(m)}$ the Gaussian density of the flow $\{V_t^{(m)}\}$, for every $z \in B^{k-1}_1$ and for every $\tau \in [-1,0)$ the set
\[
J_{z,\tau}^{(m)} := \{ \xi \in B^{n-k+1}_\delta(0) : \Theta^{(m)}((\xi,z),\tau) \ge 3/2 \}
\]
is not empty. Furthermore, since $\Theta^{(m)}$ is upper semi-continuous, $J^{(m)}_{z,\tau}$ is a compact subset of $\R^{n-k+1}$. We can then define a unique map $(z,\tau) \mapsto \xi^{(m)}(z,\tau) \in J^{(m)}_{z,\tau}$ according to the following selection procedure: for each $(z,\tau)$, we let $\xi^{(m)}(z,\tau)$ be the point in $J^{(m)}_{z,\tau}$ with the minimal Euclidean norm $|\xi|$; if multiple such points exist, then we choose the one which is first in the lexicographical ordering of $\R^{n-k+1}$. We shall call this map a \emph{binding function}, and we note explicitly that if $\Theta^{(m)}((0,z),\tau)\geq 3/2$ then $\xi^{(m)}(z,\tau)=0$. Let also $\Xi^{(m)}(z,\tau):=(\xi^{(m)}(z,\tau),z) \in \R^n$. Then, the maps $(z,\tau) \mapsto \xi^{(m)}(z,\tau)$ and $(z,\tau) \mapsto \Xi^{(m)}(z,\tau)$ are Borel measurable, and by Theorem \ref{thm:HS} it holds
\begin{equation} \label{e:pushing3jp}
|\xi^{(m)}(z,\tau)| \leq \Cr{c_HS} \max\{\|u^{(m)}\|,\mu^{(m)}\}\,,
\end{equation}
and
 \begin{equation}\label{e:binding}
 \begin{split}
     & \sup_{t\in[-1+\tau,\tau)}  
      (\tau-t)^{-\kappa-\frac{k}{2}}\int_{ \Omega^{(m)}_t} e^{-\frac{\left|X+f^{(m)}(X,t)-\Xi^{(m)}(z,\tau)\right|^2}{4(\tau-t)}} \times \\
      &\qquad \qquad \qquad \times|f^{(m)}(X,t)-\xi^{(m)}(z,\tau)^{\perp_X}|^2\,d\mathcal H^k(X) \leq \Cr{c_HS2}\,\max\{\mu^{(m)},\|u^{(m)}\|\}^2\,,
    \end{split}
    \end{equation}
    where $\Omega^{(m)}_t:=B_1 \cap \{X\,\colon\,(X,t) \in U^{(m)}\}$. The estimate \eqref{e:pushing3jp} guarantees that the sequence $\tilde\xi^{(m)}(z,\tau) := (\mu^{(m)})^{-1}\xi^{(m)}(z,\tau)$ is uniformly bounded in $m$. Hence, upon passing to a subsequence (which in principle may depend on $(z,\tau)$), it converges to a limit point $\tilde\xi(z,\tau)$. The estimate \eqref{e:binding}, in turn, produces, in the limit of this subsequence, that
   \begin{equation}\label{e:binding limit}
   \begin{split}
       \sup_{t \in [-1+\tau,\tau)} (\tau-t)^{-\kappa-\frac{k}{2}} \int_{\bC \cap B_1 \cap \{|x|>0\}}&\exp\left(-\frac{|x|^2+|y-z|^2}{4(\tau-t)}\right)\times \\
       &\times|\tilde f(x,y,t)-\tilde\xi(z,\tau)^{\perp_{(x,y)}}|^2\,d\Ha^{k}(x,y)\leq \Cr{c_HS2}\,.
       \end{split}
   \end{equation}
Recall that $\tilde\xi(z,\tau)^{\perp_{(x,y)}}$ is the projection of the vector $\tilde\xi(z,\tau)$ onto the orthogonal complement to $\bC$ at the point $(x,y)$. The validity of \eqref{e:binding limit} implies that for every $j \in \{1,2,3\}$ the projection $\mathbf P_{\mathbf P_j^\perp}(\tilde\xi(z,\tau))$ is  uniquely determined, and thus that the full sequence $\{\mathbf P_{\mathbf P_j^\perp}(\tilde\xi^{(m)}(z,\tau))\}_{m\in\mathbb N}$ converges to $\mathbf P_{\mathbf P_j^\perp}(\tilde\xi(z,\tau))$. We claim that then the vector $\tilde\xi(z,\tau)$ is uniquely determined, and thus that the full sequence $\{\tilde\xi^{(m)}(z,\tau)\}_{m\in\mathbb N}$ converges to $\tilde\xi(z,\tau)$. Indeed, suppose that $v\in\R^{n-k+1}$ is such that $\mathbf P_{\mathbf P_j^\perp}(v)=0$ for every $j$. As usual, let $w_j$ be the vectors such that $\mathbf H_j = {\rm span}^+(w_j) \oplus \R^{k-1}$, Then, $\mathbf P_{\mathbf P_j^\perp}$ is the operator $\mathbf P_{\mathbf P_j^\perp}=I_{n-k+1}-w_j\otimes w_j$, where $I_{n-k+1}$ is the orthogonal projection onto $\R^{n-k+1}$. Since $v \in \R^{n-k+1}$, the condition $\mathbf P_{\mathbf P_j^\perp}(v)=0$ translates into $v=(v \cdot w_j) \,w_j$, and this holds true for every $j\in\{1,2,3\}$. Since the $w_j$'s are linearly independent, it follows immediately that $v=0$, as we wanted. 

As a consequence of this argument, the pointwise limit function $(z,\tau) \mapsto \tilde\xi(z,\tau)$ is well defined, and it satisfies \eqref{e:binding limit}. We record the above conclusions in the following

\begin{proposition}\label{blow-up-basic}
    Let $\left\{\left(\{V^{(m)}_t\}_t, \{u^{(m)}(\cdot,t)\}_t\right)\right\}_{m\in\mathbb N}$ be a blow-up sequence, and let $U^{(m)}$ and $f^{(m)}$ be the corresponding domains of graphicality and parametrizations, respectively. Let $(z,\tau) \in B_1^{k-1} \times (-1,0) \mapsto \xi^{(m)}(z,t) \in \bS(\bC)^\perp$ be binding functions. Then, as $m \to \infty$:
    \begin{itemize}
        \item[(i)] the functions $\tilde f^{(m)}:=(\mu^{(m)})^{-1}f^{(m)}$ converge locally in $C^1$ to a function $\tilde f \colon \tilde U := \bC \cap P_1\cap \{|x|>0\} \to \bC^\perp$ satisfying
        \begin{equation} \label{e:reguarity inside}
          \sup_{(X,t) \in \tilde U} |x|^{\frac{k}{2}+2} \left( |x|^{-1} |\tilde f(X,t)| + \|\nabla \tilde f(X,t)\| \right)  \leq \Cr{nonsing1}\,.
        \end{equation}
        \item[(ii)] the functions $\tilde \xi^{(m)}:=(\mu^{(m)})^{-1}\xi^{(m)}$ converge to $\tilde \xi \colon B_1^{k-1}\times (-1,0) \to \bS(\bC)^\perp$ with $\|\tilde\xi\|_\infty \leq \Cr{c_HS}$;
        \item[(iii)] For every $(z,\tau) \in B^{k-1}_1 \times (-1,0)$, the following holds:
        \begin{equation}\label{e:bindinglim2}
        \sup_{t \in (-1+\tau,\tau))} (\tau-t)^{-\kappa-\frac{k}{2}} \int_{\bC \cap B_1 \cap \{|x|>0\}} e^{-\frac{|x|^2+|y-z|^2}{4(\tau-t)}}\,\left|\tilde f(x,y,t) - \tilde\xi(z,\tau)^{\perp_{(x,y)}}\right|^2\,d\mathcal H^k(x,y)\leq \Cr{c_HS2}\,.
        \end{equation}
    \end{itemize}
\end{proposition}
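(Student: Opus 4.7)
The proposition packages the quantitative information established in Sections \ref{sec:graph}--\ref{sec:HS} into a compactness statement ready for the blow-up analysis. My plan is to prove (i) by a standard Arzelà--Ascoli argument, and then to establish (ii) and (iii) jointly, since the uniqueness of the pointwise limit $\tilde\xi(z,\tau)$ only becomes visible after passing to the limit in \eqref{e:binding}.

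For (i), I would first invoke the interior, weighted $C^{1,\alpha}$ bound for $f^{(m)}$ stated in Remark \ref{rmk:C1a-deg}. Combined with the hypothesis $(\mu^{(m)})^{-1}\|u^{(m)}\|\to 0$, this yields locally uniform $C^{1,\alpha}$ control for the rescaled sequence $\tilde f^{(m)}$ on any region $Q_\sigma\ssubset \bC\cap P_1\cap\{|x|>0\}$. Arzelà--Ascoli plus a standard diagonal extraction then produce a subsequence (not relabeled) converging in $C^1_{\rm loc}$ to a limit $\tilde f\colon \tilde U\to\bC^\perp$. The weighted pointwise bound \eqref{e:reguarity inside} passes to the limit directly from the corresponding bound on each $\tilde f^{(m)}$.

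For (ii) and (iii), the first step is to note that \eqref{e:HS1} applies to each flow $\{V^{(m)}_t\}_t$ once $m$ is large enough (so that $\varepsilon^{(m)}\le \Cr{e_HS}$), giving the uniform bound $|\tilde\xi^{(m)}(z,\tau)|\le \Cr{c_HS}$. Fix $(z,\tau)\in B^{k-1}_1\times(-1,0)$ and extract a further subsequence along which $\tilde\xi^{(m)}(z,\tau)$ converges to some $\tilde\xi(z,\tau)\in\bS^\perp$. The second step is to pass to the limit in \eqref{e:binding} divided by $(\mu^{(m)})^2$: the integrand is non-negative, converges $\mathcal H^k$-a.e.\ on $\bC\cap B_1\cap\{|x|>0\}$ to the integrand in \eqref{e:bindinglim2} thanks to step (i) and the chosen subsequence, and the Gaussian kernel is uniformly controlled on any compact subset of $\{|x|>0\}$; Fatou's lemma (applied at each fixed $t$ and then to the supremum in $t$) delivers \eqref{e:bindinglim2} for this subsequential limit. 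The third step, which is the heart of the matter, is the uniqueness argument: restricted to each half-plane $\mathbf{H}_j$, the vector $\tilde\xi(z,\tau)^{\perp_{(x,y)}}$ reduces to the constant vector $\mathbf{p}_{\mathbf{P}_j^\perp}(\tilde\xi(z,\tau))$, and the restriction of \eqref{e:bindinglim2} to $\mathbf{H}_j$ thus uniquely determines $\mathbf{p}_{\mathbf{P}_j^\perp}(\tilde\xi(z,\tau))$ in terms of the already-fixed $\tilde f$. Since for $v\in\bS^\perp$ one has $\mathbf{p}_{\mathbf{P}_j^\perp}(v)=v-(v\cdot w_j)w_j$ and the three unit vectors $w_1,w_2,w_3\in Z$ are pairwise linearly independent inside the two-dimensional plane $Z$, the three projections determine $v$ uniquely. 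Hence every subsequential limit coincides, the full sequence converges to $\tilde\xi(z,\tau)$, and (ii) and (iii) follow simultaneously.

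I expect the main obstacle to be precisely this uniqueness step: without it, the map $\tilde\xi$ would be defined only up to $(z,\tau)$-dependent subsequences, and its measurable dependence on $(z,\tau)$ (needed in the subsequent sections) would be inaccessible. The resolution relies essentially on the $120^\circ$ angular geometry of $\bC$ within the two-plane $Z$, rather than on any additional analytic input. Every other step is routine compactness plus Fatou's lemma, and I do not foresee technical obstacles beyond those already overcome in the preceding sections.
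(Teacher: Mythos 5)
Your proposal reproduces the paper's argument: (i) is obtained from the weighted $C^{1,\alpha}$ estimate of Remark \ref{rmk:C1a-deg} (equivalently \eqref{BU:C1a est}) plus Arzelà--Ascoli and diagonal extraction, while (ii)--(iii) come from the uniform bound \eqref{e:HS1}, passage to the limit in \eqref{e:binding}, and a uniqueness argument showing that the three projections $\mathbf{p}_{\mathbf{P}_j^\perp}(\tilde\xi(z,\tau))$ pin down $\tilde\xi(z,\tau)\in\bS(\bC)^\perp$ since the $w_j$ span $Z$ in pairs. Your phrasing of the linear-algebra step (``pairwise linearly independent'') is in fact slightly more careful than the paper's (since $w_1+w_2+w_3=0$, the three vectors are not jointly independent), but the argument and all its ingredients coincide with the paper's.
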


\begin{proposition} \label{blow-up-fine}
    Let $\{(\mathscr V^{(m)}, u^{(m)})\}_{m \in \mathbb N}$, $U^{(m)}$, $\tilde f^{(m)}$, $\tilde f$, $\tilde\xi^{(m)}$, and $\tilde \xi$ be as in Proposition \ref{blow-up-basic}. Then:
    \begin{itemize}
        \item[(i)] The convergence of $\tilde f^{(m)}$ to $\tilde f$ is strong up to the spine, in the sense that
        \begin{equation}\label{strong-L2}
           \iint_{\tilde U\cap P_{\sfrac12}} |\tilde f|^2 \, d\mathcal H^k \, dt =\lim_{m \to \infty} \iint_{U^{(m)}\cap P_{\sfrac12}} |\tilde f^{(m)}|^2 \, d\mathcal H^k \, dt\,.
        \end{equation}
        \item[(ii)] We have the estimate
        \begin{equation}\label{eq:bound-y-derivative}
           \limsup_{m \to \infty} \iint_{U^{(m)}\cap P_{\sfrac12}} |\nabla_y \tilde f^{(m)}|^2\,d\mathcal H^k dt\leq \Cr{er0}\,.
        \end{equation}
        \item[(iii)] We have the estimate
        \begin{equation}
            \limsup_{m \to \infty} (\mu^{(m)})^{-2} \iint_{P_{\sfrac12}}
            \dist^2(X,\bC) \,d\|V^{(m)}_t\|\,dt \leq \iint_{\bC\cap P_{\sfrac12}\cap\{|x|>0\}} |\tilde f(X,t)|^2\,d\mathcal H^k(X)\,.
         \end{equation}
         \item[(iv)] The function $\tilde f$ is locally smooth on $\bC\cap B_1 \cap \{|x|>0\}$, and on each half-plane $\mathbf H_j$ it is a solution to the heat equation $\partial_t \tilde f -\Delta \tilde f = 0$.
    \end{itemize}
\end{proposition}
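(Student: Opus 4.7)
My plan is to upgrade the $C^1_{\mathrm{loc}}$ convergence of $\tilde f^{(m)}$ provided by Proposition \ref{blow-up-basic} to the quantitative statements (i)--(iii) via the non-concentration estimates of Section \ref{sec:HS}, and then to combine this control with standard interior parabolic regularity to obtain (iv). The backbone of the argument is Proposition \ref{p:nonconc-excess}: after division by $(\mu^{(m)})^2$ and exploiting the blow-up condition $\|u^{(m)}\|/\mu^{(m)}\to 0$ from \eqref{blow-up cond}, it yields uniform (in $m$) control on the $L^2$-mass of $|\tilde f^{(m)}|^2$ inside arbitrarily thin tubular neighborhoods of the spine.

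For (i), I would split the integration over $U^{(m)}\cap P_{\sfrac12}$ into $\{|x|>\sigma\}$ and $\{|x|<\sigma\}$. Away from the spine, \eqref{BU:C1a est} gives uniform $C^1$ convergence, so $\iint_{\{|x|>\sigma\}}|\tilde f^{(m)}|^2\,d\mathcal H^k\,dt\to\iint_{\{|x|>\sigma\}}|\tilde f|^2\,d\mathcal H^k\,dt$. Near the spine, on the graphical portion of the flow $|f^{(m)}(Z,t)|\leq 2\,\dist(Z+f^{(m)}(Z,t),\bC)$, so by change of variables and Proposition \ref{p:nonconc-excess} applied with any $\kappa\in(0,\sfrac12)$,
\[
\iint_{U^{(m)}\cap\{|x|<\sigma\}\cap P_{\sfrac12}}|f^{(m)}|^2\,d\mathcal H^k\,dt\leq C\sigma^{1-2\kappa}\max\{\mu^{(m)},\|u^{(m)}\|\}^2,
\]
which divides through to $C\sigma^{1-2\kappa}$; sending $m\to\infty$ and then $\sigma\to 0$, together with Fatou's lemma for $\tilde f$, delivers \eqref{strong-L2}. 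For (ii), on the graph of $f^{(m)}$ the tangent plane satisfies $|S^\perp e_j|^2 = |\partial_{y_{j-n+k-1}}f^{(m)}|^2 + O(|\nabla f^{(m)}|^4)$ for the spine indices $j=n-k+2,\dots,n$. I would apply Proposition \ref{p:error0} with test function $\psi$ supported in $B^{k-1}_{\sfrac12}\times(-\sfrac14,0]$, handle the non-graphical portion via \eqref{para1} and \eqref{para1-ex}, and normalize to obtain \eqref{eq:bound-y-derivative}. For (iii), I would split $\iint\dist^2(X,\bC)\,d\|V_t^{(m)}\|\,dt$ into graphical and non-graphical pieces: on the graph, $\dist^2(X,\bC)\leq |f^{(m)}(Z,t)|^2$ and $J_{\nabla f^{(m)}}\to 1$ pointwise outside any fixed neighborhood of the spine by \eqref{BU:C1a est}, so (i) transfers; the non-graphical part is contained in an $O(\mu^{(m)})$-tube of the spine by \eqref{e:HS1}, its $k$-dimensional mass there is at most $O(\mu^{(m)})$, and thus $\iint_{\text{non-graph}}\dist^2(X,\bC)\,d\|V_t\|\leq\iint_{\text{non-graph}}|x|^2\,d\|V_t\| = O((\mu^{(m)})^3)$, which normalized is $o(1)$.

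For (iv), I would start from the weak form of the forced mean curvature flow satisfied by $f^{(m)}$ on each half-plane $\mathbf H_j$,
\[
\partial_t f^{(m)}=\Delta f^{(m)}+\mathscr N(\nabla f^{(m)},\nabla^2 f^{(m)})+\mathbf{p}_{\mathbf P_j^\perp}(u^{(m)}\circ(\mathrm{Id}+f^{(m)})),
\]
where $\mathscr N$ collects terms at least quadratic in $\nabla f^{(m)}$. Dividing by $\mu^{(m)}$, pairing against $\varphi\in C^\infty_c(\mathbf H_j\cap(B_1\times(-1,0))\cap\{|x|>0\})$, and integrating by parts in time, the left-hand side of the resulting identity converges by $C^1_{\mathrm{loc}}$ convergence on $\spt\varphi$ to the analogous expression for $\tilde f$. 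On the right, $(\mu^{(m)})^{-1}\mathscr N$ retains an overall prefactor of $\mu^{(m)}$ because $\mathscr N$ is at least quadratic in $\nabla f^{(m)}=\mu^{(m)}\nabla\tilde f^{(m)}$, and the forcing vanishes since $(\mu^{(m)})^{-1}\|u^{(m)}\|\to 0$ and $\varphi$ is supported in a fixed parabolic subdomain. Thus $\tilde f$ is a distributional heat solution on each $\mathbf H_j\cap\{|x|>0\}$, and standard interior parabolic regularity upgrades this to smoothness and the classical PDE. The principal obstacle I anticipate is the vanishing of the nonlinear term in the distributional limit, which demands uniform $C^2$ control of $f^{(m)}$ on $\spt\varphi$; this goes beyond Theorem \ref{thm:graphical}, but on any compact $K\ssubset\mathbf H_j\cap\{|x|>0\}$ it can be recovered by applying Proposition \ref{e-reg} at a fixed scale at every point of $K$ and bootstrapping through standard parabolic Schauder estimates, since the graph equation in this regime is a uniformly parabolic quasilinear system with small $C^{1,\alpha}$ coefficients.
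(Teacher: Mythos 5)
Your overall strategy is sound and parts (i), (ii), and (iv) are in essence consistent with the paper's proof (for (iv) the paper just defers to \cite[Lemma~8.4]{Kasai-Tone}, and your sketch via the quasilinear graph equation, with local $C^{2,\alpha}$ control from Proposition~\ref{e-reg} plus Schauder bootstrapping, is a reasonable way to fill in those details). There is, however, a genuine gap in your argument for (iii).

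You assert that the non-graphical part of the flow is contained in an $\mathrm{O}(\mu^{(m)})$-tube around the spine, citing \eqref{e:HS1}. But \eqref{e:HS1} only controls the location of the \emph{high-density} points $(\Xi,\tau)$ with $\Theta(\Xi,\tau)\geq \sfrac32$; it says nothing directly about how far the graphicality region of Theorem~\ref{thm:graphical} extends. What Theorem~\ref{thm:graphical} and Remark~\ref{rmk:C1a-deg} give a priori is that graphicality holds only on $\{|x| > C'\mu^{1/(k+4)}\}$, and the non-graphical region could a priori occupy the whole tube $\{|x|\lesssim \mu^{1/(k+4)}\}$. Plugging that width into your crude bound produces $\iint_{\text{non-graph}}|x|^2\,d\|V_t\|\lesssim \mu^{3/(k+4)}$, which after dividing by $(\mu^{(m)})^2$ blows up for every $k\geq 1$. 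To actually push graphicality to $\{|x| > C\mu\}$ one would need to re-run the covering argument of Theorem~\ref{thm:graphical}, Step~3, using the improved scale-by-scale excess control furnished by the non-concentration estimates; this is a nontrivial extra argument that you neither carry out nor cite. The paper avoids the issue entirely: it fixes an arbitrary $r>0$, waits for $m$ large so that $\{|x|>r\}$ is graphical, controls the near-spine contribution by
\begin{equation*}
(\mu^{(m)})^{-2}\iint_{P_{\sfrac12}\cap B_r(\bS(\bC))}\dist^2(X,\bC)\,d\|V_t^{(m)}\|\,dt \;\leq\; r^{1-2\kappa}\,(\mu^{(m)})^{-2}\iint_{P_{\sfrac12}}\frac{\dist^2(X,\bC)}{\max\{|x|,\sigma^{(m)}\}^{1-2\kappa}}\,d\|V_t^{(m)}\|\,dt \;\leq\; \Cr{c-binding}\,r^{1-2\kappa}
\end{equation*}
via Proposition~\ref{p:nonconc-excess} (with $\kappa<\sfrac12$), and then sends $r\to 0$ after $m\to\infty$. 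You should use the same mechanism for (iii) that you already invoked for (i) — the $\sigma$-then-$m$-then-$\sigma\to0$ scheme — rather than attempting to locate the non-graphical region a priori.
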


\begin{proof}
    \textbf{Proof of (i).} Since the functions $\tilde f^{(m)}$ converge to $\tilde f$ locally uniformly away from the spine $\bS(\bC)$, it is enough to show that there is no concentration of the weighted $L^2$ norm at the spine. Let $r>0$, and let $B_r(\bS(\bC))$ denote the $r$-tubular neighborhood of the spine $\bS(\bC)$. With an analogous covering argument as in the second half of the proof of Proposition \ref{p:nonconc-excess}, but using \eqref{e:binding} instead of \eqref{e:HS2}, we conclude easily that
    \begin{equation} \label{e:nonconc-function}
        \iint_{U^{(m)} \cap P_{\sfrac12}} \frac{|f^{(m)}(X,t)|^2}{|x|^{1-2\kappa}} \, d\mathcal H^k(X) \, dt\leq \Cr{c_HS2}\,\max\{\|u\|,\mu\}^2\,.
    \end{equation}
    In particular, 
    \[
    \iint_{U^{(m)} \cap P_{\sfrac12} \cap B_r(\bS(\bC))}  |\tilde f^{(m)}|^2 \,d\mathcal H^k\,dt \leq \Cr{c_HS2}\, r^{1-2\kappa} \longrightarrow 0 \quad \mbox{as $r \to 0^+$}\,,
    \]
    as we needed.

    \medskip

    \textbf{Proof of (ii).}
    Let $\psi(y,t) \in C^\infty_c(U_1^{k-1} \times [-4,0])$ be non-negative, radially symmetric in $y$, and such that $\psi(\cdot,t)=1$ on $U_{\sfrac12}^{k-1}$ for every $t$, and let $\tilde\eta=\tilde\eta(|x|)$ be as in Proposition \ref{lateral}. Then, by \eqref{kore:0} we have, for all $m$ sufficiently large, the estimate
    \begin{equation} \label{the-perp-ineq}
    \Cr{er0} (\mu^{(m)})^2 \geq \int_{-\frac12}^0 \int_{B_{\frac12} \cap \mathrm{graph}\,f^{(m)}(\cdot,t)} \sum_{j=n-k+2}^n |S^\perp e_j|^2 \, dV^{(m)}_t\,dt\,.
    \end{equation}
    Now, pick a point $(X,t) \in \mathrm{graph}\,f^{(m)}(\cdot,t)$, and suppose without loss of generality that it projects onto $U^{(m)} \cap \mathbf{H}_1$, so that in the standard coordinates of $\R^n$ we have $X=(x_1,f^{(m)}(x_1,y,t),y,t)$ with $f^{(m)}(x_1,y,t)\in \mathbf{P}_1^\perp$. Then
    \[
    S=I_{\mathbf P_1} +\nabla f^{(m)}(\mathbf P_1,t)\,,
    \]
    so that
    \[
    S^\perp=I_{\mathbf P_1^\perp}-\nabla f^{(m)}(\mathbf{P}_1,t)\,.
    \]
    Since $e_j \in \mathbf{P}_1$ for every $j \in \{n-k+2,\dots,n\}$, we have that $|S^\perp e_j|^2= |\nabla_{e_j}f^{(m)}|^2$, and thus by the area formula \eqref{the-perp-ineq} implies
    \[
    \Cr{er0} (\mu^{(m)})^2 \geq \int_{-\frac12}^0 \int_{B_{\frac12} \cap U^{(m)}} |\nabla_y f^{(m)}|^2 \, d\mathcal H^k\,dt\,,
    \]
    from which \eqref{eq:bound-y-derivative} immediately follows.
    
    \medskip

    \textbf{Proof of (iii).} Let $r > 0$, and, as in the proof of (i), take $m$ so large that the space-time region $P_{\frac12} \setminus B_r(\bS(\bC))$ is contained in the graphicality region of $\mathscr{V}^{(m)}$. In particular, if $X \in \spt\|V^{(m)}_t\|$ then $X=z+f^{(m)}(z,t)$ for $z \in U^{(m)}$, and $\dist(X,\bC) \leq |f^{(m)}(z,t)|$. We can then estimate
    \[
    \begin{split}
    & \quad \limsup_{m \to \infty} (\mu^{(m)})^{-2} \iint_{P_{\frac12}\setminus B_r(\bS(\bC))} \dist^2(X,\bC) \,d\|V^{(m)}_t\|\,dt\\ & \leq \limsup_{m \to \infty} \left(1+C\,\Lip(f^{(m)};U^{(m)}\setminus B_r(\bS(\bC))) \right) \iint_{P_{\sfrac12}\cap U^{(m)}} |\tilde f^{(m)}|^2\,d\mathcal H^k\,dt\,,
    \end{split}
    \]
    where we have used the area formula and estimated the Jacobian of the graph map with the factor $(1+C\,\Lip)$. Since the Lipschitz constant of $f^{(m)}$ tends to zero in any region at positive distance from $\bS(\bC)$ by \eqref{e:reguarity inside}, we conclude from (i) that
    \begin{equation} \label{e:limsup_away}
    \begin{split}
    &  \quad  \limsup_{m \to \infty} (\mu^{(m)})^{-2} \iint_{P_{\frac12}\setminus B_r(\bS(\bC))} \dist^2(X,\bC) \,d\|V^{(m)}_t\|\,dt \\
    & \leq \iint_{\bC \cap P_{\sfrac12}\cap\{|x|>0\}} |\tilde f(z,t)|^2\,d\mathcal H^k(z)\, dt\,.
    \end{split}
    \end{equation}
On the other hand, we see as an immediate consequence of Proposition \ref{p:nonconc-excess} that
\begin{equation}\label{e:limsup_inside}
\begin{split}
    &\quad \limsup_{m \to \infty} (\mu^{(m)})^{-2} \iint_{P_{\sfrac12}\cap B_r(\bS(\bC))} \dist^2(X,\bC) \,d\|V^{(m)}_t\|\,dt \\
    &\leq \limsup_{m \to \infty} (\mu^{(m)})^{-2} \,r^{1-2\kappa}\iint_{P_{\sfrac12}\cap B_r(\bS(\bC))} \frac{\dist^2(X,\bC)}{\max\{|x|,\sigma ^{(m)}\}^{1-2\kappa}} \,d\|V^{(m)}_t\|\,dt \\
    &\leq \Cr{c-binding} r^{1-2\kappa}\,.
\end{split}
\end{equation}
By choosing any $\kappa \in \left( 0, \sfrac12\right)$, combining \eqref{e:limsup_away} and \eqref{e:limsup_inside} and letting $r \to 0$ one deduces (ii).

\medskip

\textbf{Proof of (iv).} This follows from the same argument as in \cite[Lemma 8.4]{Kasai-Tone}.
\end{proof}

In the following, for $\theta\in\mathbb R$,
we let ${\bf R}_{\theta}:\mathbb R^{n}\rightarrow\mathbb R^n$ be the rotation
\begin{equation}
    {\bf R}_{\theta}(x_1,x_2,z):=(x_1\cos\theta-x_2\sin\theta,x_1\sin\theta+x_2\cos\theta,z)
\end{equation}
for $(x_1,x_2,z)\in\mathbb R\times\mathbb R\times\mathbb R^{n-2}$, that is, ${\bf R}_\theta$ rotates the $\mathbb R^2\times\{0_{n-2}\}$ by
$\theta$ counterclockwise while fixing the other coordinates. With this notation, we 
often use the property
\begin{equation}\label{rot}
    ({\bf R}_0+{\bf R}_{\sfrac{2\pi}{3}}
    +{\bf R}_{\sfrac{4\pi}{3}})(x_1,x_2,z)=3(0,0,z).
\end{equation}
For each $j=1,2,3$, we define $f^{(m)}_j$
and $\tilde f_j$ defined on ${\bf H}_1
\cap P_1$ and having values in ${\bf P}_1^\perp=\{0_1\}\times\mathbb R^{n-k}\times\{0_{k-1}\}$
by
\begin{equation}\begin{split}
    &f^{(m)}_j(x,y,t):={\bf R}_{-2\pi(j-1)/3}(f^{(m)}({\bf R}_{2\pi(j-1)/3}(x,y),t)),\\
   & \tilde f_j(x,y,t):={\bf R}_{-2\pi(j-1)/3}(\tilde f({\bf R}_{2\pi(j-1)/3}(x,y),t)).
    \end{split}
\end{equation}
We also use the notation
\begin{equation}
    \tilde f_j(r,0,y,t)=(0,\tilde f_{j,2}(r,y,t),\cdots,
    \tilde f_{j,n-k+1}(r,y,t),0,\cdots,0)\in\{0_1\}\times\R^{n-k}\times\{0_{k-1}\}
\end{equation}
on $\{(r,y,t):r\in(0,1),\,r^2+|y|^2<1,\,t\in(-1,0)\}$. 
\begin{proposition} \label{p:odd}
The odd extensions with 
respect to $r$ of the following functions are solutions
of the heat equation
:
   \begin{itemize}
   \item[(1)] $\sum_{j=1}^3 \tilde f_{j,2}$.
   \item[(2)] If
   $n-k\geq 2$ and for each $j,j'\in\{1,2,3\}$, 
   $\ell\in \{3,\ldots,n-k+1\}$, $\tilde f_{j,\ell}-\tilde f_{j',\ell}$.
   \end{itemize}
\end{proposition}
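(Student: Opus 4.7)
My plan is to show that the odd (in $r$) extension $\tilde F$ of the function $F:=\sum_{j=1}^{3}\tilde f_{j,2}$ (in case (1)) or $F:=\tilde f_{j,\ell}-\tilde f_{j',\ell}$ (in case (2)) is a classical solution of the heat equation on a space-time neighborhood of $\{r=0\}$ inside $\mathbf P_1\times(-1,0)$. The strategy has three steps: (a) identify the boundary trace of $\tilde f_j$ at $r=0$ using the binding vector $\tilde\xi$; (b) promote the Gaussian-weighted binding bound \eqref{e:bindinglim2} to pointwise uniform decay of $F$ as $r\to 0^+$; (c) conclude via a reflection / distributional argument.

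For (a), writing $\tilde\xi(z,\tau)=(\alpha,\beta,\gamma_3,\ldots,\gamma_{n-k+1})$ in $\bS^\perp=\R^{n-k+1}$ and setting $\theta_j:=2\pi(j-1)/3$, the projection $v_j:=\tilde\xi-(\tilde\xi\cdot w_j)w_j$ onto $\mathbf P_j^\perp$ satisfies, after applying ${\bf R}_{-\theta_j}$, the explicit coordinate expressions
$$[{\bf R}_{-\theta_j}v_j]_2=\beta\cos\theta_j-\alpha\sin\theta_j,\qquad [{\bf R}_{-\theta_j}v_j]_\ell=\gamma_\ell\quad(\ell\geq 3).$$
By \eqref{rot} (equivalently $\sum_j\cos\theta_j=\sum_j\sin\theta_j=0$), $\sum_j[{\bf R}_{-\theta_j}v_j]_2=0$; and for $\ell\geq 3$, $[{\bf R}_{-\theta_j}v_j]_\ell-[{\bf R}_{-\theta_{j'}}v_{j'}]_\ell=\gamma_\ell-\gamma_\ell=0$. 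So the combinations in (1) and (2) have formal trace $0$ at $r=0$, regardless of the (possibly irregular) dependence of $\tilde\xi$ on $(z,\tau)$.

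For (b), restricting \eqref{e:bindinglim2} to $\mathbf H_j$ and pulling back via the isometry ${\bf R}_{-\theta_j}$ yields, for each $j$,
$$\sup_{t<\tau}(\tau-t)^{-\kappa-k/2}\int_{\{r>0\}\cap B_1}e^{-\frac{r^2+|y-z|^2}{4(\tau-t)}}\,|\tilde f_j(r,y,t)-{\bf R}_{-\theta_j}v_j(z,\tau)|^2\,dr\,dy\leq C.$$
Combining these through the triangle inequality and the cancellations from (a) gives the same bound with $F$ on the left-hand side in place of $\tilde f_j-{\bf R}_{-\theta_j}v_j$. Specializing $(z,\tau)=(y,t+\varepsilon^2)$: on the parabolic cylinder $P_{\varepsilon/2}(\varepsilon,y,t)\subset\{r>0\}$ the Gaussian is bounded below by a positive constant, so the bound implies $\iint_{P_{\varepsilon/2}(\varepsilon,y,t)}F^2\leq C\varepsilon^{k+2+2\kappa}$. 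Since $F$ is a classical heat solution on $\{r>0\}$ by Proposition \ref{blow-up-fine}(iv), standard interior mean-value and gradient estimates for the heat equation yield
$$|F(\varepsilon,y,t)|\leq C\varepsilon^\kappa,\qquad |\partial_r F(\varepsilon,y,t)|\leq C\varepsilon^{\kappa-1},$$
locally uniformly in $(y,t)$.

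For (c), I test $\tilde F$ against $\phi\in C_c^\infty(\mathbf P_1\cap B_1\times(-1,0))$ and decompose $\phi=\phi_e+\phi_o$ in $r$. The even part contributes zero by the oddness of $\tilde F$, so $\int\tilde F(\partial_t\phi+\Delta\phi)=2\int_{\{r>0\}}F(\partial_t+\Delta)\phi_o$. Truncating to $\{r>\varepsilon\}$, integrating by parts, and using the heat equation for $F$ collapses this to the boundary integral
$$\int\bigl\{-F(\varepsilon,\cdot)\,\partial_r\phi_o(\varepsilon,\cdot)+\partial_r F(\varepsilon,\cdot)\,\phi_o(\varepsilon,\cdot)\bigr\}\,dy\,dt.$$
Since $\phi_o(0,\cdot)\equiv 0$ gives $\phi_o(\varepsilon,\cdot)=O(\varepsilon)$, the decay bounds from (b) make both summands $O(\varepsilon^\kappa)$, which vanishes as $\varepsilon\to 0^+$. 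Hence $\tilde F$ is a distributional heat solution on $\mathbf P_1\cap B_1\times(-1,0)$, and parabolic regularity promotes it to a smooth classical one. The main obstacle is really step (b): converting the integral binding bound into pointwise uniform decay at the spine. This is the payoff of the non-concentration machinery of Section \ref{sec:HS}; once pointwise decay is in hand, the reflection argument is routine.
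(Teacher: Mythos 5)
Your proof is correct and follows the same structure as the paper's: you derive the cancellation $\sum_j[\mathbf{R}_{-\theta_j}v_j]_2=0$ (and the vanishing of the differences for $\ell\ge 3$) from the algebraic identity \eqref{rot} behind the three $w_j$'s, then use the binding bound \eqref{e:bindinglim2} to show the relevant combinations vanish at the spine, and conclude by a reflection argument. The paper dispatches the final step with the phrase "a simple approximation argument"; your contribution is to make that step explicit via interior parabolic estimates (upgrading the $L^2$ binding bound on parabolic cylinders $P_{\varepsilon/2}(\varepsilon,y,t)$ to the pointwise decay $|F|=O(\varepsilon^\kappa)$, $|\partial_r F|=O(\varepsilon^{\kappa-1})$) followed by the truncated integration by parts, where both boundary terms are $O(\varepsilon^\kappa)$ because $\phi_o(\varepsilon,\cdot)=O(\varepsilon)$. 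This is a natural instantiation of what the paper leaves unstated, not a different route.
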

\begin{proof} In the following,
we assume that $n-k\geq 2$ for
notational simplicity 
and note 
that the case $n-k=1$ proceeds verbatim. 
We write for $j\in\{1,2,3\}$
and $y\in\R^{k-1}$ and $t$
\begin{equation}
    \tilde\xi_j(y,t):={\bf P}_{{\bf P}_1^\perp}({\bf R}_{-2\pi(j-1)/3}(\tilde \xi(y,t)))
    =:(0,\tilde\xi_{j,2},\tilde\xi_{3},\cdots,\tilde\xi_{n-k+1})\times\{0_{k-1}\}.
\end{equation}
Note in particular ${\bf P}_{{\bf P}_1^\perp}$ is the 
identity map on 
$\{0_2\}\times\R^{n-k-1}\times\{0_{k-1}\}$ so that the dependence on $j$ is only on the second component of $\tilde\xi_j$. 
On the other hand, by \eqref{rot}, we have
\begin{equation}
\sum_{j=1}^3\tilde\xi_{j,2}=0.
\end{equation} 
Fix a small $\sigma>0$,
use \eqref{e:bindinglim2} with $\kappa=3/4$,
$\eta\in B_{1}^{k-1}$ and $t=\tau-\sigma^2$ so that we have 
\begin{equation}
    \int_{\bC\cap (B_\sigma^{n-k+1}\times B_{\sigma}^{k-1}(\eta))} e^{-1/2}|\tilde f(x,y,t)-\tilde\xi^\perp(\eta,t+\sigma^2)|^2\,d\mathcal H^k(x,y)\leq C\sigma^{k+3/2}.
\end{equation} 
Note that 
\begin{equation}
    \Big|\sum_{j=1}^3\tilde f_{j,2}(r,y,t)\Big|^2
    =\Big|\sum_{j=1}^3(\tilde f_{j,2}(r,y,t)-\tilde\xi_{j,2}(\eta,t+\sigma^2))\Big|^2
    \leq \sum_{j=1}^3|\tilde f(r w_j,y,t)-\tilde\xi^\perp(\eta,t+\sigma^2)|^2,
\end{equation}
and these prove that
\begin{equation}
    \sup_{t\in(-1,-\sigma^2)}
    \int_{{\bf H}_1\cap (B_\sigma^{n-k+1}\times B_1^{k-1})}\Big|\sum_{j=1}^3 \tilde f_{j,2}(r,y,t)\Big|^2\,d\mathcal H^k\leq C\sigma^{1/2}. 
\end{equation}
For $\ell\in\{3,\ldots,n-k+1\}$ and $j,j'\in\{1,2,3\}$,
since 
\begin{equation}
    |\tilde f_{j,\ell}(r,y,t)-\tilde f_{j',\ell}(r,y,t)|
\leq |\tilde f_{j,\ell}(x,y,t)-\tilde\xi_{\ell}(\eta,t+\sigma^2)|+
|\tilde f_{j',\ell}(x,y,t)-\tilde\xi_{\ell}(\eta,t+\sigma^2)|,
\end{equation}
we may similarly obtain
\begin{equation}
    \sup_{t\in(-1,-\sigma^2)}\int_{{\bf H}_1\cap (B_\sigma^{n-k+1}\times B_1^{k-1})}|\tilde f_{j,\ell}(r,y,t)-\tilde f_{j',\ell}(r,y,t)|^2\,d\mathcal H^k\leq C\sigma^{1/2}.
\end{equation}
Since these functions satisfy the heat equation away from the spine, a simple approximation argument shows that the odd reflection also 
satisfies the heat equation. 
\end{proof}
\begin{proposition} \label{p:even}
    The even extensions with respect to $r$ of the following functions are
    solutions of the heat equation:
    \begin{itemize}
    \item[(1)] For each $j,j'\in\{1,2,3\}$, $\tilde f_{j,2}-\tilde f_{j',2}$.
    \item[(2)]
    If $n-k\geq 2$ and 
    for each $\ell\in\{3,\ldots,n-k+1\}$,
    $\sum_{j=1}^3\tilde f_{j,\ell}$. 
    \end{itemize}
\end{proposition}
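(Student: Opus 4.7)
The plan is to mirror the proof of Proposition~\ref{p:odd}, with Neumann-type boundary data at $\{r=0\}$ replacing the Dirichlet-type data used there. On $\{r>0\}\times\R^{k-1}\times\R$ the combinations in (1) and (2) solve the heat equation as linear combinations of heat-equation solutions (Proposition~\ref{blow-up-fine}(iv)), so by the parabolic reflection principle their even extensions satisfy the heat equation globally if and only if the Neumann traces $\partial_r(\cdot)|_{r=0^+}$ vanish.

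The geometric origin of these Neumann conditions is the linearized $120^\circ$ angle condition at the perturbed triple junction. At each scale $m$, since $V^{(m)}_t$ has generalized mean curvature in $L^1_{\mathrm{loc}}(\|V^{(m)}_t\|)$ by (A1), it possesses no generalized boundary; applied to the three-sheet graphical decomposition of Theorem~\ref{thm:graphical}, this forces the outward conormals of the three sheets at the perturbed junction to sum to zero, $\sum_j\nu_j^{(m)}=0$. Linearizing in $f^{(m)}$ and normalizing by $\mu^{(m)}$ in the blow-up limit yields the vector identity
\begin{equation}\label{planforce}
\sum_{j=1}^{3}\mathbf{R}_{2\pi(j-1)/3}\bigl(\partial_r\tilde f_j|_{r=0}\bigr)=0 \qquad\text{in }\R^n.
\end{equation}

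Given \eqref{planforce}, both statements follow algebraically. Since $\mathbf{R}_{2\pi(j-1)/3}$ acts trivially on $Z^\perp\cap\mathbf{S}^\perp$, reading the $\ell$-th coordinate of \eqref{planforce} for $\ell\geq 3$ yields $\sum_j\partial_r\tilde f_{j,\ell}|_{r=0}=0$, proving (2). Projecting \eqref{planforce} onto $Z$ and invoking $\sum_j\mathbf{R}_{2\pi(j-1)/3}(e_2)=0$ from \eqref{rot} together with the observation that the kernel of the map $(d_1,d_2,d_3)\mapsto\sum_j d_j\mathbf{R}_{2\pi(j-1)/3}(e_2)$ is spanned by $(1,1,1)$, forces $\partial_r\tilde f_{j,2}|_{r=0}$ to be independent of $j$, proving (1).

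The technical core is the rigorous derivation of \eqref{planforce}. I would test the first variation with $X\in C_c^\infty(B_1;\R^n)$ and split $\delta V^{(m)}_t(X)$ via the graphical/non-graphical decomposition of Theorem~\ref{thm:graphical}; integration by parts on each graphical sheet produces the conormal-sum boundary contribution $\sum_j\int X\cdot\nu_j^{(m)}\,d\mathcal{H}^{k-1}$ along the perturbed junction, while the non-graphical contribution near the spine is of quadratic order by Proposition~\ref{p:nonconc-excess} and negligible after normalization. The identity $\delta V^{(m)}_t=-h^{(m)}\cdot\|V^{(m)}_t\|$ then forces the conormal sum to vanish, and passing to the limit using Proposition~\ref{blow-up-basic} yields \eqref{planforce}. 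The main obstacle is executing the integration by parts cleanly at the perturbed spine, precisely where the graphical structure breaks down: the quantitative confinement of triple-junction points to an $O(\mu^{(m)})$-neighborhood of $\mathbf{S}(\mathbf{C})$ provided by Theorem~\ref{thm:HS}, together with the non-concentration estimates of Section~\ref{sec:HS}, is essential to make the separation of bulk and boundary terms rigorous and to ensure that only the conormal identity survives in the limit.
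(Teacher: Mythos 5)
Your plan and the paper's proof share the same geometric input (the linearized $120^\circ$ condition at the junction), but you cast it as a pointwise Neumann boundary condition while the paper works entirely at the level of a weak formulation, and this difference is precisely where your argument breaks.

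The central gap is that your key identity \eqref{planforce} presupposes that the trace $\partial_r \tilde f_j|_{r=0^+}$ exists. Before Propositions~\ref{p:odd} and~\ref{p:even} are both proved, all that is known near the spine is the weighted $L^2$ bound \eqref{e:reguarity inside} (which degenerates like $|x|^{-k/2-2}$), continuity of $\tilde f_j(0,\cdot)$ via the binding function, and $\nabla_y \tilde f \in L^2$; nothing controls $\partial_r \tilde f_j$ at $r=0$. The trace regularity you invoke is, after even extension and interior parabolic estimates, precisely the \emph{conclusion} of the proposition, so your route is circular. The paper avoids this by establishing the weak identity $0 = \iint \psi_t (a\cdot\tilde f) - \nabla(a\cdot\tilde f)\cdot\nabla\psi$ for test functions $\psi$ with $\partial_r\psi = 0$ near $r=0$; this never requires a boundary trace and yields smoothness a posteriori.

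Your proposed derivation of \eqref{planforce} has two more concrete flaws. First, integration by parts over each graphical sheet produces a boundary term on the \emph{edge of the graphicality region}, $\{|x|\sim\sigma^{(m)}\}$, not ``along the perturbed junction'': the actual triple-junction points lie at $\{|x| \lesssim \Cr{c_HS}\max\{\mu^{(m)},\|u^{(m)}\|\}\}$ by \eqref{e:HS1}, a far smaller scale, and in that inner region no graphical (hence no three-sheet conormal) structure is known. Second, for a generic $X \in C_c^\infty$ the tangential divergence in the non-graphical region is $\mathrm{O}(1)$, so the non-graphical contribution to $\delta V_t^{(m)}(X)$ is of order $\|V_t^{(m)}\|(\{|x|\leq\sigma^{(m)}\}) = \mathrm{O}(\sigma^{(m)})$, which is \emph{much larger} than $\mu^{(m)}$ in a blow-up sequence (the paper requires $(\sigma^{(m)})^{-k-4}\mu^{(m)}\to 0$). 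Proposition~\ref{p:nonconc-excess} controls weighted $L^2$-excess, not the divergence of a generic vector field, and does not save you here. The paper sidesteps this by testing Brakke's inequality with $\phi = (a\cdot x)\psi + 2(\sup\psi)\tilde\psi$, $\psi$ constant in $r$ near the spine: then $\nabla^2((a\cdot x)\psi)\cdot S$ in the non-graphical region reduces to terms linear in $|S^\perp e_j|$ or $|x|$, both of which are quadratically small there by \eqref{kore} and \eqref{para1}, giving the crucial $\mathrm{o}(\mu^{(m)})$ estimate in \eqref{sma-a}. In short: the paper's weak formulation with $r$-constant cutoffs is not a cosmetic variant of your approach — it is the mechanism that makes the error terms close, and it replaces a trace identity you cannot yet justify.
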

\begin{proof}
    Let $\psi(x,y,t)\in C^\infty_c (P_1)$ be a
    non-negative radially symmetric function with respect to $x$ and assume that
    \begin{equation}\label{psizero}
        \frac{\partial\psi}{\partial r}=0
    \end{equation}
    for $0\leq r=|x|\leq \sigma$, where $\sigma$
    is a fixed small number. In addition, let $\tilde\eta$ be as in Proposition \ref{lateral} and set $\tilde\psi(x,y):=\tilde\eta(|x|/2)\tilde\eta(|y|/2)$ so that $\tilde\psi$ has a compact support in $B_{2}^{n-k+1}\times B_2^{k-1}$ and equals to $1$ on $B_1^{n-k+1}\times B_1^{k-1}$. 
Let $a\in \R^{n-k+1}$ be an arbitrary unit vector, and 
define 
\begin{equation}
    \phi(x,y,t):=(a\cdot x)\psi(x,y,t)+2(\sup\psi)\tilde\psi(x,y).
    \end{equation}
    Note that $\psi$ has a compact support in $P_1$ so that 
    \begin{equation} \label{sma-ti0}
        \phi\geq (\sup \psi)\tilde\psi\geq 0.
    \end{equation} 
We use this $\phi$ in \eqref{e:Brakke-ineq}.
To make sure that the 
contribution coming from $\tilde\psi$ is small, we make a specific choice of time interval.
In fact, fix $\hat t\in (-1,0)$ so that
${\rm spt}\psi\subset\{t<\hat t\}$. Then
choose $t^{(m)}\in [-2,-1]$ such that
\begin{equation}\label{sma-ti}
    0\leq \liminf_{m\rightarrow\infty}\frac{1}{\mu^{(m)}}\Big(\int\tilde\psi \,d\|V_{\hat t}^{(m)}\|-\int \tilde\psi \,d\|V^{(m)}_{t^{(m)}}\|\Big).
\end{equation}
We may choose such $t^{(m)}$ by the following argument: 
By Proposition \ref{san-eq}, we have
\begin{equation}\label{sma-ti1}
    \int\tilde\psi \,d\|\bC\|-\int\tilde\psi\,d\|V_{\hat t}^{(m)}\|\leq \Cr{lmin}
    (\mu^{(m)})^2. 
\end{equation}
Moreover, by Proposition \ref{lat4} and by modifying
$\tilde\psi$ at $t=0,\,-4$ appropriately, we may
choose $t^{(m)}\in[-2,-1]$ such that
\begin{equation}\label{sma-ti2}
    \int\tilde\psi\,d\|V_{t^{(m)}}^{(m)}\|-\int\tilde\psi\,d\|\bC\|\leq \Cr{er1}(\mu^{(m)})^2.
\end{equation}
Then \eqref{sma-ti1} and \eqref{sma-ti2} prove \eqref{sma-ti}.
Next, we use $\phi$ in \eqref{e:Brakke-ineq} over
time interval of $[t^{(m)}, \hat t]$ and obtain
\begin{equation}\label{sma-ti3}
    \int \phi(\cdot,t)\,d\|V_t^{(m)}\|\Big|_{t=t^{(m)}}^{\hat t}\leq \int_{t^{(m)}}^{\hat t}\int (\nabla \phi-\phi h)\cdot (h+(u^{(m)})^\perp)+\phi_t\,dV_t^{(m)}dt .
\end{equation}
For the term involving $u^{(m)}$, we have
\begin{equation}\label{sma-ti4}
    (\nabla\phi-\phi h)\cdot (u^{(m)})^\perp
    \leq (\sup|\nabla\phi|)|u^{(m)}|+\frac{\phi}{2}\big(|u^{(m)}|^2+|h|^2\big),
\end{equation}
and for another term in \eqref{sma-ti3}, 
\begin{equation}\label{sma-ti5}
    \begin{split}
    \nabla\phi\cdot h&=\nabla ((a\cdot x)\psi)\cdot h
    +2(\sup\psi)\nabla\tilde\psi\cdot h \\
    &\leq \nabla ((a\cdot x)\psi)\cdot h+\frac{|h|^2}{2}(\sup\psi)\tilde\psi+\frac{2(\sup\psi)|(\nabla\tilde\psi)^\perp|^2}{\tilde\psi}\\
    &\leq \nabla ((a\cdot x)\psi)\cdot h+\frac{|h|^2}{2}\phi+\frac{2(\sup\psi)|(\nabla\tilde\psi)^\perp|^2}{\tilde\psi},
    \end{split}
\end{equation}
where we used \eqref{sma-ti0} in the last line.
Using \eqref{sma-ti4} and \eqref{sma-ti5}, we
obtain
\begin{equation}\label{sma-tt}
    (\nabla\phi-\phi h)\cdot(h+(u^{(m)})^\perp)
    \leq \nabla((a\cdot x)\psi)\cdot h+\Cl[con]{sma-con}\Big(|u^{(m)}|^2+|u^{(m)}|+\frac{|(\nabla\tilde\psi)^\perp|^2}{\tilde\psi}\Big),
\end{equation}
where $\Cr{sma-con}$ depends only on $\sup|\nabla\phi|$ and $\sup\psi$.
We next claim that
\begin{equation}\label{sma-ti6}
\int_{t^{(m)}}^{\hat t}\int_{P_2} |u^{(m)}|^2+|u^{(m)}|+\frac{|(\nabla\tilde\psi)^\perp|^2}{\tilde\psi}\,dV_t^{(m)}dt={\rm o}(\mu^{(m)}).
\end{equation}
The first two terms involving $u^{(m)}$ is 
${\rm o}((\mu^{(m)})^2)$ and ${\rm o}(\mu^{(m)})$,
respectively,
while $\tilde\psi^{-1}|(\nabla\tilde\psi)^\perp|^2$
may be bounded from above by a constant multiple of $|S^\perp(x)|^2$
on $\{2\geq|x|\geq 1\}$ and $|S^{\perp}(y)|^2$ on
$\{2\geq |y|\geq 1\}$. Then one can
proceed as in the argument in \eqref{Ses} and \eqref{kore},
and we may conclude that they are both ${\rm O}((\mu^{(m)})^2)$. This proves the claim
\eqref{sma-ti6}.
Combining \eqref{sma-ti}, \eqref{sma-ti3},
\eqref{sma-tt} and \eqref{sma-ti6},
we have
\begin{equation}\label{sma-a1}
    0\leq \liminf_{m\rightarrow\infty}\frac{1}{\mu^{(m)}}\int_{-1}^{\hat t}\int
    -\nabla^2((a\cdot x) \psi)\cdot S +(a\cdot x)\psi_t\,dV_t^{(m)}dt.
\end{equation}
We next examine the graphical part and 
non-graphical part of each terms. Recall 
that we have $U^{(m)}\subset\bC\cap P_1$
such that $\bC\cap P_1\setminus U^{(m)}
\subset \{|x|<\sigma^{(m)}\}$ given by
Theorem \ref{thm:graphical}. We may assume
that $\sigma^{(m)}<\sigma$, so that we have
\eqref{psizero} on the non-graphical part.
Since $\nabla\psi={\bf S}(\nabla\psi)$
for $|x|<\sigma$, we have
\begin{equation}\label{sma-a2}
\begin{split}
    \nabla^2((a\cdot x)\psi)\cdot S&=2\,{\bf S}^\perp(a)\otimes {\bf S}(\nabla\psi)\cdot(S-I)+(a\cdot x)\nabla^2\psi\cdot S\\
    &=-2a\cdot \sum_{j=1}^{k-1}\psi_{y_j}S^{\perp}(e_{n-k+j+1})+(a\cdot x)\nabla^2\psi\cdot S.
    \end{split}
\end{equation}
Thus 
\begin{equation}\label{sma-a}
\begin{split}
    &\int_{-1}^{\hat t}\int_{B_1\setminus {\rm graph}\,f^{(m)}(\cdot,t)}|\nabla^2((a\cdot x)\psi)\cdot S|\,dV_t^{(m)}dt \\ &\leq 4\Big(\int_{-1}^{\hat t}\int_{B_1}\sum_{j=1}^{k-1}|\psi_{y_j}S^\perp(e_{n-k+j+1})|^2\,dV_t^{(m)}dt\int_{-1}^0\|V_t^{(m)}\|(B_1\cap\{|x|\leq \sigma^{(m)}\})\,dt\Big)^{\frac12}\\
    &\,\,+\sup|\nabla^2\psi|\Big(\iint_{P_1\setminus{\rm graph}f^{(m)}} |x|^2\,d\|V_t^{(m)}\|dt\int_{-1}^0\|V_t^{(m)}\|(B_1\cap\{|x|\leq \sigma^{(m)}\})\,dt\Big)^{\frac12}.
\end{split}
\end{equation}
By \eqref{kore} and \eqref{para1}, as well
as \eqref{e:uniform-area-bound}, we may 
conclude that the right-hand side of
\eqref{sma-a} is $o(\mu^{(m)})$. The second
term of \eqref{sma-a1} can be handled exactly 
as the second term of \eqref{sma-a2}, and 
we may conclude that the non-graphical part
of \eqref{sma-a1} is $0$ in the limit, so that
we have
\begin{equation}\label{sma-a4}
      0\leq \liminf_{m\rightarrow\infty}\frac{1}{\mu^{(m)}}\int_{-1}^{\hat t}\int_{{\rm graph}f^{(m)}(\cdot,t)}
    -\nabla^2((a\cdot x) \psi)\cdot S +(a\cdot x)\psi_t\,dV_t^{(m)}dt.
\end{equation}
We write the above quantity as an integral over $U$.
We start by claiming that
\begin{equation}\label{sma-a3}
    \int_{-1}^{\hat t}\int_{U^{(m)}}|x||\nabla f^{(m)}|^2\,d\mathcal H^k dt=\mathrm{o}(\mu^{(m)}).
\end{equation}
For this, we note first that, for any point
$(X,t)\in \overline{U^{(m)}}\cap \{|x|\geq 2\sigma^{(m)}\}$, and
assuming without loss of generality that $P_1\cap \{|x|\geq \sigma^{(m)}\}\subset U^{(m)}$, the estimate in Remark \ref{rmk:C1a-deg} implies that
\begin{equation}\label{sma-a5}
\sup_{P_{\sigma^{(m)}/4}(X,t)}\Big((\sigma^{(m)})^{-1}|f^{(m)}|+|\nabla f^{(m)}|\Big)
\leq C(\sigma^{(m)})^{-\frac{k+4}{2}}\mu^{(m)}. 
\end{equation}
Thus, using also $|\nabla f^{(m)}|\leq \beta$,
\begin{equation}\begin{split}
    \int_{-1}^{\hat t}\int_{U^{(m)}}&|x||\nabla f^{(m)}|^2
    \leq \int_{-1}^{\hat t}\int_{U^{(m)}\cap
    \{|x|\geq 2\sigma^{(m)}\}}C^2(\sigma^{(m)})^{-k-4}(\mu^{(m)})^2 \\
    &+\Big(\int_{-1}^{\hat t}\int_{U^{(m)}\cap \{|x|<2\sigma^{(m)}\}}|x|^2|\nabla f^{(m)}|^2\Big)^{\frac12}c(k)(\sigma^{(m)})^{\frac12}\\ 
    &\leq C\Big((\sigma^{(m)})^{-k-4}(\mu^{(m)})^2+(\sigma^{(m)})^{\frac{1}{2}}\mu^{(m)}\Big)\,. 
    \end{split}\label{mat2}
\end{equation}
We may assume that $\lim_{m\rightarrow \infty}(\sigma^{(m)})^{-k-4}\mu^{(m)}=0$,
thus we proved \eqref{sma-a3}. 
For the second term of \eqref{sma-a4},
\begin{equation}\begin{split}
&\int_{-1}^{\tilde t}\int_{{\rm graph}f^{(m)}(\cdot,t)}  (a\cdot x)\psi_t\,dV_t^{(m)}dt  \\&
=\sum_{j=1}^3 \int_{-1}^{\hat t}\int_{{\bf H}_j\cap U^{(m)}} \Big((a\cdot w_j)r+a\cdot f^{(m)}\Big)\psi_t(\sqrt{r^2+|f^{(m)}|^2},y,t)J_{\nabla f^{(m)}}\,d\mathcal H^k dt.
\end{split}
\end{equation}
Since $J_{\nabla f^{(m)}}=1+\mathrm{O}(|\nabla f^{(m)}|^2)$ and $\psi_t(\sqrt{r^2+|f^{(m)}|^2},y,t)=\psi_t(r,y,t)+\mathrm{O}(|f^{(m)}|^2)$,
using \eqref{sma-a3}, the above may be 
continued as
\begin{equation}
    =\sum_{j=1}^3 \int_{-1}^{\hat t}\int_{{\bf H}_j\cap U^{(m)}} \Big((a\cdot w_j)r+a\cdot f^{(m)}\Big)\psi_t(r,y,t)\,d\mathcal H^k dt+\mathrm{o}(\mu^{(m)}).
\end{equation}
Since $\sum_{j=1}^3 w_j=0$, 
the first term vanishes. 
By the $L^2$ convergence of $f^{(m)}/\mu^{(m)}\rightarrow \tilde f$ proved in \eqref{strong-L2}, we obtain
\begin{equation}\label{mat5}
\lim_{m\rightarrow\infty}\frac{1}{\mu^{(m)}}\int_{-1}^{\tilde t}\int_{{\rm graph}f^{(m)}(\cdot,t)}  (a\cdot x)\psi_t\,dV_t^{(m)}dt 
=\int_{\bC} (a\cdot \tilde f)\,\psi_t\,d\mathcal H^k dt.    
\end{equation}
For the first term of \eqref{sma-a4}, 
we consider the terms
$(a\otimes\nabla\psi)\cdot S$
and $(a\cdot x)\nabla^2\psi\cdot S$ 
separately. 
By arguing similarly 
to \eqref{sma-a}, we may 
conclude that
\begin{equation}\label{mat6}
    \int_{-1}^{\hat t}\int_{{\rm graph}(f^{(m)}\mres_{U^{(m)}\cap\{|x|\leq 2\sigma^{(m)}\}})(\cdot,t)} (a\otimes\nabla\psi)\cdot S\,dV_t^{(m)}dt=\mathrm{o}(\mu^{(m)}).
\end{equation}
On $U^{(m)}\cap\{|x|>2\sigma^{(m)}\}$, 
using \eqref{sma-a5}, we may 
conclude that $J_{\nabla f^{(m)}}=
1+\mathrm{o}(\mu^{(m)})$ and
\begin{equation}
    \nabla\psi(x+f^{(m)}(x,y,t),y,t)
    =\nabla\psi(x,y,t)+f^{(m)}(x,y,t)\cdot\nabla^2\psi(x,y,t)+\mathrm{o}(\mu^{(m)}).
\end{equation}
Here on ${\bf H}_j\cap\{|x|>2\sigma^{(m)}\}$, choosing a coordinate such 
that $w_j$ point $x_1$-direction,
one can check by the direct 
calculation that
\begin{equation}\begin{split}
&\psi_{x_1 x_1}=\psi_{rr}+\mathrm{o}(\mu^{(m)}),\,
\psi_{x_1 x_l}=\mathrm{O}(\mu^{(m)} /(\sigma^{(m)})^{\frac{k}{2}+2}) \mbox{ for }l\in\{2,\ldots,n-k+1\}, \\
&\psi_{x_l x_{l'}}=\delta_{l\,l'}r^{-1}\psi_r+\mathrm{o}(\mu^{(m)}) \mbox{ for }l,l'\in\{2,\ldots,n-k+1\}.
\end{split}\label{mat1}
\end{equation}
Since $f^{(m)}\in {\bf H}_j^\perp$, we have
\begin{equation}
    \nabla\psi(x+f^{(m)}(x,y,t),y,t)=\nabla\psi(x,y,t)+\frac{\psi_r}{r}f^{(m)}(x,y,t)+\mathrm{o}(\mu^{(m)}).
\end{equation}
Since the projection to the tangent space to the graph of $f^{(m)}$ is 
\begin{equation}
    S={\bC}+\bC^\perp\circ\nabla f^{(m)}\circ{\bf C}+\bC\circ (\nabla f^{(m)})^T\circ\bC^\perp+\mathrm{O}(|\nabla f^{(m)}|^2), 
\end{equation}
and on $U^{(m)}\cap {\bf H}_j\cap\{|x|>2\sigma^{(m)}\}$,
we can compute that
\begin{equation}\begin{split}
   & (a\otimes\nabla\psi)\cdot \bC=(a\cdot w_j)\psi_r+\mathrm{o}(\mu^{(m)}), \\
   & (a\otimes\nabla\psi)\cdot 
(\bC^\perp\circ\nabla f^{(m)}\circ\bC)= (a\cdot w_j) \nabla_{w_j} f^{(m)}\cdot \nabla\psi +\mathrm{o}(\mu^{(m)})=\mathrm{o}(\mu^{(m)}),
\\
& (a\otimes\nabla\psi)\cdot(\bC\circ
(\nabla f^{(m)})^T\circ \bC^\perp)=
\nabla (f^{(m)}\cdot a)\cdot\nabla\psi+\mathrm{o}(\mu^{(m)}),
\end{split}
\end{equation}
where the second line used the 
fact that $\nabla_{w_j} f^{(m)}\in\bC^\perp$ and $v\cdot\nabla\psi=0$ for $v\in \bC^\perp$
on $\bC$. 
Thus using again $\sum_{j=1}^3 w_j=0$,
\begin{equation}
\begin{split}
    &\lim_{m\rightarrow\infty}\frac{1}{\mu^{(m)}}\int_{-1}^{\hat t}\int_{{\rm graph}(f^{(m)}\mres_{U^{(m)}\cap\{|x|> 2\sigma^{(m)}\}})(\cdot,t)} (a\otimes\nabla\psi)\cdot S\,dV_t^{(m)}dt
    \\
    &=\lim_{m\rightarrow\infty}\frac{1}{\mu^{(m)}}\int_{\bC\cap\{|x|>2\sigma^{(m)}\}}
    \nabla(f^{(m)}\cdot a)\cdot\nabla\psi\,d\mathcal H^{k}dt\\
    &=\int_{\bC} \nabla(\tilde f\cdot a)\cdot\nabla\psi\,d\mathcal H^k dt.\end{split}\label{mat7}
\end{equation}
The last line used the fact that
$\psi_r$ vanishes near the origin and that $\nabla_y f^{(m)}/\mu^{(m)}$ converges 
to $\nabla_y \tilde f$ weakly in $L^2$ as a consequence of \eqref{eq:bound-y-derivative}. 

For the term $(a\cdot x)\nabla^2\psi\cdot S$,
using \eqref{mat1},
we have on $U^{(m)}\cap {\bf H}_j\cap\{|x|>2\sigma^{(m)}\}$
\begin{equation}\label{mat3}
  \begin{split}  
  &(a\cdot x)\nabla^2\psi\cdot \bC=\{(a\cdot e_j)r+(a\cdot f^{(m)})\}(\psi_{rr}+\Delta_y \psi)+\mathrm{o}(\mu^{(m)}), \\
  & \nabla^2\psi\cdot (\bC^\perp\circ\nabla f^{(m)}\circ\bC)=\nabla^2\psi \cdot (\bC\circ
(\nabla f^{(m)})^T\circ \bC^\perp) =\mathrm{o}(\mu^{(m)}).
  \end{split}
\end{equation}
On $U^{(m)}\cap {\bf H}_j\cap\{|x|<2\sigma^{(m)}\}$,
since $\psi$ is independent of $x$, 
\begin{equation}
    (a\cdot x)\nabla^2\psi\cdot S
    =\{(a\cdot w_j)r+(a\cdot f^{(m)})\}
    (\Delta_y\psi+\mathrm{O}(|\nabla f^{(m)}|^2)).
\end{equation}
Thus 
\begin{equation}\begin{split}
    &\Big|\int_{-1}^{\hat t}\int_{{\rm graph}(f^{(m)}\mres_{U^{(m)} \cap\{|x|\leq 2\sigma^{(m)}\}})(\cdot,t)} (a\cdot x)\nabla^2\psi\cdot S\,dV_t^{(m)}dt
    \\
    &-\int_{U^{(m)}\cap\{|x|\leq 2\sigma^{(m)}\}}\{(a\cdot x)+(a\cdot f^{(m)})\}\Delta_y\psi\,d\mathcal H^k dt\Big| \\
    &\leq C\int_{U^{(m)}\cap \{|x|\leq 2\sigma^{(m)}\}}|x||\nabla f^{(m)}|^2\,d\mathcal H^k dt=\mathrm{o}(\mu^{(m)})
    \end{split}\label{mat4}
\end{equation}
where the last claim follows from \eqref{sma-a3}. Combining \eqref{mat3} and \eqref{mat4}, and using again 
$\sum_{j=1}w_j=0$, we obtain
\begin{equation}\label{mat8}
    \lim_{m\rightarrow\infty}\frac{1}{\mu^{(m)}}\int_{U^{(m)}} (a\cdot x)\nabla^2\psi\cdot S\,dV_t^{(m)}dt=\int_{\bC} (a\cdot\tilde f)\Delta\psi\,d\mathcal H^k\,dt.
\end{equation}
Since $\nabla^2((a\cdot x)\psi\cdot S
=2(a\otimes\nabla\psi)\cdot S+(a\cdot x)\nabla^2\psi\cdot S$, \eqref{sma-a4},
\eqref{mat5}, \eqref{mat6}, 
\eqref{mat7} and \eqref{mat8} show
\begin{equation}
    0\leq \int_{\bC}-2\nabla(a\cdot \tilde f)\cdot\nabla\psi-(a\cdot\tilde f)\Delta \psi+(a\cdot \tilde f)\psi_t\,d\mathcal H^k dt\,.
\end{equation}
We can perform the integration by parts 
for the second term since $\nabla_y \tilde f\in L^2$ and $\psi_r=0$ near $r=0$,
and since $a$ can be replaced by $-a$,
the inquality must hold with equality.
This finally proves
\begin{equation}
    0=\int_{\bC}\psi_t(a\cdot\tilde f)-\nabla(a\cdot \tilde f)\cdot \nabla\psi\,d\mathcal H^k dt
\end{equation}
for any $a\in \R^{n-k+1}\times\{0_{k-1}\}$ and any non-negative $\psi$ with \eqref{psizero}.
In terms of $\tilde f_j$, this implies
\begin{equation}
    0=\sum_{j=1}^3 \int_{{\bf H}_1} 
    \psi_t\,({\bf R}_{-2\pi(j-1)/3}(a)\cdot \tilde f_j)-\nabla({\bf R}_{-2\pi(j-1)/3}(a)\cdot\tilde f_j)\cdot\nabla\psi\,d\mathcal H^k dt.
\end{equation}
If $a\in \{0_2\}\times \R^{n-k-1}\times\{0_{k-1}\}$, then ${\bf R}_{-2\pi(j-1)/3}(a)=a$, so that
for any $\ell\in \{3,\cdots,n-k+1\}$,
we have
\begin{equation}\label{mat10}
    0=\int_{{\bf H}_1}\psi_t\big(\sum_{j=1}^3 \tilde f_{j,\ell}\big)-\nabla\big(\sum_{j=1}^3\tilde f_{j,\ell}\big)\cdot\nabla\psi\,d\mathcal H^k dt.
\end{equation}
If we take $a=w_1\in \R^1\times\{0_{n-1}\}$,
then $a\cdot \tilde f_1=0$, ${\bf R}_{-2\pi/3}(a)\cdot \tilde f_2=-\sqrt{3}\tilde f_{2,2}/2$ and
${\bf R}_{-4\pi/3}(a)\cdot \tilde f_3=\sqrt{3}\tilde f_{3,2}/2$.
This shows that
\begin{equation}\label{mat9}
    0=\int_{{\bf H}_1}\psi_t(\tilde f_{2,2}-\tilde f_{3,2})-
    \nabla(\tilde f_{2,2}-\tilde f_{3,2})\cdot\nabla\psi\,d\mathcal H^k dt.
\end{equation}
Similarly, by taking $a=w_2$ and $w_3$, we have \eqref{mat9} for 
$\tilde f_{1,2}-\tilde f_{3,2}$
and $\tilde f_{1,2}-\tilde f_{2,2}$,
respectively. If \eqref{mat10}
holds for $\psi$ satisfying
\eqref{psizero}, then by the
well-known argument, the 
function $\sum_{j=1}^3\tilde f_{j,\ell}$ can be extended evenly
and is the $C^\infty$ solution of the 
heat equation, and similarly for \eqref{mat9}. This ends the 
proof. 
\end{proof}

Propositions \ref{p:odd} and \ref{p:even} allow us to draw the following fundamental consequence on the limit function $\tilde f$. It will be convenient to rotate the three branches $\tilde f_j$ back and define the functions $\hat f_j := \mathbf R_{2\pi(j-1)/3}\tilde f_j$, for $j\in\{1,2,3\}$. Notice that these functions are defined on $\mathbf{H_1}\cap P_1$, which is given coordinates $(r,0,y,t)$, and take values in $\mathbf P_j^\perp$.

\begin{corollary} \label{cor:newcone}
For every $j \in \{1,2,3\}$, the function $\hat f_j$ admits a smooth extension, still denoted $\hat f_j$, to the parabolic cylinder $\mathbf{P}_1 \cap P_1$. Such extension is a solution to the heat equation, and it satisfies uniform interior estimates of the form \begin{equation} \label{e:u-estimates0}
\sup_{\mathbf{P}_1\cap P_{\sfrac12}} |\partial_r^a\partial_y^b\partial_t^c \hat f_j| \leq C_{a,b,c}\,\|\hat f\|_{L^2(\bC\cap P_1)}\,,
\end{equation}
for all indices $a,c$, and for every $(k-1)$-multiindex $b$. Moreover, there exist vectors $\tilde\xi_0 \in \bS(\bC)^\perp$, $v_j \in \mathbf{P}_j^\perp$, and a linear map $b \colon \bS(\bC) \to \bS(\bC)^\perp$ so that
\begin{equation} \label{newcone-stationary}
    v_1 + v_2 + v_3 = 0\,,
\end{equation}
and
\begin{equation} \label{newcone-decay}
    \begin{split}
  & \rho^{-(k+4)}\sum_{j=1}^3  \int_{-\rho^2}^0\int_{\{r^2+|y|^2 < \rho^2\}} \left| \hat f_j(r,y,t)-\mathbf{P}_j^\perp (\tilde\xi_0)-v_j \,r - \mathbf{P}_j^\perp (b(y))\right|^2\, d\mathcal H^k\,dt \\
  &\qquad \qquad \leq C\, \rho^2 \iint_{\bC\cap P_1} |\tilde f|^2\,d\mathcal H^k\,dt\,,
\end{split}
\end{equation}
for all $0 < \rho < \sfrac12$, where $C$ is a constant depending only on $k$ and $n$.

\end{corollary}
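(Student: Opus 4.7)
The plan is to combine the reflection principle established in Propositions \ref{p:odd} and \ref{p:even} with interior parabolic estimates and a first-order caloric Taylor expansion at the space-time origin. The first step is to extend each $\hat f_j$ smoothly across the spine. Writing each scalar component as $\tilde f_{j,\ell}=\tfrac{1}{3}\sum_{j'}\tilde f_{j',\ell}+\tfrac{1}{3}\sum_{j'\ne j}(\tilde f_{j,\ell}-\tilde f_{j',\ell})$, Propositions \ref{p:odd} and \ref{p:even} identify which summand admits an odd and which an even caloric extension in $r$; the roles swap according to whether $\ell=2$ or $\ell\ge 3$. A sum of odd and even caloric functions is smooth and still caloric, so each $\tilde f_j$, and hence $\hat f_j=\mathbf{R}_{2\pi(j-1)/3}\tilde f_j$ (the rotation acts only on the target and commutes with $\partial_t-\Delta$), extends to a smooth solution of the heat equation on $\mathbf{P}_1\cap P_1$. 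Since the $L^2$ norm of the extension is controlled by $C\|\tilde f\|_{L^2(\bC\cap P_1)}$, standard interior estimates for the heat equation yield \eqref{e:u-estimates0}.

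With smooth caloric representatives in hand, \eqref{newcone-decay} follows from a first-order Taylor expansion at the origin. The degree-one caloric polynomials are exactly the affine functions $c_j+v_j\, r+b_j(y)$ (no $t$-dependence is required), and the interior estimates bound the pointwise remainder on $P_\rho$ by $C\rho^2\|\tilde f\|_{L^2(\bC\cap P_1)}$. Squaring and integrating over $P_\rho$, whose parabolic volume is of order $\rho^{k+2}$, and then multiplying by $\rho^{-(k+4)}$, one recovers exactly the bound \eqref{newcone-decay}. What remains is to check that the zeroth and first Taylor coefficients have the claimed structure, namely that the constant and $y$-linear parts can be written as $\mathbf{p}_{\mathbf{P}_j^\perp}(\tilde\xi_0)$ and $\mathbf{p}_{\mathbf{P}_j^\perp}\circ b$ for a \emph{single} pair $(\tilde\xi_0,b)$, and that the $r$-linear coefficients satisfy $v_1+v_2+v_3=0$.

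Identifying the common spine data is the main obstacle, and it is here that one must use the boundary conditions at $r=0$ supplied by the reflection principle. Evaluating Propositions \ref{p:odd}(1)--(2) at $r=0$ gives $\sum_j\tilde f_{j,2}(0,y,t)=0$ (vanishing of the odd extension) and $\tilde f_{j,\ell}(0,y,t)=\tilde f_{j',\ell}(0,y,t)$ for all $j,j'$ and all $\ell\ge 3$. Introducing the unit normals $n_j:=\mathbf{R}_{2\pi(j-1)/3}(e_2)$, perpendicular to $w_j$ in the $(x_1,x_2)$-plane, the identity $n_1+n_2+n_3=0$ is the unique linear relation among the $n_j$; hence the map $\xi\in\R^2\mapsto(n_j\cdot\xi)_{j=1,2,3}$ has range equal to the hyperplane $\{\sum_j s_j=0\}\subset\R^3$. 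This means the three scalars $\tilde f_{j,2}(0,y,t)$ lift uniquely to a vector in the $(x_1,x_2)$-plane, which combined with the common values in the directions $e_\ell$ ($\ell\ge 3$) produces a unique $\xi_0(y,t)\in\bS(\bC)^\perp$ such that $\hat f_j(0,y,t)=\mathbf{p}_{\mathbf{P}_j^\perp}(\xi_0(y,t))$ for all $j$. Differentiation in $y$ at the origin extracts the common linear map $b$, and setting $\tilde\xi_0:=\xi_0(0,0)$ gives the required form for the constant and $y$-linear terms.

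Finally, the identity $\sum_jv_j=0$ is established component by component. The components in the directions $e_\ell$ with $\ell\ge 3$ are invariant under the rotation defining $\hat f_j$, and Proposition \ref{p:even}(2) forces the $r$-derivative of the (even) sum $\sum_j\tilde f_{j,\ell}$ to vanish at $r=0$, so these sum to zero. For the $(x_1,x_2)$-components, the decomposition $\tilde f_{j,2}=\mathrm{odd}+\mathrm{even}_j$ used in the first step, where the odd part is the $j$-independent function $\tfrac{1}{3}\sum_{j'}\tilde f_{j',2}$ and the even parts have vanishing $r$-derivative at the origin, shows that $\partial_r\tilde f_{j,2}(0,0,0)$ is independent of $j$. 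Multiplying this common value by $n_j$ and summing yields $\sum_jn_j=0$, which completes the proof.
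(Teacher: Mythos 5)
Your proof is correct, and it tracks the paper's reasoning closely for the extension, interior estimates, Taylor expansion, and the stationarity $\sum_j v_j=0$ (the latter being derived, in both cases, from the vanishing of $\partial_r$ at $r=0$ for the even pieces in Proposition~\ref{p:even}). The one place where your route genuinely differs is the identification of the spine data $\tilde\xi_0$ and $b$: the paper obtains the trace relation $\hat f_j(0,y,t)=\mathbf{P}_j^\perp\tilde\xi_{(y,t)}$ by re-invoking the binding function via Proposition~\ref{blow-up-basic}(iii), and then constructs the auxiliary map $\omega=\sum_j\hat f_j$ and the operator $\mathbf{L}=\left.\mathbf{P}\right|_W^{-1}\circ\mathbf{p}_W$ (with $\mathbf{P}=\sum_j\mathbf{P}_j^\perp$) to recover a single $\tilde\xi_0=\mathbf{L}\,\omega(0)$ and $b(y)=\mathbf{L}(\nabla_y\omega(0)\cdot y)$. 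You instead read the two boundary constraints at $r=0$ — namely $\sum_j\tilde f_{j,2}(0,y,t)=0$ and $\tilde f_{j,\ell}(0,y,t)=\tilde f_{j',\ell}(0,y,t)$ for $\ell\ge3$ — directly off Proposition~\ref{p:odd}, and then solve the small linear system in $\R^2$ (using that $\xi\mapsto(n_j\cdot\xi)_j$ is a bijection onto $\{\sum_j s_j=0\}$) to obtain $\xi_0(y,t)$; differentiating in $y$ gives $b$. This is a cleaner, purely algebraic route that keeps the corollary's proof self-contained relative to the binding function, whereas the paper's construction of $\mathbf{L}$ and $\omega$ has the advantage of producing the explicit even/odd splitting $\hat f_j=u_j+\mathbf{P}_j^\perp\mathbf{L}(\omega)$ used verbatim in the Taylor argument (your component-wise splitting by $\ell$ is, after unwinding, equivalent). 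Both paths yield the same $\tilde\xi_0$, $b$, $v_j$ and the same $O(\rho^2)$ bound, so no change of content results — only a modest simplification of one sub-step.
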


\begin{proof}
    First observe that for every $\ell \in \{2,\ldots,n-k+1\}$ the following identities hold:
    \[
    \begin{split}
        \tilde f_{1,\ell} &= \frac13 \sum_{j=1}^3 \tilde f_{j,\ell} + \frac13(\tilde f_{1,\ell}-\tilde f_{2,\ell}) + \frac13(\tilde f_{1,\ell}-\tilde f_{3,\ell}) \\
        \tilde f_{2,\ell} &= \frac13 \sum_{j=1}^3 \tilde f_{j,\ell} + \frac13(\tilde f_{2,\ell}-\tilde f_{1,\ell}) + \frac13(\tilde f_{2,\ell}-\tilde f_{3,\ell})\\
        \tilde f_{3,\ell} &= \frac13 \sum_{j=1}^3 \tilde f_{j,\ell} + \frac13(\tilde f_{3,\ell}-\tilde f_{1,\ell}) + \frac13(\tilde f_{3,\ell}-\tilde f_{2,\ell})\,.
    \end{split}
    \]
This shows that each function $\tilde f_{j,\ell}$ can be written as a finite sum of functions which admit an extension solving the heat equation across the spine of the cone. As a consequence, each $\tilde f_{j,\ell}$ is smooth up to $\overline{\mathbf{H}_1} \cap P_1$, with uniform estimates
\begin{equation} \label{e:u-estimates1}
\sup_{\overline{\mathbf{H}_1}\cap P_{\sfrac12}} |\partial_r^a\partial_y^b\partial_t^c \tilde f_j| \leq C\,\|\tilde f\|_{L^2(\bC\cap P_1)}\,,
\end{equation}
for any choice of indices $a,c$ and multi-index $b$. Furthermore, Proposition \ref{p:even} guarantees that 
\[
\begin{split}
&\left.\partial_r \tilde f_{1,2}\right|_{r=0}=\left.\partial_r \tilde f_{1,2}\right|_{r=0}=\left.\partial_r \tilde f_{1,2}\right|_{r=0} \\
\mbox{and} \quad &\sum_{j=1}^3 \left.\partial_r\tilde f_{j,\ell}\right|_{r=0}=0 \quad \mbox{for every $\ell\in\{3,\ldots,n-k+1\}$}\,.
\end{split}
\]
In particular, after rotation we have for the functions $\hat f_j$ that
\begin{equation} \label{stationarity0}
    \sum_{j=1}^3 \left. \partial_r \hat f_j\right|_{r=0} = 0\,,
\end{equation}
where the latter now is an identity between vectors in $\R^n$. This implies that the ``average'' map
\[
\omega := \sum_{j=1}^3 \hat f_j
\]
satisfies $\partial_r\omega=0$ on the spine $\bS(\bC)\cap P_1$, so that its even extension in the $r$-variable is a smooth solution to the heat equation, satisfying uniform estimates of the form
\begin{equation} \label{e:u-estimates2}
\sup_{\mathbf{P}_1\cap P_{\sfrac12}}|\partial_r^a\partial_y^b\partial_t^c\omega| \leq C\,\|\tilde f\|_{L^2(\bC\cap P_1)} \,.
\end{equation}
Notice also that $\omega$ takes values in the orthogonal complement $\bS(\bC)^\perp$ to the spine $\bS(\bC)$ of $\bC$. Now, it is an immediate consequence of Proposition \ref{blow-up-basic}(iii) that
\begin{equation} \label{e:spine motion}
     \hat f_j(0,y,t)=\mathbf{P}_j^\perp\tilde\xi_{(y,t)} \qquad \mbox{for all $(y,t) \in B^{k-1}_1 \times (-1,0)$}\,,
\end{equation}
and thus that
\begin{equation} \label{xi and the average}
    \omega (0,y,t) = \mathbf{P} \tilde\xi_{(y,t)}\,,
\end{equation}
where we have set $\mathbf{P}:=\sum_{j=1}^3 \mathbf{P}_j^\perp$. The operator $\mathbf P$ can be easily calculated, and in fact one immediately sees that, in the coordinates $(x_1,x_2,x_3,\ldots,x_{n-k+1},y_1,\ldots,y_{k-1})$ we have $\mathbf{P}={\rm diag}\left( \sfrac32,\sfrac32,3,\ldots,3,0,\ldots,0 \right)$. However, we will never use the precise form of the operator. What is important is that $\mathbf{P} \colon \R^n \to \R^n$ is self-adjoint, with kernel $\ker(\mathbf{P})=\bS(\bC)$ and image $W = \ker(\mathbf P)^\perp=\bS(\bC)^\perp$ that gets mapped bijectively onto intself. We denote by $\left.\mathbf{P}\right|_W^{-1}$ the inverse of the restriction $\left.\mathbf{P}\right|_W \colon W \to W$, and then we set $\mathbf{L}:=\left.\mathbf{P}\right|_W^{-1} \circ \mathbf{p}_W$, where $\mathbf{p}_W$ is the orthogonal projection of $\R^n$ onto $W$. Observe that, since $W = \bS(\bC)^\perp$, $\mathbf{L}$ maps $\R^n$ on $\bS(\bC)^\perp$. Moreover, if $w \in W$ then $\mathbf{P}(\mathbf{L}(w))=w$. In particular, thanks to \eqref{xi and the average} it holds
\[
\mathbf{P}(\mathbf{L}(\omega(0,y,t))) = \omega(0,y,t) = \mathbf{P} \tilde\xi_{(y,t)}\,,
\]
so that $\mathbf{L}(\omega(0,y,t))-\tilde\xi_{(y,t)} \in \bS(\bC)^\perp\cap\ker(\mathbf{P})=\{0\}$. We then conclude that
\[
\hat f_j (0,y,t)=\mathbf{P}_j^\perp(\mathbf{L}(\omega(0,y,t)))\,.
\]
Thus, the function
\[
u_j(r,y,t) := \hat f_j(r,y,t) - \mathbf{P}_j^\perp(\mathbf{L}(\omega(r,y,t)))
\]
has zero trace on the spine $\bS(\bC) \cap P_1$, so that its odd extension in the $r$-variable is a smooth solution to the heat equation satisfying uniform estimates of the form 
\begin{equation} \label{e:u-estimates3}
\sup_{\mathbf{P}_1\cap P_{\sfrac12}}|\partial_r^a\partial_y^b\partial_t^c u_j| \leq C\,\|\tilde f\|_{L^2(\bC\cap P_1)} \,,
\end{equation}
with the usual meaning for $a,b,c$. We have then established that
\[
\hat f_j = u_j + \mathbf{P}_j^\perp \circ \mathbf{L}(\omega) \qquad \mbox{on $\{(r,y,t)\,\colon\, r >0,\, r^2+|y|^2 < \sfrac14,\, t \in \left( -\sfrac14,0\right)\}$}\,,
\]
 where $u_j$ is the restriction of an odd function and $\mathbf{P}_j\circ \mathbf{L}(\omega)$ is the restriction of an even function. This allows one to extend $\hat f_j$ to the whole parabolic cylinder $\{(r,y,t)\,\colon\,  r^2+|y|^2 < \sfrac14,\, t \in \left( -\sfrac14,0\right)\}$ in such a way that the same identity is preserved. The estimates \eqref{e:u-estimates0} are then an immediate consequence of \eqref{e:u-estimates2}-\eqref{e:u-estimates3}. By Taylor's theorem and using again the estimates \eqref{e:u-estimates2}-\eqref{e:u-estimates3} combined with the fact that the $r$-derivative of $\omega$ and the $y$-derivative of $u_j$ vanish at $r=0$ we obtain \eqref{newcone-decay} upon setting 
\begin{align} \label{e:newcone-transl}
    \tilde \xi_0 :&= \mathbf{L} \,\omega(0) = \tilde\xi_{(0,0)} \in \bS(\bC)^\perp \\ \label{e:newcone-graph}
    v_j:&= \partial_r u_j(0) \in \mathbf{P}_j^\perp\,,
\end{align}
and where $b \colon \bS(\bC) \to \bS(\bC)^\perp$ is the linear map 
\begin{equation}\label{e:newcone-tilt}
    b(y) := \mathbf{L}(\nabla_y\omega(0) \cdot y)\,.
\end{equation}
Finally, \eqref{newcone-stationary} follows from \eqref{stationarity0} upon observing that $\left.\partial_r u_j \right|_{r=0}=\left.\partial_r f_j \right|_{r=0}$ because of the properties of $\omega$. 
\end{proof}

\section{Excess decay and proof of Theorem \ref{thm:main}} \label{sec:proof-main}

In this section we prove our main result, Theorem \ref{thm:main}. The key is the following excess decay theorem.

\begin{theorem} \label{thm:decay}
Corresponding to $n,k,p,q,E_1,\Cr{conslice}$ there exist $\theta_\star \in \left( 0, \sfrac{1}{20}\right)$ as well as $\Cl[eps]{e-decay}$ and $\Cl[con]{c-decay}$ so that the following holds. Let $I=(-R^2,0]$, and suppose that $\left( \{V_t\}_{t \in I}, \{u(\cdot,t)\}_{t \in I} \right) \in \mathscr{N}_{\Cr{e-decay}}(U_R\times I)$ and (A1)-(A6) are all satisfied in $U_R \times I$. Recall the definition of $\alpha$ in \eqref{e:the-Holder-exp}, $\|u\|$ in \eqref{smuterm}, and $\mu$ in \eqref{def:L2-excess}. Then, there exist a vector $a \in \bS(\bC)^\perp$ and a rotation $O \in \mathrm{O}(n)$ such that
\begin{equation}\label{newcone-is-close}
    |a| + \|O-{\rm Id}\| \leq \Cr{c-decay} \mu\,,
\end{equation}
and upon setting $\bC':=a+O(\bC)$
   \begin{equation}\label{e:exc-decay}
       \left((\theta_\star R)^{-k-4}  \iint_{P_{\theta_\star R}(a,0)}
       \dist(X,\bC')^2 \, d\|V_t\| (X) \, dt \right)^{\frac12} \leq \theta_\star^\alpha \max\{\mu, \Cr{c-decay} \|u\|\}\,.
   \end{equation} 
Furthermore, let $\iota_{a,\theta_\star}(X):=\theta_\star^{-1}(X-a)$, and define 
\[
V^\star_t := (O^{-1})_\sharp(\iota_{a,\theta_\star})_\sharp V_{\theta_\star^2 t}\qquad \mbox{and} \qquad u^\star(X,t):=\theta_\star\, O^{-1}u(a+\theta_\star O(X)\,,\,\theta_\star^2 t)\,.
\]
Then, the flow $\left(\{V^\star_t\}_{t\in I}, \{u^\star(\cdot,t)\}_{t \in I} \right)$ is also in $\mathscr{N}_{\Cr{e-decay}}(U_R\times I)$ and (A1)-(A6) are satisfied in $U_R \times I$.
\end{theorem}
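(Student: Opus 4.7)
The plan is to prove Theorem \ref{thm:decay} by the classical scheme of compactness and contradiction, combining the blow-up analysis of Section \ref{sec:BU} with the non-concentration estimates of Section \ref{sec:HS}. After rescaling to $R=1$, suppose no admissible choice of constants makes the conclusion hold and extract a sequence $(\{V_t^{(m)}\}, \{u^{(m)}\}) \in \mathscr{N}_{\varepsilon^{(m)}}$ with $\varepsilon^{(m)} \downarrow 0$ violating \eqref{e:exc-decay} for every eligible $(a, O)$. Setting $E^{(m)} := \max\{\mu^{(m)}, \Cr{c-decay}\|u^{(m)}\|\}$ and normalizing the graph parametrization $f^{(m)}$ from Theorem \ref{thm:graphical} by $E^{(m)}$, the entirety of Section \ref{sec:BU} applies and produces a blow-up limit $\tilde f$ together with parameters $(\tilde\xi_0, v_j, b)$ satisfying $v_1 + v_2 + v_3 = 0$ and the quadratic Taylor decay \eqref{newcone-decay} of Corollary \ref{cor:newcone}.

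From this blow-up data I construct the new triple junction $\bC^{(m)} := a^{(m)} + O^{(m)}(\bC)$ by setting $a^{(m)} := E^{(m)}\tilde\xi_0 \in \bS^\perp$ and $O^{(m)} := \exp(E^{(m)} A)$, where the skew-symmetric matrix $A$ is the canonical infinitesimal generator built from $b$ (acting as $y \mapsto b(y)$ from $\bS$ to $\bS^\perp$, extended antisymmetrically) and from the $v_j$'s (determining a skew-symmetric operator on $\bS^\perp$ with $Aw_j = v_j$). The compatibility $v_1 + v_2 + v_3 = 0$, paired with $w_1+w_2+w_3=0$, is exactly the condition that places the triple of branch tilts in the image of the infinitesimal action of $\mathrm{O}(\bS^\perp)$ on $\hat\bC$, so that $A$ exists. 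The bound $|a^{(m)}| + \|O^{(m)} - I\| \leq C E^{(m)}$ (i.e.\ \eqref{newcone-is-close}) is then immediate from the a priori bound $|\tilde\xi_0| + \sum_j |v_j| + \|b\| \leq C \|\tilde f\|_{L^2(\bC\cap P_1)}$ supplied by Corollary \ref{cor:newcone}.

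The core estimate bounds the excess of $V^{(m)}_t$ from $\bC^{(m)}$ in $P_{\theta_\star}(a^{(m)},0)$, split by Theorem \ref{thm:graphical} into a graphical part $\mathrm{I}_{\mathrm{gr}}$ and a non-graphical part $\mathrm{I}_{\mathrm{ngr}}$. On the graphical region, where $\spt\|V_t^{(m)}\|$ is parametrized as $X + f^{(m)}(X,t)$ with $X\in \bC$, a direct calculation using $\mathbf{P}_j^\perp A w_j = v_j$ and $\mathbf{P}_j^\perp A y = \mathbf{P}_j^\perp b(y)$ shows that the first-order graph of $\bC^{(m)}$ over the branch $\mathbf{H}_j$ is exactly $E^{(m)}\bigl(\mathbf{P}_j^\perp\tilde\xi_0 + v_j r + \mathbf{P}_j^\perp b(y)\bigr)$, so up to errors of order $(E^{(m)})^2(1+|\nabla f^{(m)}|^2)$ coming from the volume element and the exponential map, $\dist(X+f^{(m)}, \bC^{(m)})$ equals $|f^{(m)} - E^{(m)}(\text{linear approximation})|$; combining the strong $L^2$ convergence of Proposition \ref{blow-up-fine}(iii) with \eqref{newcone-decay} then gives $(E^{(m)})^{-2}\theta_\star^{-k-4}\mathrm{I}_{\mathrm{gr}} \leq C\theta_\star^2 + o(1)$. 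On the non-graphical region, which by \eqref{e:HS1} sits inside a tube of width $\mathrm{O}(E^{(m)})$ around the spine, the bound $\dist(X,\bC^{(m)}) \leq \dist(X,\bC) + C E^{(m)}\theta_\star$ for $X \in P_{\theta_\star}(a^{(m)})$ together with Proposition \ref{p:nonconc-excess} (applied with $\sigma = \mathrm{O}(E^{(m)})$ and $\kappa$ close to $1$) makes the contribution $\mathrm{I}_{\mathrm{ngr}} = o((E^{(m)})^2 \theta_\star^{k+4})$ as $m \to \infty$. Summing and choosing $\theta_\star$ so small that $C\theta_\star^{2(1-\alpha)} \leq 1/2$ contradicts the failure of \eqref{e:exc-decay}. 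The concluding claim that $(\{V^\star_t\}, \{u^\star\})$ again lies in $\mathscr{N}_{\Cr{e-decay}}$ and satisfies (A1)-(A6) is a routine verification: parabolic rescaling and rigid motions preserve all structural assumptions, and the H\"older exponent relation \eqref{e:the-Holder-exp} is precisely what makes $\|u^\star\|$ scale as $\theta_\star^\alpha \|u\|$.

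The main obstacle will be the simultaneous control of three error budgets in the excess estimate: (i) transferring the strong $L^2$ convergence of Proposition \ref{blow-up-fine} from $P_{1/2}$ to the shifted smaller cylinder $P_{\theta_\star}(a^{(m)}, 0)$ without losing quantitative control of the constants; (ii) treating the non-graphical contribution, for which Theorem \ref{thm:graphical} alone is insufficient and the full non-concentration machinery of Section \ref{sec:HS} (itself relying crucially on assumption (A6)) is indispensable to rule out concentration of excess in the tube $|x| \lesssim E^{(m)}$; and (iii) showing that the discrepancy between the linear first-order graph predicted by Corollary \ref{cor:newcone} and the exact graph of the rigid motion $a^{(m)} + O^{(m)}(\bC)$ over $\bC$ is of order $(E^{(m)})^2$, so that it can be absorbed cleanly after division by $(E^{(m)})^2$ in the decay inequality.
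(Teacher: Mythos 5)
Your strategy is essentially identical to the paper's: contradiction via a blow-up sequence, constructing the new cone from the Taylor data $(\tilde\xi_0,v_j,b)$ of Corollary~\ref{cor:newcone}, and splitting the excess into a graphical contribution controlled by \eqref{newcone-decay} plus strong $L^2$ convergence and a near-spine contribution controlled by the non-concentration estimate of Proposition~\ref{p:nonconc-excess}. Two cosmetic deviations are harmless: normalizing by $E^{(m)}=\max\{\mu^{(m)},\Cr{c-decay}\|u^{(m)}\|\}$ instead of $\mu^{(m)}$ (these agree for large $m$ because the contradiction assumption forces $\|u^{(m)}\|/\mu^{(m)}\to 0$), and packaging the rotation into a single exponential $\exp(E^{(m)}A)$ rather than the paper's composition $O_2^{(m)}\circ O_1^{(m)}$ (these agree up to $\mathrm{O}((\mu^{(m)})^2)$, which is already the admissible error budget). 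Your observation that the near-spine contribution is in fact $o((\mu^{(m)})^2\theta_\star^{k+4})$, rather than merely $\le C\theta_\star^2$ after a fixed choice of $\sigma$, is a correct strengthening of what the paper records, and is obtained from the same estimate by letting $\sigma\to 0$.

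Two small slips should be flagged. First, you invoke Proposition~\ref{p:nonconc-excess} ``with $\kappa$ close to $1$.'' This is backwards: the weight $\max\{|x|,\sigma\}^{-(1-2\kappa)}$ only blows up near the spine and thus carries non-concentration information when $\kappa<\sfrac12$; the paper takes $\kappa=\sfrac14$, and any fixed $\kappa\in(0,\sfrac12)$ works. Second, you place the non-graphical region ``inside a tube of width $\mathrm{O}(E^{(m)})$ around the spine,'' appealing to \eqref{e:HS1}. But \eqref{e:HS1} bounds the location of high-density points, not the extent of the region where graphicality from Theorem~\ref{thm:graphical} breaks down; the latter is only known, from Remark~\ref{rmk:C1a-deg}, to lie within $\mathrm{O}((\mu^{(m)})^{1/(k+4)})$, which is much wider. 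This makes no difference to the outcome, since Proposition~\ref{p:nonconc-excess} controls the weighted excess over any fixed tube and hence over $\{|x|\le 6\sigma^{(m)}\}$, but the justification as written is incorrect.
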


\begin{proof}
    We may assume $R=5$ after a parabolic change of variables. The proof is by contradiction. Suppose the result is false. Then, we may consider sequences $\{V_t^{(m)}\}_{t \in I}$ and $u^{(m)}$ such that $(\{V_t^{(m)}\}_{t \in I}, \{u^{(m)}(\cdot, t)\}_{t\in I}) \in \mathcal{N}_{\sfrac1m}(U_5\times I)$, conditions (A1)-(A6) are all satisfied, and with the following additional property. For any triple junction $\bC'=a+O(\bC)$ with 
    \begin{equation} \label{e:constraint-cones}
    |a|+\|O-\mathrm{Id}\| \leq m \mu^{(m)}
    \end{equation}
    it holds 
    \begin{equation}\label{e:contradiction-decay}
       \left((5\theta_\star)^{-k-4} \iint_{P_{5\theta_\star}(a,0)} 
       \dist(X,\bC')^2 \, d\|V_t^{(m)}\| (X) \, dt \right)^{\frac12} > \theta_\star^\alpha \max\{\mu^{(m)}, m\|u^{(m)}\|\}\,.
   \end{equation}
   We will show that \eqref{e:contradiction-decay} is inconsistent for suitable choices of $\bC'=\bC'^{(m)}$ and of $\theta_\star$ depending, the latter, only on $n,k,p,q,E_1$. First, we claim that $(\{V_t^{(m)}\}_{t \in I}, \{u^{(m)}(\cdot, t)\}_{t\in I})$ is a blow-up sequence in the sense of Definition \ref{def:BU}, corresponding to the choice $\varepsilon^{(m)}=\sfrac{1}{m}$. The only condition to check is the validity of the second limit in \eqref{blow-up cond}. From \eqref{e:contradiction-decay} with $\bC'=\bC$ we see that
   \[
   \theta_\star^\alpha\, m\|u^{(m)}\| < (5\theta_\star)^{-\frac{k+4}{2}} \left(\iint_{P_1}\dist(X,\bC)^2\,d\|V_t^{(m)}\|(X)\,dt \right)^{\frac12} \leq (5\theta_\star)^{-\frac{k+4}{2}} \mu^{(m)}\,,
   \]
   from which it indeed follows that $(\mu^{(m)})^{-1}\|u^{(m)}\| \to 0$ as $m \to \infty$. Then, all the arguments of Section \ref{sec:BU} apply. Recall the definition of the numbers $\sigma^{(m)}$, of the domains $U^{(m)}$, of the functions $f^{(m)}$ and $\xi^{(m)}$, as well as of their limits upon renormalization by dividing by $\mu^{(m)}$. Consider now the vectors $\tilde\xi_0 \in \bS(\bC)^\perp$, $v_j \in \mathbf{P}_j^\perp$, and the linear map $b \colon \bS(\bC) \to \bS(\bC)^\perp$ from Corollary \ref{cor:newcone}. We now define the cone $\bC'^{(m)}$ leading to the contradiction. First, for every $j$ parameterize the half-plane $\mathbf{H}_j = \mathrm{span}^+(w_j) \oplus \bS(\bC)$ with coordinates $(r,y)$, where $r=|x|$ is the distance function from $\bS(\bC)$, and consider the graph of the map $l_j^{(m)} \colon \mathbf{H}_j \to \mathbf{P}_j^\perp$ defined by $l_j^{(m)}(r,y):=r\,\mu^{(m)}\, v_j$. Notice that each graph is a linear half-plane of dimension $k$ in $\R^n$ that contains $\bS(\bC)$. Furthermore, since $v_1+v_2+v_3=0$ the union of the three graphs is a standard triple junction $\bC_1^{(m)}$ with the spine $\bS(\bC)$, and clearly $\bC_1^{(m)}=O_1^{(m)}(\bC)$, where $O_1^{(m)}$ is an orthogonal transformation in $\R^n$ that keeps $\bS(\bC)$ fixed and satisfies $\|O_1^{(m)}-\mathrm{Id}\| \leq C\mu^{(m)}$. Next, consider the linear map $b$, and let $b^*\colon \bS(\bC)^\perp\to\bS(\bC)$ be its adjoint. Define $\tilde b \colon \bS(\bC)^\perp \oplus \bS(\bC) \simeq \R^n \to \R^n$ by $\tilde b(x,y) := b(y)-b^*(x)$. The vector field $\tilde b$ is then the infinitesimal generator of a one-parameter family of linear transformations of $\R^n$, indexed by the parameter $\tau$ and denoted $O(\tau,\cdot)$, namely such that
   \begin{equation} \label{ODE}
       \begin{cases}
           & O(0,x,y)=(x,y), \\
           & \partial_{\tau} O(0,x,y)=\tilde b(x,y)\,.
       \end{cases}
   \end{equation}
   If $B$ denotes the matrix that defines $\tilde b$, then $O(\tau,\cdot)$ is the linear transformation whose matrix is $\exp(\tau B)$. On the other hand, $B$ is skew-symmetric by definition, and thus $\exp(\tau B)$ is orthogonal. In other words, $O$ is a one-parameter family of rotations, and we set $O_2^{(m)}:=O(\mu^{(m)},\cdot)$. Again we see immediately that $\|O_2^{(m)}-\mathrm{Id}\| \leq C \mu^{(m)}$. We define $O^{(m)}:=O^{(m)}_2\circ O^{(m)}_1$: notice that $O^{(m)}(\bC)$ is a standard triple junction with spine $O^{(m)}_2(\bS(\bC))$, and that $\|O^{(m)}-\mathrm{Id}\| \leq C \mu^{(m)}$. Finally, define $a^{(m)}:=\mu^{(m)}\tilde\xi_0$, and set $\bC'^{(m)}=a^{(m)}+O^{(m)}(\bC)$. We now proceed to estimate the quantity
   \[
   (5\theta_\star)^{-k-4}  \iint_{P_{5\theta_\star}(a^{(m)},0)} \dist(X,\bC'^{(m)})^2 \, d\|V_t^{(m)}\| (X) \, dt
   \]
   with this choice of $\bC'^{(m)}$. We shall work separately on the region $P_{5\theta_\star}(a^{(m)},0) \cap \{|x| > 6 \sigma^{(m)}\}$, where $\{V_t^{(m)}\}$ can be represented as a graph over $\bC$, and on the complement region $P_{5\theta_\star}(a^{(m)},0)\cap \{|x| \leq 6 \sigma^{(m)}\}$. For the first part, let $t \in (-(5\theta_\star)^2,0)$, let $X \in \spt\|V_t\|\cap  B_{5\theta_\star}(a^{(m)}) \cap \{|x| > 6 \sigma^{(m)}\} $, and let $j \in \{1,2,3\}$ and $Z=(z,w) \in \mathbf H_j$ be such that $X = Z + f^{(m)}(Z,t)$. Consider now the points
   \begin{align*}
       \hat Z &:= Z-\mathbf{P}_j(a^{(m)}+\mu^{(m)}b(w)) + \mu^{(m)}b^*(z)\,,\\
       \hat X &:= \hat Z + l^{(m)}_j(\hat Z)\,,\\
       X'&:= a^{(m)}+O^{(m)}_2(\hat X)\,.
   \end{align*}
   Notice that $Z \in \mathbf H_j$ lies in the domain of the map $f^{(m)}$: since this domain has distance from $\bS(\bC)$ of order $\mathrm{O}(\sigma^{(m)})$, since $|\mathbf{P}_j(a^{(m)}+\mu^{(m)}b(w)) + \mu^{(m)}b^*(z)| = \mathrm{O}(\mu^{(m)})$, and since $\mu^{(m)}=\mathrm{o}(\sigma^{(m)})$, the point $\hat Z$ still belongs to $\mathbf H_j$, at a distance from $\bS(\bC)$ comparable to that of $Z$. Then, by definition, $\hat X \in O_1^{(m)}(\mathbf H_j)$, and $X' \in \bC'^{(m)}$, with $\dist(X,\bC'^{(m)}) \leq |X-X'|$. On the other hand, $X-X'=(X-\hat X)+(\hat X - X')$, and we proceed to calculate and estimate each vector in the sum separately. We have, recalling the definition of $a^{(m)}$, $l^{(m)}_j$, and $\tilde f^{(m)}$:
   \[
   \begin{split}
       X-\hat X &= \mathbf{P}_j(a^{(m)}) + \mu^{(m)}\mathbf{P}_j(b(w))-\mu^{(m)}b^*(z)+f^{(m)}(Z,t)-l_j^{(m)}(Z)+\mathrm{O}((\mu^{(m)})^2)\\
       &=\mu^{(m)} \left( \tilde f^{(m)}(z,w,t) - |z|v_j+\mathbf{P}_j(\tilde\xi_0+b(w))-b^*(z) \right) + \mathrm{O}((\mu^{(m)})^2)\,.
    \end{split}
   \]
    On the other hand, since $\mu^{(m)}\tilde b$ is the infinitesimal generator of $O_{2}^{(m)}$, we have that
    \[
    \begin{split}
    \hat X - X' &= \hat X - (a^{(m)}+\hat X + \mu^{(m)}\tilde b(\hat X) + \mathrm{O}((\mu^{(m)})^2))\\
    &=-\mu^{(m)} \left( \tilde\xi_0 + b(w) -b^*(z) \right) + \mathrm{O}((\mu^{(m)})^2)\,,
    \end{split}
    \]
   where we have used that $\tilde b(\hat X - Z) = {\rm O}(\mu^{(m)})$. By combining the two estimates, we then have that
   \[
   X-X' = \mu^{(m)} \left( \tilde f^{(m)}(z,w,t) - |z|v_j-\mathbf{P}_j^\perp(\tilde\xi_0+b(w)) \right) + \mathrm{O}((\mu^{(m)})^2). 
   \]
    This implies, using \eqref{strong-L2}, that 
    \[
    \begin{split}
    &(\mu^{(m)})^{-2} (5\theta_\star)^{-k-4} 
    \iint_{P_{5\theta_\star}(a^{(m)},0)\cap\{|x|>6\sigma^{(m)}\}} \dist(X,\bC'^{(m)})^2 \, d\|V_t^{(m)}\|(X)\,dt\\
    &\qquad  \leq C \theta_\star^{-k-4} 
    \iint_{P_{6\theta_\star}\cap U^{(m)}}\left| \tilde f^{(m)}(z,w,t) - |z|v_j-\mathbf{P}_j^\perp(\tilde\xi_0+b(w)) \right|^2 \,d\mathcal H^k \, dt + \mathrm{O}((\mu^{(m)})^2)\,,
    \end{split}
    \]
    where $C$ depends only on $k$. Using that $\tilde f$ is the limit of $\tilde f^{(m)}$ as $m \to \infty$, we can take advantage of \eqref{newcone-decay} to estimate the right-hand side and, since the space-time $L^2$ norm of $\tilde f$ is finite as a consequence of \eqref{strong-L2}, we can deduce that 
    \[
    \limsup_{m \to \infty}(\mu^{(m)})^{-2} (5\theta_\star)^{-k-4} 
    \iint_{P_{5\theta_\star}(a^{(m)},0)\cap \{|x|>\sigma^{(m)}\}}\dist(X,\bC'^{(m)})^2 \, d\|V_t^{(m)}\|(X)\,dt \leq C \theta_\star^2\,
    \]
    where $C$ depends only on $n,k,p,q,E_1$. Next, observe that on $B_1$ we have $\dist(X,\bC'^{(m)}) \leq \dist(X,\bC) + C \mu^{(m)}$. Fix $\sigma \in \left( 0,\sfrac{1}{40} \right)$ to be chosen momentarily depending only on $n,k,p,q,E_1$. For all $m$ sufficiently large, then, the flow $\left(\{V_t^{(m)}\}_{t\in I}, \{u^{(m)}(\cdot,t)\}_{t\in I} \right)$ satisfies all the hypotheses of Proposition \ref{p:nonconc-excess} with, say, $\kappa=\sfrac14$, and furthermore it holds $6 \sigma^{(m)} \leq \sigma$. In particular, we have
    \begin{equation*}
    \begin{split}
        &  \limsup_{m \to \infty}(\mu^{(m)})^{-2} (5\theta_\star)^{-k-4} \iint_{P_{5\theta_\star}(a^{(m)},0)\cap\{|x|\leq 6\sigma^{(m)}\}}\dist(X,\bC'^{(m)})^2 \, d\|V_t^{(m)}\|(X)\,dt \\
        &\quad \leq C \,
        \theta_\star^{-3}
        \sigma + \sigma^{\sfrac12}\limsup_{m \to \infty}(\mu^{(m)})^{-2} (5\theta_\star)^{-k-4} 
        \iint_{P_{5\theta_\star}(a^{(m)},0)\cap \{|x|\leq \sigma\}}
        \frac{\dist(X,\bC)^2}{\sigma^{\sfrac12}} \, d\|V_t^{(m)}\|(X)\,dt \\
        & \quad \leq C 
        \theta_\star^{-3}
        \sigma 
        + \Cr{c-binding} (5\theta_\star)^{-k-4} \sigma^{\sfrac12}.
    \end{split}
    \end{equation*}
   Here we used $\|V_t^{(m)}\|(B_{5\theta_\star}(a^{(m)})\cap \{|x|\leq \sigma\})\leq c(k)E_1 \sigma \theta_\star^{k-1}$. Upon choosing $\sigma$ small enough, depending on the given constants (note that $\Cr{c-binding}$ does not depend on $\sigma$) and on $\theta_\star$, and thus only on $n,k,p,q,E_1,\Cr{conslice}$, we can ensure that the right-hand side is bounded by $\theta_\star^2$. Combining the two estimates, we conclude that for all $m$ large enough it holds 
   \begin{equation} \label{e:the-end}
   \left((5\theta_\star)^{-k-4}  
   \iint_{P_{5\theta_\star}(a^{(m)},0)}
   \dist(X,\bC'^{(m)})^2 \, d\|V_t^{(m)}\| (X) \, dt \right)^{\frac12} \leq C\,\theta_\star\,\mu^{(m)}\,,
   \end{equation}
    where $C$ depends only on $n,k,p,q,E_1,\Cr{conslice}$. This estimate is in apparent contradiction with \eqref{e:contradiction-decay} for a suitable choice of $\theta_\star$ depending only on the same data. This proves the first part of the theorem. The second part is immediate, upon possibly further decreasing $\theta_\star$ to entail $\theta_\star^\alpha \Cr{c-decay} < 1$. Indeed, (A1)-(A5) remain true by the scale invariance property of Brakke flows and the fact that $\|u^\star\|_R = \|u\|_{\theta_\star R} \leq \theta_\star^\alpha \|u\|_R$; the latter scaling property also immediately implies \eqref{smuterm}, whereas \eqref{def:L2-excess} follows from \eqref{e:exc-decay}; \eqref{def:mass deficit} and \eqref{non0} are guaranteed because for any $\sigma > 0$, upon choosing $\Cr{e-decay}$ sufficiently small, $V_t$ is a $C^{1,\alpha}$ graph over $\bC$ in $U_{\frac{5}{2}\theta_\star}(a)\setminus B_{\sigma}(\bS(\bC))$. Finally, Assumption (A6), by its multiscale formulation, is satisfied by the rescaled flow $\left( \{V_t^\star\}_{t \in I}, \{u^\star(\cdot,t)\}_{t \in I}\right)$ by hypothesis, and the proof is complete.
\end{proof}

By the second claim of Theorem \ref{thm:decay}, excess decay can be iterated through dyadic scales, and we reach the following.

\begin{proposition}\label{prop:iteration}
Corresponding to $n,k,p,q,E_1,\Cr{conslice}$ there exist positive numbers $\Cl[eps]{eps-iter} < \Cr{e-decay}$ and $\Cl[con]{con-iter}$ with the following property. Let $I = (-R^2,0]$ and suppose that $(\{V_t\}_{t \in I}, \{u(\cdot,t)\}_{t \in I})\in \mathscr{N}_{\Cr{eps-iter}}(U_R\times I)$ so that (A1)-(A6) are all satisfied. Then there exists a sequence of triple junctions $\{\bC_m\}_{m=0}^\infty$ of the form $\bC_m = a_m + O_m(\bC)$, and a limit triple junction $\bC_\infty = a_\infty + O_\infty(\bC)$ with $O_m, O_\infty \in \mathrm{O}(n)$, such that for every $m \ge 0$:
\begin{itemize}
    \item[(i)] The excess at scale $\theta_\star^m R$ decays:
    \begin{equation}\label{eq:excess-decay-iter}
        \mu_m := \left((\theta_\star^m R)^{-k-4} 
        \iint_{P_{\theta_\star^m R}(a_m,0)}
        \dist(X,\bC_m)^2\,d\|V_t\|\,dt \right)^{\frac12} \leq (\theta_\star^m)^\alpha \,\max\{\mu, \Cr{c-decay}\|u\|\}\,.
    \end{equation}
    \item[(ii)] The cones $\bC_m$ converge geometrically to $\bC_\infty$, namely
    \begin{equation}\label{eq:cone-conv-iter}
        \max\{\theta_\star^{-m} |a_m-a_\infty|, \|O_m-O_\infty\|\} \le \Cr{con-iter} (\theta_\star^m)^\alpha \max\{\mu, \Cr{c-decay}\|u\|\}\,,
    \end{equation}
    and furthermore the distance between $\bC_\infty$ and $\bC$ is estimated by
    \begin{equation} \label{eq:close-to-standard}
        \max\{|a_\infty|, \|O_\infty-\mathrm{Id}\|\} \leq \Cr{con-iter} \max\{\mu, \Cr{c-decay}\|u\|\}\,.
    \end{equation}
\end{itemize}
\end{proposition}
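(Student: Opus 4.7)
The strategy is a standard iteration of Theorem \ref{thm:decay}, in which the key point is that the theorem's second claim — namely, that the translated/rescaled/rotated flow still belongs to $\mathscr{N}_{\Cr{e-decay}}$ and satisfies (A1)-(A6) — allows the excess decay step to be reapplied at each smaller scale. I would set $\bC_0 := \bC$ (i.e.\ $a_0 = 0$, $O_0 = \mathrm{Id}$) and construct the sequence $\{\bC_m\}_{m \ge 0}$ recursively as follows. Assuming $\bC_m = a_m + O_m(\bC)$ together with the excess bound \eqref{eq:excess-decay-iter} has been obtained, consider the transformed flow
\[
V^{(m)}_t := (O_m^{-1})_\sharp (\iota_{a_m,\theta_\star^m R / R})_\sharp V_{(\theta_\star^m)^2 t}, \qquad u^{(m)}(X,t) := \theta_\star^m O_m^{-1} u(a_m + \theta_\star^m O_m(X), (\theta_\star^m)^2 t),
\]
which is in $\mathscr{N}_{\Cr{e-decay}}(U_R \times I)$ with respect to the standard $\bC$ by the second claim of Theorem \ref{thm:decay} (applied iteratively). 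Apply Theorem \ref{thm:decay} to $(V^{(m)}, u^{(m)})$ to obtain $\tilde a_{m+1} \in \bS(\bC)^\perp$ and $\tilde O_{m+1} \in \mathrm{O}(n)$ satisfying $|\tilde a_{m+1}| + \|\tilde O_{m+1} - \mathrm{Id}\| \leq \Cr{c-decay}\,\mu_m$, and define
\[
a_{m+1} := a_m + \theta_\star^m O_m(\tilde a_{m+1}), \qquad O_{m+1} := O_m \circ \tilde O_{m+1}.
\]

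The crucial algebraic observation that makes the bookkeeping work is that $\bC$ is a cone, hence invariant under positive dilations, so that $\theta_\star^m O_m(\tilde O_{m+1}(\bC)) = O_{m+1}(\bC)$ as sets; consequently, the cone corresponding to $\tilde\bC_{m+1} = \tilde a_{m+1} + \tilde O_{m+1}(\bC)$ in the transformed frame corresponds precisely to $\bC_{m+1} = a_{m+1} + O_{m+1}(\bC)$ in the original frame. This identification, combined with the change of variables $X \mapsto a_m + \theta_\star^m O_m(X)$, translates the decay estimate \eqref{e:exc-decay} for the transformed flow at scale $\theta_\star R$ into the bound \eqref{eq:excess-decay-iter} for the original flow at scale $\theta_\star^{m+1} R$ with cone $\bC_{m+1}$; here one uses that $\|u^{(m)}\| \leq (\theta_\star^m)^\alpha \|u\|$ by the scaling of the $L^q_t L^p_X$ norm. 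To initialize the iteration, one simply chooses $\Cr{eps-iter}$ small enough (depending on $\Cr{e-decay}$ and $\Cr{c-decay}$) so that $\mu_0 \leq \Cr{e-decay}$ — which holds as soon as $\mu \leq \Cr{eps-iter}$ — and the geometric decay of $\mu_m$ guarantees the smallness hypothesis is preserved in all subsequent steps.

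Once the sequence is in place, convergence of $\{a_m\}$ and $\{O_m\}$ is immediate from the estimates on single-step differences: using $|a_{m+1}-a_m| = \theta_\star^m |\tilde a_{m+1}| \leq \Cr{c-decay}\,\theta_\star^m\,\mu_m$ and $\|O_{m+1} - O_m\| = \|\tilde O_{m+1} - \mathrm{Id}\| \leq \Cr{c-decay}\,\mu_m$, together with $\mu_m \leq (\theta_\star^m)^\alpha \max\{\mu, \Cr{c-decay}\|u\|\}$, one sees that both series $\sum_m (a_{m+1}-a_m)$ and $\sum_m (O_{m+1}-O_m)$ are absolutely convergent, with geometric rate. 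Their limits $a_\infty$ and $O_\infty$ satisfy
\[
|a_\infty - a_m| \leq \sum_{j \ge m} |a_{j+1} - a_j| \leq \frac{\Cr{c-decay}}{1-\theta_\star^{1+\alpha}} \theta_\star^{m(1+\alpha)} \max\{\mu, \Cr{c-decay}\|u\|\},
\]
and analogously for $\|O_\infty - O_m\|$, yielding \eqref{eq:cone-conv-iter}. The bound \eqref{eq:close-to-standard} then follows by evaluating at $m=0$.

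The main obstacle is not analytic — all the hard analysis is already packaged inside Theorem \ref{thm:decay} — but rather the careful bookkeeping of the affine transformations under iteration, particularly verifying the cone identity $\theta_\star^m O_m(\tilde O_{m+1}(\bC)) = O_{m+1}(\bC)$ and checking that the hypotheses of Theorem \ref{thm:decay}, most notably the multiscale-formulated (A6), remain satisfied at every stage of the iteration. The latter is already built into the theorem's statement, which asserts invariance of the full hypothesis package $(A1)\!-\!(A6)$ together with membership in $\mathscr{N}_{\Cr{e-decay}}$; the former is guaranteed by the dilation-invariance of cones.
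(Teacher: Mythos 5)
Your proposal is correct and follows essentially the same inductive route as the paper: set up the recursion $a_{m+1}=a_m+\theta_\star^m O_m(a')$, $O_{m+1}=O_m\circ O'$ via repeated application of Theorem \ref{thm:decay} to the transformed flows, using the theorem's second claim to preserve the hypothesis package, and then conclude geometric convergence from the one-step bounds $|a_{m+1}-a_m|\le\Cr{c-decay}\theta_\star^m\mu_m$ and $\|O_{m+1}-O_m\|\le\Cr{c-decay}\mu_m$. The cone-dilation identity you highlight is used implicitly by the paper in the algebraic verification of $(2)_{m+1}$ from $(2)_m$; making it explicit is harmless and arguably clarifying.
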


\begin{proof}
The proof is by induction. We set the sequence of scales $R_m := \theta_\star^m R$, and we claim that there exist sequences of vectors $\{a_m\}$, rotations $\{O_m\}$, cones $\{\bC_m\}$, flows $\{V_t^{(m)}\}$, and forcing terms $\{u^{(m)}\}$ so that the following hold for every $m \geq 0$:
\begin{itemize}
    \item[$(1)_m$] $\bC_m=a_m+O_m(\bC)$;
    \item[$(2)_m$] $V_t^{(m)}=(O_m^{-1})_\sharp(\iota_{a_m,\theta_\star^m})_\sharp V_{\theta_\star^{2m}t}$ and $u^{(m)}(X,t) = \theta_\star^m \,O_m^{-1}u(a_m + \theta_\star^m O_m(X), \theta_\star^{2m}t)$;
    \item[$(3)_m$] equation \eqref{eq:excess-decay-iter} holds;
    \item[$(4)_m$] $(\{V^{(m)}_t\},\{u^{(m)}(\cdot,t)\})$ satisfies the assumptions of Theorem \ref{thm:decay}.
\end{itemize}

\noindent\textbf{Base Case (m=0):}
We set $a_0 = 0$, $O_0 = \mathrm{Id}$, so $\bC_0 = \bC$ and $(1)_0$ is satisfied. We define $V_t^{(0)} := V_t$ and $u^{(0)} := u$, so that $(2)_0$ is satisfied,. The excess $\mu_0$ satisfies \eqref{eq:excess-decay-iter} by definition, and the hypothesis $(\{V_t\},\{u(\cdot,t)\})\in \mathscr{N}_{\Cr{eps-iter}}(U_R\times I)$ with $\Cr{eps-iter} < \Cr{e-decay}$ ensures that the conditions of Theorem \ref{thm:decay} are met for this initial setup.

\noindent\textbf{Inductive Step:}
Assume that for some $m \ge 0$, we have constructed $a_m, O_m, \bC_m, V_t^{(m)}, u^{(m)}$ such that conditions $(1)_m$-$(4)_m$ are all satisfied. We aim at defining $a_{m+1}$, $O_{m+1}$, $V_t^{(m+1)}$, and $u^{(m+1)}$. We apply Theorem \ref{thm:decay} to the flow $(\{V_t^{(m)}\}, \{u^{(m)}(\cdot,t)\})$. The theorem provides a new vector $a' \in \bS(\bC)^\perp$ and a rotation $O' \in \mathrm{O}(n)$ such that $|a'| + \|O'-\mathrm{Id}\|$ is controlled by the $L^2$-excess of the flow $V_t^{(m)}$ at scale $R$. Using $(3)_m$, we then have
\begin{equation}\label{eq:one-step-cone-close}
|a'| + \|O'-\mathrm{Id}\| \le \Cr{c-decay}  \mu_m\,.
\end{equation}
Furthermore, the theorem gives the following one-step excess decay: for $\bC':=a'+O'(\bC)$ it holds
\begin{equation}\label{eq:one-step-decay}
\left((\theta_\star R)^{-k-4} 
\iint_{P_{\theta_\star}(a',0)}
\dist(X,\bC')^2 \, d\|V_t^{(m)}\| (X) \, dt \right)^{\frac12}\le \theta_\star^\alpha \max\{\mu_m, \Cr{c-decay}\|u^{(m)}\|\}\,,
\end{equation}
where, of course, the quantity $\|u^{(m)}\|$ is computed as in \eqref{smuterm} using $V^{(m)}$. We can now define the new cone in the sequence: precisely, we set 
\begin{align}
    a_{m+1} &:= a_m + \theta_\star^m\, O_m(a') \label{eq:iter-a} \\
    O_{m+1} &:= O_m \circ O' \label{eq:iter-O} \\
    \bC_{m+1} &:= a_{m+1} + O_{m+1}(\bC) \label{eq:iter-C}\,.
\end{align}
The condition $(1)_{m+1}$ is then satisfied by definition. We also set  $V^{(m+1)}_t:=(V^{(m)}_t)^\star$ and $u^{(m+1)}:=(u^{(m)})^\star$ as in Theorem \ref{thm:decay}, using the new $a'$ and $O'$: explicitly, using $(2)_m$ as well as \eqref{eq:iter-a} and \eqref{eq:iter-O} we see after a simple algebraic calculation that
\begin{align*}
V^{(m+1)}_t &= ((O')^{-1})_\sharp (\iota_{a',\theta_\star})_\sharp V^{(m)}_{\theta_\star^2t} = (O_{m+1}^{-1})_\sharp (\iota_{a_{m+1},\theta_\star^{m+1}})_\sharp V_{\theta_\star^{2(m+1)}t}\,, \\
\qquad u^{(m+1)}(X,t) &= \theta_\star (O')^{-1} u^{(m)}(a'+\theta_\star O'(X),\theta_\star^2 t)  \\&= \theta_\star^{m+1} \,O_{m+1}^{-1} u(a_{m+1}+\theta_\star^{m+1}O_{m+1}(X),\theta_\star^{2(m+1)}t)\,,
\end{align*}
namely $(2)_{m+1}$ is satisfied. Theorem \ref{thm:decay} also guarantees that $(4)_{m+1}$ holds. Finally, using $(2)_m$ and the definition of $\bC_{m+1}$ one immediately sees that $\mu_{m+1}$ equals the left-hand side of \eqref{eq:one-step-decay}. In turn, \eqref{eq:one-step-decay} together with $(3)_m$ and the trivial estimate $\|u^{(m)}\|\leq \theta_\star^{m\alpha}\|u\|$ gives $(3)_{m+1}$. This completes the proof of the inductive claim. In particular, it proves the validity of \eqref{eq:excess-decay-iter} for every $m \geq 0$.

\smallskip

It remains to prove the geometric convergence of the cones and the estimate \eqref{eq:cone-conv-iter}. From the iterative definitions \eqref{eq:iter-a} and \eqref{eq:iter-O}, and the one-step estimate \eqref{eq:one-step-cone-close}, we can bound the distance between successive cones. For the rotations, we have:
\[
\|O_{m+1}-O_m\| = \|O_m \circ O' - O_m\| = \|O_m(O'-\mathrm{Id})\| = \|O'-\mathrm{Id}\| \le \Cr{c-decay}\mu_m\,.
\]
For the translations, we have:
\[
|a_{m+1}-a_m| = |\theta_\star^m O_m(a')| = \theta_\star^m |a'| \le \Cr{c-decay} \theta_\star^m \mu_m\,.
\]
By \eqref{eq:cone-conv-iter}, we see that for any $p > m$:
\begin{align*}
\|O_p - O_m\| &\le \sum_{j=m}^{p-1} \|O_{j+1}-O_j\| \le \Cr{c-decay} \sum_{j=m}^{p-1} \mu_j \le \Cr{c-decay}\left( \sum_{j=m}^{p-1} (\theta_\star^\alpha)^j\right) \max\{\mu, \Cr{c-decay}\|u\|\}, \\
|a_p - a_m| &\le \sum_{j=m}^{p-1} |a_{j+1}-a_j| \le \Cr{c-decay} \sum_{j=m}^{p-1} \theta_\star^j \mu_j \le \Cr{c-decay} \left(  \sum_{j=m}^{p-1} (\theta_\star^{1+\alpha})^j\right) \max\{\mu, \Cr{c-decay}\|u\|\}\,.
\end{align*}
Since $\theta_\star < 1$ and $\alpha>0$, both right-hand sides are tails of convergent geometric series, which go to zero as $m \to \infty$. Thus, $\{O_m\}$ and $\{a_m\}$ are Cauchy sequences. They converge to limits $O_\infty \in \mathrm{O}(n)$ and $a_\infty \in \mathbb{R}^n$, respectively. The limit cone is $\bC_\infty = a_\infty + O_\infty(\bC)$.

Letting $p \to \infty$ in the inequalities above, we get the estimate
\begin{align*}
\max\lbrace \theta_\star^{-m}|a_m-a_\infty| , \|O_m - O_\infty\| \rbrace &\le \Cr{c-decay}\left( \sum_{j=m}^{\infty} (\theta_\star^\alpha)^j\right) \max\{\mu, \Cr{c-decay}\|u\|\} \\
&=\Cr{c-decay} \frac{\theta_\star^{m\alpha}}{1-\theta_\star^\alpha} \max\{\mu, \Cr{c-decay}\|u\|\}\,,
\end{align*}
whereas summing the whole series gives
\begin{align*}
\max\lbrace |a_\infty| , \|O_\infty-\mathrm{Id}\| \rbrace &\le \Cr{c-decay}\left( \sum_{j=0}^{\infty} (\theta_\star^\alpha)^j\right) \max\{\mu, \Cr{c-decay}\|u\|\} \\
&=\Cr{c-decay} \frac{1}{1-\theta_\star^\alpha} \max\{\mu, \Cr{c-decay}\|u\|\}\,,
\end{align*}
Choosing $\Cr{con-iter}=(1-\theta_\star^\alpha)^{-1}\Cr{c-decay}$ the desired estimates \eqref{eq:cone-conv-iter} and \eqref{eq:close-to-standard} follow. This completes the proof.
\end{proof}

The following is an immediate corollary of Proposition \ref{prop:iteration}: it finally existence of a point close to the origin at time $t=0$ with a static triple junction tangent flow, as well as uniqueness and decay.

\begin{proposition} \label{prop:tangent-flow}
    Under the same assumptions of Proposition \ref{prop:iteration}, the following holds. For every $0 < s < R$ there are points $a_s$ and rotations $O_s$, with corresponding cones $\bC_s=a_s+O_s(\bC)$, as well as a point $a_\infty$ and a rotation $O_\infty$ with corresponding cone $C_\infty=a_\infty+O_\infty(\bC)$ such that
    \eqref{eq:close-to-standard} holds and furthermore
    \begin{equation}\label{eq:cont-ed}
        \left(s^{-k-4} \iint_{P_s(a_s,0)} \dist(X,\bC_s)^2\,d\|V_t\|(X)\,dt \right)^{\frac12} \leq \left( \frac{s}{R}\right)^{\alpha} \max\{\mu,\Cr{c-decay} \|u\|\}
    \end{equation}
    and 
    \begin{equation} \label{eq:cont-conv}
        \max\{(s/R)^{-1}|a_s-a_\infty|,\|O_s-O_\infty\|\} \leq \Cr{con-iter} \left( \frac{s}{R}\right)^{\alpha} \max\{\mu, \Cr{c-decay}\|u\|\}\,.
    \end{equation}
    In particular, $a_\infty \in \spt\|V_0\|$, $O_\infty(\bC)$ is the unique tangent flow to $\{V_t\}_t$ at the point $(a_\infty,0)$, and the parabolic blow-ups $(\iota_{a_\infty,\lambda})_\sharp V_{\lambda^2s}$ converge to the static $O_\infty(\bC)$ with rate $\lambda^\alpha$ as $\lambda \to 0^+$. Furthermore, if the Gaussian density $\Theta(0,0)\geq \sfrac32$ then the same conclusion holds with $a_\infty=0$.
\end{proposition}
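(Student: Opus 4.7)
I will deduce Proposition \ref{prop:tangent-flow} from Proposition \ref{prop:iteration} by interpolating the dyadic decay estimates to continuous scales and then invoking standard tangent-flow arguments; the final claim when $\Theta(0,0)\ge 3/2$ will be obtained by inspecting the construction of the new cone in the proof of Theorem \ref{thm:decay}.

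For the continuous-scale estimates, given $s\in(0,R)$ I will pick the unique $m\in\mathbb{N}_{0}$ with $\theta_\star^{m+1}R<s\le\theta_\star^m R$, set $a_s:=a_m$, $O_s:=O_m$, $\bC_s:=\bC_m$, and note that the inclusion $P_s(a_s,0)\subset P_{\theta_\star^m R}(a_m,0)$ together with $s\ge\theta_\star\cdot\theta_\star^m R$ converts \eqref{eq:excess-decay-iter}-\eqref{eq:cone-conv-iter} into \eqref{eq:cont-ed}-\eqref{eq:cont-conv}, up to a multiplicative factor depending only on $\theta_\star$ and $k$ which I will absorb into $\Cr{c-decay}$ by enlarging it at the outset. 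The inclusion $a_\infty\in\spt\|V_0\|$ is then immediate, since each rescaled flow $V^{(m)}$ satisfies \eqref{non0} near the rescaled origin, which by (A5) translates into mass of $\|V_0\|$ in neighborhoods of $a_m$ shrinking to $a_\infty$. For uniqueness of the tangent flow at $(a_\infty,0)$, I will apply \eqref{eq:cont-ed} at scale $s=\lambda$ to the rescaling $V^\lambda_t:=(\iota_{a_\infty,\lambda})_\sharp V_{\lambda^2 t}$, together with the Hausdorff-distance estimate between $\bC_\lambda\cap B_\lambda(a_\infty)$ and $(a_\infty+O_\infty(\bC))\cap B_\lambda(a_\infty)$ of order $C\lambda(\lambda/R)^\alpha\max\{\mu,\Cr{c-decay}\|u\|\}$ derived from \eqref{eq:cont-conv}: this yields an excess bound for $V^\lambda$ from the static $O_\infty(\bC)$ at scale $1$ of the form $C(\lambda/R)^\alpha$, and Brakke-flow compactness then forces every subsequential limit to equal $O_\infty(\bC)$, giving both the tangent-flow uniqueness and the polynomial rate $\lambda^\alpha$.

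The final claim is handled by inductively tracking the construction in the proof of Theorem \ref{thm:decay}, where the translation vector of the new cone at step $m$ takes the form $a^{(m)}=\mu^{(m)}\tilde\xi_0$, with $\tilde\xi_0=\tilde\xi_{(0,0)}$ the limit of the binding function at the rescaled origin (see \eqref{e:newcone-transl}). The induction will start with $a_0=0$: if $a_m=0$, then the rescaling in $(2)_m$ maps the original origin to the rescaled origin, so the scale-invariance of the Gaussian density transports the hypothesis $\Theta(0,0)\ge 3/2$ to $\Theta^{(m)}(0,0)\ge 3/2$; by the minimal-norm selection rule defining the binding function, $\xi^{(m)}(0,0)=0$, hence $\tilde\xi_0=0$ and $a^{(m+1)}=0$, which via the iterative update \eqref{eq:iter-a} gives $a_{m+1}=0$. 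Passing to the limit yields $a_\infty=0$, and the tangent-flow conclusions of the preceding paragraph specialize at the origin. I expect the main subtlety to be this inductive step, namely the verification that the iteration of Proposition \ref{prop:iteration} can be performed with the specific choices from the proof of Theorem \ref{thm:decay} consistently preserved; the remaining steps are essentially bookkeeping, given the iteration results already established.
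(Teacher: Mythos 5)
Your proposal is correct and follows the paper's own three-step route: dyadic interpolation of Proposition~\ref{prop:iteration} to continuous scales, deduction of tangent-flow uniqueness and blow-up rate from \eqref{eq:cont-ed}--\eqref{eq:cont-conv} via Brakke-flow compactness, and propagation of $\tilde\xi_0=0$ through the iteration under the hypothesis $\Theta(0,0)\geq 3/2$, so that $a_m=0$ for all $m$ and hence $a_\infty=0$. The last step is the only place where a careful reader might pause: the identification $a^{(m)}=\mu^{(m)}\tilde\xi_0$ is made inside the \emph{contradiction} argument of Theorem~\ref{thm:decay}, not in its conclusion, so strictly what you are doing (and what the paper also does, implicitly) is re-running the contradiction argument with the cone translation constrained to zero, which is licit precisely because the minimal-norm selection gives $\xi^{(m)}(0,0)=0$ and thus $\tilde\xi_0=0$ along any contradiction sequence whose origins all have density $\geq 3/2$.

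One caveat on the interpolation step: the multiplicative constant $\theta_\star^{-(k+4)/2-\alpha}$ produced by passing from dyadic to continuous scales cannot be absorbed into $\Cr{c-decay}$ as you state, because $\Cr{c-decay}$ only multiplies $\|u\|$ inside the $\max$ and not the whole right-hand side; if $\mu$ dominates the $\max$, enlarging $\Cr{c-decay}$ changes nothing. The repair (left implicit by the paper as well) is to exploit the slack in the proof of Theorem~\ref{thm:decay}, whose estimate \eqref{e:the-end} in fact yields $C\theta_\star\mu^{(m)}$, so that one may state the one-step decay with exponent $\alpha''>\alpha$ (or directly with a slightly larger $\alpha$ coming from the choice of $p,q$), and use the extra factor $\theta_\star^{m(\alpha''-\alpha)}$ for large $m$ together with a trivial bound with an explicit constant for the finitely many small $m$. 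This is bookkeeping rather than a conceptual gap, but it is worth spelling out since the claimed absorption into $\Cr{c-decay}$ is genuinely the wrong mechanism.
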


\begin{proof}
    Conclusions \eqref{eq:cont-ed} and \eqref{eq:cont-conv} are an immediate consequence of \eqref{eq:excess-decay-iter} and \eqref{eq:cone-conv-iter}, respectively, upon interpolating between dyadic scales. The fact that $O_\infty(\bC)$ is the unique tangent flow at $(a_\infty,0)$ follows then from \eqref{eq:cont-ed} and \eqref{eq:cont-conv}. Finally, if $\Theta(0,0) \geq \sfrac32$ then $\tilde\xi_0=0$ in Corollary \ref{cor:newcone}, which in turn forces $a=0$ in Theorem \ref{thm:decay} at every iteration across scales, and thus $a_\infty=0$.
\end{proof}

We are now in the position to prove the Main Theorem \ref{thm:main}.

\begin{proof}[Proof of Theorem \ref{thm:main}]
By scale invariance, we can assume $R=4$. We shall divide the proof into steps.

\smallskip
\noindent \textbf{Step 1.} Suppose first that the Gaussian density $\Theta(0,0) \geq \sfrac{3}{2}$. By Proposition \ref{prop:tangent-flow}, assuming $\Cr{e_main} < \Cr{eps-iter}$ there exists a unique static triple junction tangent flow at $(0,0)$, which we denote $\bC_{(0,0)}$, and thus $\Theta(0,0)=\sfrac32$. Without loss of generality, up to possibly rotating the flow and the forcing field, we can assume that $\bC_{(0,0)}=\bC$. We claim then that the conclusion of Theorem \ref{thm:main} is valid in a (backward in time) parabolic cylinder centered at $(0,0)$, with the following additional information on the functions $\xi$ and $f_i$. Recalling that
\[
\bC= \bigcup_{i=1}^3 \mathbf{H}_{i}\,,
\]
and that $\bS_{(0,0)}:=\bS(\bC)$ is the spine of $\bC$, the parabolic blow-ups of $f_i$ at $(0,0)$ converge to $\mathbf{H}_{i}$ and the parabolic blow-ups of $\xi$ at $(0,0)$ is $\bS_{(0,0)}$.

To see this, we notice first that, upon choosing $\Cr{e_main}$ sufficiently small, for any $(\Xi,\tau) \in U_3(0)\times (-9,0)$ the assumptions of Proposition \ref{prop:tangent-flow} are satisfied for the flow $V^{(\Xi,\tau)}_t\,, u^{(\Xi,\tau)}(\cdot,t)$, where 
\begin{align*}
    V^{(\Xi,\tau)}_t :&= (\iota_{\Xi,\frac14})_\sharp V_{\tau+\frac{t}{16}} \\
    u^{(\Xi,\tau)}(X,t) :&= (\sfrac14)\, u(\Xi + (\sfrac14) X, \tau + t/16)\,.
\end{align*}
In particular, we have the following alternative for every point $(\Xi,\tau)$ in $U_3(0)\times(-9,0)$:
\begin{itemize}
    \item[(a)] either $\Theta(\Xi,\tau) < \sfrac32$,
    \item[(b)]  or $\Theta(\Xi,\tau) \geq \sfrac32$, and in this case we are again in the position of applying Proposition \ref{prop:tangent-flow}, conclude that in fact $\Theta(\Xi,\tau)=\sfrac32$, and determine the existence of a rotation $O_{(\Xi,\tau)}$ and a corresponding triple junction $\bC_{(\Xi,\tau)}=O_{(\Xi,\tau)}(\bC)$ with spine $\bS_{(\Xi,\tau)}=O_{(\Xi,\tau)}(\bS(\bC))$ so that
    \begin{equation} \label{eq:decay-nearby}
       \left( r^{-k-4} \iint_{P_r(\Xi,\tau)}\dist(X-\Xi,\bC_{(\Xi,\tau)})^2\,d\|V_t\|\,dt  \right)^{\frac12}\leq \Cl[con]{con-dn} \, \max\{\mu, \Cr{c-decay} \|u\|\}\,r^\alpha\,,
    \end{equation}
    for every $r \in (0,1)$, and with 
    \begin{equation} \label{eq:change-cone}
        \| O_{(\Xi,\tau)} - \mathrm{Id} \| \leq \Cr{con-dn} \max\{\mu, \Cr{c-decay} \|u\|\}\,.
    \end{equation}
\end{itemize}
We have therefore a correspondence $(\Xi,\tau)\mapsto O_{(\Xi,\tau)}$, and corresponding triple junctions $\bC_{(\Xi,\tau)}=O_{(\Xi,\tau)}(\bC)$, for all points $(\Xi,\tau) \in U_3(0)\times (-9,0)$ such that $\Theta(\Xi,\tau) \geq \sfrac32$. Now let $(\Xi,\tau)$ and $(\Xi',\tau')$ be points in $U_3(0)\times (-9,0)$ such that $\Theta(\Xi,\tau) \geq \sfrac32$ and $\Theta(\Xi',\tau') \geq \sfrac32$, and call $r$ their parabolic distance, namely $r := |\Xi-\Xi'| + \sqrt{|\tau-\tau'|}$. By applying \eqref{eq:decay-nearby} to both $(\Xi,\tau)$ and $(\Xi',\tau')$ at scale $r$ we see as a consequence of the triangle inequality that the (Hausdorff) distance between the cone $\bC_{(\Xi,\tau)}$ and the cone $\tau_{r^{-1}(\Xi'-\Xi)} (\bC_{(\Xi',\tau')})$ (where $\tau_v$ denotes the translation by vector $v$) is bounded by $\Cr{con-dn} \, \max\{\mu, \Cr{c-decay} \|u\|\}\,r^\alpha$. In particular:
\begin{align} 
    \| O_{(\Xi,\tau)} - O_{(\Xi',\tau')} \| &\leq \Cr{con-dn} \, \max\{\mu, \Cr{c-decay} \|u\|\}\,r^\alpha\,, \label{eq:holder-con1} \\
    \dist(\Xi-\Xi',\bS_{(\Xi',\tau')}) + \dist(\Xi'-\Xi,\bS_{(\Xi,\tau)}) &\leq \Cr{con-dn} \, \max\{\mu, \Cr{c-decay} \|u\|\}\,r^{1+\alpha}\,. \label{eq:holder-con2}
\end{align}
We claim that \eqref{eq:holder-con1}-\eqref{eq:holder-con2} imply that, in $U_3(0) \times (-9,0)$, the set $\{\Theta \geq \sfrac32\}=\{\Theta = \sfrac32\}$ is contained in the graph of a $C^{1,\alpha}$ map $(y,t) \in (\bS\cap U_3(0)) \times (-9,0) \mapsto \xi(y,t) \in \bS^\perp$, so that every point $(\Xi,t)$ with $\Theta(\Xi,t) \geq \sfrac 32$ is of the form $(\Xi,t)=(\xi(y,t),y,t)$. The only thing we need to check is that for every $(y,t)$ as above there exists a unique point $(\Xi,t)$ with $\Theta(\Xi,t)\geq \sfrac32$ such that $\bS(\Xi)=y$: the claimed regularity will then be an immediate consequence of \eqref{eq:holder-con1}-\eqref{eq:holder-con2}. From the same estimates it also follows that the tangent to the graph of $\xi(\cdot,t)$ at the point $(\xi(y,t),y,t)$ is $\bS_{(\xi(y,t),y,t)}$. Fix then $\delta > 0$, and consider any $y_0 \in \bS \cap U_3(0)$ and $t_0 \in (-9,0)$. If $\Cr{e_main}$ is sufficiently small, Proposition \ref{p:NH_property} guarantees that $B^{n-k+1}_\delta\times\{y_0\}$ contains a point $\Xi_1=(\xi_1,y_0)$ so that $\Theta(\Xi_1,t_0) \geq \sfrac32$. Suppose by contradiction that this point is not unique, so that there exists $\Xi_2=(\xi_2,y_0)$ with $\Theta(\Xi_2,t_0) \geq \sfrac32$ and $r:=|\Xi_1-\Xi_2|=|\xi_1-\xi_2| > 0$. Choosing $\delta$ small (say $\delta < 1/8$), we have that $r < 1$. Let $\bC_1 = \bC_{(\Xi_1,t_0)}$ and $\bC_2 = \bC_{(\Xi_2,t_0)}$ be the corresponding unique tangent cones, with spines $\bS_1$ and $\bS_2$ respectively. By \eqref{eq:holder-con2}, we have that
\begin{equation} \label{unique-contr1}
    \dist(\Xi_1-\Xi_2, \bS_2) \le \Cr{con-dn} \, \max\{\mu, \Cr{c-decay} \|u\|\} \, r^{1+\alpha}\,.
\end{equation}
On the other hand, $\dist(\Xi_1-\Xi_2, \bS_2) = \dist (\xi_1-\xi_2,O_2(\bS)) = |\bS^\perp (O_2^{-1}(\xi_1-\xi_2))|$, where $O_2=O_{(\Xi_2,t_0)}$. Since $\xi_1-\xi_2 \in \bS^\perp$, and \eqref{eq:change-cone} holds, we have
\begin{equation} \label{unique-contr2}
    \dist(\Xi_1-\Xi_2, \bS_2) \geq r \left( 1 - C \|O_2-\mathrm{Id}\|^2\right) \geq r \left( 1 - C \Cr{con-dn}^2 \, \max\{\mu, \Cr{c-decay} \|u\|\}^2 \right)\,.
\end{equation}
Together, \eqref{unique-contr1} and \eqref{unique-contr2} give 
\[
1 - C \Cr{con-dn}^2 \, \max\{\mu, \Cr{c-decay} \|u\|\}^2 \leq \Cr{con-dn} \, \max\{\mu, \Cr{c-decay} \|u\|\} \, r^{\alpha}\,,
\]
a contradiction. This completes the proof of the existence of the map $\xi$ and its regularity.

\smallskip

Next, let $(X,t)$ be any point on the support of the flow in $\left( U_3(0)\times (-9,0) \right) \setminus \mathrm{graph}\,\xi$, and let $(\Xi(X,t),\tau(X,t))$ be a point in $\mathrm{graph}\,\xi$ with $\tau\ge t$ that minimizes the parabolic distance to $(X,t)$. If $r := |\Xi-X| + \sqrt{\tau-t}$, \eqref{eq:decay-nearby} guarantees that we can apply Theorem \ref{thm:graphical} to the flow
\begin{align*}
    V^{(\Xi,\tau),r}_s :&= (O_{(\Xi,\tau)}^{-1})_\sharp(\iota_{\Xi,r})_\sharp V_{\tau+r^2s} \\
    u^{(\Xi,\tau),r}(Y,s) :&= O_{(\Xi,\tau)}^{-1} \, r \, u(\Xi + r O_{(\Xi,\tau)}(Y), \tau + r^2 s)\,,
\end{align*}
and conclude that $(X,t)$ is contained in a toroidal region (of characteristic scale comparable to $r$) where the flow is a $C^{1,\alpha}$ graph over $\Xi+\bC_{(\Xi,\tau)}$ satisfying the estimates \eqref{expara1} corresponding to $\sigma=\sfrac18$. This shows that $(X,t)$ is a regular point, and, since it is arbitrary, that $\mathrm{graph}\,\xi$ coincides with the singular set. Furthermore, \eqref{eq:change-cone} implies that, upon choosing $\Cr{e_main}$ small, any local graphical region over $\Xi + \bC_{(\Xi,\tau)}$ such that its projection to $\mathbf P_i$ is contained in $\Omega_i$ can be written as a normal graph over $\Omega_i$. Since such graphs all agree with the support of the flow, they must agree on overlaps: hence, we obtain global functions $f_i \in C^{1,\alpha}(\Omega_i;\mathbf P_i)$ such that \eqref{e:main-thm-est} holds with $\Cr{c_main}=\Cr{nonsing1}+\Cr{con-dn}\Cr{c-decay}$. This completes the proof of Theorem \ref{thm:main} under the assumption that $\Theta(0,0) \geq \sfrac32$.

\medskip
\noindent\textbf{Step 2.} In the general case, we first apply Proposition \eqref{prop:tangent-flow} and identify a point $a_\infty$ and a rotation $O_\infty$ so that \eqref{eq:close-to-standard} holds and $O_\infty(\bC)$ is the unique tangent flow at $(a_\infty,0)$. Then, we apply step 1 to the translated, rotated, and slightly rescaled flow
\begin{align*}
    V'_t :&= (O_\infty^{-1})_\sharp(\iota_{a_\infty,\frac23})_\sharp V_{4t/9}\,,\\
u'(X,t) :&= \frac{2}{3}\,O_\infty^{-1}\,u\!\left(a_\infty+\tfrac{2}{3}O_\infty X,\ \tfrac{4t}{9}\right).
\end{align*}
to obtain parametrization for the flow and its singular set over $O_\infty(\bC)$ in $U_2(0)\times (-4,0)$. We then reparametrize over $\bC$, and the proof is complete.
\end{proof}

\section{Unconditional triple junction regularity} \label{lastapp}

In this section we discuss more in detail two classes of Brakke flows to which our main result apply: first, because triple junction singularities are naturally expected to occur; second, because the main structural condition, Assumption (A6), is automatically satisfied.

To begin with, we work in the case when $n=k+1$, and we introduce the notion of Brakke flow equipped with a ``cluster-like'' structure.
\begin{definition}\label{clusterdef}
    We say that a family $\{V_t\}_{t\in I}$ of $k$-varifolds in $U_R\subset \mathbb R^{k+1}$ is {\it cluster-like} if for some $N\in\mathbb N_{\geq 2}$, we have
    families $\{E_i(t)\}_{t\in I}$ ($i=1,\ldots,N$) of open sets in $U_R$ with the following properties.
    \begin{itemize}
\item[(i)] For each $t\in I$, $E_1(t),\ldots,E_N(t)$ 
are pairwise disjoint, and $\mathcal H^k(U_R\setminus\cup_{i=1}^N E_i(t))<\infty$. 
\item[(ii)] For a.e.~$t\in I$, $2\|V_t\|\geq \sum_{i=1}^N\|\partial^* E_i(t)\|$ in $U_R$ as Radon measures. 
\item[(iii)]
For some open set $O\subset U_R$ and 
interval $I'\subset I$, if $V_t$ is a unit-density varifold in $O$ for a.e.~$t\in I'$, then 
$2\|V_t\|= \sum_{i=1}^N\|\partial^* E_i(t)\|$ in $O$ for a.e.~$t\in I'$.
    \end{itemize}
    Here $\partial^* E$ denotes the reduced boundary of the set of finite perimeter 
    $E$, and $\|\partial^* E\|$ is the perimeter measure, so that $\|\partial^*E\|= \mathcal H^{k}\mres_{\partial^* E}$. 
\end{definition}
We note that $({\rm i})$ implies that each
$E_i(t)$ is a set of finite perimeter
(\cite[Proposition 3.62]{AFP}),
and by \cite[Proposition 29.4]{Maggitextbook}, 
\begin{equation}\label{fineqper}
\frac12\sum_{i=1}^N\|\partial^*E_i(t)\|=\sum_{1\leq i<j\leq N}\mathcal H^k\mres_{\partial^*E_i(t)\cap\partial^*E_j(t)}.
\end{equation}
\begin{remark}
    The Brakke flow constructed in \cite{ST_canonical}, with forcing $u \equiv 0$, is precisely cluster-like, see \cite[Theorem 2.11,\,2.12]{ST_canonical}.
The immediate corollary of the following 
Theorem \ref{profor1.1} is that
Theorem \ref{thm:main} is applicable to the flow in \cite{ST_canonical}.
%and leads to the claim of Theorem \ref{roughthm} since having a unit-density triple junction as a tangent flow implies that the flow belongs to ${\mathscr N}_{\Cr{e_main},1}(U_R\times I)$ for sufficiently small $R$.
\end{remark}
\begin{theorem}\label{profor1.1}
    Suppose that 
    $(\{V_t\}_{t\in I},\{u(\cdot,t)\}_{t\in I})$  satisfies (A1)-(A5) in $U_R\times I\subset \R^{k+1}\times I$, and further assume that $\{V_t\}_{t\in I}$ is cluster-like. 
    Then, the condition (A6) is automatically satisfied. In particular, Theorem \ref{thm:main}
    is applicable without assuming (A6)
    in this case.
\end{theorem}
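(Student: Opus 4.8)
\textbf{Proof proposal for Theorem \ref{profor1.1}.}
The plan is to fix an arbitrary parabolic cylinder $P_r(X,t)\subset U_R\times I$ for which, after normalization and rotation, the flow belongs to $\mathscr N_{\Cr{onlyunit}}(P_r)$, and to verify directly the two inequalities \eqref{cotriple}--\eqref{cotriple2} for a.e.~time $t$ and a.e.~spine coordinate $y$. After scaling we may assume $r=1$. By Proposition \ref{prouniden}, for a.e.~$t\in(-3/4,0)$ the varifold $V_t$ is unit-density in $B_{3/4}$, and by Theorem \ref{thm:graphical} (applied with $\beta=\sigma=1/40$, say) the support of $\|V_t\|$ in the annular region $\{1/8\le|x|\le 1/2\}$ is a $C^{1,\alpha}$ graph over $\bC$ with small $C^1$-norm; hence each slice $M_t^y$ restricted to that annulus is, for a.e.~$y$, a union of three $C^{1,\alpha}$ curves emanating from the three directions $w_1,w_2,w_3$, each close (within $cK$, where $K$ is as in \eqref{anasup}) to the corresponding segment of $\hat\bC$. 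The heart of the matter is then to control the \emph{inner} part of the slice, i.e.\ $M_t^y\cap\{|x|<1/8\}$, using the cluster structure.

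First I would exploit the condition (iii) in Definition \ref{clusterdef}, together with the coarea/slicing theory for sets of finite perimeter. Since $V_t$ is unit-density in $B_{3/4}$ for a.e.~$t$, part (iii) gives $2\|V_t\|=\sum_{i=1}^N\|\partial^*E_i(t)\|$ there, and by \eqref{fineqper} the slice $M_t^y$ is identified (up to $\mathcal H^1$-null sets, for a.e.~$y$) with $\bigcup_{i<j}\partial^*(E_i(t)^y)\cap\partial^*(E_j(t)^y)$, where $E_i(t)^y$ is the $1$-dimensional slice of the planar set of finite perimeter $E_i(t)$ (in the $(n-k+1)=2$-dimensional normal plane through $y$). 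Slices of sets of finite perimeter are again sets of finite perimeter in the line (Vol'pert's theorem, see \cite[Section 3.11]{AFP}), so each $E_i(t)^y$ is, up to null sets, a finite union of intervals, and the $E_i(t)^y$ partition the $2$-disc $B^{2}_{1/2}$. The key topological input is: in the annulus $\{1/8\le|x|\le1/2\}$ the partition consists of exactly three ``sectors'' glued along the three graphical curves near $w_1,w_2,w_3$; by the Jordan-curve/connectedness structure of partitions of a disc into sets of finite perimeter, those three curves cannot simply terminate or pair up inside $\{|x|<1/8\}$ in a way that shortens the slice — any completion of the partition inside the inner disc forces $M_t^y\cap B^{2}_{1/2}$ to contain a connected set joining the three boundary points, and the shortest such set is the triod (the three radii meeting at $120^\circ$), whose length is exactly $3/2$ in our normalization. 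This is precisely the statement that excludes the degeneracy of Figure \ref{fig:A6}.

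Concretely, to get \eqref{cotriple} I would argue as follows. Parametrize the three graphical arcs at radius $\tfrac12$; they hit $\partial B^{2}_{1/2}$ at three points within angular distance $cK$ of the cube roots of unity directions. The part of $M_t^y$ in $\{1/8\le|x|\le1/2\}$ has $\mathcal H^1$-measure at least $3(\tfrac12-\tfrac18)-cK^2 = \tfrac98 - cK^2$, using that a $C^1$ graph over a segment of length $\ell$ with gradient bounded by $cK$ has length $\ge \ell(1 - cK^2)$... wait, at least $\ell$, and the quadratic correction only helps; but we keep the $cK^2$ slack for the second-moment estimate. For the inner part, connectedness of the slice across the junction (a consequence of the partition structure: the three sectors must close up, so their common-boundary curves propagate inward until they meet) gives that $M_t^y\cap B^{2}_{1/8}$ contains a connected set joining the three arc-endpoints on $\partial B^2_{1/8}$, hence $\mathcal H^1(M_t^y\cap B^2_{1/8})\ge$ (length of the triod in $B^2_{1/8}$) $-cK^2 = \tfrac{3}{8}-cK^2$; more precisely one uses that the shortest connected set containing three given points near the vertices of an equilateral triangle of ``radius'' $\tfrac18$ is the Steiner triod, of length $\ge \tfrac38(1-cK^2)$ after accounting for the angular displacement $cK$. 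Summing, $\mathcal H^1(M_t^y\cap B^{2}_{1/2})\ge \tfrac98 + \tfrac38 - cK^2 = \tfrac32 - cK^2$, which is \eqref{cotriple} with $\Cr{conslice}=c$. For \eqref{cotriple2}, the second moment $\int_{M_t^y\cap B^2_{1/2}}|x|^2\,d\mathcal H^1$ is estimated the same way: on the graphical part the integrand differs from its value on $\hat\bC$ by $O(K^2)$ pointwise and the domains differ by $O(K)$ in the radial variable, contributing $O(K^2)$; on the inner part $|x|^2\le\tfrac1{64}$ is small, and since the inner piece is at least as long as the triod (again up to $cK^2$), and $|x|\mapsto|x|^2$ is increasing in $|x|$, one obtains $\int_{M_t^y\cap B^2_{1/2}}|x|^2\ge \int_{\hat\bC\cap B^2_{1/2}}|x|^2 - cK^2$.

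The main obstacle is making the topological argument quantitative and measurable: asserting that ``the three graphical arcs must be joined by a connected subset of $M_t^y$ inside the inner disc'' requires care, because a priori $M_t^y$ is only $\mathcal H^1$-rectifiable and the partition slices $E_i(t)^y$ are only defined up to $\mathcal H^1$-null sets for a.e.~$y$. The clean way around this is to work not with $M_t^y$ directly but with the $1$-dimensional BV function $\mathbf 1_{E_i(t)^y}$ and its essential boundary, invoking the structure theorem for BV functions of one variable (a finite union of intervals) together with the fact that the slicing commutes with taking reduced boundaries for a.e.~$y$; the ``connectedness'' is then the elementary statement that on a disc, two disjoint open sets $E_i^y, E_j^y$ whose union contains an annulus-sector structure near the three outer arcs cannot be separated by an inner configuration that avoids joining the arcs, because each $E_i^y$ is (up to null sets) a finite union of intervals in each radial line and the partition is $\mathcal H^1$-a.e.\ exhaustive. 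I would isolate this as a short lemma about partitions of a $2$-disc into sets of finite perimeter with prescribed three-fold boundary behavior near $\partial$, prove the length and second-moment lower bounds for such partitions via calibration by the radial vector field $x/|x|$ (as in the standard lower bound for the Steiner tree / for $\|\partial^* E\|$), and then apply it slice-by-slice. Once this lemma is in place, the verification of (A6) — and hence the applicability of Theorem \ref{thm:main} — is immediate, with $\Cr{conslice}$ depending only on $n,k$ and the constants from Theorem \ref{thm:graphical} and Proposition \ref{e-reg}.
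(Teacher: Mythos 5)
Your overall strategy matches the paper's: slice to two dimensions via Vol'pert's theorem for sets of finite perimeter (the paper cites \cite[Theorem~18.11 and Remark~18.13]{Maggitextbook}), use the cluster structure together with \eqref{fineqper} to identify $M_t^y$ with the boundary of a partition of the $2$-disc, control the annular part via Theorem~\ref{thm:graphical}, and bound the slice from below against the triod. For the length estimate \eqref{cotriple}, the calibration you gesture at is a legitimate alternative to what the paper does (compactness $\to$ a perimeter-minimizing competitor $\to$ regularity of planar minimizing clusters $\to$ Steiner comparison), and it would cleanly sidestep your own worry about ``connectedness'' of the slice, which indeed cannot be taken for granted for an $\mathcal H^1$-rectifiable essential boundary. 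However, the calibrant you name --- the radial field $x/|x|$ --- is not the correct one for a triple junction: testing the divergence theorem on a sector $E_i$ with $x/|x|$ gives an identity, not an inequality, since $x/|x|$ is tangent to the radial interfaces. The standard paired calibration uses the three constant fields $g_i=\frac{1}{\sqrt 3}w_i$ (so that $|g_i-g_j|=1$ and $g_i-g_j\perp w_k$), for which the divergence-theorem bookkeeping gives $\mathcal H^1\big(\cup_i\partial^*E_i\cap B_s\big)\geq \sum_i\int_{\partial B_s\cap E_i}g_i\cdot\frac{x}{s}$; the right-hand side equals $3s$ at the symmetric configuration and has vanishing first variation in the angular displacement of the interface endpoints, producing exactly the quadratic error the paper obtains.

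The real gap is in your treatment of the second-moment inequality \eqref{cotriple2}. You argue that since the inner part of the slice has length at least that of the triod and $|x|\mapsto|x|^2$ is increasing, the second moment follows --- but this implication is false. A slice whose extra length is concentrated near the origin, where $|x|^2$ is small, can have a strictly \emph{smaller} second moment than the triod; a lower bound on $\mathcal H^1$-measure does not control a weighted integral whose weight is not bounded below. The paper's Lemma~\ref{triplemin3} establishes \eqref{cotriple2} by an entirely different and more delicate route: it minimizes $\int|x|^2\,d\mathcal H^1$ among admissible partition boundaries, derives the Euler--Lagrange equation $h=x^\perp/|x|^2$, exhibits the explicit one-parameter family of barrier curves $f(x_1)=\sqrt{a+x_1^2}$ solving it, and runs a sliding maximum-principle argument (exploiting that these barriers have opening angle at most $90^\circ$ while the segments from $\partial B_s$ to the origin open at more than $100^\circ$ when $K$ is small) to conclude that the minimizer is precisely the union of three straight segments from the boundary points to the origin, which has weighted length exactly $s^3$. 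Your proposal contains neither such a barrier argument nor a calibration adapted to the weight $|x|^2$, so this step is missing; the monotonicity observation you offer cannot substitute for it.
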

\begin{proof}
    We need to check the existence of a
constant $\Cr{conslice}$ as stated in (A6). For any $P_r$ in which 
$(\{V_t\}_{t\in I'},\{u(\cdot,t)\}_{t\in I'})\in {\mathscr N}_{\Cr{onlyunit}}(P_r)$, by Proposition \ref{prouniden} corresponding to $r=3/4$, for a.e.~$t\in (-3r^2/4,0)$ the varifold $V_t$ is unit-density in $B_{3r/4}$.
By Definition \ref{clusterdef}(iii)
and \eqref{fineqper}, 
$V_t={\bf var}(\cup_{i=1}^N\partial^* E_i(t),1)$ in $B_{3r/4}$ for a.e.~$t\in I'$ and, in the notation of (A6), $M_t=\cup_{i=1}^N\partial^* E_i(t)$. By \cite[Theorem 18.11 and Remark 18.13]{Maggitextbook}, the slice of $E_i(t)$
by $\R^{2}\times\{y\}$ has the property
that
\begin{equation}
   \mathcal H^1\Big( \big((\R^{2}\times\{y\})\cap \partial^* E_i(t)\big)\triangle\big(\partial^*((\R^{2}\times\{y\})\cap E_i(t))\big)\Big)=0
\end{equation}
for $\mathcal H^{k-1}$-a.e.~$y$. Thus,
writing $E_i^y(t):= (\R^2\times\{y\})\cap E_i(t)$, in $B_{3r/4}$ 
and for $\mathcal H^{k-1}$-a.e~$y$ it holds
\begin{equation}\label{messon}
    \mathcal H^1(M_t^y\triangle \cup_{i=1}^N \partial^*(E_i^y(t)))=0\,.
\end{equation}
Also by Theorem \ref{thm:graphical} 
(which does not require (A6)), in
$P_{3r/4}\cap \{|x|>r/10\}$,
${\rm spt}\,\|V_t\|$ is represented 
as a $C^{1,\alpha}$ graph over
${\bf C}$. Thus, for all $y\in B_{r/2}^{k-1}$,
$M_t^y\cap \{r/10\leq |x|\leq 3r/4\}$ is 
represented as three graphical $C^{1,\alpha}$ curves over $\hat{\bC}$, and we have by \eqref{anasup}
\begin{equation}\label{grheit}
    {\rm dist}_{H}(M_t^y\cap \{r/8\leq |x|\leq r/2\},{\hat{\bC}}\cap\{r/8\leq |x|\leq r/2\})\leq rK. 
\end{equation}
In terms of $\mathcal H^1$-measure, $M_t^y$
and $\cup_{i=1}^N\partial^*(E_i^y(t))$ can
be identified by \eqref{messon}. In the following two lemmas, by letting $E_i=E_i^y(t)$ after a suitable change of variables, we conclude the validity of \eqref{cotriple} and \eqref{cotriple2}, respectively.
\end{proof}

\begin{lemma}\label{triplemin1}
    Suppose that $E_1,\ldots,E_N\subset B_1^2$ are 
    mutually disjoint open sets with finite perimeter
    such that $\mathcal L^2(B_1^2\setminus \cup_{i=1}^N E_i)=0$. Suppose that 
    $(B_1^2\setminus B_{1/2}^2)\cap (\cup_{i=1}^N \partial^* E_i)$ consists of three $C^1$ curves $\ell_1,\ell_2,\ell_3$ which are represented as 
    $C^1$ graphs over $\hat\bC$ with small $C^1$-norms, and
    assume that ${\rm dist}_H(\cup_{i=1}^3\ell_i,(B_1^2\setminus B_{1/2}^2)\cap \hat{\bf C})\leq K$.
    Then there exists an absolute constant $\Cl[con]{minleng}>0$
    such that for any $s\in[1/2,1)$, we have 
    \begin{equation}
        \frac{1}{s}\mathcal H^1(B_s^2\cap \cup_{i=1}^N\partial^* E_i)\geq 
        \frac{1}{s}\mathcal H^1(B_s^2\cap\hat\bC)-\Cr{minleng} K^2=3-\Cr{minleng}K^2.
        \end{equation}
\end{lemma}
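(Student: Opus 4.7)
The plan is to employ a calibration argument whose boundary evaluation automatically produces an error quadratic in $K$, thanks to the fact that the three crossings $p_i := \ell_i \cap \partial B_s^2$ lie exactly on the circle of radius $s$ while the graphing perturbations of the $\ell_i$ over $\hat{\mathbf{C}}$ are perpendicular to the radial directions $w_i$.

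First, I would parametrize each curve as $\ell_i = \{tw_i + g_i(t) w_i^\perp : t \in [1/2, 1]\}$, where $w_i^\perp$ is a unit vector perpendicular to $w_i$ and $g_i$ is a $C^1$ function with $\sup |g_i| \leq K$ (a consequence of the Hausdorff distance bound together with the small $C^1$-norm assumption). The unique crossing $p_i = t_i w_i + g_i(t_i) w_i^\perp \in \partial B_s^2$ then satisfies $t_i^2 + g_i(t_i)^2 = s^2$, so
\[
p_i \cdot w_i = t_i \geq \sqrt{s^2 - K^2} \geq s - K^2/s\,.
\]
The three curves separate the annulus $B_1^2 \setminus B_{1/2}^2$ into three connected sectors. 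Since the $E_j$'s are pairwise disjoint, cover $B_1^2$ modulo null sets, and $\ell_i \subset \cup_j \partial^* E_j$ requires a jump between distinct phases across $\ell_i$, each sector must lie in a single $E_j$, with all three labels distinct. After relabeling, the sectors near angles $60^\circ,180^\circ,300^\circ$ lie respectively in $E_1,E_2,E_3$, and every $E_j$ with $j \geq 4$ (being open and meeting the annulus in a null set) is contained in $B_{1/2}^2$.

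Next comes the calibration. Let $n_1 = (\tfrac12, \tfrac{\sqrt{3}}{2})$, $n_2 = (-1, 0)$, $n_3 = (\tfrac12, -\tfrac{\sqrt{3}}{2})$; these unit vectors satisfy $n_1 + n_2 + n_3 = 0$ and $|n_i - n_j| = \sqrt{3}$ for $i \neq j$. Set $\Phi = \sum_{i=1}^3 n_i \mathbf{1}_{E_i} \in BV(B_s^2; \R^2)$. The BV integration-by-parts formula on the Lipschitz domain $B_s^2$ gives
\[
-\sum_{i=1}^3 \int_{B_s^2} n_i \cdot \nu_{E_i}\, d\|\partial^* E_i\| = \int_{\partial B_s^2} \Phi \cdot \hat{x}\, d\mathcal{H}^1\,,
\]
where $\hat x$ is the outward unit normal to $\partial B_s^2$. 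At $\mathcal{H}^1$-a.e.~point of $\partial^* E_i \cap \partial^* E_j$ with $1 \leq i < j \leq 3$, the two contributions combine into $(n_j - n_i)\cdot \nu_{E_i}$, of magnitude $\leq \sqrt{3}$; at $\mathcal{H}^1$-a.e.~point of $\partial^* E_i \cap \partial^* E_j$ with $i \leq 3 < j$, only $n_i \cdot \nu_{E_i}$ contributes, of magnitude $\leq 1$. Together with \eqref{fineqper}, this yields
\[
\Big| \sum_{i=1}^3 \int_{B_s^2} n_i \cdot \nu_{E_i}\, d\|\partial^* E_i\| \Big| \leq \sqrt{3}\, L, \qquad L := \mathcal{H}^1\!\Big(B_s^2 \cap \cup_{i=1}^N \partial^* E_i\Big)\,.
\]

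Finally, the boundary integral is computed exactly. Each arc $\alpha_i := E_i \cap \partial B_s^2$ runs from $p_i$ to $p_{i+1}$ counterclockwise (indices mod $3$), and the elementary identity $\int_\alpha \hat{x}\, d\mathcal{H}^1 = R(q_a - q_b)$ holds for any counterclockwise arc of $\partial B_s^2$ from $q_a$ to $q_b$, where $R$ is the $90^\circ$ counterclockwise rotation. A direct computation verifies the algebraic identities $R(n_{i-1} - n_i) = \sqrt{3}\, w_i$ (cyclic, with $n_0 := n_3$); rearranging then gives
\[
\int_{\partial B_s^2} \Phi \cdot \hat{x}\, d\mathcal{H}^1 = \sum_{i=1}^3 n_i \cdot R(p_i - p_{i+1}) = \sum_{i=1}^3 p_i \cdot R(n_{i-1} - n_i) = \sqrt{3}\sum_{i=1}^3 p_i \cdot w_i\,.
\]
Chaining the two inequalities produces $\sqrt{3}L \geq \sqrt{3}(3s - 3K^2/s)$, so $L/s \geq 3 - 3K^2/s^2 \geq 3 - 12K^2$ for $s \geq 1/2$, and $\Cr{minleng} = 12$ suffices. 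The main obstacle will be the careful bookkeeping in the BV divergence theorem: verifying the jump formulas and accounting for interfaces with phases $j \geq 4$ via \eqref{fineqper}, as well as the correct sector-to-phase identification. Once these are in place, the quadratic-in-$K$ defect follows automatically, since the perpendicularity of the graphing perturbations to $w_i$ forces the first-order correction to $p_i \cdot w_i$ to vanish.
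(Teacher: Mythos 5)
Your calibration argument is correct and is a genuinely different route from the paper's. The paper minimizes perimeter among admissible partitions with the boundary data fixed on $B_1^2\setminus B_s^2$, appeals to compactness for existence of a minimizer and to regularity theory to identify it as a Steiner tree with endpoints $p_i=\ell_i\cap\partial B_s^2$, and then estimates the Steiner length from below. Your approach replaces all of that with a single calibration $\Phi=\sum_{i=1}^3 n_i\mathbf 1_{E_i}$, the BV divergence theorem on $B_s^2$, and the algebraic identity $R(n_{i-1}-n_i)=\sqrt3\,w_i$; this yields $\sqrt3\sum_i p_i\cdot w_i\le\sqrt3\,L$ directly, with no existence or regularity arguments, and handles interfaces involving the extra phases $E_j$, $j\ge4$, for free (they only raise $L$). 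The underlying geometric mechanism is the same in both proofs — the first-order perturbation of $p_i$ off $sw_i$ is tangent to $\partial B_s^2$, hence perpendicular to $w_i$, so $p_i\cdot w_i = s + \mathrm O(K^2)$ — but your calibration makes it explicit and quantitative in one line, whereas the paper's sketch leaves the length estimate for the perturbed Steiner tree to the reader. Two small points you should note: the BV Gauss--Green formula on $B_s^2$ strictly requires $|D\Phi|(\partial B_s^2)=0$, which holds only for a.e.\ $s$; this is harmless, since $s\mapsto\mathcal H^1(B_s^2\cap\cup_i\partial^*E_i)$ is monotone and both sides of the target inequality depend continuously on $s$, so one can pass from a.e.\ $s$ to all $s\in[\tfrac12,1)$ by a one-line approximation. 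Also, the bound $\sup|g_i|\le K$ from the Hausdorff-distance hypothesis is only exact up to a multiplicative constant depending on the (assumed small) $C^1$ norm of $g_i$, but this only changes the numerical value of $\Cr{minleng}$ and not the structure of the estimate.
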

\begin{proof}
    By the assumption, there are exactly three open sets, say $E_1,E_2,E_3$, which are not empty in $B_1^2\setminus B_s^2$. Given these sets, consider the perimeter 
    minimization 
    problem of $\sum_{i=1}^N\mathcal H^1(\partial^* \tilde E_i \cap B_1^2)$ among $\tilde E_1,\ldots,\tilde E_N\subset B_1^2$ with $(B_1^2\setminus B_s^2)\cap\tilde E_i=(B_1^2\setminus B_s^2)\cap E_i$ for $i=1,\ldots,N$ and with $\mathcal L^2(\tilde E_i\cap\tilde E_{i'})=0$ for $i\neq i'$ and $\mathcal L^2(B_1^2\setminus \cup_{i=1}^N \tilde E_i)=0$. By the
    standard compactness theorem of set of finite perimeter,
    there exists a minimizer which we call $\tilde E_1,\ldots,\tilde E_N$. One can prove that $B_s^2\cap \cup_{i=1}^N\partial^* \tilde E_i$ lies in the convex
    hull of the three points $\cup_{i=1}^3 \ell_i\cap \partial B_{s}^2$, and it is locally either a line segment or a
    triple junction of $120^\circ$. Then one can argue that 
    the line segment starting from $\ell_1\cap\partial B_{s}^2$ has another end point being a triple junction,
    from which two lines start and reach to $\partial B_s^2\cap\ell_2$ and $\partial B_s^2\cap\ell_3$ without having another triple junction. In other 
    words, $\tilde E_4,\ldots,\tilde E_N$ are empty and
    $\cup_{i=1}^3\partial^*E_i\cap B_{s}^2$ is a regular
    triple junction. If the triple junction intersects with $\partial B_{s}^2$ at three points which differs from that of $\hat\bC\cap \partial B_{s}^2$ at most by $K$, one can 
    estimate $\mathcal H^1(B_{s}^2\cap \cup_{i=1}^3\partial^* E_i)$ from below by $3s$ minus 
    some absolute constant times $K^2s$. 
    Thus, we proved the claim. 
\end{proof}
\begin{lemma}\label{triplemin3}
    Under the same assumption of Lemma \ref{triplemin1},
    there exists an absolute constant $\Cl[con]{hlengh}>0$
    such that if $K<\Cr{hlengh}$ then for any
    $s\in[1/2,1)$, we have
    \begin{equation}
        \frac{1}{s^3}\int_{B_s^2\cap \cup_{i=1}^N\partial^* E_i}|x|^2\,d\mathcal H^1(x)\geq \frac{1}{s^3}\int_{B_s^2}|x|^2\,d\|\hat\bC\|(x)=1.
    \end{equation}
\end{lemma}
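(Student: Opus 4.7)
The plan is to combine a calibration argument based on the divergence theorem with the coarea formula, together with the topological structure of a $3$-region cluster, so as to reduce the estimate to the standard triple-junction case.

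First, I would reduce without loss of generality to $N=3$. Any extra region $E_{\ell}$ with $\ell\geq 4$ is not adjacent to the annulus and can be merged with one of the three ``main'' regions $E_1,E_2,E_3$ inherited from the annular labelling. Merging produces a new cluster whose reduced boundary $M'$ satisfies $M'\subset M$ (the interface $\partial^*E_\ell\cap \partial^*E_i$ is removed), hence $\int_{M'}|x|^2\,d\mathcal{H}^1\leq \int_{M}|x|^2\,d\mathcal{H}^1$, and proving the bound for the merged cluster suffices.

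Second, I would extract the ``easy'' contribution from the annular region via coarea. Applying the coarea formula to $f(x):=|x|$ on $M$ gives
\[
\int_M |x|^2\,d\mathcal{H}^1 \;\geq\; \int_M |x|^2\,|\nabla^M f|\,d\mathcal{H}^1 \;=\; \int_0^s r^2\,N(r)\,dr, \qquad N(r):=\mathcal{H}^0(M\cap \partial B_r^2),
\]
with equality iff $M$ is radial $\mathcal{H}^1$-a.e. Since each $\ell_i$ is a $C^1$-graph over its $\hat{\bC}$-branch with small $C^1$-norm, it crosses every $\partial B_r^2$ with $r\in(1/2,s)$ exactly once, so $N(r)\ge 3$ on $(1/2,s)$. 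This already yields $\int_{1/2}^s r^2 N(r)\,dr\geq s^3-\tfrac18$.

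Third, I would set up the calibration to recover the missing $\tfrac18$ from the interior portion $M\cap B_{1/2}^2$. Choose
\[
a_i\;:=\;\tfrac{1}{\sqrt{3}}\,w_i \qquad (i=1,2,3),
\]
so that $|a_i-a_j|=1$ for $i\neq j$. Let $W(x):=a_i$ for $x\in E_i$, and consider the vector fields $X_i(x):=|x|^2 a_i$ with $\operatorname{div} X_i = 2\,a_i\cdot x$. Applying the divergence theorem to each $E_i\cap B_s^2$ and summing (using $\nu_{j\to i}=-\nu_{i\to j}$ on $\Sigma_{ij}=\partial^*E_i\cap \partial^*E_j$) gives the identity
\[
\sum_{i<j}\int_{\Sigma_{ij}}|x|^2\,(a_i-a_j)\cdot\nu_{i\to j}\,d\mathcal{H}^1
\;=\;
2\int_{B_s^2}W\cdot x\,dx \;-\; s\int_{\partial B_s^2}W\cdot x\,d\mathcal{H}^1.
\]
Since $|(a_i-a_j)\cdot \nu_{i\to j}|\le 1$ and $M=\bigcup_{i<j}\Sigma_{ij}$, the left-hand side is $\leq \int_M |x|^2\,d\mathcal{H}^1$, so
\[
\int_M |x|^2\,d\mathcal{H}^1 \;\geq\; 2\int_{B_s^2}W\cdot x\,dx \;-\; s\int_{\partial B_s^2}W\cdot x\,d\mathcal{H}^1.
\]
For the standard configuration $E_i=E_i^{\mathrm{std}}$ (the sector of $\hat{\bC}$ opposite $w_i$), direct computation gives the right-hand side exactly equal to $s^3$, with equality throughout the calibration (since $a_i-a_j$ is parallel to $\nu_{i\to j}$ with unit length). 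One then checks that for perturbed boundary data on $\partial B_s^2$ (whose three endpoints lie within Hausdorff distance $sK$ of $sw_i$), the surface integral on $\partial B_s^2$ changes by $O(s^3 K^2)$ only, since $W\cdot x$ on $\partial B_s^2$ is a sum of terms of the form $\cos\theta$ and the perturbation of arc endpoints is $O(sK)$.

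The main obstacle, and the place where the argument is most delicate, is estimating the bulk term $2\int_{B_s^2}W\cdot x\,dx$ for an arbitrary perturbed cluster, since the sets $E_i\cap B_{1/2}^2$ can deviate from the standard sectors in ways not controlled by $K$. My plan to handle this is to exploit the slack in the coarea bound from Step 2: any deviation of $W$ from the standard piecewise-constant field in $B_{1/2}^2$ corresponds to non-radial portions of $M\cap B_{1/2}^2$ (interior triple points or curves bending away from radial rays), which contribute a strictly positive non-radial excess $\int_M |x|^2(1-|\nabla^M f|)\,d\mathcal{H}^1$ to the coarea inequality. A case analysis using the planar graph structure of the $3$-cluster in $B_{1/2}^2$ (one or more interior triple points, with Euler's formula forcing an odd number) shows that in every configuration the coarea deficit from $N(r)<3$ on some subinterval of $(0,1/2)$ is compensated by this non-radial excess, and the remaining error in the calibration bound is $O(s^3 K^2)$. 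Taking $\Cr{hlengh}$ sufficiently small, so that this $O(K^2)$ correction is absorbed, concludes the proof.
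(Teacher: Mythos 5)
Your proposal takes a genuinely different route — a calibration via the divergence theorem combined with the coarea formula — whereas the paper replaces the cluster by the minimizer of the weighted-length functional $\int |x|^2\,d\mathcal H^1$ with the same annular data, characterizes that minimizer as three radial segments to the origin via a barrier argument with the explicit solutions $f(x_1)=\sqrt{a+x_1^2}$ of the curve equation $h=x^\perp/|x|^2$, and then simply computes the weighted length of three radial segments to be exactly $s^3$. Unfortunately, your argument has a genuine gap that the paper's approach avoids.

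The gap is precisely where you flag it: the calibration right-hand side $2\int_{B_s^2} W\cdot x\,dx - s\int_{\partial B_s^2}W\cdot x\,d\mathcal H^1$ depends on the \emph{interior} partition $\{E_i\cap B_{1/2}^2\}$, which is entirely uncontrolled by $K$. You propose to compensate for a bad bulk term by combining with slack in the coarea bound, but the coarea inequality and the calibration inequality are two \emph{separate} lower bounds on the same quantity $\int_M|x|^2\,d\mathcal H^1$; a deficit in one cannot be offset by surplus in the other without a genuinely localized argument that splits the integral between the annulus and the disc, which you do not set up (a natural attempt would apply the calibration in $B_\rho$ only, but then the boundary term on $\partial B_\rho$ is again uncontrolled). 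The asserted ``case analysis using the planar graph structure'' and Euler's formula is a plan, not a proof: the case where a region $E_i$ terminates at an interior triple junction at radius $r_0$ gives $N(r)=2$ for $r<r_0$ and a coarea loss of $r_0^3/3$, and it is not obvious that the extra non-radial length compensates exactly. Moreover — and this is a second, independent problem — the lemma claims the sharp bound $\geq 1$ with no $K$-dependent slack, yet your calibration with the fixed vectors $a_i=\tfrac{1}{\sqrt3}w_i$ is inherently $O(K^2)$-lossy: when the three boundary points are perturbed by $O(K)$, the radial segments meet at angles differing from $120^\circ$ by $O(K)$, and on those segments $(a_i-a_j)\cdot\nu_{i\to j}\le 1-cK^2$. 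Your closing statement ``taking $\Cr{hlengh}$ sufficiently small so that this $O(K^2)$ correction is absorbed'' is a non sequitur: in the sharp inequality $\geq 1$ there is nothing against which to absorb a potentially negative $O(K^2)$ error. The paper obtains the exact constant because the minimizer is \emph{identified} as the three segments joining $\partial B_s^2\cap\ell_i$ (each at distance exactly $s$ from the origin) to the origin, whose weighted length is $3\int_0^s r^2\,dr = s^3$ independently of $K$.
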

\begin{proof}
    By arguing similarly as in the proof of Lemma \ref{triplemin1},
    we have a minimizer $\tilde E_1,\ldots,\tilde E_N$ which minimizes
    $\int_{B_s^2\cap\cup_{i=1}^N\partial^*\tilde E_i}|x|^2\,d\mathcal H^1(x)$ in 
    $B_s^2$, having $E_i=\tilde E_i$ on $B_1^2\setminus B_s^2$. We claim that $\tilde E_4,\ldots,\tilde E_N$ are empty and
    the boundary $\cup_{i=1}^3\partial^* \tilde E_i$ in $B_{s}^2$ consists of three straight line segments which connect
    $\partial B_{s}^2\cap \ell_i$ ($i=1,2,3$) and 
    the origin. By the minimizing property, we first note that $\cup_{i=1}^N\partial^* \tilde E_i$ 
    in $B_{s}^2$ is in the convex hull 
    of three points $\cup_{i=1}^3 \partial B_{s}^2\cap \ell_i$, and
    that the set is locally either a smooth curve or triple junction
    possibly except for the origin.
    By computing the first variation, one can also prove that the curve satisfies 
    \begin{equation}
        h=\frac{x^\perp}{|x|^2}\,,
    \end{equation}
    where $h$ is the curvature vector.
    This equation has an explicit 
    solution: suppose that the 
curve is given as a graph 
$x=(x_1,f(x_1))$ with $x_1\in\R$. Then the above equation reduces to 
\begin{equation}\label{ODEgeo}
    \frac{f''}{1+(f')^2}=\frac{-x_1f'+f}{f^2+(x_1)^2}.
\end{equation}
Let $f(x_1)=\sqrt{a+(x_1)^2}$ for any
$a>0$. It is straightforward to check by
direct computation that this $f$ satisfies the equation.
The curve behaves like $(x_1,|x_1|)$
for small $a$. We can use
this curve as a barrier function. 
Assume that $\partial B_{s}^2\cap \cup_{i=1}^3\ell_i$ is positioned so that
the line segments to the origin intersect
at the origin with angles bigger than, say, $100^\circ$. Then, if $B_{s}^2\cap \cup_{i=1}^N\partial^*\tilde E_i$ is
not three line segments as claimed above, 
by sliding the explicit curves above (rotated appropriately) with 
varying $a$ and from varying direction, 
one should be able to touch $B_{s}^2\cap \cup_{i=1}^N\partial^*\tilde E_i$ with 
this curve. The point of touching cannot be a triple junction singularity, and also it is not the origin. 
Then by the uniqueness of the solution of ODE \eqref{ODEgeo}, these two curves must coincide, which is a contradiction since
the opening angle of $f$ is $90^\circ$ at most. 
Thus the minimizer consists of three 
straight line segments in $B_s^2$,
and the claim follows immediately.
\end{proof}
We next discuss the case, in arbitrary codimension $n-k \geq 1$, when the flow is
equipped with a mod~3 current structure, which naturally allows for triple junction singularities in the interior.
\begin{theorem} \label{thm:main-mod3}
    Suppose that $(\{V_t\}_{t\in I},\{u(\cdot,t)\}_{t\in I})$ satisfies
    (A1)-(A5) in $U_R\times I$, and further
    assume that, for a.e.~$t\in I$, there exists a mod~3 integral current ${\mathscr S}_t$ whose mod~3 mass measure coincides with $\|V_t\|$ and $\partial {\mathscr S}_t=0$ mod~3 in $U_R$. Then assumption (A6) is automatically satisfied. In particular, Theorem \ref{thm:main}
    is applicable without assuming (A6)
    in this case. 
\end{theorem}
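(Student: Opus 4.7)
The strategy mirrors that of Theorem~\ref{profor1.1}, with the cluster partition structure replaced by the slicing and minimization theory for mod~3 integral currents. First, I would use Proposition~\ref{prouniden} to see that $V_t$ is unit density in $B_{3r/4}$, so that $\mathscr{S}_t$ is a mod~3 rectifiable current on $M_t$ with multiplicity in $\{1,2\}$ at $\mathcal{H}^k$-a.e.\ point (both giving mod~3 mass one). Then I would apply the slicing theorem for mod~3 integral currents with respect to the projection $\mathbf{p}_{\bS(\bC)}$ to obtain, for $\mathcal{H}^{k-1}$-a.e.\ $y$, a 1-dimensional mod~3 integral current $T_y$ supported on $M_t^y$, with mod~3 mass equal to $\mathcal{H}^1 \mres M_t^y$ and with $\partial T_y = 0$ mod~3 on the open disc $B_{3r/4}^{n-k+1}\times\{y\}$ (since slicing commutes with $\partial$ and $\partial \mathscr{S}_t = 0$).

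The key topological input is that all three graphical sheets must carry the \emph{same} mod~3 multiplicity class. Indeed, by Theorem~\ref{thm:graphical}, $\spt\|V_t\|$ away from the spine is a union of three $C^{1,\alpha}$ graphs over $\mathbf{H}_1,\mathbf{H}_2,\mathbf{H}_3$, and on each sheet the mod~3 multiplicity is locally constant, say $m_i\in\{1,2\}$. Choosing coherent orientations on the three half-planes so that each $\mathbf{H}_i$ contributes $\pm[\bS(\bC)]$ as boundary with a common sign, the condition $\partial \mathscr{S}_t \equiv 0$ (mod~3) forces $m_1+m_2+m_3\equiv 0 \pmod 3$, and among triples in $\{1,2\}^3$ the only admissible ones are $(1,1,1)$ and $(2,2,2)$. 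Consequently, for $\mathcal{H}^{k-1}$-a.e.\ $y$ the slice $T_y$ meets $\partial B_{r/8}^{n-k+1}$ transversally at three points $P_1,P_2,P_3$ --- one on each graph --- with identical signed multiplicity and $|P_i-(r/8)w_i|\le CrK$. Thus $T_y\mres B_{r/8}^{n-k+1}$ becomes an admissible competitor for the mod~3 mass (and $|x|^2$-weighted mass) minimization problem with these prescribed boundary data.

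I would then invoke the classical mass-minimization theory for mod~3 cycles (Federer--Fleming, refined by Taylor): with three boundary points of common multiplicity and all triangle angles strictly below $120^\circ$ (ensured by $K$ small), the mass minimizer is the Steiner tripod meeting at the Fermat point at $120^\circ$, while the $|x|^2$-weighted minimizer consists of three straight radial segments from the origin to the $P_i$, exactly as in Lemma~\ref{triplemin3}; crucially, the mod~3 constraint $m_1=m_2=m_3$ rules out pairwise-cancelling ``V-shaped'' competitors which, in the integer-current setting, could undercut these minima. An envelope-theorem expansion at the symmetric configuration $P_i^{(0)}=(r/8)w_i$, exploiting the orthogonality $(P_i-P_i^{(0)})\cdot w_i = \mathrm{O}(rK^2)$ inherited from the $C^1$ graph structure, shows that both minima differ from the corresponding quantities for $\hat{\bC}\cap B_{r/8}^{n-k+1}$ by at most $CK^2 r$ (mass) and $CK^2r^3$ (second moment). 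Combining these interior estimates with the $\mathrm{O}(K^2)$-controlled contributions of the three graphical curves in the annulus $\{r/8\le|x|\le r/2\}$ yields \eqref{cotriple} and \eqref{cotriple2} with a constant $\Cr{conslice}$ depending only on $n,k,p,q,E_1$.

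The main technical obstacle is the minimization step: in arbitrary codimension $n-k+1\ge 2$, one must argue that the mod~3 mass and weighted-mass minimizers with prescribed three-point boundary of common signed multiplicity are precisely the Steiner tripod (respectively the radial tripod), and hence live in the 2-plane spanned by $\{0,P_1,P_2,P_3\}$, bringing us back to the planar picture of Lemmas~\ref{triplemin1}--\ref{triplemin3}. The mod~3 multiplicity balance $m_1=m_2=m_3$ is what prevents integer cancellations and guarantees this tripod structure; once the structural uniqueness of the minimizer is established, the sharp quadratic-in-$K$ expansion follows from a direct envelope-theorem computation combined with the graphicality estimates already at our disposal.
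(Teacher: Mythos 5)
Your proposal follows the same overall strategy as the paper: slice $\mathscr{S}_t$ to obtain one-dimensional mod~3 cycles supported on $M_t^y$, use the graphical control of Theorem~\ref{thm:graphical} to locate the three boundary points, and then compare against the mod~3 mass and $|x|^2$-weighted minimizers with that prescribed boundary data. Your explicit discussion of the multiplicity balance $m_1+m_2+m_3\equiv 0\pmod 3$ forcing the three arcs to carry a common signed multiplicity is a useful clarification that the paper leaves largely implicit (the paper instead chooses the density-one integer representative by flipping orientations, but the mod~3 cycle constraint on the slice at $\partial B_s^{n-k+1}$ is what rules out the cancellations you correctly worry about). The identification of the mass minimizer as a Steiner tripod in the affine 2-plane through the three boundary points is also correct and matches Lemma~\ref{triplemin2}.

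Where you and the paper genuinely diverge is in the treatment of the $|x|^2$-weighted minimizer in codimension $n-k+1\ge 2$, which you flag as ``the main technical obstacle'' but do not resolve. Two issues with your proposed resolution. First, the claim that the weighted minimizer lives ``in the 2-plane spanned by $\{0,P_1,P_2,P_3\}$'' cannot be right: those four points generically span a three-dimensional affine subspace, since the 2-plane $\hat A$ through $P_1,P_2,P_3$ is a distance $\mathrm{O}(K)$ off the origin, and the weighted mass pushes the junction towards the origin, i.e.\ off $\hat A$; a convex-hull argument only confines the minimizer to a 3-dimensional wedge. Second, the envelope-theorem expansion would still require knowing the structure and smooth dependence of the minimizer on the boundary data, which is precisely the missing ingredient. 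The paper sidesteps this entirely: instead of characterizing the minimizer, the unnamed companion lemma to Lemma~\ref{triplemin2} constructs a 1-Lipschitz map $F$ onto the 3-dimensional subspace containing $\hat A$ and the foot $\hat x$ of the origin on $\hat A$, with the crucial property $|F(x)|\le|x|$, so that the pushforward does not increase weighted mass, and then a further orthogonal projection $G$ onto $\hat A-\hat x\simeq\R^2$ reduces the problem to the planar barrier/ODE analysis of Lemma~\ref{triplemin3} with boundary points displaced by only $\mathrm{O}(K^2)$. This yields the quadratic lower bound directly and unconditionally, without ever determining what the higher-codimension minimizer actually looks like. Your approach is not wrong in spirit, and the plain-mass estimate goes through exactly as you describe, but for the second-moment estimate you should either supply the projection argument or complete the characterization-plus-criticality route; as stated, that step is a gap.
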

\begin{proof}
By the same argument as in the proof of Theorem \ref{profor1.1}, in $P_r$, we have
a unit-density varifold for a.e.~$t\in (-3r^2/4,0)$ in $B_{3r/4}$ and by assumption, we have a representative
integer rectifiable current (denoted with the same symbol) ${\mathscr S}_t$ with density function
equal to $1$ for a.e.~$t$. For $\mathcal H^{k-1}$-a.e.~$y\in B_{r/2}^{k-1}$, the slice of ${\mathscr S}_t$ by $\R^{n-k+1}\times\{y\}$
is a one-dimensional integer rectifiable current, supported on $M_t^y$ with the inherited orientation and with zero boundary mod~3. By the same argument, we also have \eqref{grheit}. 
If we set this current, after a suitable change of variables, as $P$ in the following two lemmas, we conclude the validity of \eqref{cotriple} and \eqref{cotriple2}, respectively. 
\end{proof}
\begin{lemma}\label{triplemin2}
    Suppose that $P$ is a unit-density
    one-dimensional mod~3 current satisfying $\partial P=0 \; {\rm mod}\, 3$ in $B_1^{n-k+1}$. Suppose that
    $(B_1^{n-k+1}\setminus B_{1/2}^{n-k+1})
    \cap {\rm spt}\,\|P\|$ consists of three $C^1$ curves $\ell_1,\ell_2,\ell_3$ which 
    are represented as $C^1$ graphs over 
    $\hat\bC\times\{0_{n-k-1}\}$ satisfying 
    ${\rm dist}_H(\cup_{i=1}^3\ell_i,(B_1^2\setminus B_{1/2}^2)\cap \hat{\bf C})\leq K$. Then there exists an absolute constant $\Cl[con]{concurr}>0$ such that for any $s\in[1/2,1)$, we have 
    \begin{equation}
    \frac{1}{s}\|P\|(B_s^{n-k+1})\geq 3-\Cr{concurr}K^2.
    \end{equation}
\end{lemma}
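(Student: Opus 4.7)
The overall plan is to mirror Lemma~\ref{triplemin1}, replacing the perimeter minimization in the partition setting with mass minimization in the mod~$3$ framework. Specifically, I would slice $P$ by the radial distance, identify the boundary of $P\mres B_s^{n-k+1}$ as three slice points with coherent orientation, and bound $\|P\|(B_s^{n-k+1})$ from below by the mod~$3$ mass minimum (a Steiner/Fermat tripod) having those boundary data. The crucial observation is that the graph structure of the $\ell_i$ over the rays $\R^+ w_i$ forces the deviation $p_i(s)-sw_i$ to be essentially perpendicular to $w_i$, turning the first-order term in the Fermat-length expansion into a quadratic error in $K$.

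Concretely, for a.e.~$s\in[1/2,1)$ the slice $\langle P,d,s\rangle$ with $d(x):=|x|$ is a $0$-dimensional mod~$3$ current supported on $\spt\|P\|\cap\partial B_s^{n-k+1}$. Transversality of each graphical curve $\ell_i$ with the sphere gives a unique intersection point $p_i(s)\in\ell_i\cap\partial B_s^{n-k+1}$, and the unit-density assumption then yields
\[
\langle P,d,s\rangle=\sum_{i=1}^3\sigma_i\,[p_i(s)]\quad\text{mod }3,\qquad \sigma_i\in\{\pm 1\}.
\]
Since $\partial P=0$ mod~$3$ in $B_1^{n-k+1}$, we have $\partial(P\mres B_s^{n-k+1})=-\langle P,d,s\rangle$ mod~$3$, so the $\mathbb{Z}/3\mathbb{Z}$-signed total $\sum_i\sigma_i$ must vanish mod~$3$. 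Because $\sigma_i\in\{\pm 1\}$, the three signs must coincide, and, after reversing orientation if necessary, I may assume $\sigma_i=+1$ for each~$i$.

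Parametrizing $\ell_i=\{rw_i+f_i(r):r\in[1/2,1]\}$ with $f_i(r)\in w_i^\perp$ and $\|f_i\|_{C^1}\le CK$, the equation $r^2+|f_i(r)|^2=s^2$ yields
\[
p_i(s)=sw_i+f_i(s)+c_i(s)\,w_i,\qquad |f_i(s)|\le CK,\quad |c_i(s)|\le CK^2,
\]
so in particular $w_i\cdot(p_i(s)-sw_i)=c_i(s)$ is of order $K^2$. I would then invoke the classical Steiner/Fermat lower bound for mod~$3$ $1$-currents, namely that any such $Q$ with $\partial Q=\sum_{i=1}^3[p_i]$ mod~$3$ satisfies
\[
\mathcal{M}(Q)\ge F(p_1,p_2,p_3):=\min_{z\in\R^{n-k+1}}\sum_{i=1}^3|p_i-z|,
\]
and apply it with $Q=P\mres B_s^{n-k+1}$ to obtain $\|P\|(B_s^{n-k+1})\ge F(p_1(s),p_2(s),p_3(s))$. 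Since $F$ is smooth in a neighborhood of the symmetric configuration $(sw_1,sw_2,sw_3)$ (at which $F=3s$ and the partial gradient with respect to $p_i$ equals $w_i$, the unit vector from the Fermat point $z=0$ to $sw_i$), Taylor's theorem gives
\[
F(p_1(s),p_2(s),p_3(s))=3s+\sum_{i=1}^3 w_i\cdot(p_i(s)-sw_i)+\mathrm{O}(K^2)=3s+\mathrm{O}(K^2),
\]
and the claimed estimate follows after using $s\ge 1/2$ to absorb the $\mathrm{O}(K^2)$ error into $\mathrm{O}(K^2 s)$.

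The main obstacle is the Steiner/Fermat lower bound for mod~$3$ $1$-currents in arbitrary codimension, which is classical (see, e.g., Morgan's treatment of mod~$3$ flat chains and Steiner trees, or the work of Marchese--Massaccesi--Stuvard on the structure of flat chains mod~$p$) and reduces to the planar case since three points span at most a $2$-plane and a mod~$3$ mass-minimizer with those boundary data is supported there. As an alternative that bypasses citing this result, one can instead project $P\mres B_s^{n-k+1}$ via the $1$-Lipschitz orthogonal projection $\pi_Z:\R^{n-k+1}\to Z$ onto the plane containing $\hat\bC$ -- which only decreases mass and preserves the mod~$3$ boundary -- and then adapt the minimization/regularity argument of Lemma~\ref{triplemin1} to mod~$3$ currents in $B_s^2$, invoking the interior regularity of mod~$3$ mass-minimizing $1$-currents in the plane to conclude that the minimizer is a Y-tripod whose length again expands to $3s+\mathrm{O}(K^2)$.
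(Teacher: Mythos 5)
Your proof is correct and follows essentially the same route as the paper: compare the mass of $P\mres B_s^{n-k+1}$ with the Steiner/Fermat minimizer for the three slice boundary points, and exploit the graphical constraint on the $\ell_i$ to make the length deficit quadratic in $K$. You helpfully spell out two steps the paper leaves implicit — the coherent orientation of the slice points (forced by $\partial P=0$ mod~$3$ together with unit density) and the explicit Taylor expansion of the Fermat length, whose $p_i$-gradient at the symmetric configuration is $w_i$, so that the first-order term is $\mathrm{O}(K^2)$ precisely because the graph structure makes the endpoint deviations essentially perpendicular to $w_i$.
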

\begin{proof}
    The proof is similar to that of Lemma \ref{triplemin1}, except that one minimizes the mass functional among mod~3 one-dimensional integral currents $\tilde P$ with $\partial\tilde P=0$ mod~3 in $B_1^{n-k+1}$ and with $P=\tilde P$
    in $B_1^{n-k+1}\setminus B_s^{n-k+1}$. By the standard compactness theorem for such class of currents, the minimizer $\tilde P$ exists, and furthermore, one
    can argue that $\tilde P$ in $B_s^{n-k+1}$ is the triple junction
    with straight line segments connecting the    three points $\partial B_s^{n-k+1}\cap \cup_{i=1}^3\ell_i$. It lies in a two-dimensional
    affine plane in $\R^{n-k+1}$ away from $\R^2\times\{0_{n-k-1}\}$ by at most $K$. The total length
    of such triple junction can be estimate from below by $3s-\Cr{concurr}K^2s$, so that we have the stated claim as before.
\end{proof}
\begin{lemma}
    Under the same assumption of Lemma \ref{triplemin2},
    there exists an absolute constant $\Cl[con]{concurr2}>0$ such that if $K<\Cr{concurr2}$
    then for any $s\in[1/2,1)$, we have
    \begin{equation}
        \frac{1}{s^3}\int_{B_s^{n-k+1}}|x|^2\,d\|P\|(x)\geq 
        \frac{1}{s^3}\int_{B_s^{n-k+1}\cap (\hat\bC\times\{0_{n-k-1}\})}|x|^2\,d\mathcal H^1(x)-\Cr{concurr2}K^2=1-\Cr{concurr2}K^2.
    \end{equation}
\end{lemma}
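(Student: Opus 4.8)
The plan is to follow the same scheme as in the proof of Lemma \ref{triplemin3}, replacing the language of sets of finite perimeter with that of mod~$3$ integral currents. First I would consider the competitor problem: among one-dimensional mod~$3$ integral currents $\tilde P$ with $\partial\tilde P=0$ mod~$3$ in $B_1^{n-k+1}$ and with $\tilde P=P$ in $B_1^{n-k+1}\setminus B_s^{n-k+1}$, minimize the weighted mass functional $\int_{B_s^{n-k+1}}|x|^2\,d\|\tilde P\|$. By the standard compactness and lower semicontinuity theorems for mod~$3$ integral currents (with a weight bounded above and below by positive constants on $B_1^{n-k+1}$, as $|x|^2 \in [1/4,1]$ on the relevant annulus — more precisely one works on $B_s^{n-k+1}$ and notes $|x|^2$ is continuous and positive away from $0$), a minimizer $\tilde P$ exists. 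Exactly as in Lemma \ref{triplemin2}, one argues that $\tilde P$ restricted to $B_s^{n-k+1}$ is supported in the convex hull of the three points $\partial B_s^{n-k+1}\cap(\ell_1\cup\ell_2\cup\ell_3)$, lies (up to an error $\mathrm{O}(K)$) in a $2$-dimensional affine plane near $\R^2\times\{0_{n-k-1}\}$, and is locally either a smooth curve satisfying the Euler--Lagrange equation $h=x^\perp/|x|^2$ or a $120^\circ$ triple junction away from the origin.

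The key step is the barrier argument: when $K<\Cr{concurr2}$ is small enough, the three boundary points $\partial B_s^{n-k+1}\cap\ell_i$ are close to those of $\hat\bC\cap\partial B_s^{n-k+1}$, so the straight segments joining them to the origin meet at the origin with mutual angles exceeding, say, $100^\circ$. I would then use the explicit family $x=(x_1,\sqrt{a+x_1^2})$ (for $a>0$), which solves the reduced ODE \eqref{ODEgeo}, as a barrier: sliding these curves (suitably rotated and translated within the affine $2$-plane) with varying $a>0$ and varying direction, any competitor $\tilde P$ that is not the union of three straight segments from the origin would be touched by such a barrier at an interior point which is neither a triple-junction singularity nor the origin. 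At such a touching point the unique continuation for solutions of \eqref{ODEgeo} forces the two curves to coincide, contradicting the fact that the barrier's opening angle at the origin is at most $90^\circ$ while the segment has opening angle $0$. Hence the minimizer is exactly the ``cone'' consisting of three straight segments from the origin to $\partial B_s^{n-k+1}\cap\ell_i$.

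With the minimizer identified, the conclusion is a direct computation. Since each $\ell_i$ is a $C^1$ graph over $\hat\bC\times\{0_{n-k-1}\}$ with $\mathrm{dist}_H(\cup\ell_i,(B_1\setminus B_{1/2})\cap\hat\bC)\le K$, the endpoints $\partial B_s^{n-k+1}\cap\ell_i$ differ from the endpoints of $\hat\bC\cap\partial B_s^{n-k+1}$ by $\mathrm{O}(K)$, and likewise the segments lie within $\mathrm{O}(K)$ of the $2$-plane $\R^2\times\{0_{n-k-1}\}$. A Taylor expansion of the weighted length $\int_0^s r^2\,dr = s^3/3$ along each of the three segments, comparing with the corresponding integral over $\hat\bC$, shows that
\[
\int_{B_s^{n-k+1}}|x|^2\,d\|\tilde P\|(x) \geq \int_{B_s^{n-k+1}\cap(\hat\bC\times\{0_{n-k-1}\})}|x|^2\,d\mathcal H^1(x) - \Cr{concurr2}\,K^2\,s^3
\]
for an absolute constant $\Cr{concurr2}$ (the error is quadratic in $K$ because $\hat\bC$ is a critical configuration for this functional, so the first-order term in the perturbation vanishes). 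Since $\|P\|(B_s^{n-k+1})\ge$ the weighted mass of $\tilde P$ is false — one must be slightly careful: the minimality gives $\int|x|^2\,d\|P\|\ge\int|x|^2\,d\|\tilde P\|$ over $B_s^{n-k+1}$ because $P$ is an admissible competitor for the minimization. Dividing by $s^3$ and recalling $\frac{1}{s^3}\int_{B_s^{n-k+1}\cap(\hat\bC\times\{0_{n-k-1}\})}|x|^2\,d\mathcal H^1=1$ yields the claim. The main obstacle is making the barrier/touching argument fully rigorous for mod~$3$ currents with possibly higher multiplicity or density; however, since near the origin the minimizer is either regular or a classical triple junction (multiplicity one), the argument of Lemma \ref{triplemin3} transfers verbatim once the endpoints are pinned down by the exterior boundary condition, and the only new input is the compactness theorem for area-minimizing (here, weighted-area-minimizing) mod~$3$ integral currents in place of that for sets of finite perimeter.
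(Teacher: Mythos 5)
Your proposal correctly identifies the overall strategy (minimize the weighted mass among mod~3 competitors, then use a barrier/ODE argument to identify the minimizer), but it contains a genuine gap precisely where the paper's proof is most technical: the reduction from codimension $n-k$ down to the two-dimensional setting in which the barrier argument of Lemma~\ref{triplemin3} actually operates.

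The claim that, ``exactly as in Lemma~\ref{triplemin2},'' the minimizer $\tilde P$ is supported in the convex hull of the three boundary points does not hold for the \emph{weighted} functional $\int|x|^2\,d\|\tilde P\|$. In Lemma~\ref{triplemin2} (unweighted mass) the minimizer is a Steiner tree, which does lie in the affine $2$-plane $\hat A$ spanned by the three boundary points; but the weight $|x|^2$ has its pole at the origin, which in general does \emph{not} lie in $\hat A$ when $K>0$. Projection onto the triangle (or onto $\hat A$) is then no longer $|x|$-decreasing, so the minimizer has every incentive to dip out of $\hat A$ toward the origin, and there is no a priori $2$-plane containing it. Consequently the barrier argument — the sliding family $x=(x_1,\sqrt{a+x_1^2})$ together with unique continuation for the scalar ODE~\eqref{ODEgeo} — is intrinsically two-dimensional and cannot be applied ``verbatim'' to a curve in $\R^{n-k+1}$ that may have a nontrivial binormal component: at a point of tangency the two curves need not coincide, since the competitor can leave the osculating plane of the barrier. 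The paper closes this gap by constructing an explicit piecewise-linear $1$-Lipschitz map $F$ with $|F(x)|\le|x|$ that fixes the boundary points and pushes the current into the $3$-dimensional space $(\hat A-\hat x)\oplus\mathrm{span}(\hat x)$, followed by the orthogonal projection $G$ onto $\hat A - \hat x \simeq \R^2$; each pushforward decreases the weighted mass, and only \emph{then} is the $2$-dimensional barrier argument invoked. The $K^2$ loss that you attribute somewhat vaguely to ``criticality of $\hat\bC$'' actually comes out of this projection, via the Pythagorean estimate $s-|G(x_i)|=s-\sqrt{s^2-|\hat x|^2}\lesssim K^2 s$, which controls how far the projected endpoints have drifted inside $\partial B_s^2$. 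Your proof never addresses this drift of the endpoints under projection, nor the possibility that the minimizer leaves $\hat A$ — both are exactly the content that distinguishes this lemma from Lemma~\ref{triplemin3}.
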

\begin{proof}
    Similarly to the previous lemma, one minimizes in this case the mass weighted by $|x|^2$ in $B_s^{n-k+1}$ among
    mod~3 currents, and the minimizer $\tilde P$ 
    exists. One 
    can also conclude that, away from $x=0$, each 
    point of ${\rm spt}\,\|\tilde P\|$ is 
    locally 
    either a smooth curve or triple junction,
    with $\partial B_s^{n-k+1}\cap {\rm spt}\|\tilde P\|=\partial B_s^{n-k+1}\cap\cup_{i=1}^3\ell_i$. Since $\partial B_{s}^{n-k+1}\cap\cup_{i=1}^3\ell_i$ consists of three non-collinear points, there is a unique two-dimensional
    affine plane, denoted by $\hat A$, containing it, and let $\hat x\in \hat A$ be the closest point, in $\hat A$, to the origin. Suppose that $\hat x\neq 0$. Let $\hat S$ be the orthogonal projection map from $\R^{n-k+1}$ to the 3-dimensional subspace containing both $\hat x$ and $\hat A$, namely the subspace $(\hat A-\hat x) \oplus {\rm span}(\hat x)$, and consider the map $F:\R^{n-k+1}\rightarrow \hat S(\R^{n-k+1})$ defined by 
    \begin{equation}
        F(x):=\left\{\begin{array}{ll}
        \hat S(x) & \mbox{if }0\leq x\cdot \hat x\leq |\hat x|,\\
        \hat S(x)-\big(\hat S(x)\cdot\frac{\hat x}{|\hat x|}\big)\frac{\hat x}{|\hat x|}+\hat x
        & \mbox{if }x\cdot\hat x>|\hat x|, \\
        \hat S(x)-\big(\hat S(x)\cdot\frac{\hat x}{|\hat x|}\big)\frac{\hat x}{|\hat x|} &
        \mbox{if }x\cdot\hat x<0\,.
        \end{array}\right.
    \end{equation}
    The map $F$ is Lipschitz with Lipschitz constant equal to $1$, and $|F(x)|\leq |x|$ for all $x\in\R^{n-k+1}$.
    Then the pushforward of $P$ under the map
    $F$ is a non-increasing operation for $\int_{B_{s}^{n-k+1}}|x|^2\,d\|P\|(x)$
    while fixing the three boundary points,
    and we may therefore assume that ${\rm spt}\|\tilde P\|$ in
    $B_{s}^{n-k+1}$ is contained in the image 
    of $F$, which is a subset of the aforementioned  3-dimensional subspace. Let us identify the latter with $\R^3$, with a slight abuse of notation. The image of $F$ is then the region of $\R^3$ limited by $\partial B_s^{3}$, 
    the affine plane $\hat A$ and the subspace $\hat A-\hat x$, which we identify with $\R^2\times\{0_1\}\simeq\R^2$. The assumption implies that $|\hat x|\leq \Cr{concurr2}K s$
    for some absolute constant. Consider then 
    the pushforward of $\tilde P$ in $B_s^3$ 
    by orthogonal projection of $\R^3$ 
    to $\hat A-\hat x=\R^2$ denoted by $G$. 
    The map again reduces the weighted integral,
    that is, 
    \begin{equation}\label{P1eq}
        \int_{B_s^3}|x|^2\,d\|\tilde P\|(x)\geq
        \int_{B_s^2} |x|^2\,d\|G_\sharp\tilde P\|(x),
    \end{equation}
    and $G_\sharp(\tilde P)$ has three boundary points whose distance from 
    $\partial B_s^2$ is less than $\Cr{concurr2}K^2 s$ for some absolute constant and which are positioned close to
    $\hat\bC\cap \partial B_s^2$. By considering
    the minimization problem with mod~3 current setting again on the two-dimensional plane as in Lemma \ref{triplemin3} and using that the
    end points are $\Cr{concurr2}K^2s$ close to
    $\partial B_s^2$, one can conclude that the minimizer is achieved by the three straight lines to the origin and we obtain that
    \begin{equation}\label{P2eq}
    \int_{B_s^2}|x|^2\,d\|G_\sharp P\|(x)\geq \int_{B_s^2\cap\hat\bC}|x|^2\,d\mathcal H^1(x)-\Cr{concurr2}K^2 s^3\,.
    \end{equation}
    Now \eqref{P1eq} and \eqref{P2eq} prove the
    desired inequality. In the case that $\hat x=0$ (that is $\hat A$ is a subspace), then 
    we may use $F$ which is the orthogonal projection of $\R^{n-k+1}$ to $\hat A$ and we may argue similarly. This ends the proof.
    \end{proof}

\section{Concluding remarks} \label{sec:finalrmk}
We conclude this manuscript with some remarks on this result, its assumptions, as well as future research questions stemming from it.

First, if $(\{V_t\}_{t\in I},\{u(\cdot,t)\}_{t\in I})$ happens to be independent of time (so we have $(V,u)$ instead) and satisfies (A1)-(A5), it is natural to assume that condition \eqref{e:the-Holder-exp} reduces to $q=\infty$ and $p>k$. One can also argue (see \cite[Lemma 10.1]{Kasai-Tone}) that $h(x,V)=-u(x)^\perp$ for $\|V\|$-a.e., thus $h\in L^p(\|V\|)$ with $p>k$. In this case, Simon's result \cite{Simon_cylindrical} (where $h$ is assumed to be in $L^\infty(\|V\|)$, but $L^p(\|V\|)$ with $p>k$ should be handled similarly) shows without (A6) that
$(V,u)\in \mathscr N_{\varepsilon}(U_R)$ for 
sufficiently small $\varepsilon>0$ implies that ${\rm spt}\,\|V\|$ is a $C^{1,\alpha}$ perturbation of $\bC$. Thus, (A6) is not needed in the corresponding time-independent case. As already mentioned, (A6) is essential to control terms stemming from various cut-off in Brakke's inequality,
while Simon avoided creating similar terms by utilizing \eqref{simonineq}. We do not know if (A6) may be removed in general. Theorem \ref{thm:main} is establishing a dichotomy: given a flow satisfying (A1)-(A5) and belonging to $\mathcal{N}_\varepsilon(\bC)$ for a sufficiently small $\varepsilon$, \emph{either} the flow is a $C^{1,\alpha}$ perturbation of $\bC$ in a smaller parabolic neighborhood \emph{or} the flow presents certain significant topological degeneracies at the level of its one-dimensional slices, in that (A6) must fail. It would be very interesting to construct examples of Brakke flows with this pathological behavior. By our results, any such flow cannot have multi-phase structure, nor can it have an underlying structure of flow of currents mod~3, which poses a significant difficulty in devising an appropriate construction method.

Concerning future research directions, it would be interesting to explore whether higher regularity of the ``moving free-boundary'' $\mathrm{graph}\,\xi$ in the case $u$ is sufficiently regular or even $u=0$. As anticipated, when $u=0$ the result of Krummel \cite{Krummel} narrows the problem to establishing $C^{2,\alpha}$ regularity, but whether that holds remains open.

Finally, it would be interesting to study whether one may now leverage on having both an end-time regularity at multiplicity-one planes \emph{and} an end-time regularity at multiplicity-one triple junctions $\bC=\mathbf Y^1 \times \R^{k-1}$ to conclude some similar parabolic $\varepsilon$-regularity near cones splitting a codimension 2 Euclidean factor, such as the tetrahedral cone $\mathbf{T}^2 \times \R^{k-2}$, in the spirit of what \cite{CoEdSp} does in the elliptic framework. We remark that even in the setting of \cite{CoEdSp} an underlying cluster-like or current-like structure is assumed in order to enforce the validity of the no-hole property. We expect that parabolic regularity may be proved for analogous classes of Brakke flows as well.

\bibliographystyle{siam}
\bibliography{Large_biblio}

\begin{thebibliography}{10}

\bibitem{Allard}
{\sc W.~K. Allard}, {\em On the first variation of a varifold}, Ann. of Math.
  (2), 95 (1972), pp.~417--491.

\bibitem{Alm_minsets}
{\sc F.~J. Almgren, Jr.}, {\em Existence and regularity almost everywhere of
  solutions to elliptic variational problems with constraints}, Mem. Amer.
  Math. Soc., 4 (1976), pp.~viii+199.

\bibitem{Almgren_brp}
{\sc F.~J. Almgren, Jr.}, {\em Almgren's big regularity paper}, vol.~1 of World
  Scientific Monograph Series in Mathematics, World Scientific Publishing Co.,
  Inc., River Edge, NJ, 2000.

\bibitem{AFP}
{\sc L.~Ambrosio, N.~Fusco, and D.~Pallara}, {\em Functions of bounded
  variation and free discontinuity problems}, Oxford Mathematical Monographs,
  The Clarendon Press, Oxford University Press, New York, 2000.

\bibitem{Brakke_mcf}
{\sc K.~A. Brakke}, {\em The motion of a surface by its mean curvature},
  vol.~20 of Mathematical Notes, Princeton University Press, Princeton, N.J.,
  1978.

\bibitem{CoEdSp}
{\sc M.~Colombo, N.~Edelen, and L.~Spolaor}, {\em The singular set of minimal
  surfaces near polyhedral cones}, J. Differential Geom., 120 (2022),
  pp.~411--503.

\bibitem{DG_frontiere}
{\sc E.~De~Giorgi}, {\em Frontiere orientate di misura minima}, Seminario di
  Matematica della Scuola Normale Superiore di Pisa, 1960-61, Editrice Tecnico
  Scientifica, Pisa, 1961.

\bibitem{DLHMSS_odd}
{\sc C.~{De Lellis}, J.~Hirsch, A.~Marchese, L.~Spolaor, and S.~Stuvard}, {\em
  Area minimizing hypersurfaces modulo $p$: a geometric free-boundary problem},
   (2021).
\newblock Preprint arXiv:2105.08135.

\bibitem{MDLS}
{\sc C.~{De Lellis}, P.~Minter, and A.~Skorobogatova}, {\em Fine structure of
  singularities in area-minimizing currents mod$(q)$},  (2024).
\newblock Preprint arXiv:2403.15889.

\bibitem{DLS_Lp}
{\sc C.~De~Lellis and E.~Spadaro}, {\em Regularity of area minimizing currents
  {I}: gradient {$L^p$} estimates}, Geom. Funct. Anal., 24 (2014),
  pp.~1831--1884.

\bibitem{DLS_cm}
\leavevmode\vrule height 2pt depth -1.6pt width 23pt, {\em Regularity of area
  minimizing currents {II}: center manifold}, Ann. of Math. (2), 183 (2016),
  pp.~499--575.

\bibitem{DLS_bu}
\leavevmode\vrule height 2pt depth -1.6pt width 23pt, {\em Regularity of area
  minimizing currents {III}: blow-up}, Ann. of Math. (2), 183 (2016),
  pp.~577--617.

\bibitem{DPGS}
{\sc G.~De~Philippis, C.~Gasparetto, and F.~Schulze}, {\em A short proof of
  {A}llard's and {B}rakke's regularity theorems}, Int. Math. Res. Not. IMRN,
  (2024), pp.~7594--7613.

\bibitem{Ecker}
{\sc K.~Ecker}, {\em A local monotonicity formula for mean curvature flow},
  Ann. of Math. (2), 154 (2001), pp.~503--525.

\bibitem{Huisken_mono}
{\sc G.~Huisken}, {\em Asymptotic behavior for singularities of the mean
  curvature flow}, J. Differential Geom., 31 (1990), pp.~285--299.

\bibitem{Ilm1}
{\sc T.~Ilmanen}, {\em Elliptic regularization and partial regularity for
  motion by mean curvature}, Mem. Amer. Math. Soc., 108 (1994), pp.~x+90.

\bibitem{Ilmanen_mono}
\leavevmode\vrule height 2pt depth -1.6pt width 23pt, {\em Singularities of
  mean curvature flow of surfaces},  (1995).
\newblock Preprint.

\bibitem{Kasai-Tone}
{\sc K.~Kasai and Y.~Tonegawa}, {\em A general regularity theory for weak mean
  curvature flow}, Calc. Var. Partial Differential Equations, 50 (2014),
  pp.~1--68.

\bibitem{kim-tone}
{\sc L.~Kim and Y.~Tonegawa}, {\em On the mean curvature flow of grain
  boundaries}, Ann. Inst. Fourier (Grenoble), 67 (2017), pp.~43--142.

\bibitem{KNS78}
{\sc D.~Kinderlehrer, L.~Nirenberg, and J.~Spruck}, {\em Regularity in elliptic
  free boundary problems i}, Journal d'Analyse Math{\'e}matique, 34 (1978),
  pp.~86--119.

\bibitem{Krummel}
{\sc B.~Krummel}, {\em Regularity of minimal submanifolds and mean curvature
  flows with a common free boundary}, Communications in Partial Differential
  Equations, 44 (2019), pp.~990--1036.

\bibitem{Maggitextbook}
{\sc F.~Maggi}, {\em Sets of finite perimeter and geometric variational
  problems}, vol.~135 of Cambridge Studies in Advanced Mathematics, Cambridge
  University Press, Cambridge, 2012.

\bibitem{MWic}
{\sc P.~Minter and N.~Wickramasekera}, {\em A structure theory for stable
  codimension 1 integral varifolds with applications to area minimising
  hypersurfaces {${\rm mod}\,p$}}, J. Amer. Math. Soc., 37 (2024),
  pp.~861--927.

\bibitem{Motegi}
{\sc K.~Motegi}, {\em {$L^2$} normal velocity implies strong solution for
  graphical {B}rakke flows},  (2025).
\newblock Preprint arXiv:2510.11377.

\bibitem{NV_varifolds}
{\sc A.~Naber and D.~Valtorta}, {\em The singular structure and regularity of
  stationary varifolds}, J. Eur. Math. Soc. (JEMS), 22 (2020), pp.~3305--3382.

\bibitem{SW20}
{\sc F.~Schulze and B.~White}, {\em A local regularity theorem for mean
  curvature flow with triple edges}, J. Reine Angew. Math., 758 (2020),
  pp.~281--305.

\bibitem{Simon_Loj}
{\sc L.~Simon}, {\em Asymptotics for a class of nonlinear evolution equations,
  with applications to geometric problems}, Ann. of Math. (2), 118 (1983),
  pp.~525--571.

\bibitem{Simon}
\leavevmode\vrule height 2pt depth -1.6pt width 23pt, {\em Lectures on
  geometric measure theory}, vol.~3, Australian National University, Centre for
  Mathematical Analysis, Canberra, 1983.

\bibitem{Simon_cylindrical}
\leavevmode\vrule height 2pt depth -1.6pt width 23pt, {\em Cylindrical tangent
  cones and the singular set of minimal submanifolds}, J. Differential Geom.,
  38 (1993), pp.~585--652.

\bibitem{ST_endtime}
{\sc S.~Stuvard and Y.~Tonegawa}, {\em End-time regularity theorem for {B}rakke
  flows}, Math. Ann., 390 (2024), pp.~3317--3353.

\bibitem{ST_canonical}
\leavevmode\vrule height 2pt depth -1.6pt width 23pt, {\em On the existence of
  canonical multi-phase {B}rakke flows}, Adv. Calc. Var., 17 (2024),
  pp.~33--78.

\bibitem{Taylor}
{\sc J.~E. Taylor}, {\em {Regularity of the singular sets of two-dimensional
  area-minimizing flat chains modulo {$3$} in {$R^{3}$}}}, Invent. Math., 22
  (1973), pp.~119--159.

\bibitem{Taylor_AlmMin}
\leavevmode\vrule height 2pt depth -1.6pt width 23pt, {\em The structure of
  singularities in soap-bubble-like and soap-film-like minimal surfaces}, Ann.
  of Math. (2), 103 (1976), pp.~489--539.

\bibitem{Ton-2}
{\sc Y.~Tonegawa}, {\em A second derivative {H}\"{o}lder estimate for weak mean
  curvature flow}, Adv. Calc. Var., 7 (2014), pp.~91--138.

\bibitem{Ton1}
\leavevmode\vrule height 2pt depth -1.6pt width 23pt, {\em {B}rakke's mean
  curvature flow: An introduction}, SpringerBriefs in Mathematics, Springer,
  Singapore, 2019.

\bibitem{tone-wic}
{\sc Y.~Tonegawa and N.~Wickramasekera}, {\em The blow up method for {B}rakke
  flows: networks near triple junctions}, Arch. Ration. Mech. Anal., 221
  (2016), pp.~1161--1222.

\bibitem{white-immiscible}
{\sc B.~White}, {\em Regularity of the singular sets in immiscible fluid
  interfaces and solutions to other {P}lateau-type problems}, in Miniconference
  on geometry and partial differential equations ({C}anberra, 1985), vol.~10 of
  Proc. Centre Math. Anal. Austral. Nat. Univ., 1986, pp.~244--249.

\bibitem{White_stratification}
\leavevmode\vrule height 2pt depth -1.6pt width 23pt, {\em Stratification of
  minimal surfaces, mean curvature flows, and harmonic maps}, J. Reine Angew.
  Math., 488 (1997), pp.~1--35.

\bibitem{Wic}
{\sc N.~Wickramasekera}, {\em A general regularity theory for stable
  codimension 1 integral varifolds}, Ann. of Math. (2), 179 (2014),
  pp.~843--1007.

\end{thebibliography}

\end{document}